\newcommand{\define}[1]{\emph{#1}}
\definecolor{mygray}{RGB}{205,205,205}
\theoremstyle{plain}
\newtheorem{theorem}{Theorem}[section]
\newtheorem{corollary}[theorem]{Corollary}
\newtheorem{lemma}[theorem]{Lemma}
\newtheorem{proposition}[theorem]{Proposition}
\theoremstyle{definition}
\newtheorem{definition}[theorem]{Definition}
\newtheorem{example}[theorem]{Example}
\theoremstyle{remark}
\newtheorem{remark}{Remark}[section]
\newcommand{\R}{\mathbb{R}}
\newcommand{\C}{\mathbb{C}}
\newcommand{\K}{\mathbb{K}}
\newcommand{\HH}{\mathbb{H}}
\renewcommand{\SS}{\mathbb{S}}
\newcommand{\CP}{\C\mathrm{P}}
\newcommand{\chat}{\hat{\C}}
\DeclareMathOperator{\Sym}{Sym}
\DeclareMathOperator{\Herm}{Herm}
\DeclareMathOperator{\SO}{SO}
\DeclareMathOperator{\SL}{SL}
\DeclareMathOperator{\PSL}{PSL}
\DeclareMathOperator*{\argmin}{argmin}
\DeclareMathOperator{\arcosh}{arcosh}
\DeclareMathOperator{\interior}{int}
\DeclareMathOperator{\tr}{tr}
\DeclareMathOperator{\dist}{dist}
\DeclareMathOperator{\tandist}{td}
\DeclareMathOperator{\conv}{conv}
\DeclareMathOperator{\poly}{poly}
\DeclareMathOperator{\lin}{span}
\DeclareMathOperator{\trunc}{trunc}
\newcommand{\upT}{\mathsf{T}}
\newcommand{\ip}[2]{\langle #1, #2 \rangle}
\newcommand{\config}{\mathfrak{D}}
\newcommand{\eq}{\,=\,}
\newcommand{\plus}{\,+\,}
\newcommand{\minus}{\,-\,}
\newcommand{\ee}{\mathrm{e}}
\newcommand{\ii}{\bm{i}}
\newcommand{\surf}{\Sigma}
\newcommand{\csurf}{\bar{\surf}}
\newcommand{\Fgroup}{\Gamma}
\newcommand{\tri}{T}
\newcommand{\verts}{V}
\newcommand{\len}{\ell}
\begin{document}

\title[Canonical tessellations of decorated hyperbolic surfaces]{
   Canonical tessellations \\ of decorated hyperbolic surfaces
}

\author{Carl O.\ R.\ Lutz}
\address{Technische Universit\"at Berlin, Institut f\"ur Mathematik,
   Str.\ des 17.\ Juni 136, 10623 Berlin, Germany}
\email{clutz@math.tu-berlin.de}

\date{\today}

\subjclass[2010]{Primary 57M50; Secondary 51M10, 53A35, 57K20}

\keywords{hyperbolic surfaces;
   weighted Delaunay tessellations;
   weighted Voronoi decompositions;
   Epstein-Penner convex hull;
   flip algorithm;
   configuration space}

\begin{abstract}
   A decoration of a hyperbolic surface of finite type is a choice of
   circle, horocycle or hypercycle about each cone-point, cusp or
   flare of the surface, respectively. In this article we show that
   a decoration induces a unique canonical tessellation and dual decomposition of
   the underlying surface. They are analogues of the weighted Delaunay
   tessellation and Voronoi decomposition in the Euclidean plane.
   We develop a characterisation in terms of the hyperbolic geometric equivalents
   of \textsc{Delaunay}'s empty-discs and \textsc{Laguerre}'s tangent-distance,
   also known as power-distance. Furthermore, the relation between the
   tessellations and convex hulls in Minkowski space is presented, generalising
   the Epstein-Penner convex hull construction. This relation allows us to extend
   \textsc{Weeks}' flip algorithm to the case of decorated finite type hyperbolic
   surfaces. Finally, we give a simple description of the configuration space of
   decorations and show that any fixed hyperbolic surface only admits a finite
   number of combinatorially different canonical tessellations.
\end{abstract}

\maketitle

\section{Introduction}
It is commonly known that one can associate to a finite set of points $\verts$ in
the Euclidean plane two dual combinatorial structures: the Delaunay tessellation
and the Voronoi decomposition. The former is a tessellation of $\conv(\verts)$ with
vertex set $\verts$ such that faces are given as the convex hulls of vertices on the
boundaries of \emph{empty discs}, i.e., discs which contain no point of $\verts$
in their interiors. The latter is a decomposition of the Euclidean plane into regions,
each consisting of all points closest to one of the points in $\verts$, respectively.

The study of Voronoi decompositions has a long history. It dates back at least to
\textsc{L.~Dirichlet's} analysis of fundamental domains of $2$- and $3$-dimensional
Euclidean lattices in 1850 \cite{Dirichlet50}. The analogous considerations of
\textsc{H.~Poincar{\'e}} for Fuchsian groups acting on the hyperbolic plane are
almost as old \cite{Poincare82}. The first general results in $N$-dimensions
are due to \textsc{G.~Vorono{\"{\i}}} \cite{Voronoi08}. His student
\textsc{B.~Delaunay} introduced the dual approach via \emph{empty spheres}
\cite{Delaunay34}. The classical motivations for studying these tessellations and there
generalisations, i.e., \define{weighed Delaunay tessellations} and
\define{weighted Voronoi decompositions}, are diverse. They range form number-theoretic
considerations in the reduction theory of quadratic forms
\citelist{\cite{Dirichlet50}\cite{Voronoi08}\cite{Delaunay34}}, over questions in
surface theory \cite{Poincare82} to statics and kinematics of frameworks and
the analysis of combinatorics of convex polyhedra
\citelist{\cite{Izmestiev19}\cite{EricksonLin21}}.

Since the 1980s canonical tessellations, i.e., weighted Delaunay tessellations, of
hyperbolic cusp surfaces play an important role in the analysis of Teichm\"uller and
moduli spaces \citelist{\cite{Harer88}\cite{Penner12}}. One approach to these
tessellations considers level sets of intrinsic
\emph{\enquote{polar coordinates}} which can be
introduced about each cusp of the surface. It is due to ideas of \textsc{W.~Thurston}
and was
worked out by \textsc{B.~Bowditch}, \textsc{D.~Epstein} and \textsc{L.~Mosher}
\citelist{\cite{Harer88}*{Chapter 2, \S3}\cite{BowditchEpstein88}}. Another
approach introduced by \textsc{D.~Epstein} and \textsc{R.~Penner} uses affine lifts
to convex hulls in Minkowski-space \cite{EpsteinPenner88}, the
\emph{Epstein-Penner convex hull construction}. Subsequently, the latter approach
was successfully applied to closed hyperbolic surfaces with a finite
number of distinguished points \cite{NaatanenPenner91}, compact hyperbolic surfaces
with boundaries \cite{Ushijima99} and projective manifolds with radial ends
\cite{CooperLong13}. The convex hull construction gives rise to an explicit method to
compute the weighted Delaunay tessellations, the \emph{flip algorithm}. It was
originally explored by \textsc{J.~Weeks} for decorated hyperbolic cusp surfaces
\cite{Weeks93} and recently extended to projective surfaces by \textsc{S.~Tillmann}
and \textsc{S.~Wong} \cite{TillmannWong16}.

Weighted Delaunay tessellations of closed hyperbolic surfaces and surfaces with
singular Euclidean structure (\emph{PL-surfaces}) are closely related to $3$-dimensional
hyperbolic polyhedra. They can be characterised as critical points of certain
\emph{\enquote{energy-functionals}} related to the volumes of these polyhedra
\citelist{\cite{Leibon02}\cite{Rivin94}\cite{Springborn08}}. Conversely, weighted
Delaunay tessellations have proven to be a valuable tool for finding variational methods
for the polyhedral realisation of hyperbolic cusp surfaces
\citelist{\cite{Springborn20}\cite{Prosanov20}}
(see \cite{Fillastre08} for an overview of the general problem).

Finally, weighted Delaunay tessellations and Voronoi decompositions are interesting
for applications (see
\citelist{\cite{BogdanovEtAl14}\cite{Edelsbrunner00}\cite{ImaiEtAl85}} and references
therein). In particular, there is a flip algorithm to compute Delaunay triangulations
of PL-surfaces \cite{IndermitteEtAl01}. These can be used to define a discrete notion
of intrinsic Laplace-Beltrami operator \cite{BobenkoSpringborn07}. Furthermore,
there are surprising connections between discrete conformal equivalence and decorated
hyperbolic cusp surfaces \cite{BobenkoEtAl15}. Their weighted Delaunay tessellations
play an important role in the theoretical proof of the discrete uniformization
theorem \citelist{\cite{GuEtAl18a}\cite{GuEtAl18b}} (see also \cite{Springborn20}).
Additionally, they are necessary to ensure optimal results when computing discrete
conformal maps \cite{GillespieEtAl21}.

\subsection{Statements of the article}
In this article we are going to define and analyse weighted Delaunay tessellations
and Voronoi decompositions on decorated hyperbolic surfaces of finite type. Our
constructions contain the results about tessellations obtained in
\citelist{\cite{BowditchEpstein88}\cite{EpsteinPenner88}\cite{NaatanenPenner91}
\cite{Ushijima99}\cite{Weeks93}} as special cases. Another important class of examples
are hyperbolic surfaces corresponding to the quotients of the hyperbolic plane by
a finitely generated, non-elementary Fuchsian groups (see Figure
\ref{fig:surface_from_group} and Example \ref{example:fuchsian_group}).

\begin{figure}[t]
   \includegraphics[width=0.4\textwidth]{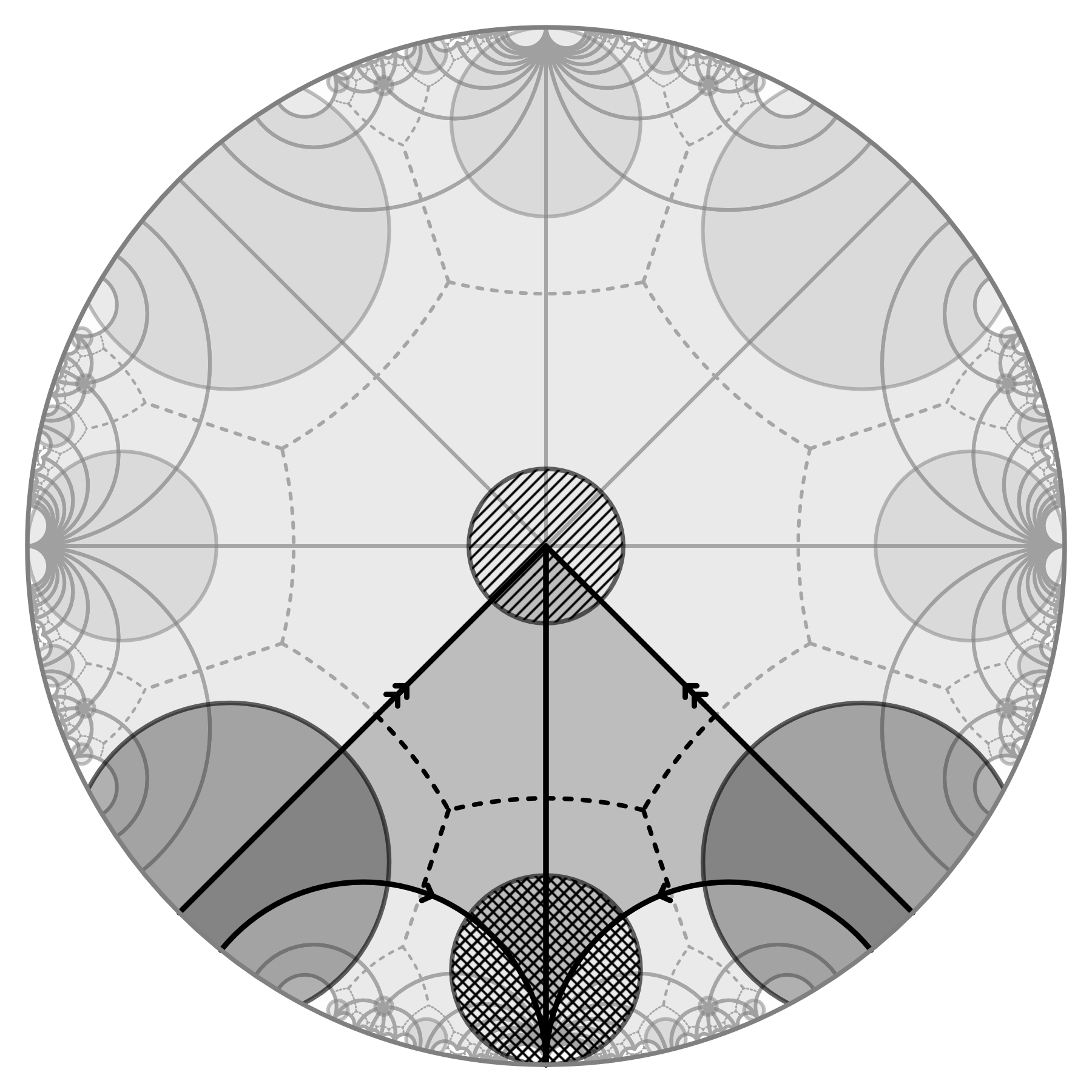}
   \caption{A tessellation of the hyperbolic plane corresponding to a
      weighted Delaunay triangulation (solid lines) and its dual weighted Voronoi
      decomposition (dashed lines) of a decorated hyperbolic surface. The surface can
      be obtained by identifying the boundary edges of a fundamental domain
      (darker shaded) as indicated by arrows. It is homeomorphic to a twice punctured
      sphere and has a cone-point (striped), cusp (chequered) and flare (solid). The
      identifications correspond to the action of a Fuchsian group.
   }
   \label{fig:surface_from_group}
\end{figure}

Informally speaking, a hyperbolic surface of finite type consists of a surface
$\surf$ which is homeomorphic to a closed orientable surface $\csurf$ minus a finite
set of points $\verts_0\cup\verts_1$ endowed with a complete hyperbolic path-metric
$\dist_{\surf}$ which possesses a finite number of cone-points $\verts_{-1}\subset\surf$,
ends of finite area $\verts_{0}$ (\define{cusps}) and infinite area ends
$\verts_{1}$ (\define{flares}). Each \emph{\enquote{point}} in
$\verts\coloneq\verts_{-1}\cup\verts_{0}\cup\verts_{1}$ is decorated with a hyperbolic
cycle of the respective type, i.e., a circle, horocycle or hypercycle.

In Theorem \ref{theorem:weighted_Delaunay_tessellations} we prove the existence and
uniqueness of weighted Delaunay tessellations. They are defined using
\emph{properly immersed discs}, the analogue of \textsc{B.\ Delaunay's} empty discs
for decorated hyperbolic surfaces. The corresponding results about weighted Voronoi
decompositions are contained in Theorem \ref{theorem:properties_laguerre_voronoi}.
Their $2$-cells consist of all points of $\surf$ closest to one of the decorated
vertices in $\verts$ measured in the \emph{modified tangent distance}, respectively.
The distance is an analogue of \textsc{E.\ Laguerre's} \emph{tangent-distance}, also
known as \emph{\enquote{power distance}} in the Euclidean plane (see \cite{Blaschke29}).
Our construction generalises the approach of \textsc{L.\ Mosher}, \textsc{B.\ Bowditch}
and \textsc{D.\ Epstein}. Theorem
\ref{theorem:weighted_Delaunay_tessellations_revisited} reveals the
connections between weighted Delaunay tessellations and Voronoi decompositions.
The flip algorithm is discussed in Theorem \ref{theorem:flip_algorithm}.
We prove that for all proper decorations (see section
\ref{sec:laguerre_voronoi_decompositions}) a weighted Delaunay triangulation
of $\surf$ can be computed from an arbitrary geodesic triangulation in finite time.
All steps to actually implement the algorithm are discussed. For the analysis of the
flip algorithm we introduce \emph{support functions}, i.e., the local
\emph{\enquote{scaling offsets}} from the one-sheeted hyperboloid, on the surface
(compare to \cite{Fillastre13}). If the hyperbolic surface corresponds to a Fuchsian
group, the support function associated to a weighted Delaunay triangulation
induces a convex hull in Minkowski-space (Corollary \ref{cor:convex_hull_construction}).
This is a direct generalisation of the
Epstein-Penner convex hull construction to all finitely generated, non-elementary
Fuchsian groups. Finally, Theorem \ref{theorem:configuration_space} identifies
the configuration space of proper decorations as a convex, connected subset of
$\R_{>0}^{\verts}$ and discusses the dependence of the combinatorics of weighted
Delaunay tessellations on the decoration. In particular, we prove a generalisation of
\emph{\enquote{Akiyoshi's compactification}}
\citelist{\cite{Akiyoshi00}\cite{GuEtAl18a}*{Appendix}}, that is, we prove that any
fixed hyperbolic surface of finite type only admits a finite number of combinatorially
different weighted Delaunay tessellations. Moreover, we show that
weighted Delaunay tessellations induce a decomposition of the configuration space
into convex polyhedral cones. This is an analogue of the classical secondary fan
associated to a finite number of points in the Euclidean plane
\citelist{\cite{GelfandEtAl94}*{Chapter 7}\cite{DeLoeraEtAl10}*{Chapter 5}}.

We highlight that the main methods of this article, namely, properly immersed discs,
tangent-distances and support functions, are intrinsic in nature. That is, they only
depend on the metric of the surface and the given decoration. In contrast most other
approaches like the classical Epstein-Penner convex hull construction or the
\emph{\enquote{empty discs}} utilised in \cite{DespreEtAl20} rely on the existence
of (metric) covers of the surface by the hyperbolic plane. Notable exceptions are the
approach by \textsc{B.~Bowditch}, \textsc{D.~Epstein} and \textsc{L.~Mosher} for
hyperbolic cusp surfaces and the \emph{\enquote{empty immersed discs}}
\textsc{A.~Bobenko} and \textsc{B.~Springborn} considered for PL-surfaces. It is
important to notice that for the objects of interest of this article, i.e., canonical
tessellations of finite type hyperbolic surfaces, (metric) covers by the
hyperbolic plane do in general not exist. Thus a classical Epstein-Penner convex
hull construction is not feasible.

\subsection{Outline of the article}
We begin our expositions with an introduction to the local geometry of hyperbolic
cycles and their associated polygons in section \ref{sec:local}. The main aim is
to derive relations between hyperbolic cycles, hyperbolic polygons and the
hyperbolic analogue of \textsc{E.\ Laguerre's} tangent-distance. This will lead us
to a generalisation of \textsc{J.\ Weeks}' tilt formula \cite{Weeks93,SakumaWeeks95}.

In the next section \ref{sec:global} we are turning our attention to hyperbolic
surfaces of finite type. After collecting some properties of these surfaces we
introduce and analyse weighted Delaunay tessellations and Voronoi decompositions.
We close this section with an analysis of the flip algorithm and a generalisation of
the Epstein-Penner convex hull construction to decorated hyperbolic surfaces.

The last section \ref{sec:config_space} is about characterising the configuration
space of decorations of a fixed hyperbolic surface of finite type. Furthermore, we
consider some explicit examples.

\subsection{Open questions}
Using the convex hull construction, \textsc{R.~Penner} introduced a mapping class
group invariant cell decomposition of the decorated Teichm\"uller space of hyperbolic
cusp surfaces \cites{Penner87, Penner12}. \textsc{A.~Ushijima} presented
a similar construction for Teichm\"uller spaces of compact surfaces with boundary
\cite{Ushijima99}. But his constructions do not cover decorations of these
surfaces.  Actually, in light of this article, we see that \textsc{A.~Ushijima}
implicitly prescribes a constant radius decoration for all surfaces. It remains
the question whether his decompositions extend to decorated Teich\"uller spaces
exhibiting equal properties to the case of hyperbolic cusp surfaces.

Independently of these questions the structure of the configuration space of
decorations for a fixed surface remains interesting on its own.
\textsc{M.~Joswig}, \textsc{R.~L\"owe} and \textsc{B.~Springborn} showed
that the notions of secondary fan and polyhedron can be defined for
decorated hyperbolic cusp surfaces \cite{JoswigEtAl20}. Our Theorem
\ref{theorem:configuration_space} provides the existence of secondary fans for
the more general class of finite type hyperbolic surfaces. Their secondary polyhedra
still remain to be investigated.

The algorithmic aspects of finding weighted Delaunay tessellations on hyperbolic
surfaces, or PL-surfaces, are still little explored. To date,
\textsc{J.~Weeks'} flip algorithm and its generalisations, presents the only general
means to compute such tessellations known to the author. Except for correctness and
termination in the case of surfaces there is not much known about the flip algorithm.
Recently, \textsc{V.~Despr\'e}, \textsc{J.-M.~Schlenker} and \textsc{M.~Teillaud}
found upper bounds for the run-time in the case of undecorated compact hyperbolic
surfaces with a finite number of distinguished points \cite{DespreEtAl20}. For
dimensions $\geq3$ even an algorithm which is guaranteed to terminate with a correct
tessellation is an open question.

Another question is characterising all decorations of a fixed hyperbolic surface
whose weighted Delaunay tessellation can be computed via the flip algorithm.
Our Theorem \ref{theorem:flip_algorithm} guarantees that this is possible for all
decorations of a hyperbolic surfaces without cone points, i.e.,
$\verts_{-1}=\emptyset$. Should cone points exist we only consider proper decorations
(see section \ref{sec:laguerre_voronoi_decompositions}). Experiments for a finite set
of points on a compact hyperbolic surface indicate that the flip-algorithm is still
valid for (some) non-proper decorations. Indeed we conjecture that our configuration
space of proper decorations is optimal iff all cone-angles at vertices in
$\verts_{-1}$ are $\leq\pi$.

\subsection{Acknowledgements}
This work was supported by DFG via SFB-TRR 109:
\enquote{Discretization in Geometry and Dynamics}. The author wishes to thank his
doctoral advisor \textsc{Alexander Bobenko} for his encouragement and support,
\textsc{Boris Springborn} for always having an open door and \textsc{Fabian Bittl}
for interesting discussions.

\section{The local geometry}\label{sec:local}
In this section we consider the geometry of hyperbolic cycles and their
associated hyperbolic polygons in the hyperbolic plane. We approach this topic
from a M\"obius geometric point of view. Apart from some elementary facts
about hyperbolic geometry our expositions are self contained.

The interested reader can find a classical account of M\"obius geometry in
\cite{Blaschke29}. In-depth discussions of its relations to complex numbers
and matrix-groups are given in \citelist{\cite{Yaglom68}\cite{Benz73}}. A
modern introduction to M\"obius geometry and its connections to hyperbolic
geometry is given in \cite{BobenkoEtAl21}. More informations about the
differential aspects of M\"obius geometry can be found in \cite{Cecil92}.

For comprehensive overviews of hyperbolic geometry and its different models we refer
the reader to \citelist{\cite{CannonEtAl97}\cite{Ratcliffe94}}. If the reader wishes
to get a better intuition of hyperbolic geometry we recommend \cite{Thurston97}.

\subsection{M\"obius circles and hyperbolic cycles}
The complex plane $\C$ extended by a single point $\infty$ is called
the \define{M\"obius plane} $\chat$. Its automorphisms are given
by \define{(orientation preserving) M\"{o}bius transformations}, i.e., complex
linear fractional transformations
\begin{equation}
   z \mapsto \frac{az + b}{cz + d},
\end{equation}
where $ad - bc \neq 0$. They form a group isomorphic to $\PSL(2;\C)$ as $\chat$
is equivalent to the complex projective line $\CP^1$. M\"{o}bius transformations
act bijectively on the set of quadratic equations of the form
\begin{equation}
   \label{eq:conformal_circles}
   az\bar{z} - \bar{b}z - b\bar{z} + c \eq 0,
\end{equation}
with non-simultaneously vanishing $a,c\in\R$, $b\in\C$. The solution set of
such a quadratic equation, if non-empty, is a
point, line or circle in $\C$. It is easy to see that for points
$b\bar{b} - ac = 0$ while $b\bar{b} - ac > 0$ for lines and circles.
Furthermore, M\"{o}bius transformations preserve these relations.
Therefore, M\"obius transformations act bijectively on the set of lines and
circles of the complex plane, the \define{M\"obius circles} of $\chat$.

The left hand side of the quadratic equation \eqref{eq:conformal_circles} can be
uniquely identified with an Hermitian matrix, namely
\[
   \begin{pmatrix} a & b\\ \bar{b} & c \end{pmatrix} \in \Herm(2).
\]
Endowed with the bilinear form
\begin{equation}
   \label{eq:bilinear_form}
   \ip{X}{Y}
   \,\coloneq\,
   -\frac{1}{2}
   \tr\left(
   X\left(\begin{smallmatrix}0&-\ii\\ \ii&0\end{smallmatrix}\right)
   Y^{\upT}\left(\begin{smallmatrix}0&- \ii\\ \ii&0\end{smallmatrix}\right)\right)
\end{equation}
the Hermitian matrices constitute an inner product space of signature $(3, 1)$.
More precisely, parametrising $X\in\Herm(2)$ by
\[
   X
   = \begin{pmatrix}
      x_0 + x_3 & x_1 + \ii x_2\\
      x_1 - \ii x_2 & x_0 - x_3
   \end{pmatrix}
\]
we see that $\ip{X}{Y} = -x_0y_0 + x_1y_1 + x_2y_2 + x_3y_3$. The
identity component $\SO^{+}(3,1)$ of its isometry group is isomorphic to
$\SL(2;\C)$, the isomorphism $\varphi\colon\SO^{+}(3,1)\to\SL(2;\C)$ being
defined by
\begin{equation}
   f(X) \eq \bar{\varphi}_f^{\upT}\,X\,\varphi_f.
\end{equation}
Utilising the bilinear form \eqref{eq:bilinear_form} the collection of M\"obius
circles and points in $\chat$ can be identified up to scaling with the elements of
$\{|X|^2 > 0\}$ and $\{|X|^2=0\}$, respectively. Here $|X|^2\coloneq\ip{X}{X}$.
We will not further distinguish between elements of $\Herm(2)$ and their
M\"obius-geometric counterparts if the scaling ambiguity poses no problem for
the presented constructions.

\begin{lemma}
   \label{lemma:conformal_circles_orthogonality}
   Two M\"obius circles $C_1$ and $C_2$ intersect orthogonally iff
   $\ip{C_1}{C_2} \eq 0$.
\end{lemma}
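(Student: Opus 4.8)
The plan is to reduce the statement to the case of two ordinary Euclidean circles by applying a M\"obius transformation, and then verify the identity by a direct computation using the explicit coordinates $\ip{X}{Y} = -x_0y_0 + x_1y_1 + x_2y_2 + x_3y_3$. More precisely, I would first recall from the preceding discussion that the bilinear form \eqref{eq:bilinear_form} is $\SO^+(3,1)$-invariant, and that via the isomorphism $\varphi\colon\SO^+(3,1)\to\SL(2;\C)$ this is the same as saying $\ip{\cdot}{\cdot}$ transforms naturally under the M\"obius action $X\mapsto \bar{\varphi}^{\upT}X\varphi$. Since both the condition \enquote{$C_1$ and $C_2$ meet orthogonally} and the condition \enquote{$\ip{C_1}{C_2}=0$} are M\"obius-invariant, it suffices to check the equivalence for any convenient normalisation of the pair $(C_1, C_2)$.

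If $C_1$ and $C_2$ intersect, I would move one intersection point to $\infty$; then both M\"obius circles become straight lines in $\C$. A Euclidean line $\{z : \cos\theta\, x + \sin\theta\, y = p\}$ corresponds to the Hermitian matrix with $a = c = 0$ and $b = \tfrac12(\cos\theta - \ii\sin\theta)$ up to scale and a real offset; writing two such lines $C_1, C_2$ with angles $\theta_1, \theta_2$, one computes directly from the coordinate formula that $\ip{C_1}{C_2}$ is proportional to $\cos(\theta_1 - \theta_2)$ (the $x_0$-terms vanish since $a=c=0$), which is zero exactly when the lines are perpendicular. For completeness I would also treat the tangential/non-transverse configurations and the case of two circles meeting at two points, but by M\"obius-invariance the single normalised computation above already settles the generic case, and the degenerate cases follow by continuity or a second explicit normalisation (e.g.\ the unit circle $x_0 = -\tfrac12, x_3 = 0, \ldots$ against a line through the origin).

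The only genuine subtlety, and the step I would be most careful with, is bookkeeping of the scaling ambiguity: elements of $\Herm(2)$ represent M\"obius circles only up to a positive real scalar, so I must check that the \emph{vanishing} of $\ip{C_1}{C_2}$ — which is scale-independent — is what encodes orthogonality, rather than any particular value. I would also note the sign conventions: for a genuine circle $|X|^2 > 0$, and one should confirm that the chosen representative of a line (with $a = 0$) still lies in the $\{|X|^2 > 0\}$ component, so that \enquote{orthogonal intersection} is being compared against the right quantity. Once these conventions are pinned down, the lemma is essentially the classical fact that the inner product of the coefficient vectors of two circles vanishes iff they are orthogonal, transported into the signature-$(3,1)$ language set up above.
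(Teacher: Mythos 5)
Your proposal is correct and takes essentially the same route as the paper: use M\"obius invariance of the bilinear form to normalise the pair of circles, then verify the equivalence by a direct coordinate computation. The paper's normalisation is the $x$-axis together with a circle through $\pm1$ rather than your pair of lines through $\infty$, but this difference is immaterial, and like the paper you relegate the non-intersecting configurations (needed for the \enquote{if} direction) to a brief supplementary check.
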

\begin{proof}
   Using a M\"{o}bius transformation we can assume that the first M\"obius
   circle is the $x$-axis and the second one intersects it in $\pm1$. Thus,
   they intersect orthogonally iff the second M\"obius circle is the
   unit circle centred at the origin. The claim follows by direct computation.
\end{proof}

It is clear that any two M\"obius circles $C_1$ and $C_2$ span a $2$-dimensional
subspace of $\Herm(2)$ and thus induce a $1$-parameter family of M\"obius circles.
It is call the \define{pencil (of circles)} spanned by $C_1$ and $C_2$.
The non-degeneracy of $\ip{\cdot}{\cdot}$ grants that there is a unique
complementary subspace in $\Herm(2)$ such that the two $1$-parameter families
of M\"obius circles are mutually orthogonal. They are said to be \define{dual}
to each other (see Figure \ref{fig:pencils}).

\begin{figure}[t]
   \begin{minipage}{0.48\textwidth}
      \begin{tikzpicture}
         \clip (-2.2,-1.9) rectangle (4.2,1.9);
         \tkzDefPoint(0,0){A};
         \tkzDefPoint(2,0){B};

         \foreach \K in {1.2, 1.5, 2, 2.5, 3.4}{
            \tkzDefCircle[apollonius,K=\K](A,B);
            \tkzGetPoint{C}; \tkzGetLength{r};
            \tkzDrawCircle[R](C,\r);

            \tkzDefCircle[apollonius,K=\K](B,A);
            \tkzGetPoint{C}; \tkzGetLength{r};
            \tkzDrawCircle[R](C,\r);
         }

         \foreach \y in {0.5, 1.5, 5}{
            \tkzDefPoint(1,\y){C};
            \tkzDrawCircle[style=dotted](C,A);
            \tkzDefPoint(1,-\y){C};
            \tkzDrawCircle[style=dotted](C,A);
         }

         \tkzDrawPoints(A,B);
         \tkzDefLine[mediator](A,B);
         \tkzGetPoints{C}{D};
         \tkzDrawLine(C,D);

         \tkzDrawLine[add=1.1 and 1.1,style=dotted](A,B);
         \tkzDefPoint(1,0){C}; \tkzDrawCircle[style=dotted](C,A);
      \end{tikzpicture}
   \end{minipage}
   \quad
   \begin{minipage}{0.48\textwidth}
      \begin{tikzpicture}
         \clip (-3.2,-1.9) rectangle (3.2,1.9);
         \tkzDefPoint(-1,0){A}; \tkzDefPoint(0,0){M}; \tkzDefPoint(1,0){B};

         \foreach \t in {0.5, 1, 1.5, 5}{
            \tkzDefPoint(0,\t){C};
            \tkzDrawCircle[style=dotted](C,M);
            \tkzDefPoint(0,-\t){C};
            \tkzDrawCircle[style=dotted](C,M);

            \tkzDefPoint(\t, 0){C};
            \tkzDrawCircle(C,M);
            \tkzDefPoint(-\t, 0){C};
            \tkzDrawCircle(C,M);
         }
         \tkzDrawLine[add=1.1 and 1.1,style=dotted](A,B);

         \tkzDrawPoint(M);
         \tkzDefLine[mediator](A,B);
         \tkzGetPoints{C}{D};
         \tkzDrawLine(C,D);
      \end{tikzpicture}
   \end{minipage}
   \caption{Pencils of circles (solid) in $\chat$ with their respective
      dual pencils (dotted). They are also known as Apollonian circles.
      There are three types of pencils.
   }
   \label{fig:pencils}
\end{figure}
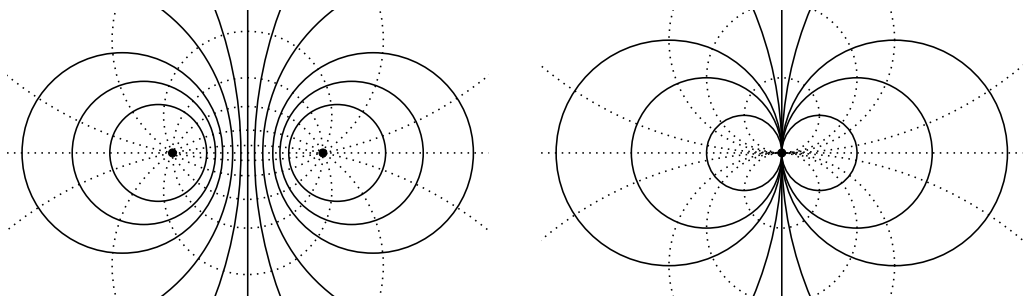

\begin{lemma}
   \label{lemma:gramian_determinant}
   Two M\"obius circles $C_1$ and $C_2$ intersect, touch or are disjoint iff
   the expression
   \begin{equation}
      \label{eq:gramian_determinant}
      |C_1|^2|C_2|^2 - \ip{C_1}{C_2}^2
   \end{equation}
   is positive, zero or negative, respectively.
\end{lemma}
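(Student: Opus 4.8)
The plan is to interpret \eqref{eq:gramian_determinant} as the Gram determinant of the plane $\Pi\coloneq\lin\{C_1,C_2\}\subset\Herm(2)$ and to recover the relative position of the two circles from the signature of the restriction of $\ip{\cdot}{\cdot}$ to its orthogonal complement $\Pi^{\perp}$. We may assume $C_1$ and $C_2$ are linearly independent; otherwise they represent the same circle and \eqref{eq:gramian_determinant} vanishes by the Cauchy--Schwarz identity. With respect to the basis $\{C_1,C_2\}$ the Gram matrix of $\Pi$ is $\left(\begin{smallmatrix} |C_1|^2 & \ip{C_1}{C_2} \\ \ip{C_1}{C_2} & |C_2|^2 \end{smallmatrix}\right)$, whose determinant is exactly \eqref{eq:gramian_determinant}; since passing to another basis changes this determinant by a positive square, its sign is an invariant of $\Pi$ and, in particular, does not depend on the scaling ambiguity of $C_1$ and $C_2$.

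First I would classify the possible signatures of a $2$-plane $W$ inside the signature $(3,1)$ space $\Herm(2)$. Since this space has only one negative direction, contains no $2$-dimensional totally isotropic subspace, and admits no timelike vector orthogonal to a non-zero null vector, the only possibilities are: $W$ positive definite; $W$ of Lorentzian signature $(1,1)$; or $W$ degenerate with a $1$-dimensional, necessarily null, radical. These three cases correspond precisely to the Gram determinant of $W$ being positive, negative, or zero. Passing to $\Pi^{\perp}$, which is again $2$-dimensional because the ambient form is non-degenerate, the map $W\mapsto W^{\perp}$ interchanges the positive-definite and Lorentzian types and preserves the degenerate type, in which case $\Pi$ and $\Pi^{\perp}$ share their null radical; this follows from the additivity of signatures in $\Herm(2)=\Pi\oplus\Pi^{\perp}$ when $\Pi$ is non-degenerate, and from $\Pi\cap\Pi^{\perp}=\operatorname{rad}(\Pi)$ otherwise.

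Next I would translate this back to circles. A point of $\chat$ is represented by a null line in $\Herm(2)$, and a short computation in the spirit of Lemma \ref{lemma:conformal_circles_orthogonality} (a point being a circle of radius zero) shows that it lies on a M\"obius circle iff the two vectors are orthogonal. Hence a point lies on both $C_1$ and $C_2$ iff its representative lies in $\Pi^{\perp}$, so $C_1\cap C_2$ is in bijection with the set of null lines of $\Pi^{\perp}$; and a $2$-plane contains two, one, or no null lines according to whether it is Lorentzian, degenerate, or positive definite. Chaining the equivalences gives the claim: \eqref{eq:gramian_determinant} is positive iff $\Pi$ is positive definite iff $\Pi^{\perp}$ is Lorentzian iff $C_1\cap C_2$ has two points iff the circles intersect; it is zero iff $\Pi^{\perp}$ is degenerate iff $C_1\cap C_2$ has one point iff the circles touch; and it is negative iff $\Pi^{\perp}$ is positive definite iff $C_1\cap C_2$ is empty iff the circles are disjoint.

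I expect the main obstacle to be the linear-algebraic bookkeeping rather than anything geometric: one has to be careful that the excluded signatures genuinely cannot occur in $(3,1)$ and that $\Pi^{\perp}$ really has the asserted type, and one needs the incidence fact that a point lies on a circle iff the representing vectors are orthogonal. A purely computational alternative would follow the template of the proof of Lemma \ref{lemma:conformal_circles_orthogonality}: apply a M\"obius transformation bringing the pair $(C_1,C_2)$ into one of three normal forms (two distinct lines through $0$ and $\infty$; two parallel lines; two concentric circles) and evaluate \eqref{eq:gramian_determinant} directly in each case. This is elementary but less transparent and requires the three cases to be handled separately.
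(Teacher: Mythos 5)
Your argument is correct and follows essentially the same route as the paper: the paper's proof also observes that the common points of $C_1$ and $C_2$ are exactly the null lines of the dual pencil, counts them via the signature of that pencil, and reads the signature off the sign of the Gram determinant of $\lin\{C_1,C_2\}$. You merely make explicit the steps the paper compresses (the incidence criterion $\ip{P}{C}=0$, the classification of $2$-plane signatures in signature $(3,1)$, and the behaviour of signatures under orthogonal complement), which is a faithful elaboration rather than a different proof.
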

\begin{proof}
   If there are any common points of $C_1$ and $C_2$ they are contained in
   their dual pencil. A pencil of circles contains two, one or zero points
   depending on whether its signature is $+-$, $+0$ or $++$, respectively.
   Thus, the question of common points can be decided by looking at the
   sign of the Gramian determinant of the subspace spanned by $C_1$ and $C_2$,
   that is expression \eqref{eq:gramian_determinant}.
\end{proof}

Prescribing a M\"obius circle, say the $x$-axis, divides the M\"obius plane
$\chat$ into two components. One of them, say the upper half plane, can be
identified with the hyperbolic plane. We denote this component
by $\HH$ and its bounding M\"obius circle by $\partial\HH$. Using the mentioned
normalisation, the subgroup of M\"{o}bius transformations leaving $\partial\HH$
invariant is given by $\PSL(2;\R)$, the group of \define{hyperbolic motions}.
Clearly they preserve M\"obius circles intersecting $\partial\HH$ orthogonally.
These M\"obius circles, or rather their intersection with $\HH$, are the hyperbolic
lines of the hyperbolic plane $\HH$.

\begin{definition}[hyperbolic cycle]
   A M\"obius circle, or more precisely its intersections with $\HH$,
   which is neither $\partial\HH$ nor a hyperbolic line is called a
   \define{hyperbolic cycle} (see Figure \ref{fig:cycles}). The type of a hyperbolic
   cycle is given by the number of intersection points with $\partial\HH$:
   \begin{center}
      \begin{tabular}{ccc}
         no.\ points  && type\\
         \hline\hline
         $0$ && (hyperbolic) circle\\
         $1$ && horocycle\\
         $2$ && hypercycle
      \end{tabular}
   \end{center}
\end{definition}

\begin{figure}[h]
   \begin{minipage}{0.55\textwidth}
      \includegraphics[width=\textwidth]{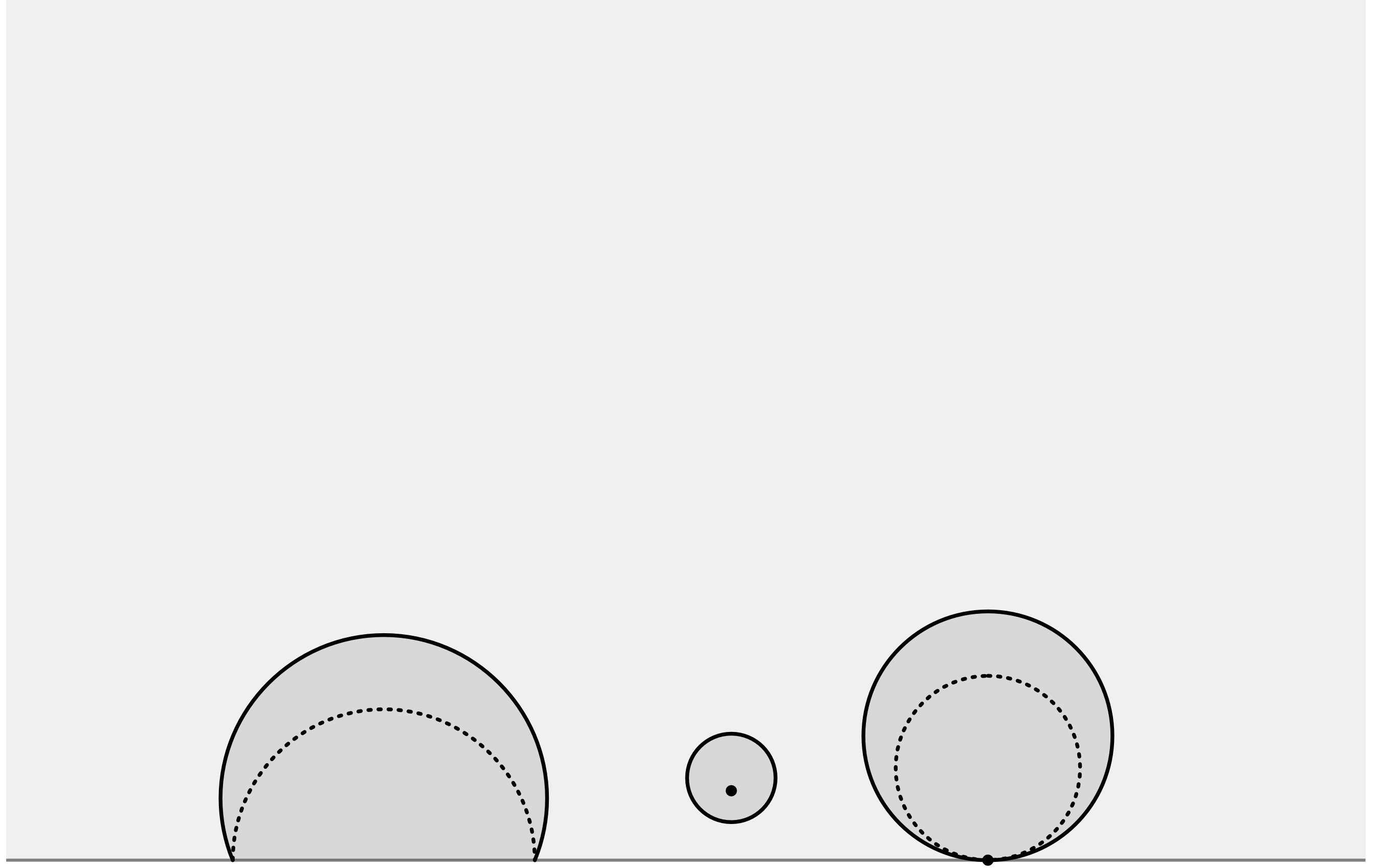}
   \end{minipage}
   \qquad
   \begin{minipage}{0.35\textwidth}
      \includegraphics[width=\textwidth]{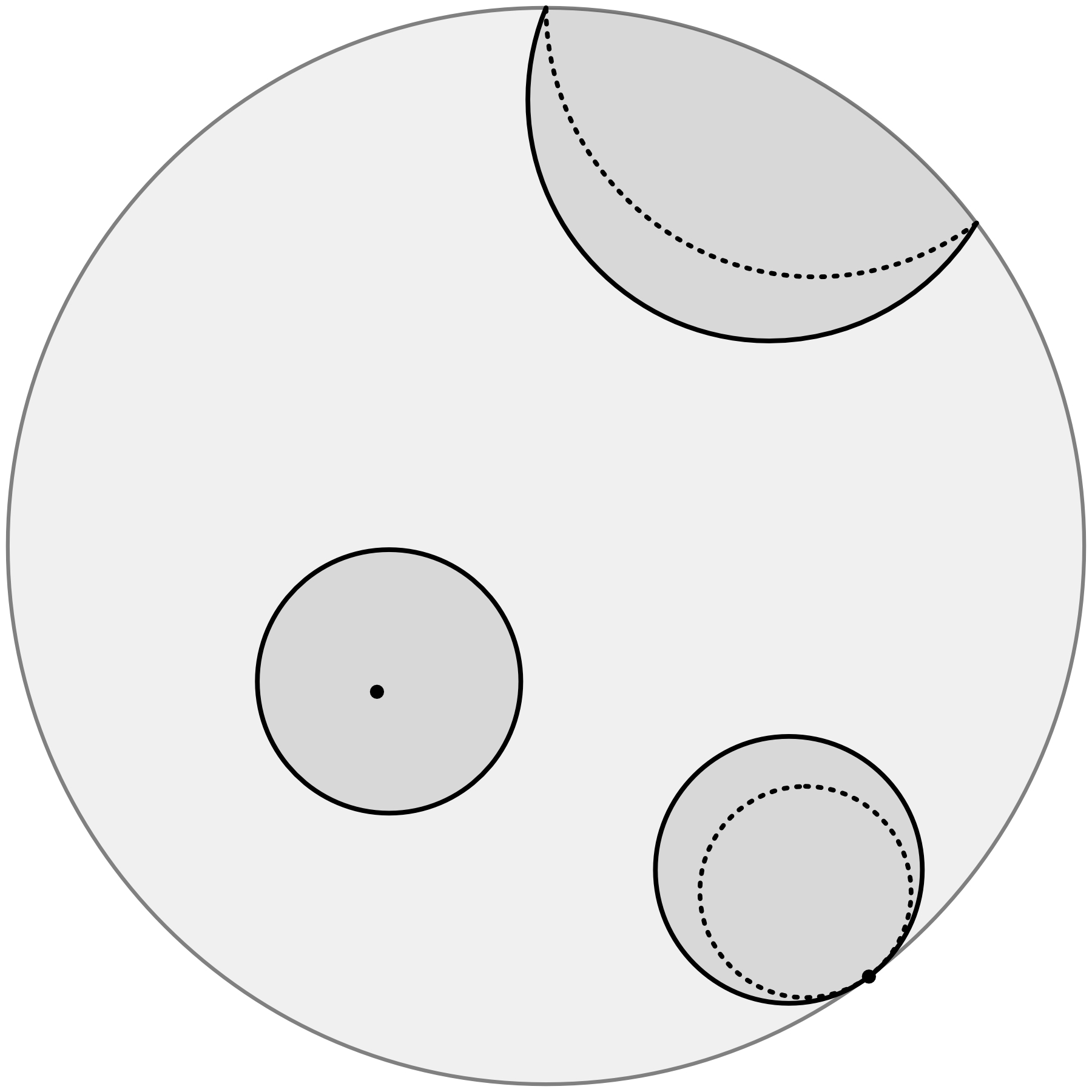}
   \end{minipage}
   \caption{\textsc{Left:} hyperbolic cycles (solid) with their centres
      (dotted) and associated discs (shaded). From left to right: hypercycle, circle,
      horocycle. \textsc{Right:} using the Cayley transform $z\mapsto (z-\ii)/(z+\ii)$
      we can switch to the Poincar\'e disc model of the hyperbolic plane.
   }
   \label{fig:cycles}
\end{figure}

Each hyperbolic cycle spans a pencil together with $\partial\HH$. This pencil
either contains a point in $\HH\cup\partial\HH$ or a hyperbolic line. These
members are called the \define{centres} of the corresponding hyperbolic cycles.
Furthermore, a hyperbolic cycle divides $\HH$ into two components. For a circle
or a hypercycle one of these components contains its centre and in the case of a
horocycle there is a component such that the intersection of its closure in
$\chat$ with $\partial\HH$ is its centre. These components are called
\define{(open) circular discs}, \define{(open) horodiscs} or
\define{(open) hyperdiscs}, respectively. The closure of these discs will
always be considered relative to $\HH$, i.e., it is given by the union of the
disc with its bounding cycle.

\begin{lemma}
   \label{lemma:uniqueness_radical_line}
   Any pencil spanned by two hyperbolic cycles, say $C_1$ and
   $C_2$, contains at most one hyperbolic line.
\end{lemma}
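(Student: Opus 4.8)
The plan is to argue by dimension and signature. Two hyperbolic cycles $C_1$ and $C_2$ span a $2$-dimensional subspace $\Pi\coloneq\lin\{C_1,C_2\}$ of $\Herm(2)$, and the pencil they generate is the image $\proj(\Pi)$ of $\Pi$ in the space of M\"obius circles. Since $\partial\HH$ is itself a M\"obius circle and $C_1,C_2$ are \emph{not} $\partial\HH$, the pencil $\proj(\Pi)$ contains a hyperbolic line precisely when $\Pi$ contains an element $L$ with $\ip{L}{\partial\HH}=0$ and $|L|^2>0$; this is exactly the condition from Lemma \ref{lemma:conformal_circles_orthogonality} characterising M\"obius circles meeting $\partial\HH$ orthogonally, i.e.\ hyperbolic lines.

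First I would consider the intersection $\Pi\cap(\partial\HH)^{\perp}$. Because $\ip{\cdot}{\cdot}$ is non-degenerate of signature $(3,1)$ and $|\partial\HH|^2>0$ (as $\partial\HH$ is a M\"obius circle), the hyperplane $(\partial\HH)^{\perp}$ has signature $(2,1)$. Now $\Pi$ is $2$-dimensional while $(\partial\HH)^{\perp}$ is $3$-dimensional inside the $4$-dimensional $\Herm(2)$, so $\dim(\Pi\cap(\partial\HH)^{\perp})\geq 1$. The two cases are: this intersection is exactly a line $\R L$, or $\Pi\subset(\partial\HH)^{\perp}$. In the first case, any hyperbolic line in the pencil must be (a rescaling of) $L$, so there is at most one; this is the generic situation and finishes the lemma there. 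The remaining task is to rule out the degenerate case $\Pi\subset(\partial\HH)^{\perp}$: here I would show it cannot occur, because then both $C_1$ and $C_2$ would be orthogonal to $\partial\HH$, hence both hyperbolic lines, contradicting the hypothesis that they are hyperbolic \emph{cycles} (a hyperbolic cycle is by definition neither $\partial\HH$ nor a hyperbolic line). Thus the intersection is always exactly one line, and the pencil contains at most one hyperbolic line — note it may contain none, precisely when $|L|^2\leq 0$.

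I expect the main subtlety to be bookkeeping the projectivisation: an element $L\in\Pi\cap(\partial\HH)^{\perp}$ only represents a genuine hyperbolic line when $|L|^2>0$, and one should be careful that "$L$ is the unique such element up to scale" is what yields uniqueness of the line in the pencil, rather than assuming a line always exists. A clean way to see why at most one line occurs, matching the trichotomy used in the proof of Lemma \ref{lemma:gramian_determinant}, is that the dual pencil of $\proj(\Pi)$ together with $\partial\HH$'s role controls the signature of $\Pi$: the centres and the possible hyperbolic line in a pencil spanned by a cycle and $\partial\HH$ are exactly the members of the one-dimensional orthogonal complement within the relevant plane, so there is no room for two. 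I would state the argument via the dimension count above, since it is the shortest and uses only the non-degeneracy of the form and the definition of a hyperbolic cycle.
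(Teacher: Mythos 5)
Your proposal is correct and follows essentially the same route as the paper: a dimension count showing $\lin\{C_1,C_2\}\cap\{\ip{X}{\partial\HH}=0\}$ is at least one-dimensional, combined with the observation that the span cannot lie entirely in that hyperplane because $C_1$ and $C_2$ are hyperbolic cycles (hence not hyperbolic lines or points). Your extra care about when the resulting element actually has $|L|^2>0$ and thus represents a genuine line is a welcome clarification of why the lemma says ``at most one.''
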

\begin{proof}
   From dimension considerations it follows that the intersection of
   $\{\ip{X}{\partial\HH}=0\}$ and $\lin\{C_1, C_2\}$ is non-empty. Furthermore,
   $\lin\{C_1, C_2\}\nsubset\{\ip{X}{\partial\HH}=0\}$ since neither $C_1$ nor
   $C_2$ is a hyperbolic line or a point.
\end{proof}

\begin{definition}[radical line]\label{def:radical_line}
   Given two hyperbolic cycles. The unique hyperbolic line, if
   existent, in their pencil is called their \emph{(hyperbolic) radical line}.
\end{definition}

Since we normalised $\partial\HH$ to be the $x$-axis its complement
is given by $\Sym(2)\subset\Herm(2)$. Hence, the space of hyperbolic cycles
is given by $\{|X|^2 > 0\}\setminus\Sym(2)$ up to scaling. This can be
simplified by considering an affine space parallel to $\Sym(2)$.

\begin{proposition}[space of hyperbolic cycles]
   \label{prop:hyperbolic_cycles}
   The hyperbolic cycles and points of $\HH$ can be identified with elements
   of $\Sym(2)$. In particular, the type of a cycle represented by $C\in\Sym(2)$
   can be determined using $\ip{\cdot}{\cdot}$ (see Figure
   \ref{fig:space_of_cycles}):
   \begin{center}
      \begin{tabular}{ccc}
         type && norm\\
         \hline\hline
         hypercycle && $|C|^2 > 0$\\
         horocycle && $|C|^2 = 0$, $x_0>0$\\
         circle && $0 > |C|^2 > -1$, $x_0>0$\\
         point && $|C|^2 = -1$, $x_0>0$.
      \end{tabular}
   \end{center}
   Furthermore, two hyperbolic cycles are orthogonal iff their representatives
   $C_1,C_2\in\Sym(2)$ satisfy $\ip{C_1}{C_2} = -1$.
\end{proposition}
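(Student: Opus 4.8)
The plan is to turn the scaling ambiguity into a canonical representative by means of the $\ip{\cdot}{\cdot}$-orthogonal splitting $\Herm(2)=\Sym(2)\oplus\R\partial\HH$. Since $\Sym(2)=\{\ip{X}{\partial\HH}=0\}$ is exactly $\partial\HH^{\perp}$ and $|\partial\HH|^2=1$, any $X$ with $\ip{X}{\partial\HH}\neq 0$ can be rescaled, uniquely, so that $\ip{X}{\partial\HH}=1$, and then $C\coloneq X-\partial\HH$ lies in $\Sym(2)$. First I would observe that every hyperbolic cycle has $\ip{X}{\partial\HH}\neq 0$: by Lemma \ref{lemma:conformal_circles_orthogonality} the equality $\ip{X}{\partial\HH}=0$ means the M\"obius circle $X$ meets $\partial\HH$ orthogonally, i.e.\ $X$ is a hyperbolic line or $\partial\HH$, both excluded by definition; likewise a point $p\in\HH$ does not lie on $\partial\HH$, so its null representative also has $\ip{\cdot}{\partial\HH}\neq 0$. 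Thus $X\mapsto C$ is well defined on hyperbolic cycles and points of $\HH$, and it is injective ($C_1=C_2$ forces $X_1=C_1+\partial\HH=X_2$); its image will be read off from the table, and there its inverse is $C\mapsto C+\partial\HH$.

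The whole table then rests on two identities, valid because $C_i\perp\partial\HH$ and $|\partial\HH|^2=1$:
\[
   |X|^2 = |C|^2 + 1,
   \qquad
   \ip{X_1}{X_2} = \ip{C_1}{C_2} + 1 .
\]
If $|C|^2>-1$ then $|X|^2>0$, so $X$ is a M\"obius circle; by Lemma \ref{lemma:gramian_determinant} its number of intersection points with $\partial\HH$ --- which by definition is its type --- is dictated by the sign of the Gramian $|X|^2|\partial\HH|^2-\ip{X}{\partial\HH}^2=(|C|^2+1)\cdot 1-1=|C|^2$. Hence $|C|^2>0$ gives two points (hypercycle), $|C|^2=0$ gives one point (horocycle) and $-1<|C|^2<0$ gives none (hyperbolic circle); the boundary case $|C|^2=-1$ yields $|X|^2=0$, i.e.\ $X$ is a point of $\chat$ (the radius-zero limit of the circle family), while $|C|^2<-1$ yields $|X|^2<0$, which represents no object of $\HH\cup\partial\HH$ and is discarded. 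The orthogonality claim is immediate from the second identity and Lemma \ref{lemma:conformal_circles_orthogonality}: two hyperbolic cycles are orthogonal iff $\ip{X_1}{X_2}=0$ iff $\ip{C_1}{C_2}=-1$.

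The step I expect to need the most care is the side condition $x_0>0$ in the last three rows, and its absence in the first, since this is where one must single out $\HH$ among the two components of $\chat\setminus\partial\HH$. Reflection of $\chat$ across $\partial\HH$ is the M\"obius map $z\mapsto\bar z$, which acts on $\Herm(2)$ by $X\mapsto\bar X$, i.e.\ $(x_0,x_1,x_2,x_3)\mapsto(x_0,x_1,-x_2,x_3)$; a one-line computation (rescale $\bar X$, which satisfies $\ip{\bar X}{\partial\HH}=-1$, by $-1$) shows it sends the canonical representative $C$ of a cycle or point to $-C$, in particular reversing the sign of $x_0$. For a horocycle, circle or point the underlying M\"obius object does not cross $\partial\HH$ transversally, hence lies in the closure of a single component of $\chat\setminus\partial\HH$; so of the pair $\{C,-C\}$ exactly one corresponds to an object of $\HH$, and since $|C|^2\le 0$ forces $x_0^2\ge x_1^2+x_3^2$ (with equality only at $C=0$, which represents $\partial\HH$ and is excluded), these two members are separated precisely by the sign of $x_0$. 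One then fixes the sign by a single example --- the point $\ii\in\HH$, with matrix $\left(\begin{smallmatrix}1&\ii\\ -\ii&1\end{smallmatrix}\right)$, gives $C=(1,0,0,0)$ and $x_0=1>0$ --- and the reflection identity propagates $x_0>0$ to every horocycle, circle and point of $\HH$. For hypercycles there is no sign condition, because the M\"obius circle of a hypercycle passes through the two ideal endpoints of its axis and hence genuinely crosses $\partial\HH$; thus it and its reflection each have an arc in $\HH$, so both $C$ and $-C$ (which generically have opposite signs of $x_0$) are honest hypercycles of $\HH$. Collecting these facts gives the table and the proposition.
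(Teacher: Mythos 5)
Your proof is correct and takes essentially the same route as the paper's: you normalise to the affine plane $\{\ip{X}{\partial\HH}=1\}$ parallel to $\Sym(2)$, read the type off the Gramian determinant of Lemma \ref{lemma:gramian_determinant} (which after the shift $C=X-\partial\HH$ equals $|C|^2$), and reduce orthogonality to Lemma \ref{lemma:conformal_circles_orthogonality}, merely making explicit the sign analysis for $x_0$ that the paper leaves implicit. One parenthetical is misworded --- the equality $x_0^2=x_1^2+x_3^2$ holds for every horocycle, not only at $C=0$; what you actually need, and do have, is that $x_0=0$ together with $|C|^2\leq 0$ forces $C=0$.
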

\begin{proof}
   As described, we can identify the hyperbolic cycles with part of an
   affine space parallel to $\Sym(2)$, say $\{\ip{X}{\partial\HH} = 1\}$. By
   definition, the type of a cycle $C$ is determined by the signature of
   $\lin\{\partial\HH, C\}$. Our choice of affine space and Lemma
   \ref{lemma:gramian_determinant} lead to the table above. Similarly,
   the characterisation of orthogonality follows from Lemma
   \ref{lemma:conformal_circles_orthogonality}.
\end{proof}

\begin{figure}[h]
   \begin{tikzpicture}
      \tikzset{
         hatch distance/.store in=\hatchdistance,
         hatch distance=10pt,
         hatch thickness/.store in=\hatchthickness,
         hatch thickness=2pt
      }

      \makeatletter
      \pgfdeclarepatternformonly[\hatchdistance,\hatchthickness]{flexible hatch}
      {\pgfqpoint{0pt}{0pt}}
      {\pgfqpoint{\hatchdistance}{\hatchdistance}}
      {\pgfpoint{\hatchdistance-1pt}{\hatchdistance-1pt}}%
      {
         \pgfsetcolor{\tikz@pattern@color}
         \pgfsetlinewidth{\hatchthickness}
         \pgfpathmoveto{\pgfqpoint{0pt}{0pt}}
         \pgfpathlineto{\pgfqpoint{\hatchdistance}{\hatchdistance}}
         \pgfusepath{stroke}
      }
      \makeatother

      \begin{axis}[width=0.5\textwidth, axis equal,
         xmin=-2, xmax=2, ymin=-1, ymax=2, hide axis]
         \addplot[name path=A,
            domain=-2:2, samples=1000, line width=1.2pt]{sqrt(x^2+0.5)};
         \addplot[name path=B, domain=0:2, samples=2, dashed, line width=1.2pt]{x};
         \addplot[name path=C, domain=-2:0, samples=2, dashed, line width=1.2pt]{-x};
         \addplot[name path=D, domain=0:2, samples=2, opacity=0]{-x};
         \addplot[name path=E, domain=-2:0, samples=2, opacity=0]{x};

         \addplot [mygray] fill between [of=A and B, soft clip={domain=0:2}];
         \addplot [mygray] fill between [of=A and C, soft clip={domain=-2:0}];
         \addplot [pattern color=mygray,
            pattern=flexible hatch,
            hatch distance=6pt,
            hatch thickness=1pt]
            fill between [of=B and D, soft clip={domain=0:2}];
         \addplot [pattern color=mygray,
            pattern=flexible hatch,
            hatch distance=6pt,
            hatch thickness=1pt]
            fill between [of=C and E, soft clip={domain=-2:0}];
      \end{axis}
   \end{tikzpicture}
   \caption{2D-sketch of the domains representing hyperbolic cycles:
      points (solid line); circles (shaded region); horocycles (dotted line);
      hypercycles (striped region).}
   \label{fig:space_of_cycles}
\end{figure}
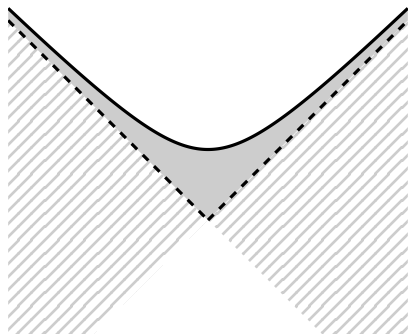

\subsection{Hyperbolic polygons and decorations}
\label{sec:hyperbolic_polygons_decorations}
To a finite collection of hyperbolic circles we can naturally associate a hyperbolic
polygon by considering the convex hull of their centres. We are now going to
investigate how this construction can be extended to more general collections
of hyperbolic cycles.

\begin{definition}[hyperbolic polygons]
   Consider a finite collection $\{C_n\}_{n=1}^{N}$, $N\geq3$, of hyperbolic cycles.
   Suppose that their associated discs are pairwise disjoint. Their
   \define{associated hyperbolic polygon} is
   \[
      \poly\big(\{C_n\}_{n=1}^N\big)
      \;\coloneq\;
      \left\{p\in\HH \,:\,
         p = \sum_{n=1}^N\alpha_nC_n,\;\,
         \alpha_1,\dotsc,\alpha_N\geq0
      \right\}.
   \]
   A \define{(convex) hyperbolic polygon}  (see Figure \ref{fig:hyperbolic_polygons},
   left) is a subset $P\subset\HH$ such that there is some sequence
   $\{C_n\}_{n=1}^N$ of hyperbolic cycles with $P=\poly\big(\{C_n\}_{n=1}^N\big)$.
\end{definition}

We call a collection of hyperbolic cycles \define{minimal} if there is no $n$ such that
the centre of $C_n$ is contained in $\poly\big(\{C_n\}_{n=1}^N\big)\eqcolon P$. In this
case we also call $P$ a \define{hyperbolic $N$-gon} and the centres of the $C_n$ the
\define{vertices} of $P$. In particular, $P$ is a \define{hyperbolic triangle} or
\define{quadrilateral} if $N=3$ or $N=4$, respectively. By our assumption about the
associated discs, we can reorder a minimal sequence of cycles defining $P$ such that
there are $L_n\in\Sym(2)$ with $|L_n|^2=1$,
\[
   \poly\big(\{C_n\}_{n=1}^N\big)
   \;=\;
   \HH \,\cap\, \bigcap_{n=1}^N\{\ip{X}{L_n}\leq0\}
\]
and $\ip{C_n}{L_n}=0=\ip{C_{n+1}}{L_n}$, where $C_{N+1}=C_1$. The intersection
$P\cap\{\ip{X}{L_n}=0\}$ is a hyperbolic line segment and we call it an \define{edge}
of $P$. Suppose that there are
$0<M\leq N$ vertices of $P$ which are hyperbolic lines. For each such vertex there is
$c_m\in\Sym(2)$ representing the centre of $C_{n_m}$ with $|c_m|^2 = 1$ and
$\ip{c_m}{C_n} < 0$ for all $n\neq n_m$. The \define{truncation} of $P$ is defined as
\[
   \trunc(P)
   \;\coloneq\;
   P \,\cap\, \bigcap_{m=1}^M \{\ip{X}{c_m}\leq0\}.
\]

\begin{definition}[decorated hyperbolic polygon]
   \label{def:decorated_polygon}
   Let $P$ be a hyperbolic $N$-gon and denote by $v_1, \dotsc, v_N$
   its vertices. A \define{decoration} of $P$ is a choice of hyperbolic cycles
   $C_{v_1}, \dotsc, C_{v_N}$ such that $C_{v_n}$ is centred at $v_n$ and all cycles
   intersect the interior of the truncation of $P$. The polygon $P$ together
   with the cycles $C_{v_n}$ is called a \define{decorated hyperbolic polygon}
   (see Figure \ref{fig:hyperbolic_polygons}, right).
\end{definition}

\begin{figure}[h]
   \begin{minipage}{0.45\textwidth}
      \centering
      \begin{tikzpicture}[scale=2]
         \tkzDefPoint(0,0){O}; \tkzDefPoint(1,0){Z};
         \tkzDefPoint(1,1){Y}; \tkzInterLC(O,Y)(O,Z); \tkzGetPoints{X}{A};
         \tkzDefPoint(-1,-0.5){Y}; \tkzInterLC(O,Y)(O,Z); \tkzGetPoints{X}{D}
         \tkzDefPoint(0.85,-0.1){B}; \tkzDefPoint(0.1,-1.2){C};
         \tkzDefPoint(-0.93,0.65){E};

         \tkzDrawCircle[gray,fill=mygray!60,line width=1pt](O,Z);
         \begin{scope}
            \tkzClipCircle(O,Z);
            \tkzDrawPolygon[fill=black!20](A,B,C,D,E);
         \end{scope}

         \tkzDefTangent[from=C](O,Z); \tkzGetPoints{c1}{c2};
         \tkzDrawSegment[line width=1pt, dotted](C,c1);
         \tkzDrawSegment[line width=1pt, dotted](C,c2);
         \tkzDrawSegment[line width=1pt, dashed](c1,c2);

         \tkzDefTangent[from=E](O,Z); \tkzGetPoints{c1}{c2};
         \tkzDrawSegment[line width=1pt, dotted](E,c1);
         \tkzDrawSegment[line width=1pt, dotted](E,c2);
         \tkzDrawSegment[line width=1pt, dashed](c1,c2);

         \tkzDrawPolygon[line width=1pt](A,B,C,D,E);
         \tkzDrawPoints(A,B,C,D,E);
      \end{tikzpicture}
   \end{minipage}
   \quad
   \begin{minipage}{0.51\textwidth}
      \includegraphics[width=\textwidth]{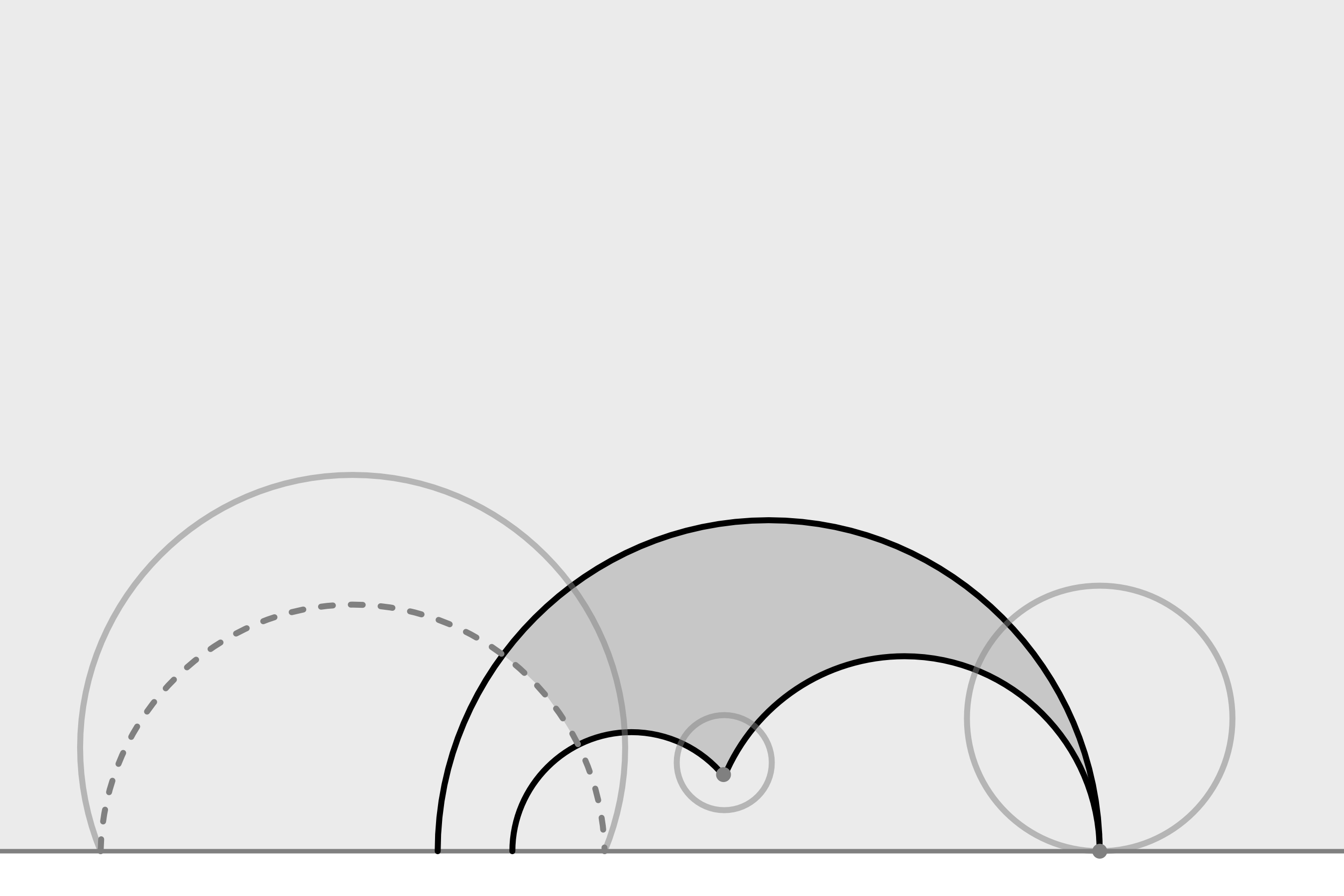}
   \end{minipage}
   \caption{\textsc{Left:} in the Beltrami-Klein model of the hyperbolic plane a
      hyperbolic polygon is given by the intersection of the Euclidean unit disc with
      a Euclidean polygon. Vertices outside the disc correspond to hyperbolic lines
      by polarity (dashed lines). \textsc{Right:} a decorated hyperbolic triangle in
      the half-plane model. The truncation of the triangle is shaded.
   }
   \label{fig:hyperbolic_polygons}
\end{figure}

Consider a vertex $v$ of a decorated hyperbolic polygon $P$ incident
to the hyperbolic lines $L_n$ and $L_m$ with decorating cycle $C_v$.
The \define{(generalised) angle} $\theta_v$ at $v$ in $P$ is defined as
follows: if $v\in\HH$ then $\theta_v$ is the interior angle
between $L_n$ and $L_m$ in $P$. For $v\in\partial\HH$ the angle is the hyperbolic
length of the horocyclic arc $C_v\cap P$. Finally, if $v$ is a hyperbolic
line we define $\theta_v$ to be the hyperbolic distance between $L_n$ and $L_m$.

For $v\notin\partial\HH$ we define the \define{radius} $r_v$ of $C_v$
to be its distance to its centre $v$. If $v\in\partial\HH$ we choose some
horocycle $H_v$ centred at $v$. We call it an \define{auxiliary centre} of
$C_v$ and the oriented hyperbolic distance $r_v$ between $H_v$ and $C_v$
the \define{(auxiliary) radius} of $C_v$. The orientation is chosen such that $r_v$
is negative if $C_v$ is contained in the horodisc bounded by $H_v$. Whenever it is
clear from the context that we are talking about $H_v$ and not $v$ we might
call $H_v$ a centre, too. Furthermore, let $e$ be an edge of $P$ contained in the
line $L$ with adjacent vertices $u$ and $v$. Its \define{(generalised) edge-length}
$\len_e$ is the oriented distance between the (auxiliary) centres at $u$ and $v$.
Clearly, the notions of auxiliary radius and edge-length depend on the choice of
auxiliary centres. But we will see in the following (Lemma
\ref{lemma:distance_from_product}) that different choice, say $H_v$ and
$\tilde{H}_v$, only result in a constant offset, i.e., the oriented distance
between $H_v$ and $\tilde{H}_v$ (see Figure \ref{fig:horocycle_distances}).

\begin{figure}[h]
   \begin{picture}(400, 150)
      \put(0, 0){
         \includegraphics[width=0.38\textwidth]{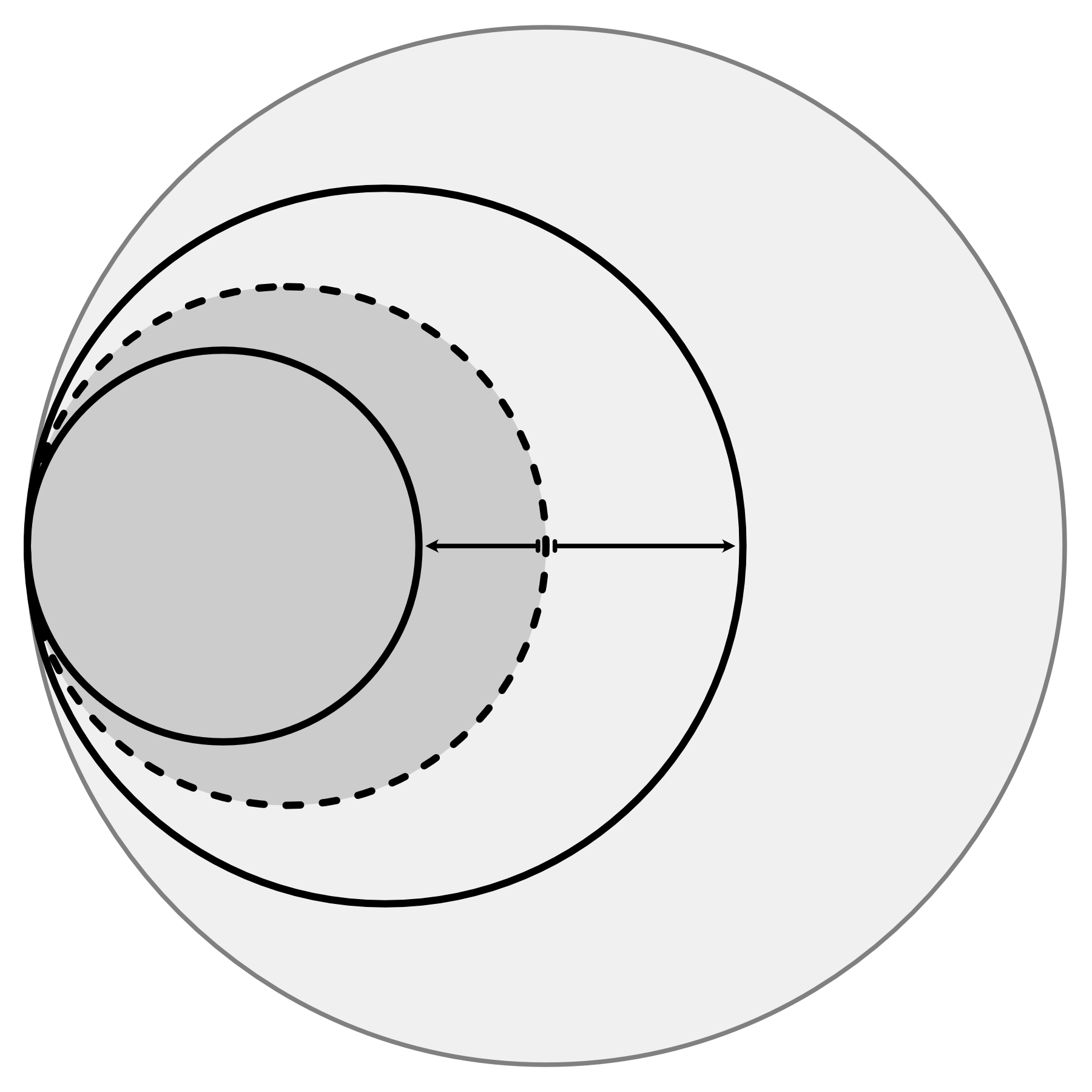}
      }
      \put(170, 0){
         \includegraphics[width=0.55\textwidth]{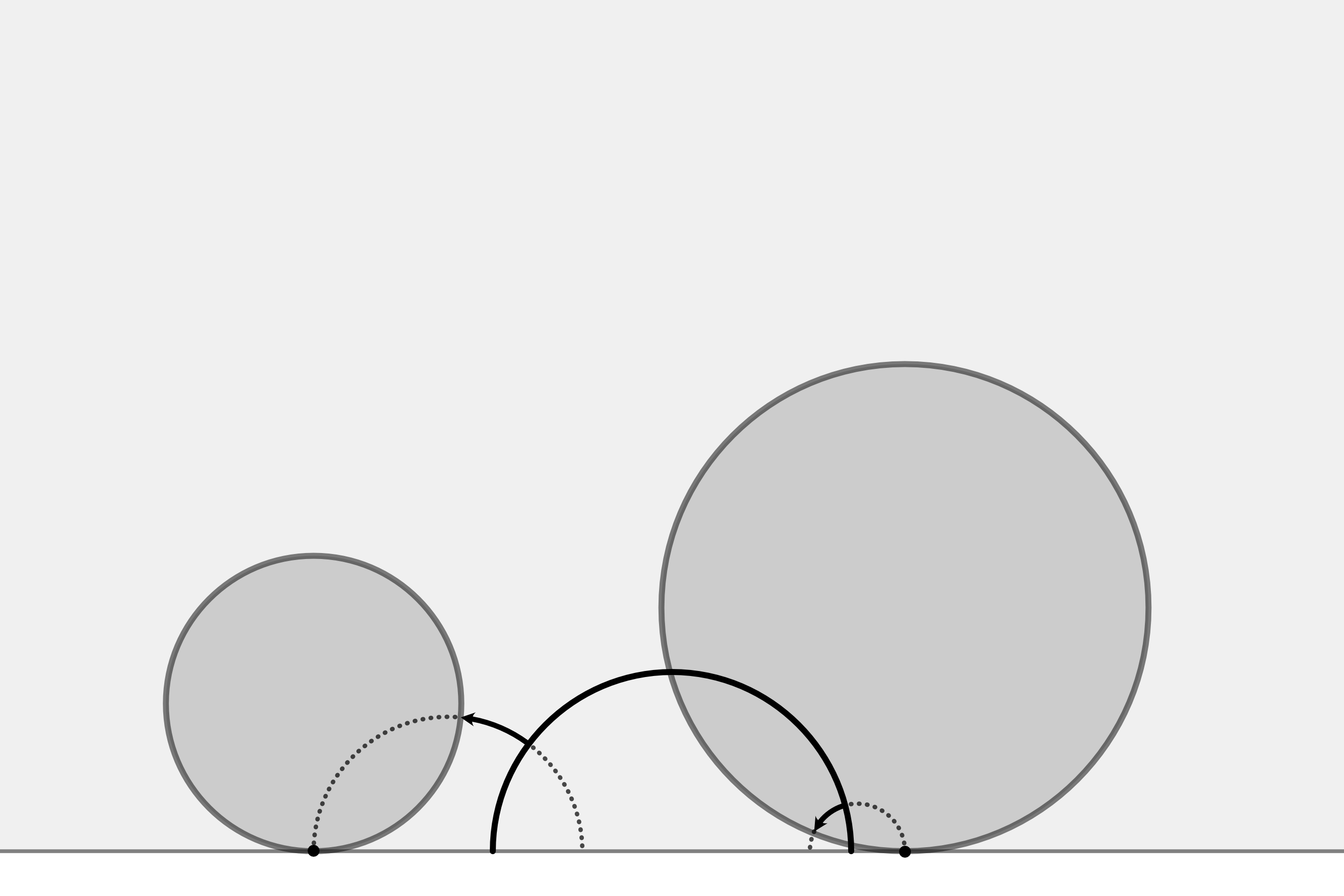}
      }
      \put(59, 107){\footnotesize$H_v$}
      \put(61, 76){\footnotesize$<0$}
      \put(84, 76){\footnotesize$>0$}
      \put(247, 30){\footnotesize$>0$}
      \put(293, 15){\footnotesize$<0$}
   \end{picture}
   \caption{Definition of radius and edge-length for horocycles. \textsc{Left:}
      Concentric horocycles (solid) with auxiliary centre $H_v$ (dashed). The disc
      belonging to $H_v$ is shaded. \textsc{Right:} Edge-length between horocycles
      and a hyperbolic line.
   }
   \label{fig:horocycle_distances}
\end{figure}

We aim to relate the metric properties of decorated triangles to the
representation of their cycles in $\Sym(2)$. Therefore, we need to introduce some
extra notation. The \define{type} $\epsilon_v$ of a vertex $v$ is $-1$, $0$
or $+1$ depending on whether $v\in\HH$, $v\in\partial\HH$ or is a hyperbolic line.
Furthermore, we define the \define{angle-modifiers} $\rho_{\epsilon}\colon\R\to\R$
by
\[
   \rho_{-1}(\theta) \coloneq \sin(\theta), \qquad
   \rho_{0}(\theta) \coloneq \theta, \qquad
   \rho_{1}(\theta) \coloneq \sinh(\theta)
\]
and the \define{length-modifiers} $\tau_{\epsilon}\colon\R\to\R$ are given by
\begin{equation}
   \label{eq:length_modifiers}
   \tau_{-1}(\len) \coloneq \cosh(\len), \qquad
   \tau_{0}(\len) \coloneq \frac{1}{2}\ee^{\len}, \qquad
   \tau_{1}(\len) \coloneq \sinh(\len).
\end{equation}

\begin{lemma}
   \label{lemma:radius_from_norm}
   Consider a hyperbolic cycle $C$ with centre of type $\epsilon=\pm1$ and
   radius $r$. Then its representative in $C\in\Sym(2)$ satisfies
   \begin{equation}
      |C|^2 \eq \frac{\epsilon}{\tau_{\epsilon}^2(r)}.
   \end{equation}
\end{lemma}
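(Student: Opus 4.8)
The plan is to combine the fact that the centre of $C$ lies in the pencil $\lin\{\partial\HH,C\}$ with the distance formulas of the $(2,1)$-model: $\Sym(2)$, carrying the restriction of \eqref{eq:bilinear_form}, is the hyperboloid model of $\HH$ on which $\PSL(2;\R)$ acts by isometries, with points of $\HH$ represented by vectors of norm $-1$ and geodesics by vectors of norm $+1$ (compare Proposition \ref{prop:hyperbolic_cycles}). I normalise $\partial\HH$ so that $|\partial\HH|^2=1$ (the normalisation implicit in Proposition \ref{prop:hyperbolic_cycles}) and write $\pi\colon\Herm(2)\to\Sym(2)$, $\pi(X)=X-\ip{X}{\partial\HH}\partial\HH$, for the orthogonal projection along $\partial\HH$; it fixes $\Sym(2)$ pointwise, kills $\partial\HH$, and carries the ambient representative of a hyperbolic cycle normalised by $\ip{\cdot}{\partial\HH}=1$ to the representative $C\in\Sym(2)$ named in the statement.

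\emph{Step 1: the centre is proportional to $C$.} Let $c\in\Sym(2)$ represent the centre, scaled so that $|c|^2=\epsilon$; this is legitimate by Proposition \ref{prop:hyperbolic_cycles}, the centre being a point of norm $-1$ for $\epsilon=-1$ and a hyperbolic line, which we scale to norm $+1$, for $\epsilon=+1$. By definition the centre lies in $\lin\{\partial\HH,C\}$, whence its $\Sym(2)$-representative lies in $\pi(\lin\{\partial\HH,C\})=\lin\{C\}$; thus $c=\lambda\,C$ for some $\lambda\in\R\setminus\{0\}$. Then $\epsilon=|c|^2=\lambda^2|C|^2$, so $|C|^2=\epsilon/\lambda^2$ and it remains to compute $|\lambda|$.

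\emph{Step 2: computing $|\lambda|$.} Pick a point $z\in\HH$ lying on $C$ at distance $r$ from the centre (the whole cycle consists of such points), and let $Z\in\Sym(2)$ be its representative, $|Z|^2=-1$. Incidence of a point with a cycle in the $\Sym(2)$-picture reads $\ip{Z}{C}=-1$ (immediate from \eqref{eq:conformal_circles} and the identification, parallel to the orthogonality criterion of Proposition \ref{prop:hyperbolic_cycles}), so $\ip{Z}{c}=\lambda\ip{Z}{C}=-\lambda$. On the other hand the hyperboloid distance formulas give $\ip{Z}{c}=-\cosh r=-\tau_{-1}(r)$ when the centre is a point, and $\ip{Z}{c}=\pm\sinh r=\pm\tau_{1}(r)$ when it is a geodesic (see \eqref{eq:length_modifiers}). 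Comparing, $|\lambda|=\tau_{\epsilon}(r)$, and hence $|C|^2=\epsilon/\tau_{\epsilon}^{2}(r)$.

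The only genuine difficulty is bookkeeping: one must keep the ambient representative in $\Herm(2)$ strictly apart from the $\Sym(2)$-representative obtained after projecting off $\partial\HH$, and track the normalisations --- which makes the $\epsilon=+1$ case mildly awkward, since the centre-geodesic lies in $\Sym(2)$ itself rather than on the affine hyperplane $\{\ip{X}{\partial\HH}=1\}$. Both identities used in Step 2 can alternatively be verified by an explicit computation after moving things to standard position with a hyperbolic motion: the centre at $\ii$, so that $C$ is represented by the Euclidean circle about $\ii\cosh r$ of Euclidean radius $\sinh r$; or the centre the imaginary axis, so that $C$ is represented by the Euclidean line through $0$ of slope $\tan\theta_0$ with $\tan(\theta_0/2)=\ee^{-r}$, i.e.\ $\tan\theta_0=1/\sinh r$ --- one then reads off the Hermitian matrix from \eqref{eq:conformal_circles}, rescales so that $\ip{\cdot}{\partial\HH}=1$, subtracts $\partial\HH$, and computes the norm.
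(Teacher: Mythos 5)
Your proof is correct, but it follows a genuinely different route from the paper's. The paper moves the cycle into standard position by a M\"obius transformation (centre at $\ii$, resp.\ a geodesic meeting the $y$-axis orthogonally at $\ii$), writes down the explicit representative \eqref{eq:cycle_herm_matrix_1} in $\Herm(2)$, and obtains the norm by direct computation after normalising to $\{\ip{X}{\partial\HH}=1\}$ and projecting to $\Sym(2)$. You instead argue invariantly: since the centre lies by definition in the pencil $\lin\{\partial\HH,\hat C\}$, its $\Sym(2)$-representative is $\lambda C$, and the factor $|\lambda|=\tau_{\epsilon}(r)$ drops out of the incidence relation $\ip{Z}{C}=-1$ together with the hyperboloid-model distance formulas $\ip{Z}{c}=-\cosh r$ (point centre) and $|\ip{Z}{c}|=\sinh r$ (geodesic centre). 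Both normalisation conventions you use ($|\partial\HH|^2=1$, projection $\pi$, representative on $\{\ip{X}{\partial\HH}=1\}$) are exactly those implicit in Proposition \ref{prop:hyperbolic_cycles}, and your incidence and distance identities are consistent with Lemma \ref{lemma:distance_from_product} (which logically comes after this lemma, so there is no circularity; you are importing standard facts about the hyperboloid model rather than anything proved later). What your approach buys is a conceptual explanation of the formula --- the cycle and its centre are proportional in $\Sym(2)$, with $\tau_{\epsilon}(r)$ as the proportionality constant, a fact the paper uses implicitly elsewhere (e.g.\ in Remark \ref{remark:geometric_interpretation_tild}). What the paper's computation buys is brevity and self-containedness, since it does not appeal to the external distance formulas; your closing paragraph rightly notes that verifying those formulas explicitly would reduce your argument to essentially the paper's.
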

\begin{proof}
   Using a M\"{o}bius transformation we can assume that the centre
   of the cycle is $\ii$ or intersects the $y$-axis orthogonally in $\ii$,
   respectively. The hyperbolic distance in the Poincar\'e metric for two points
   $p\ii, q\ii\in\HH$ on the $y$-axis takes the form
   \begin{equation}
      \dist_{\HH}(p\ii, q\ii) = |\ln(p) - \ln(q)|.
   \end{equation}
   Hence, it follows that the cycle can be represented in $\Herm(2)$ by
   \begin{equation}
      \label{eq:cycle_herm_matrix_1}
      \begin{pmatrix}
         1 & \ii\tau_{\epsilon}(r)\\
         -\ii\tau_{\epsilon}(r) & -\epsilon
      \end{pmatrix}.
   \end{equation}
   The assertion follows by direct computation.
\end{proof}

\begin{lemma}
   \label{lemma:distance_from_product}
   Given a decorated hyperbolic polygon. Denote by $\len_{uv}$ the length of the edge
   between two adjacent vertices $u$ and $v$. Then the product of the cycles
   $C_u, C_v\in\Sym(2)$ at these vertices is
   \begin{equation}
      -\ip{C_u}{C_v}
      \eq
      \frac{\tau_{\epsilon_u\epsilon_v}'(\len_{uv})}
         {\tau_{\epsilon_u}(r_v)\tau_{\epsilon_v}(r_v)}.
   \end{equation}
\end{lemma}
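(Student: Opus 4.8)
\emph{Proof plan.} The idea is to reduce to an explicit normal form, exactly as in the proof of Lemma~\ref{lemma:radius_from_norm}, and then to recognise the asserted identity as a disguised addition theorem for $\cosh$, $\sinh$ and $\exp$. Since the edge between $u$ and $v$ lies on a hyperbolic line $L$, I would first apply a hyperbolic motion carrying $L$ to the $y$-axis. An extension of the computation of Lemma~\ref{lemma:radius_from_norm} to the case $\epsilon = 0$ --- fixing once and for all an auxiliary horocycle whose relevant intersection with $L$ is $\ii$, so that the cycle becomes $\{|z - \ii\tau_0(r)| = \tau_0(r)\}$, tangent to $\partial\HH$ at $0$ --- shows that, up to the translation $z\mapsto \ee^{\len_{uv}}z$ along $L$, both $C_u$ and $C_v$ have $\Herm(2)$-representatives of the uniform shape
\[
   C(\epsilon, r) \eq
   \begin{pmatrix}
      1 & \ii\,\tau_{\epsilon}(r)\\
      -\ii\,\tau_{\epsilon}(r) & -\epsilon
   \end{pmatrix},
   \qquad \epsilon\in\{-1, 0, 1\}.
\]

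I would then carry out the computation. Conjugating $C(\epsilon_v, r_v)$ by $\diag(\ee^{\len_{uv}/2}, \ee^{-\len_{uv}/2})$ replaces the diagonal entries $1$, $-\epsilon_v$ by $\ee^{\len_{uv}}$, $-\epsilon_v\ee^{-\len_{uv}}$ and leaves the off-diagonal ones unchanged; reading off the coordinates $X = \bigl(\begin{smallmatrix} x_0 + x_3 & x_1 + \ii x_2\\ x_1 - \ii x_2 & x_0 - x_3\end{smallmatrix}\bigr)$ and substituting into \eqref{eq:bilinear_form} yields
\[
   \ip{C_u}{C_v}_{\Herm(2)}
   \eq
   \frac{\epsilon_u\,\ee^{\len_{uv}} \plus \epsilon_v\,\ee^{-\len_{uv}}}{2}
   \plus
   \tau_{\epsilon_u}(r_u)\,\tau_{\epsilon_v}(r_v).
\]
Rescaling the two representatives to the affine normalisation of Proposition~\ref{prop:hyperbolic_cycles} divides the first summand by $\tau_{\epsilon_u}(r_u)\tau_{\epsilon_v}(r_v)$, and passing to the associated inner product on $\Sym(2)$ cancels the second summand against the unit contribution of $\partial\HH$; hence
\[
   -\ip{C_u}{C_v}
   \eq
   \frac{-\epsilon_u\,\ee^{\len_{uv}} \minus \epsilon_v\,\ee^{-\len_{uv}}}{2\,\tau_{\epsilon_u}(r_u)\,\tau_{\epsilon_v}(r_v)}.
\]
A short case distinction then identifies the numerator with $\tau_{\epsilon_u\epsilon_v}'(\len_{uv})$: it is $\cosh\len_{uv}$ when $\epsilon_u\epsilon_v = 1$, a hyperbolic sine when $\epsilon_u\epsilon_v = -1$, and $\tfrac12\ee^{\len_{uv}}$ when $\epsilon_u\epsilon_v = 0$, in agreement with \eqref{eq:length_modifiers}.

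The one genuinely delicate point is the bookkeeping of the orientations carried by $r_u$, $r_v$ and $\len_{uv}$ as fixed in Section~\ref{sec:hyperbolic_polygons_decorations}, which is exactly what pins down the signs in the last case distinction. Concretely: at a line-vertex the decorating hypercycle lies on the side of the centre-line facing the polygon, so the normal form at that vertex carries $\tau_{1}(r)$ or $\tau_{1}(-r)$ according to which end of the edge it occupies; the direction of the translation $z\mapsto\ee^{\len_{uv}}z$ has to be matched against the orientation of $\len_{uv}$; and at a cusp one has to verify independence of the auxiliary horocycle --- changing it by an oriented distance $\delta$ multiplies both $\tau_0(r)$ and $\tau_0'(\len_{uv})$ by $\ee^{\delta}$, so the two effects cancel. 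Once these conventions are resolved as prescribed in the definitions, the three modifier-type cases collapse to the single displayed formula. A slightly more conceptual alternative starts from the remark that $C_u$ and $C_v$ both meet $L$ orthogonally and therefore lie in the signature-$(2,1)$ subspace $L^{\perp}\subset\Herm(2)$, which also contains $\partial\HH$; expanding $\ip{C_u}{C_v}$ in a basis of $L^{\perp}$ adapted to $\partial\HH$ exhibits $\len_{uv}$ directly as the hyperbolic displacement of $C_v$ against $C_u$ along $L$, but the analysis over vertex types is essentially the same.
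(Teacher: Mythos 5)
Your overall strategy --- explicit normal forms for the two cycles followed by a direct evaluation of the bilinear form --- is exactly the paper's, and your intermediate $\Herm(2)$ identity is correct \emph{for the matrices you wrote down}. The gap is that those matrices do not both represent decorating cycles of the edge: the cycle at the far vertex $v$ must \enquote{open towards} $u$, i.e.\ it is the reflection of $C(\epsilon_v,r_v)$ in its own centre, and only then translated along $L$. For a circle this reflection is the identity; for a hypercycle it is the sign flip $\tau_1(r_v)\mapsto\tau_1(-r_v)$ on the off-diagonal entries, which you do flag; but for a horocycle it moves the ideal centre from $0$ to $\infty$ (it swaps the two diagonal entries of the matrix), which is not a sign and which your bookkeeping paragraph does not address --- there you only verify independence of the auxiliary horocycle. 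The concrete failure: when $\epsilon_u=\epsilon_v=0$ your construction places both horocycles tangent to $\partial\HH$ at $0$ (the translation $z\mapsto\ee^{\len_{uv}}z$ fixes $0$), so they are concentric, and indeed your displayed formula returns $-\ip{C_u}{C_v}=0$, whereas the lemma asserts the nonzero value $\tfrac12\ee^{\len_{uv}}/\big(\tau_0(r_u)\tau_0(r_v)\big)$ --- this is precisely the classical $\lambda$-length relation. An identically vanishing expression cannot be rescued by any orientation convention. Similarly, $\epsilon_u=\epsilon_v=1$ yields $-\cosh\len_{uv}$ rather than $+\cosh\len_{uv}$ from your formula, so the \enquote{short case distinction} is false as stated; it only becomes true after the reflection is inserted, which your text defers to bookkeeping but never actually carries out.

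With the corrected second matrix (this is what the paper's equation \eqref{eq:cycle_herm_matrix_2} encodes, up to its own overall-sign conventions), the identical computation gives, after the affine normalisation and the projection to $\Sym(2)$,
\[
   -\ip{C_u}{C_v}
   \eq
   \frac{\ee^{\len_{uv}} \plus \epsilon_u\epsilon_v\,\ee^{-\len_{uv}}}
      {2\,\tau_{\epsilon_u}(r_u)\,\tau_{\epsilon_v}(r_v)},
\]
whose numerator equals $\tau_{\epsilon_u\epsilon_v}'(\len_{uv})$ uniformly in all nine type combinations. Note that this numerator depends only on the product $\epsilon_u\epsilon_v$, as the lemma requires, whereas your expression $-\tfrac12\big(\epsilon_u\ee^{\len_{uv}}+\epsilon_v\ee^{-\len_{uv}}\big)$ does not (compare $(\epsilon_u,\epsilon_v)=(0,1)$ with $(1,0)$); this asymmetry is a structural signal that the second normal form is misplaced. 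The fix is local: replace the translated $C(\epsilon_v,r_v)$ by its centre-reflection and the rest of your argument (normalisation against $\partial\HH$, cancellation of the unit contribution, case distinction) goes through unchanged.
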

\begin{proof}
   We begin by normalising the first cycle as in the previous Lemma
   \ref{lemma:radius_from_norm}. Note that equation \eqref{eq:cycle_herm_matrix_1}
   for the representative in $\Herm(2)$ remains valid for the horocycle passing
   through $0$ and $\ii$ with auxiliary radius $r_u=0$. The second cycle is then
   given by
   \begin{equation}
      \label{eq:cycle_herm_matrix_2}
      \begin{cases}
         \begin{pmatrix}
            0 & \ii\\
            -\ii & -2\ee^{\len_{uv}}
         \end{pmatrix}
         &\text{if $\epsilon_v=0$ or}\\[0.5cm]
         \begin{pmatrix}
            1 & -\ii\tau_{\epsilon_v}(r_v)\ee^{\len_{uv}}\\
            \ii\tau_{\epsilon_v}(r_v)\ee^{\len_{uv}} & -\epsilon_v\ee^{2\len_{uv}}
         \end{pmatrix}
         &\text{if $\epsilon_v\neq0$.}
      \end{cases}
   \end{equation}
   Again, the assertion follows by direct computation.
\end{proof}

\begin{lemma}[hyperbolic cosine laws]
   \label{lemma:cos_sin_laws}
   Consider a decorated hyperbolic triangle with vertices $u$, $v$ and $w$.
   Denote by $\len_{uv}$, $\len_{vw}$ and $\len_{wu}$ the edge-lengths and
   suppose that $\epsilon_v=-1$. Then the angle $\theta_v$ is related to the
   edge-lengths by
   \begin{equation}
      \cos(\theta_v)
      \;=\;
      \frac{-\tau_{\epsilon_w\epsilon_u}'(\len_{wu})
         \plus \tau_{\epsilon_u}(\len_{uv})\tau_{\epsilon_w}(\len_{vw})}
         {\tau_{\epsilon_u}'(\len_{uv})\tau_{\epsilon_w}'(\len_{vw})}.
   \end{equation}
\end{lemma}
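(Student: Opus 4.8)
The plan is to work in the inner product space $\Sym(2)$, equipped with the restriction of $\ip{\cdot}{\cdot}$; this is the orthogonal complement of the pole $\partial\HH$, so it has signature $(2,1)$, and both the decorating cycles $C_u,C_v,C_w$ and the two edge-lines at $v$ are vectors in it. The Gram matrix of $C_u,C_v,C_w$ is given by Lemmas \ref{lemma:radius_from_norm} and \ref{lemma:distance_from_product}, so it suffices to exhibit $\cos\theta_v$ as an explicit function of that Gram matrix and then substitute. The hypothesis $\epsilon_v=-1$ enters twice: it makes $C_v$ (and the centre $v$) timelike, $|C_v|^2=-1/\tau_{-1}^2(r_v)<0$, so that $v^\perp$ within $\Sym(2)$ is positive definite and $\theta_v$ is an honest Euclidean angle there; and it fixes $\tau_{\epsilon_v}=\cosh$ in the substitution.

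First I would rewrite $\cos\theta_v$ via the edge-lines. Let $L_{uv},L_{vw}\in\Sym(2)$ represent the hyperbolic lines carrying the edges $uv$ and $vw$ of the triangle, normalised by $|L_{uv}|^2=|L_{vw}|^2=1$ and oriented so that the triangle lies in $\{\ip{X}{L_{uv}}\le0\}\cap\{\ip{X}{L_{vw}}\le0\}$, as in the description of $\poly(\cdot)$ preceding Definition \ref{def:decorated_polygon}. Then $\ip{C_v}{L_{uv}}=\ip{C_v}{L_{vw}}=0$; since a hyperbolic line is orthogonal to $\partial\HH$ and $v\in\lin\{C_v,\partial\HH\}$, both $L_{uv}$ and $L_{vw}$ also lie in $v^\perp$. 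A brief normalisation (put $v=\ii$ and rotate about $\ii$) then shows $\cos\theta_v=-\ip{L_{uv}}{L_{vw}}$, the sign reflecting that the inward normals of a convex wedge enclose the supplementary angle. Because $L_{uv}$ spans the line $\{C_u,C_v\}^\perp$ inside $\Sym(2)$ and is spacelike, $L_{uv}=\pm(C_u\times C_v)/\sqrt{\ip{C_u}{C_v}^2-|C_u|^2|C_v|^2}$, where $\times$ is the cross product of $\R^{2,1}=\Sym(2)$; likewise for $L_{vw}$. The Lagrange identity $\ip{X\times Y}{Z\times W}=\ip{X}{W}\ip{Y}{Z}-\ip{X}{Z}\ip{Y}{W}$ in $\R^{2,1}$ gives
\[
   \cos\theta_v
   \;=\;\pm\,
   \frac{\ip{C_u}{C_v}\,\ip{C_v}{C_w}\,-\,|C_v|^2\,\ip{C_u}{C_w}}
        {\sqrt{\bigl(\ip{C_u}{C_v}^2-|C_u|^2|C_v|^2\bigr)
              \bigl(\ip{C_v}{C_w}^2-|C_v|^2|C_w|^2\bigr)}}.
\]

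It remains to insert $|C_x|^2=\epsilon_x/\tau_{\epsilon_x}^2(r_x)$ (Lemma \ref{lemma:radius_from_norm}, with value $0$ when $\epsilon_x=0$) and $-\ip{C_x}{C_y}=\tau'_{\epsilon_x\epsilon_y}(\len_{xy})/\bigl(\tau_{\epsilon_x}(r_x)\tau_{\epsilon_y}(r_y)\bigr)$ (Lemma \ref{lemma:distance_from_product}) and to simplify. Using $\epsilon_v=-1$, every occurrence of $\tau_{\epsilon_v}(r_v)$, $\tau_{\epsilon_u}(r_u)$, $\tau_{\epsilon_w}(r_w)$ cancels between numerator and denominator, leaving a quotient in the edge-lengths only. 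Two elementary identities, $\tau'_{-\epsilon}(\len)=\tau_\epsilon(\len)$ and $\tau'_{-\epsilon}(\len)^2+\epsilon=\tau'_\epsilon(\len)^2$ (each checked for $\epsilon\in\{-1,0,1\}$ from \eqref{eq:length_modifiers}), reduce the radicands to $\tau'_{\epsilon_u}(\len_{uv})^2/\bigl(\tau_{\epsilon_u}^2(r_u)\tau_{-1}^2(r_v)\bigr)$ and $\tau'_{\epsilon_w}(\len_{vw})^2/\bigl(\tau_{-1}^2(r_v)\tau_{\epsilon_w}^2(r_w)\bigr)$, and turn the numerator into a multiple of $\tau_{\epsilon_u}(\len_{uv})\tau_{\epsilon_w}(\len_{vw})-\tau'_{\epsilon_u\epsilon_w}(\len_{wu})$ with the same multiplier; the stated formula drops out.

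The main obstacle is the sign bookkeeping, concentrated in two steps. The first is $\cos\theta_v=-\ip{L_{uv}}{L_{vw}}$: one must use convexity, $\theta_v<\pi$, to be sure the wedge $\{\ip{X}{L_{uv}}\le0\}\cap\{\ip{X}{L_{vw}}\le0\}$ cuts out the interior angle at $v$ rather than its supplement. The second is the global $\pm$ in the displayed formula, i.e.\ the product of the sign ambiguities in $L_{uv}=\pm(C_u\times C_v)/\|{\cdot}\|$ and $L_{vw}=\pm(C_v\times C_w)/\|{\cdot}\|$; this product is locally constant on the (connected) space of decorated triangles, so evaluating it on one symmetric all-circle triangle — where it equals $+1$ — settles it. The remaining hypotheses, that $C_u,C_v$ and $C_v,C_w$ are linearly independent (distinct vertices of a minimal polygon are not concentric) and that the square roots are taken positive (forced by $\len_{uv},\len_{vw}>0$), are routine.
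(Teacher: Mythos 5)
Your proof is correct and takes exactly the second of the two routes the paper itself points to: the paper's proof is only a citation (\enquote{direct computation for the different cases \ldots or \ldots analysing bases in $\Sym(2)$}), and your argument is a complete worked-out version of the $\Sym(2)$/Minkowski approach, with the Gram data supplied by Lemmas \ref{lemma:radius_from_norm} and \ref{lemma:distance_from_product}, the sign fixed correctly (one can even avoid the connectedness argument by noting that the two normal-orientation signs are $-\operatorname{sign}\det(C_u,C_v,C_w)$ and $-\operatorname{sign}\det(C_v,C_w,C_u)$, whose product is always $+1$), and the identities $\tau'_{-\epsilon}=\tau_{\epsilon}$ and $\tau_{\epsilon}^2+\epsilon=(\tau'_{\epsilon})^2$ all checking out. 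The only cosmetic slip is attributing positivity of the square roots to $\len_{uv},\len_{vw}>0$: generalised edge-lengths can be nonpositive when an endpoint is a horocycle or hypercycle, but $\tau'_0$ and $\tau'_1$ are positive everywhere, so the conclusion is unaffected.
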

\begin{proof}
   These relations follow either by direct computation for the different cases
   \cite{Ratcliffe94}*{\S3.5} or using a combined approach by analysing bases in
   $\Sym(2)$ \cite{Thurston97}*{Section 2.4}.
\end{proof}

\begin{definition}[modified tangent distance]
   \label{def:modified_tangent_distance}
   Let $C$ be hyperbolic cycle of type $\epsilon$ with (auxiliary) centre $c$
   and radius $r$. The \emph{modified tangent distance} between $C$ and a
   point $x\in\HH$ is
   \[
      \tandist_x(c,r)\,\coloneq\,\tandist_x(C)\,\coloneq\,
      \frac{\tau_{\epsilon}(\dist_{\HH}(c,x))}{\tau_{\epsilon}(r)}.
   \]
   Here, we orient $\dist_{\HH}$ such that $\dist_{\HH}(c,x)>0$
   iff $\ip{C}{x}<(\epsilon - 1)/\tau_{\epsilon}(r)$ (compare to Lemma
   \ref{lemma:distance_from_product}). Note that this condition is satisfied for
   all points $x\neq c$ if $\epsilon=-1$.
\end{definition}

\begin{lemma}
   \label{lemma:formula_radical_line}
   Let $C_1$ and $C_2$ be hyperbolic cycles whose associated discs do not contain
   each other, respectively. Then the radical line of $C_1$ and $C_2$ exists and
   is given by $\{x\in\HH : \tandist_x(C_1)=\tandist_x(C_2)\}$.
\end{lemma}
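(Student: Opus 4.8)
The plan is to reduce the statement to the single identity
\[
   \tandist_x(C) \eq -\ip{C}{x} \qquad\text{for } x \in \HH,
\]
valid for every hyperbolic cycle $C$, where on the right $x$ denotes the $\Sym(2)$-representative of the point $x\in\HH$ (so $|x|^2=-1$; cf.\ Proposition \ref{prop:hyperbolic_cycles}). To obtain it I regard $x$ as a degenerate cycle of type $\epsilon_x=-1$ and radius $0$ and feed the pair $(C,x)$ into the computation behind Lemma \ref{lemma:distance_from_product}, reading $\len$ there as the oriented distance $\dist_{\HH}(c,x)$ between the (auxiliary) centres. Since $\tau_{-1}(0)=1$ and, directly from \eqref{eq:length_modifiers}, $\tau_{-\epsilon}'=\tau_{\epsilon}$ for every $\epsilon$, the right-hand side of Lemma \ref{lemma:distance_from_product} collapses to $\tau_{\epsilon}(\dist_{\HH}(c,x))/\tau_{\epsilon}(r)$, which is exactly $\tandist_x(C)$; the orientation of $\dist_{\HH}(c,x)$ agrees because the sign convention in Definition \ref{def:modified_tangent_distance} is precisely the one set up to mirror Lemma \ref{lemma:distance_from_product}. (Alternatively the identity follows by direct computation as in Lemmas \ref{lemma:radius_from_norm} and \ref{lemma:distance_from_product}, normalising $C$ to be centred at $\ii$ and putting $x=p\ii$ on the $y$-axis.) I expect this matching of the two sign conventions, uniformly over the three vertex types, to be the only genuinely delicate point.

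Granting the identity, put $D\coloneq C_1-C_2$. For $x\in\HH$ one then has $\tandist_x(C_1)=\tandist_x(C_2)$ if and only if $\ip{D}{x}=0$, so the set in question equals $\{x\in\HH:\ip{D}{x}=0\}$. Now $D$ lies in the pencil of $C_1$ and $C_2$ and satisfies $\ip{D}{\partial\HH}=0$, so by the reasoning in the proof of Lemma \ref{lemma:uniqueness_radical_line} it spans the unique line of that pencil contained in $\partial\HH^{\perp}$, the only candidate for the radical line. Hence it suffices to show $|D|^2>0$: in that case $D$ is a M\"obius circle orthogonal to $\partial\HH$, that is, a hyperbolic line, so the radical line of $C_1$ and $C_2$ exists and equals $D$, and $\{x\in\HH:\ip{D}{x}=0\}$ is precisely its set of points in $\HH$.

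It remains to prove $|D|^2>0$, and this is where the hypothesis on the discs enters. By the identity, $\ip{D}{x}=\tandist_x(C_2)-\tandist_x(C_1)$ for $x\in\HH$, and from Definition \ref{def:modified_tangent_distance} one reads off that $\tandist_x(C)$ is $<1$, $=1$, or $>1$ according as $x$ lies in the open disc of $C$, on $C$, or outside the closed disc of $C$. Since the disc of $C_1$ does not contain that of $C_2$, there is a point $x_1\in\HH$ in the closed disc of $C_1$ but outside the closed disc of $C_2$, so that $\ip{D}{x_1}=\tandist_{x_1}(C_2)-\tandist_{x_1}(C_1)>0$; symmetrically there is $x_2\in\HH$ with $\ip{D}{x_2}<0$. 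Thus the linear functional $\ip{D}{\cdot}$ takes both signs on $\HH$. But $\HH$ is one sheet of the hyperboloid $\{|X|^2=-1\}$ in $\Sym(2)\cong\R^{2,1}$, and the pairing of a fixed timelike or null vector with the points of such a sheet is nowhere zero and of constant sign; hence $D$ must be spacelike, i.e.\ $|D|^2>0$. Together with the previous paragraph this establishes both the existence of the radical line and the claimed description.
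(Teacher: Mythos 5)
Your proof is correct, but it takes a genuinely different route from the paper's. The paper argues synthetically: using Lemma \ref{lemma:cos_sin_laws} it reads $\tandist_x(C_n)$ as the hyperbolic cosine of a tangent-segment length, so a point with equal tangent distances is the centre of a circle orthogonal to both cycles; an intermediate-value argument along two disjoint paths from $x_+$ to $x_-$ produces two such points, whose circles span the dual pencil, and the line through them then lies in the pencil of $C_1,C_2$ and is the radical line by Lemma \ref{lemma:uniqueness_radical_line}. You instead linearise the whole statement through the identity $\tandist_x(C)=-\ip{C}{x}$ --- which is indeed the intended reading of Definition \ref{def:modified_tangent_distance} and is exactly what the paper invokes later in Lemmas \ref{lemma:intersection_radical_lines} and \ref{lemma:delaunay_cover} --- reducing everything to showing that $D=C_1-C_2$ is spacelike. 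Your signature argument (a nonzero vector whose pairing with the sheet $\{|X|^2=-1,\,x_0>0\}$ changes sign cannot be timelike or null) is clean, replaces the paper's somewhat informal two-paths argument by linear algebra, and as a by-product establishes $|C_1-C_2|^2>0$, i.e.\ the first assertion of Lemma \ref{lemma:pencil_argmin}, which the paper instead deduces \emph{from} the present lemma; the logical order is consistent either way. Two harmless slips: when producing $x_1$ you quote the containment hypothesis with the roles of $C_1$ and $C_2$ interchanged (a point of the disc of $C_1$ outside the closed disc of $C_2$ exists because the disc of $C_2$ does not contain that of $C_1$), which does not matter since the hypothesis is symmetric; and you use without proof that $\tandist_x(C)$ is $<1$, $=1$ or $>1$ according to the position of $x$ relative to the disc of $C$ --- a fact the paper also uses without proof in Lemma \ref{lemma:delaunay_cover}, so it is fair to take it as granted, though a one-line check over the three types $\epsilon\in\{-1,0,1\}$ would make the argument self-contained.
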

\begin{proof}
   Suppose a point $x\in\HH$ is not contained in the disc associated to $C_n$,
   $n=1,2$. By Lemma \ref{lemma:cos_sin_laws}, $\tandist_x(C_n) = \cosh(\delta)$
   where $\delta$ is the hyperbolic length of the hyperbolic segments which starts
   in $x$ and ends at a tangent point with $C_n$ (see Figure
   \ref{fig:tangent_distance}). Thus, if there is a point $x$ such that
   \begin{equation}
      \label{eq:equal_tangent_distance}
      \tandist_x(C_1)\eq\tandist_x(C_2)
   \end{equation}
   then $x$ is the centre of a hyperbolic circle $C$ which is orthogonal to
   both $C_1$ and $C_2$.

   \begin{figure}[t]
      \begin{picture}(385, 160)
         \put(115, 0){
            \includegraphics[width=0.4\textwidth]{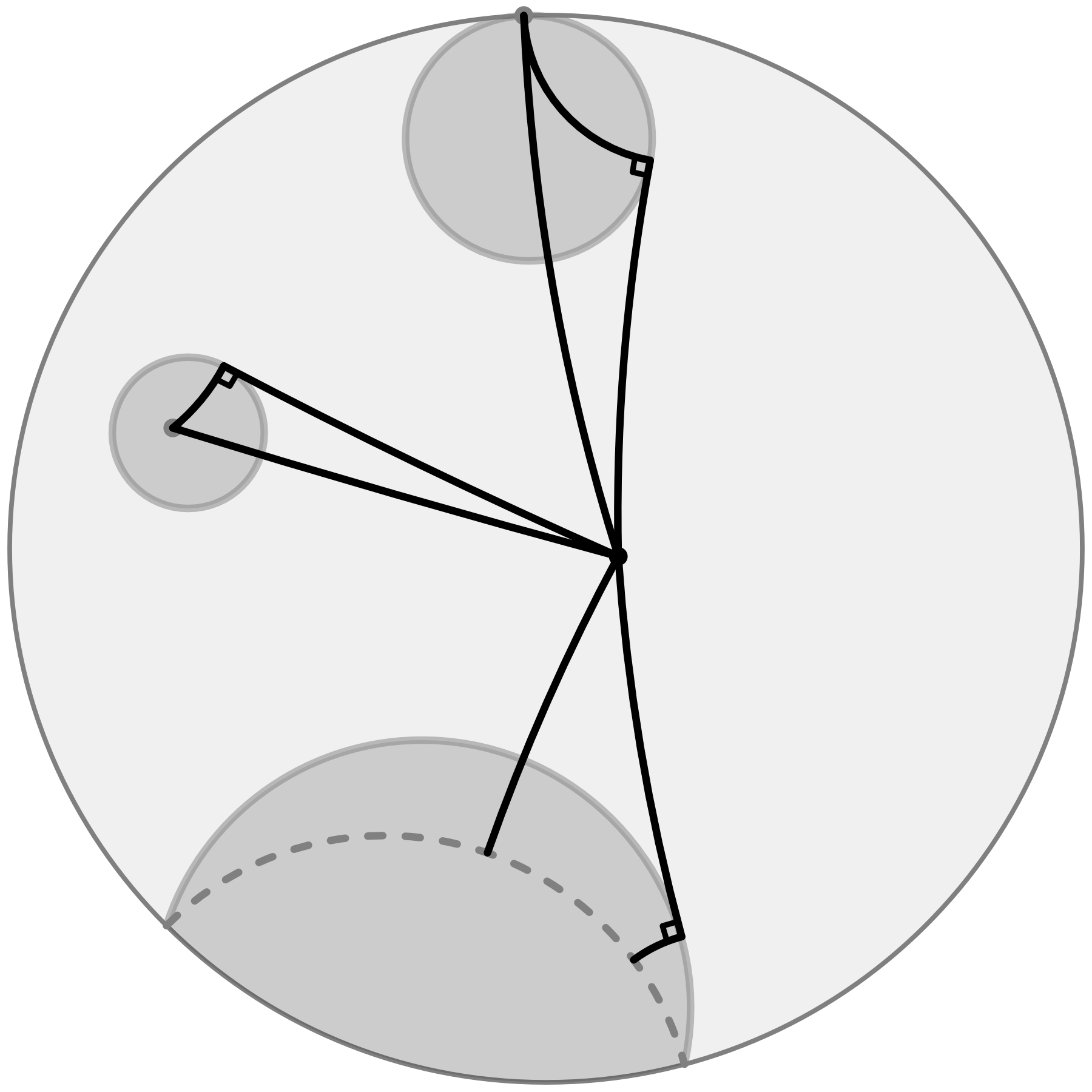}
         }
         \put(210, 75){\small $x$}

         \put(200, 141){\small $r_1$}
         \put(209, 105){\small $\delta_1$}

         \put(137, 100){\small $r_2$}
         \put(171, 95){\small $\delta_2$}

         \put(211, 14){\small $r_3$}
         \put(212, 45){\small $\delta_3$}
      \end{picture}
      \caption{The modified tangent distance between a cycle $C$ and a point $x$
         outside its associated disc is the hyperbolic cosine of the length of
         the tangential segment to $C$ starting at $x$. It can be computed
         using right-angled hyperbolic triangles.
      }
      \label{fig:tangent_distance}
   \end{figure}

   Now, consider the function $f\colon x\mapsto \tandist_x(C_1) - \tandist_x(C_2)$.
   It is continuous. Furthermore, we find $x_{+},x_{-}\in\HH$ with $f(x_{+})>0$ and
   $f(x_{-})<0$ because of our assumption about the associated discs. It follows,
   considering two non-intersecting continuous path starting at $x_{+}$ and
   ending at $x_{-}$, that there are $p,\tilde{p}\in\HH$ which satisfy equation
   \eqref{eq:equal_tangent_distance}. We already observed that they are the
   centres of two hyperbolic circles which are orthogonal to both $C_1$ and
   $C_2$. Hence, they span the dual pencil of $C_1$ and $C_2$. We conclude that
   the hyperbolic line connecting $p$ and $\tilde{p}$ is in the pencil spanned by
   $C_1$ and $C_2$, that is, the unique radical line of $C_1$ and $C_2$ (Lemma
   \ref{lemma:uniqueness_radical_line}).
\end{proof}

\begin{corollary}
   \label{cor:radical_line_intersection}
   Let $C_1$ be a hyperbolic cycle with (auxiliary) centre $c_1$ and $C_2$ a circle
   with centre $c_2$. In addition to the assumptions of Lemma
   \ref{lemma:formula_radical_line} we require that $\dist_{\HH}(c_1, c_2) > 0$.
   The orientation is chosen as in Definition \ref{def:modified_tangent_distance}.
   Then the radical line of $C_1$ and $C_2$ intersects the hyperbolic ray starting at
   $c_2$ extending towards the centre of $C_1$
   iff $\tau_{\epsilon_1}(r_1)/\cosh(r_2)<\tau_{\epsilon_1}(\dist_{\HH}(c_1, c_2))$.
\end{corollary}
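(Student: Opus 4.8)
The plan is to reduce the assertion to a sign condition for a single real function along one distinguished geodesic. Let $\tilde\gamma\colon\R\to\HH$ be the complete geodesic with $\tilde\gamma(0)=c_2$, parametrised by arc length so that $\tilde\gamma(t)$ moves towards the centre of $C_1$ for increasing $t$; the ray in the statement is then $\gamma=\tilde\gamma|_{[0,\infty)}$. A geodesic running towards the centre of a hyperbolic cycle meets that cycle orthogonally — whether the centre is a point of $\HH$, an ideal point, or a hyperbolic line — so $\tilde\gamma$ is orthogonal to both $C_1$ and $C_2$; by Lemma \ref{lemma:conformal_circles_orthogonality} this means $\ip{\tilde\gamma}{C_1}=\ip{\tilde\gamma}{C_2}=0$, i.e.\ $\tilde\gamma$ lies in the pencil dual to $\lin\{C_1,C_2\}$. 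Since the discs of $C_1$ and $C_2$ do not contain each other, the radical line $R$ exists and lies in $\lin\{C_1,C_2\}$ (Lemma \ref{lemma:formula_radical_line}), hence $\ip{R}{\tilde\gamma}=0$. Two hyperbolic lines that are orthogonal as M\"obius circles necessarily cross in a single point of $\HH$ — normalising $\tilde\gamma$ to the imaginary axis turns $R$ into a semicircle centred at the origin — so $R\cap\tilde\gamma=\{\tilde\gamma(t_0)\}$ for a unique $t_0\in\R$, and the radical line meets the ray $\gamma$ exactly when $t_0\geq0$.

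To locate $t_0$ I would study the function
\[
   h(t)\;\coloneq\;\tandist_{\tilde\gamma(t)}(C_1)-\tandist_{\tilde\gamma(t)}(C_2)
   \;=\;\frac{\tau_{\epsilon_1}\big(\dist_{\HH}(c_1,c_2)-t\big)}{\tau_{\epsilon_1}(r_1)}\;-\;\frac{\cosh t}{\cosh r_2},
\]
which by Lemma \ref{lemma:formula_radical_line} vanishes exactly on $R\cap\tilde\gamma$, hence only at $t_0$. The right-hand side uses Definition \ref{def:modified_tangent_distance}: along $\tilde\gamma$ one has $\dist_{\HH}(c_2,\tilde\gamma(t))=|t|$ with $\epsilon_2=-1$, and $\tau_{\epsilon_1}(\dist_{\HH}(c_1,\tilde\gamma(t)))=\tau_{\epsilon_1}(\dist_{\HH}(c_1,c_2)-t)$ because $\tilde\gamma$ heads towards the centre of $C_1$, so the oriented distance to $c_1$ decreases at unit rate (and $\tau_{-1}=\cosh$ is even). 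Checking that this distance decreases rather than increases with respect to the orientation of Definition \ref{def:modified_tangent_distance} is the one place where the three types of $C_1$ must be treated separately, e.g.\ by noting that $\tilde\gamma(t)$ enters the disc of $C_1$ as $t$ grows, where $\tandist(C_1)<1$.

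Now I would read off the sign of $h$ at $t_0$ from its behaviour as $t\to+\infty$. There $\tandist_{\tilde\gamma(t)}(C_2)=\cosh t/\cosh r_2\to+\infty$, whereas $\tandist_{\tilde\gamma(t)}(C_1)=\tau_{\epsilon_1}(\dist_{\HH}(c_1,c_2)-t)/\tau_{\epsilon_1}(r_1)$ tends to $0$ if $\epsilon_1=0$, to $-\infty$ if $\epsilon_1=1$, and for $\epsilon_1=-1$ it diverges but strictly slower: the quotient of the two terms tends to $\ee^{-\dist_{\HH}(c_1,c_2)}\cosh r_2/\cosh r_1$, which is $<1$ because non-containment of the discs gives $\dist_{\HH}(c_1,c_2)>|r_1-r_2|$ and hence $\cosh r_2<\cosh(r_1+\dist_{\HH}(c_1,c_2))<\ee^{\dist_{\HH}(c_1,c_2)}\cosh r_1$. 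In every case $h(t)\to-\infty$; since $h$ is continuous with the single zero $t_0$, it follows that $h>0$ on $(-\infty,t_0)$ and $h<0$ on $(t_0,\infty)$. Consequently $t_0\geq0$ if and only if $h(0)\geq0$, that is, if and only if $\tau_{\epsilon_1}(\dist_{\HH}(c_1,c_2))/\tau_{\epsilon_1}(r_1)\geq1/\cosh r_2$, which rearranges to the stated inequality; the borderline case $h(0)=0$ is exactly $c_2\in R$, so the strict inequality corresponds to the ray with its initial point removed.

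I expect the genuine work to sit in the middle step: pinning down, type by type, the sign of the oriented distance $\dist_{\HH}(c_1,\tilde\gamma(t))$ against the convention of Definition \ref{def:modified_tangent_distance}, and, for the circle case, extracting the comparison $\cosh r_2<\ee^{\dist_{\HH}(c_1,c_2)}\cosh r_1$ from the hypothesis that the discs do not contain each other. The remaining ingredients — the dual-pencil argument for uniqueness of the intersection point and the passage from the sign of $h(0)$ to the inequality — are routine.
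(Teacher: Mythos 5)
The paper states this corollary without proof, as an immediate consequence of Lemma \ref{lemma:formula_radical_line}, so there is no authorial argument to compare against; your reduction to the sign of $h(0)$ along the distinguished geodesic is a sound way to supply the missing argument. The pencil computation ($\tilde{\gamma}\in\lin\{C_1,C_2\}^{\perp}$, $R\in\lin\{C_1,C_2\}$, hence $\ip{R}{\tilde{\gamma}}=0$ and a unique intersection point $\tilde{\gamma}(t_0)$ in $\HH$), the explicit formula for $h$ via the oriented distances, the asymptotics at $t\to+\infty$, and the translation of the sign of $h(0)$ into the stated inequality are all correct; you are also right that the borderline case $t_0=0$ forces the ray to be read as not containing its initial point $c_2$.

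There is, however, one inferential gap. From \enquote{$h$ is continuous, vanishes only at $t_0$, and tends to $-\infty$ as $t\to+\infty$} you conclude $h>0$ on $(-\infty,t_0)$. Those three facts are also consistent with $h\leq 0$ everywhere (an isolated zero need not be a sign change), and the implication \enquote{$R$ meets the ray $\Rightarrow$ inequality} genuinely uses the positivity of $h$ to the left of $t_0$, so this must be argued. Two repairs are available. (a) Run your asymptotic analysis at $t\to-\infty$ as well: there $h(t)\sim\tfrac{1}{2}e^{-t}\big(e^{d}/\tau_{\epsilon_1}(r_1)-1/\cosh r_2\big)$ with $d=\dist_{\HH}(c_1,c_2)$, and the leading coefficient is positive precisely because of the \emph{other} half of the non-containment hypothesis, $D_2\nsubseteq D_1$, which in each of the three cases gives $r_1<d+r_2$ and hence $\tau_{\epsilon_1}(r_1)\leq\cosh(r_1)<\cosh(d+r_2)<e^{d}\cosh(r_2)$. (Note that your $+\infty$ analysis only invoked $D_1\nsubseteq D_2$.) (b) Alternatively, observe that by Lemma \ref{lemma:formula_radical_line} and its proof the function $x\mapsto\tandist_x(C_1)-\tandist_x(C_2)$ takes both signs, vanishes exactly on $R$, and therefore has opposite constant signs on the two components of $\HH\setminus R$; since $\tilde{\gamma}$ crosses $R$ transversally at $t_0$, the function $h$ does change sign there. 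With either repair the argument is complete.
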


Suppose $C_1, C_2, C_3\in\Sym(2)$ are the vertex cycles of a decorated hyperbolic
triangle. They form a basis of $\Sym(2)$. Hence, they determine a unique affine
plane in $\Sym(2)$. There is a unique $F\in\Sym(2)$ such that this plane is given
by $\{\ip{X}{F} = -1\}$. We call $F$ the \define{face-vector} of the triangle.

\begin{lemma}
   \label{lemma:intersection_radical_lines}
   The three radical lines defined by a decorated hyperbolic triangle
   whose vertex discs do not contain each other, respectively, either
   intersect in a common point in $\HH\cup\partial\HH$ or are
   all orthogonal to a common hyperbolic line.
\end{lemma}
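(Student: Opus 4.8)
The plan is to pass to linear algebra inside $\Sym(2)$, which carries an inner product of signature $(2,1)$ (it is the orthogonal complement $\partial\HH^{\perp}$ of the spacelike vector $\partial\HH$ in $\Herm(2)$), and to relate the three radical lines to the face-vector $F$ introduced just above the statement.

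First I would identify the radical lines explicitly. Write $C_1,C_2,C_3\in\Sym(2)$ for the vertex cycles. In the affine chart $\{\ip{X}{\partial\HH}=1\}$ of Proposition~\ref{prop:hyperbolic_cycles} the corresponding M\"obius circles are $C_i+\partial\HH$, so the pencil they span meets $\Sym(2)=\{\ip{X}{\partial\HH}=0\}$ in the single projective point $[(C_i+\partial\HH)-(C_j+\partial\HH)]=[C_i-C_j]$. By hypothesis the vertex discs do not contain one another, so Lemma~\ref{lemma:formula_radical_line} guarantees that the radical line of $C_i$ and $C_j$ exists; by Lemma~\ref{lemma:uniqueness_radical_line} it can only be this point, so in particular $|C_i-C_j|^2>0$. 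Hence the three radical lines are represented in $\Sym(2)$ by $C_1-C_2$, $C_2-C_3$ and $C_3-C_1$.

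Now I would bring in the face-vector $F$, the unique element of $\Sym(2)$ with $\ip{C_i}{F}=-1$ for $i=1,2,3$ (it is nonzero, as otherwise $\{\ip{X}{F}=-1\}$ would be empty). Subtracting these equations gives $\ip{C_i-C_j}{F}=0$ for every pair, i.e.\ $F$ is orthogonal to all three radical lines. (Since $C_1,C_2,C_3$ is a basis of $\Sym(2)$ the differences $C_i-C_j$ span a $2$-plane — they sum to $0$ and any two are independent — so $\R F$ is exactly their orthogonal complement, and the common point or perpendicular obtained below is unique.) It then only remains to read off what $F$ represents from the sign of $|F|^2$. If $|F|^2>0$, then $[F]$ is a positive vector orthogonal to, but not proportional to, $\partial\HH$, hence a hyperbolic line, and by Lemma~\ref{lemma:conformal_circles_orthogonality} it meets each of the three radical lines orthogonally — the second alternative. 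If $|F|^2=0$, then $[F]$ is a nonzero null vector orthogonal to $\partial\HH$, hence an ideal point on $\partial\HH$; if $|F|^2<0$, then after rescaling $F$ and choosing the sheet as in Proposition~\ref{prop:hyperbolic_cycles} it is a point of $\HH$. In either of these last two cases the equations $\ip{F}{C_i-C_j}=0$ say that this point lies on all three radical lines — incidence of a point with a M\"obius circle is again the vanishing of their pairing, which for an ideal point follows from Lemma~\ref{lemma:gramian_determinant} (with one circle degenerate to a point) and for a point of $\HH$ is the standard incidence relation of the hyperboloid model. Thus the three radical lines share a common point of $\HH\cup\partial\HH$ — the first alternative.

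The one genuinely delicate point is the bookkeeping in the identification step: one must keep the affine normalisation $\ip{X}{\partial\HH}=1$ of the M\"obius-circle representatives apart from their images in $\Sym(2)$, so that \enquote{the radical line of $C_i$ and $C_j$ is $C_i-C_j$} is literally correct. Everything afterwards is just the trichotomy of causal types of a single vector in a $(2,1)$-space, combined with the dictionaries already set up in Proposition~\ref{prop:hyperbolic_cycles} and Lemmas~\ref{lemma:conformal_circles_orthogonality} and~\ref{lemma:gramian_determinant}.
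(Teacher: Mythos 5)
Your proof is correct and follows essentially the same route as the paper's: identify the radical line of $C_i$ and $C_j$ with (the orthogonal hyperplane of) $C_i-C_j$ in $\Sym(2)$, observe that $\ip{F}{C_i}=-1$ forces $F$ to lie in all three of these hyperplanes, whose intersection is exactly $\lin\{F\}$, and read off the two alternatives from the sign of $|F|^2$. The paper's version is terser — it stops at $\bigcap\{\ip{X}{C_n-C_m}=0\}=\lin\{F\}$ and leaves the causal-type trichotomy implicit — so your spelled-out case analysis is a faithful elaboration rather than a different argument.
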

\begin{proof}
   Let $C_1$, $C_2$ and $C_3$ denote the vertex cycles and $F$ their face-vector.
   From Lemma \ref{lemma:distance_from_product} and
   Lemma \ref{lemma:formula_radical_line} we deduce that the radical line of
   the vertex cycles $C_m$ and $C_n$, $m\neq n$, is given by
   $\{\ip{X}{C_n-C_m}=0\}$. Since $\ip{F}{C_n} = -1$, $n=1,2,3$, we see that
   \[
      \bigcap_{n\neq m}\{\ip{X}{C_n-C_m} = 0\} \eq \lin\{F\}.\qedhere
   \]
\end{proof}

By definition the vertices of an associated hyperbolic polygon are centres of the
defining cycles. But in general not all centres need to be vertices, too. We are now
going to find some sufficient conditions in terms of the \define{intersection angles}
the cycles. It is understood to be the interior intersection angle of their associated
discs.

\begin{lemma}
   \label{lemma:intersection_angle}
   Consider two vertex cycles, say $C_1$ and $C_2$. Their associated discs either
   do not intersect or intersect at most with an angle of $\pi/2$ if
   $\ip{C_1}{C_2} \leq -1$.
\end{lemma}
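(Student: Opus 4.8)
The plan is to translate the hypothesis from the $\Sym(2)$-representatives into the ambient M\"obius-geometric picture of this section and then read the interior intersection angle directly off the inner product. Write $\hat C_i\in\Herm(2)$ for the representative of the M\"obius circle underlying $C_i$ scaled to the affine slice $\{\ip{X}{\partial\HH}=1\}$ of Proposition \ref{prop:hyperbolic_cycles}; concretely $\hat C_i=C_i+\partial\HH$. Since $C_i\in\Sym(2)=\partial\HH^{\perp}$ and $|\partial\HH|^2=1$, expanding the bilinear form \eqref{eq:bilinear_form} gives
\begin{equation}
   |\hat C_i|^2 \eq |C_i|^2+1, \qquad \ip{\hat C_1}{\hat C_2}\eq\ip{C_1}{C_2}+1 .
\end{equation}
By Proposition \ref{prop:hyperbolic_cycles} every hyperbolic cycle satisfies $|C_i|^2>-1$, so $|\hat C_i|^2>0$ and $\hat C_1,\hat C_2$ are genuine M\"obius circles, and the hypothesis $\ip{C_1}{C_2}\le-1$ becomes $\ip{\hat C_1}{\hat C_2}\le 0$. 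With the sign conventions of Proposition \ref{prop:hyperbolic_cycles} (the conditions $x_0>0$) the associated disc of $C_i$ is exactly the side on which the defining quadratic form \eqref{eq:conformal_circles} of $\hat C_i$ is negative; I would verify this on one model cycle of each of the three types, a hyperbolic motion reducing the general case to these.

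The geometric input is the classical relation between \emph{inversive distance} and intersection angle: if $\hat C_1,\hat C_2$ are M\"obius circles whose discs are both the ``negative'' side and the circles cross, then the interior angle $\phi$ of the lens $D_1\cap D_2$ satisfies $\cos\phi=-\ip{\hat C_1}{\hat C_2}/(|\hat C_1|\,|\hat C_2|)$. This follows from a short computation: both sides are M\"obius-invariant, so one may normalise $\hat C_1$ to the unit circle and $\hat C_2$ to $\{|z-w|=\rho\}$, whereupon $\ip{\hat C_1}{\hat C_2}/(|\hat C_1|\,|\hat C_2|)=(1-|w|^2+\rho^2)/(2\rho)$, which by the Euclidean law of cosines is $\cos\beta$ for $\beta$ the angle at an intersection point in the triangle $0,w,\text{(that point)}$, while an elementary local picture shows the lens has interior angle $\pi-\beta$ there (the same at both intersection points). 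By Lemma \ref{lemma:gramian_determinant} the circles cross precisely when this quantity lies in $(-1,1)$. As a sanity check, $\phi=\pi/2$ exactly when $\ip{\hat C_1}{\hat C_2}=0$, i.e.\ $C_1\perp C_2$, i.e.\ $\ip{C_1}{C_2}=-1$ by Proposition \ref{prop:hyperbolic_cycles} --- the borderline case of the Lemma.

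With this in hand the Lemma is immediate. Set $\delta:=\ip{\hat C_1}{\hat C_2}/(|\hat C_1|\,|\hat C_2|)$, which is $\le 0$ by hypothesis. If $\delta<-1$ the two circles do not cross (Lemma \ref{lemma:gramian_determinant}); as $\delta$ is negative, the two ``negative'' regions lie disjointly rather than one inside the other (nesting would force $\delta>1$), so the associated discs do not intersect. If $-1\le\delta\le 0$ the cycles cross (or are externally tangent when $\delta=-1$), and then $\cos\phi=-\delta\ge 0$, so $\phi\le\pi/2$.

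The one genuinely delicate point --- and the only place a real computation hides --- is the orientation bookkeeping flagged in the first paragraph: one must pin down that the canonical $\Sym(2)$-representative, fixed by the slice normalisation together with the conditions $x_0>0$, singles out the \emph{associated} disc and not its complement (the bounded side for a hyperbolic circle, the horodisc for a horocycle, the side containing the axis for a hypercycle); getting this sign right is exactly what makes the threshold ``$\le-1$'' rather than ``$\ge1$''. Once it is checked on a model cycle of each type, the remainder is the elementary calculation above together with the dictionary of Lemma \ref{lemma:gramian_determinant}. (An alternative, avoiding M\"obius geometry but splitting into cases, is to take an intersection point $x$ of $C_1$ and $C_2$ and apply the hyperbolic law of cosines, Lemma \ref{lemma:cos_sin_laws}, to the generalised triangle formed by $x$ and the two centres, in which $x$ has type $-1$.)
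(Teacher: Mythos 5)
Your proof is correct, but it runs along a genuinely different track from the paper's. The paper disposes of the lemma in one line by combining Lemma \ref{lemma:cos_sin_laws} with Lemma \ref{lemma:distance_from_product}: one takes a crossing point $x$ of the two cycles, forms the generalised triangle with vertices $x$ (of type $-1$) and the two (auxiliary) centres, notes that the edge-lengths from $x$ are the radii, and reads off
$\cos\phi=\tau_{\epsilon_1}(r_1)\tau_{\epsilon_2}(r_2)\,(-\ip{C_1}{C_2}-1)\big/\big(\tau_{\epsilon_1}'(r_1)\tau_{\epsilon_2}'(r_2)\big)\geq 0$ --- exactly the route you relegate to your closing parenthesis. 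Your main argument instead lifts to $\Herm(2)$ and quotes the Euclidean inversive-distance formula; the identities $|\hat C_i|^2=|C_i|^2+1$ and $\ip{\hat C_1}{\hat C_2}=\ip{C_1}{C_2}+1$ are right (given $\Sym(2)=\partial\HH^{\perp}$ and $|\partial\HH|^2=1$), your normalisation of $\hat C_2$ to $\{|z-w|=\rho\}$ gives $|\hat C_2|=\rho$ and $\ip{\hat C_1}{\hat C_2}=(1-|w|^2+\rho^2)/2$ as claimed, and the sign check that the associated disc is the negative side of the slice-normalised form does hold for the model representatives of Lemma \ref{lemma:radius_from_norm} in all three types. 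What your route buys is uniformity --- no case split over cycle types in the angle computation, and a clean treatment of the disjoint, tangent and nested configurations via Lemma \ref{lemma:gramian_determinant} (the paper's one-liner is silent on why nesting cannot occur) --- at the price of the orientation bookkeeping you correctly single out as the delicate step. One caveat applies to both arguments equally: when both cycles are hypercycles, the underlying M\"obius circles can meet on or below $\partial\HH$ while the hyperdiscs in $\HH$ still overlap, so neither the crossing point used in the paper's triangle nor the lens vertex in your computation need lie in $\HH$; in the configurations where this happens under the hypothesis $\ip{C_1}{C_2}\leq-1$ the meeting points land on $\partial\HH$ and the angle bound survives as the ideal angle of the overlap region, but strictly speaking both proofs leave this degenerate case to the reader.
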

\begin{proof}
   This assertion follows from combining Lemma \ref{lemma:cos_sin_laws}
   and Lemma \ref{lemma:distance_from_product}.
\end{proof}

\begin{lemma}
   \label{lemma:concave_hull}
   Consider a decorated hyperbolic triangle with vertex cycles given by
   $C_1,C_2,C_3\in\Sym(2)$. Let $F$ be their face-vector. Suppose that
   $X=\alpha_1 C_1 + \alpha_2 C_2 + \alpha_3 C_3$ is a hyperbolic
   cycle with $\alpha_1,\alpha_2,\alpha_3 \geq0$ and $\ip{F}{X} \leq -1$. Then
   there is an $n$ such that $\ip{C_n}{X} > -1$.
\end{lemma}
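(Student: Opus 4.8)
The plan is to argue by contradiction and to exploit the face-vector $F$, which makes every pairing in sight affine-linear in the barycentric coefficients $\alpha_1,\alpha_2,\alpha_3$. So I would assume $\ip{C_n}{X}\le -1$ for all $n\in\{1,2,3\}$ and aim for a contradiction with the hypothesis that $X$ is a hyperbolic cycle.

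First I would unpack the hypothesis $\ip{F}{X}\le -1$. By definition of the face-vector one has $\ip{F}{C_n}=-1$ for $n=1,2,3$, so bilinearity gives $\ip{F}{X}=\sum_{n}\alpha_n\ip{F}{C_n}=-(\alpha_1+\alpha_2+\alpha_3)$; hence $\ip{F}{X}\le -1$ is exactly the statement $\alpha_1+\alpha_2+\alpha_3\ge 1$. The same expansion applied to $\ip{X}{X}$ yields $|X|^2=\sum_n\alpha_n\ip{C_n}{X}$. Since all $\alpha_n\ge 0$, the assumed bounds $\ip{C_n}{X}\le -1$ turn this into
\[
   |X|^2 \;=\; \sum_{n=1}^{3}\alpha_n\ip{C_n}{X} \;\le\; -\sum_{n=1}^{3}\alpha_n \;\le\; -1 .
\]
On the other hand, $X$ being a hyperbolic cycle means, by Proposition \ref{prop:hyperbolic_cycles}, that $|X|^2>-1$ (the value $-1$ is attained only by points). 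This contradiction establishes the claim.

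The argument has essentially no computational content; the only thing to get right is the reduction of the normalisation $\ip{F}{X}\le -1$ to the affine inequality $\sum_n\alpha_n\ge 1$, and the crucial — if trivial — use of $\alpha_n\ge 0$ when passing from $\ip{C_n}{X}\le -1$ to $\alpha_n\ip{C_n}{X}\le -\alpha_n$. If there is any obstacle at all it is conceptual rather than technical: recognising that the strict inequality $|X|^2>-1$, which in Proposition \ref{prop:hyperbolic_cycles} separates hyperbolic cycles from points, is precisely what closes the contradiction.
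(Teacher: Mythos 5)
Your proof is correct and follows essentially the same route as the paper: reduce $\ip{F}{X}\leq-1$ to $\alpha_1+\alpha_2+\alpha_3\geq1$, expand $|X|^2=\sum_n\alpha_n\ip{C_n}{X}$, and contradict the bound $|X|^2>-1$ from Proposition \ref{prop:hyperbolic_cycles}. You have merely made explicit the contradiction that the paper compresses into \enquote{combining these two inequalities yields the result.}
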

\begin{proof}
   From $\ip{F}{C_i} = -1$, we deduce that
   \[
      -1 \,\geq\, \ip{F}{X} \eq -(\alpha_1+\alpha_2+\alpha_3).
   \]
   By assumption $X$ is a hyperbolic cycle. Proposition \ref{prop:hyperbolic_cycles}
   leads to
   \begin{equation}
      |X|^2
      \eq \alpha_1\ip{C_1}{X} \plus \alpha_2\ip{C_2}{X} \plus\alpha_3\ip{C_3}{X}
      \,>\, -1.
   \end{equation}
   Combining these two inequalities yields the result.
\end{proof}

\begin{proposition}
   \label{prop:associated_polygon}
   Let $C_1, \dotsc, C_N$ be hyperbolic cycles with pairwise non-intersecting
   discs. Suppose that there is $F\in\Sym(2)$ such that $\ip{F}{C_n}=-1$
   for all $n=1,\dotsc, N$. Then the centre of each $C_n$ is a vertex of the
   associated hyperbolic polygon $\poly(C_1,\dotsc,C_N)$.
\end{proposition}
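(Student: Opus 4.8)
The plan is to reduce the statement to a fact about a convex polygon sitting in an affine plane of $\Sym(2)$. By hypothesis all of $C_1,\dots,C_N$ lie in the affine plane $A\coloneq\{X\in\Sym(2):\ip{X}{F}=-1\}$, which does not contain the origin. Hence $\cone(C_1,\dots,C_N)=\{\sum_{n}\alpha_nC_n:\alpha_n\geq0\}$ is a pointed polyhedral cone with compact base $Q\coloneq\conv(C_1,\dots,C_N)\subset A$, and its extreme rays are exactly the rays $\R_{\geq0}C_n$ for which $C_n$ is a vertex of $Q$. Since $\poly(C_1,\dots,C_N)=\HH\cap\cone(C_1,\dots,C_N)$ by definition, and the vertices of this (generalised) hyperbolic polygon are precisely the centres of the cycles whose rays are extreme in the cone --- a circle-type ray meets $\HH$ in the centre $\tau_{\epsilon_n}(r_n)C_n$, a horocycle-type null ray determines the ideal centre on $\partial\HH$, and a hypercycle-type spacelike ray accounts for a truncation line (compare Section \ref{sec:hyperbolic_polygons_decorations}) --- it suffices to prove that \emph{each $C_n$ is a vertex of $Q$}.

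For the latter I would first record that the disjointness hypothesis forces $\ip{C_i}{C_j}\leq-1$ whenever $i\neq j$: by Lemma \ref{lemma:distance_from_product} the value $-\ip{C_i}{C_j}$ equals $\tau'_{\epsilon_i\epsilon_j}(\len_{ij})/\bigl(\tau_{\epsilon_i}(r_i)\tau_{\epsilon_j}(r_j)\bigr)$, where $\len_{ij}$ is the oriented distance between the (auxiliary) centres of $C_i$ and $C_j$, and this quotient is $\geq1$ exactly because the associated discs are disjoint (compare the computation behind Lemma \ref{lemma:intersection_angle}). Now assume, for contradiction, that some $C_n$ is not a vertex of $Q$. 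Then $C_n\in\conv(\{C_m:m\neq n\})$, say $C_n=\sum_{m\neq n}\beta_mC_m$ with $\beta_m\geq0$ and $\sum_{m\neq n}\beta_m=1$; pairing this with $C_n$ gives
\[
   |C_n|^2\eq\sum_{m\neq n}\beta_m\ip{C_n}{C_m}\;\leq\;-\sum_{m\neq n}\beta_m\eq-1,
\]
contradicting $|C_n|^2>-1$ (Proposition \ref{prop:hyperbolic_cycles}, as $C_n$ is a hyperbolic cycle and not a point). So every $C_n$ is a vertex of $Q$, the cone $\cone(C_1,\dots,C_N)$ has precisely the $N$ distinct extreme rays $\R_{\geq0}C_n$, and hence every centre is a vertex of $\poly(C_1,\dots,C_N)$. (This also shows that no three of the $C_n$ are affinely dependent, so $Q$ is a genuine $N$-gon.)

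The step I expect to cost the most care is the identification invoked at the end of the first paragraph: that the vertices of $\poly(C_1,\dots,C_N)$ --- in the paper's generalised sense, where a vertex is a point of $\HH$, an ideal point of $\partial\HH$, or a hyperbolic line appearing in the truncation --- are in bijection with the extreme rays of the cone. This is essentially bookkeeping over the three possible signatures (timelike, null, spacelike) of an extreme ray, but it is exactly the place where the three types of cycles have to be handled separately. By contrast, the inequality $\ip{C_i}{C_j}\leq-1$ is a routine consequence of Lemma \ref{lemma:distance_from_product}, and the algebraic core of the argument is the one-line estimate displayed above.
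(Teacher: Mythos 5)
Your argument is correct and is essentially the paper's own proof: the inequality $|C_n|^2=\sum_{m\neq n}\beta_m\ip{C_n}{C_m}\leq-\sum_{m\neq n}\beta_m\leq-1$ versus $|C_n|^2>-1$ is exactly the content of Lemma \ref{lemma:concave_hull}, and the input $\ip{C_i}{C_j}\leq-1$ from disjointness is Lemma \ref{lemma:intersection_angle}. The only cosmetic difference is that you work with the full convex hull in the affine plane $\{\ip{X}{F}=-1\}$, whereas the paper reduces (via Carath\'eodory) to $C_n$ being a nonnegative combination of three of the other cycles; both versions leave the identification of vertices of $\poly(C_1,\dotsc,C_N)$ with extreme rays equally implicit.
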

\begin{proof}
   Suppose otherwise. Then there are four cycles, w.l.o.g., $C_1$, $C_2$,
   $C_3$ and $C_4$, such that $C_4= \alpha_1 C_1 + \alpha_2 C_2 + \alpha_3 C_3$
   with $\alpha_1,\alpha_2,\alpha_3\geq0$. By Lemma \ref{lemma:concave_hull} there
   is an $n\in\{1,2,3\}$ such that $\ip{C_4}{C_n} > -1$. Hence, Lemma
   \ref{lemma:intersection_angle} implies that the discs of $C_4$ and $C_n$
   intersect. This contradicts our assumption.
\end{proof}

\subsection{The local Delaunay condition}
A \define{triangulated decorated hyperbolic quadrilateral} is the union of two
decorated hyperbolic triangles with disjoint interiors which share an edge and the
corresponding vertex cycles. For the rest of the section we refer to them simply as
decorated quadrilaterals. The two triangles give a \define{triangulation} of the
quadrilateral and their common edge is called \define{diagonal}. Combinatorially
there are two triangulation for each quadrilateral. This change of combinatorics is
called an \define{edge-flip}. The diagonal of a decorated quadrilateral is called
\define{flippable} if its edge-flip can be geometrically realised in $\HH$. It is
immediate that a diagonal is flippable iff the decorated quadrilateral is strictly
convex. Note that a decorated quadrilateral in the sense of
this section need not be convex. Hence it is not necessarily a decorated hyperbolic
$4$-gon as defined in Definition \ref{def:decorated_polygon}. Still, a decorated
quadrilateral is always strictly convex if it has no vertices contained in $\HH$.

\begin{definition}[local Delaunay condition]
   \label{def:local_delaunay_condition}
   Consider a decorated quadrilateral with vertex cycles
   $C_1, C_2, C_3, C_4\in\Sym(2)$ such that $C_2$ and $C_3$ belong to the
   diagonal. Denote by $F_{123}, F_{234}\in\Sym(2)$ the face-vectors.
   We say that the diagonal satisfies the \define{local Delaunay condition},
   or is \define{local Delaunay}, iff
   \begin{equation}
      \label{eq:local_delaunay_vectors}
      \ip{C_1}{F_{234}} \,\leq\, -1
      \qquad\text{and}\qquad
      \ip{C_4}{F_{123}} \,\leq\, -1.
   \end{equation}
\end{definition}

\begin{remark}
   \label{remark:geometric_interpretation_face_vector}
   Suppose that $|F_{123}|^2 > -1$. Then it represents a hyperbolic cycle
   which is orthogonal to $C_1$, $C_2$ and $C_3$. The proof of Lemma
   \ref{lemma:intersection_radical_lines} shows that the centre of $F_{123}$
   is the \enquote{intersection point} of the radical lines of the triangle
   corresponding to $F_{123}$. In addition, Lemma \ref{lemma:intersection_angle}
   shows that the local Delaunay is equivalent to $F_{123}$ intersecting
   $C_4$ at most orthogonally.
\end{remark}

In the following we are going to derive a way to determine the local Delaunay
condition just by intrinsic properties of the decorated quadrilateral. To this
end, again denote by $C_1$, $C_2$ and $C_3$ the vertex cycles of a decorated
hyperbolic triangle. For any permutation $(m,n,k)$ of $\{1,2,3\}$ the subspace
spanned by $C_m$ and $C_n$ corresponds to an edge of the triangle. Therefore,
there is a $L_{k}\in\Sym(2)$ with $|L_{k}|^2 = 1$ such that this subspace is given
by the complement of $L_{k}$ in $\Sym(2)$. The $L_{k}$ can be chosen in such a way
that $\ip{L_{k}}{C_k} < 0$. Denote by $r_n$ the radius and by $\epsilon_n$ the type
of the cycle represented by $C_n$. Furthermore, let $\theta_n$ be the
interior angle at the vertex $n$ and $d_n$ be the (oriented) distance
between the (auxiliary) centre of $C_n$ and the line $L_n$ (see
Figure \ref{fig:notation_triangle}). Note that the $d_n$ can be computed from the
edge-lengths.

\begin{figure}[h]
   \begin{picture}(385, 160)
      \put(115, 0){
         \includegraphics[width=0.4\textwidth]{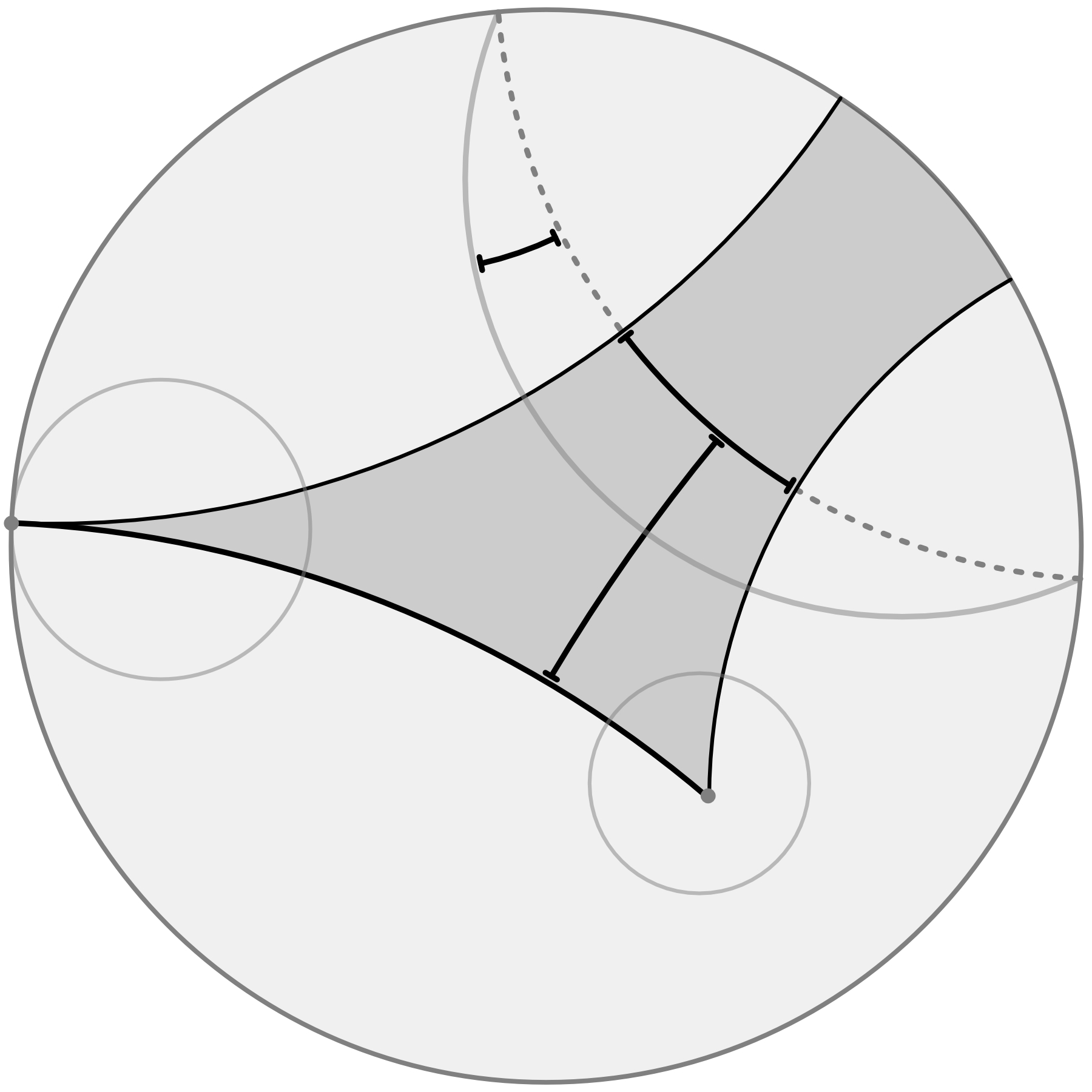}
      }
      \put(181,56){\small $L_n$}
      \put(196,78){\small $d_n$}
      \put(186,124){\small $r_n$}
      \put(218,99){\small $\theta_n$}
   \end{picture}
   \caption{Notation in a decorated hyperbolic triangle.}
   \label{fig:notation_triangle}
\end{figure}

\begin{lemma}
   \label{lemma:line_basis}
   The $L_{k}$ form a basis of $\Sym(2)$. Their dual basis is
   $-\tau_{\epsilon_k}(r_k)/\tau_{\epsilon_k}'(d_k)\,C_k$.
\end{lemma}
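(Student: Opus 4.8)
The plan is to reduce the whole statement to the single scalar identity $\ip{C_k}{L_k}=-\tau'_{\epsilon_k}(d_k)/\tau_{\epsilon_k}(r_k)$ and then to verify this by a normalisation and a direct computation, in the spirit of Lemmas \ref{lemma:radius_from_norm} and \ref{lemma:distance_from_product}. The linear-algebra part is quick: by definition $L_k$ spans the orthogonal complement of the edge-plane $\lin\{C_m,C_n\}$ with $\{m,n,k\}=\{1,2,3\}$, so $\ip{L_k}{C_j}=0$ for $j\neq k$, while $\ip{L_k}{C_k}<0$ by the choice made in the setup (in particular non-zero, as it must be, since $C_1,C_2,C_3$ form a basis of $\Sym(2)$ and the bilinear form is non-degenerate there). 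Hence, if $\sum_k a_kL_k=0$, pairing with $C_j$ gives $a_j\ip{L_j}{C_j}=0$ and so $a_j=0$; thus the $L_k$ are linearly independent and, $\Sym(2)$ being three-dimensional, a basis. Since the dual basis $\{L_k^{\ast}\}$ is determined by $\ip{L_j^{\ast}}{L_k}=\delta_{jk}$ and since $\ip{C_k}{L_j}=0$ for $j\neq k$, every scalar multiple $\lambda_kC_k$ already satisfies the off-diagonal conditions, so it only remains to choose $\lambda_k$ with $\lambda_k\ip{C_k}{L_k}=1$; by the displayed identity this forces $\lambda_k=-\tau_{\epsilon_k}(r_k)/\tau'_{\epsilon_k}(d_k)$, which is exactly the asserted dual basis.

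To establish the identity I would split on the vertex type $\epsilon_k\in\{-1,0,+1\}$ and apply a hyperbolic motion bringing $C_k$ into the normal form from the proof of Lemma \ref{lemma:radius_from_norm}: centre at $\ii$ for $\epsilon_k=-1$, centre-line meeting the imaginary axis orthogonally at $\ii$ for $\epsilon_k=+1$, and cusp at $\infty$ with $C_k$ and the auxiliary horocycle $H_k$ horizontal for $\epsilon_k=0$. In each case one reads off an explicit $\Sym(2)$-representative of $C_k$ — for example $\tfrac{1}{\cosh r_k}\bigl(\begin{smallmatrix}1&0\\0&1\end{smallmatrix}\bigr)$ when $\epsilon_k=-1$ — and, exploiting the symmetry of $C_k$ about its centre, one may take $L_k$ to be a single convenient unit vector of $\Sym(2)$ realising the oriented distance $d_k$. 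Expanding $\ip{C_k}{L_k}$ in the coordinates $-x_0y_0+x_1y_1+x_2y_2+x_3y_3$ and identifying hyperbolic distances through $\dist_{\HH}(p\ii,q\ii)=|\ln p-\ln q|$, exactly as in Lemma \ref{lemma:radius_from_norm}, then yields $-\tau'_{\epsilon_k}(d_k)/\tau_{\epsilon_k}(r_k)$. (Alternatively one can regard $L_k$ as a rescaled limit of the hypercycles about itself and read the formula off from Lemma \ref{lemma:distance_from_product}; the bookkeeping is essentially the same.)

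The only genuinely delicate point, I expect, is the sign: $d_k$ is oriented (Figure \ref{fig:notation_triangle}), $r_k$ carries the orientation of Definition \ref{def:modified_tangent_distance}, and $L_k$ is pinned down not by a geometric side-condition but by $\ip{L_k}{C_k}<0$, so one must check these conventions are mutually compatible in every case. Concretely, for $\epsilon_k=-1$ the naive computation gives $\ip{C_k}{L_k}=\sinh(d_k)/\cosh(r_k)$ for one of the two unit normals, and this becomes $-\sinh(d_k)/\cosh(r_k)$ precisely once the sign of $L_k$ is fixed by $\ip{L_k}{C_k}<0$, using that $d_k>0$ always holds for a circle by the convention of Definition \ref{def:modified_tangent_distance}; for $\epsilon_k=0$ and $\epsilon_k=+1$ the sign is automatic because $\tau'_{\epsilon_k}>0$ and $\tau_{\epsilon_k}(r_k)>0$. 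Once the sign is settled the identity is in hand, and the basis and dual-basis statements follow as in the first paragraph.
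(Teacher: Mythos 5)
Your proof is correct and follows essentially the same route as the paper: reduce everything to the single pairing $\ip{C_k}{L_k}$ using that $L_k$ annihilates $C_m,C_n$ for $m,n\neq k$, then evaluate that pairing (the paper simply cites Lemma \ref{lemma:distance_from_product} for the scale factor, which is the limit/normalisation computation you carry out explicitly). Your attention to the sign conventions for $d_k$ and the normalisation $\ip{L_k}{C_k}<0$ is exactly the point the paper leaves implicit, and your verification of it is right.
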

\begin{proof}
   By construction $\ip{L_k}{C_m} = 0 = \ip{L_k}{C_m}$. Hence,
   up to a scalar, $C_k$ is the dual vector of $L_k$. The scale factor
   follows from Lemma \ref{lemma:distance_from_product}.
\end{proof}

\begin{lemma}
   \label{lemma:angles_by_product}
   Let $L_1,L_2\in\Herm(2)$ be representatives of two hyperbolic lines.
   Suppose that $|L_n|^2=1$. Denote by $\theta$ the generalised angle between
   $L_1$ and $L_2$ and by $\epsilon$ the type of there common vertex. Then
   \begin{equation}
      |\rho_{\epsilon}'(\theta)| \eq |\ip{L_1}{L_2}|.
   \end{equation}
\end{lemma}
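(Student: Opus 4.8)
The plan is to reduce each of the three cases $\epsilon\in\{-1,0,1\}$ to a normal form by applying a hyperbolic motion, and then to compute both sides directly, exactly in the spirit of Lemma~\ref{lemma:radius_from_norm}. Throughout, recall that $\Sym(2)$ carries the restriction of $\ip{\cdot}{\cdot}$, which has signature $(2,1)$, so $L_1$ and $L_2$ are unit spacelike vectors; each is determined only up to sign by $|L_n|^2=1$, which is why the asserted identity carries absolute values. A first, orientation-free observation is that the type $\epsilon$ of the common vertex is already encoded by $\ip{L_1}{L_2}$: the plane $\lin\{L_1,L_2\}$ has Gramian determinant $1-\ip{L_1}{L_2}^2$, and by the argument behind Lemma~\ref{lemma:gramian_determinant} its sign decides whether $L_1$ and $L_2$ meet in $\HH$, are asymptotic, or are ultraparallel, that is whether $\epsilon=-1$, $0$ or $1$. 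This matches the three ranges $|\rho'_{-1}(\theta)|=|\cos\theta|<1$, $|\rho'_{0}(\theta)|=1$, $|\rho'_{1}(\theta)|=|\cosh\theta|>1$ of the claimed values.

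For $\epsilon=-1$ I would move the common point to $\ii$ and then rotate about $\ii$ so that $L_1$ is the $y$-axis and $L_2$ is the half-circle through $\ii$ centred at some $c\in\R$; writing down the Hermitian representatives as in Lemma~\ref{lemma:radius_from_norm} and normalising to unit norm, one finds $\ip{L_1}{L_2}=c/\sqrt{1+c^2}$. Since the half-plane model is conformal, the hyperbolic angle $\theta$ at $\ii$ equals the Euclidean angle of the tangents at $\ii$, whose cosine is the same expression $c/\sqrt{1+c^2}$, so $|\cos\theta|=|\ip{L_1}{L_2}|$. For $\epsilon=0$ I would move the common ideal point to $\infty$, so that $L_1,L_2$ become vertical lines; their unit representatives are immediate and their inner product has absolute value $1$, which settles the case because $\rho_0'\equiv1$ — the horocyclic arc-length $\theta$ (which depends on a choice of horocycle and is immaterial here) never enters. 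For $\epsilon=1$ I would move the common perpendicular geodesic to the $y$-axis, so that $L_1,L_2$ are half-circles about the origin of radii $R_1,R_2$, meeting the $y$-axis at $\ii R_1,\ii R_2$; hence $\theta=\dist_{\HH}(\ii R_1,\ii R_2)=|\ln(R_1/R_2)|$, while computing the unit representatives gives $\ip{L_1}{L_2}=\tfrac12\big(R_1/R_2+R_2/R_1\big)=\cosh\theta=\rho_1'(\theta)$.

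The argument is essentially bookkeeping and contains no real obstacle; the only delicate points are (i) checking that the three normal forms are attainable, which is the transitivity of $\PSL(2;\R)$ on points of $\HH$, on points of $\partial\HH$, and on unoriented geodesics respectively, and (ii) verifying that in each case the quantity $\theta$ on the right-hand side is indeed the one prescribed by the definition of the generalised angle — interior angle, horocyclic arc-length, or distance between the two lines. As an alternative to the explicit computation one could extract all three cases from the trigonometry of the hyperboloid model, as in \cite{Ratcliffe94}*{\S3.5} and \cite{Thurston97}*{Section~2.4}, just as in the proof of Lemma~\ref{lemma:cos_sin_laws}.
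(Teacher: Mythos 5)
Your proof is correct and takes essentially the same approach as the paper's: normalise by a hyperbolic motion to an explicit configuration in each of the three cases and evaluate $\ip{L_1}{L_2}$ directly. The only cosmetic difference is that the paper obtains $L_2$ by applying the standard elliptic, parabolic or hyperbolic one-parameter subgroup to $L_1=\left(\begin{smallmatrix}0&1\\1&0\end{smallmatrix}\right)$, whereas you write both lines in a per-case normal form; your computations (including the trivial $\epsilon=0$ case, where $\rho_0'\equiv 1$) all check out.
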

\begin{proof}
   We can normalise the first line to be the $y$-axis. Then its representative
   is given by $L_1=\left(\begin{smallmatrix}0&1\\1&0\end{smallmatrix}\right)$.
   The representative of the second line can now be obtained by applying
   the appropriate hyperbolic motion to $L_1$, i.e.,
   \begin{equation}
      \label{eq:standard_transformations}
      \begin{pmatrix}
         \sin(\theta) & 1-\cos(\theta)\\
         \cos(\theta)-1 & \sin(\theta)
      \end{pmatrix},
      \qquad
      \begin{pmatrix}
         1 & 0\\
         \theta & 1
      \end{pmatrix}
      \qquad\text{or}\qquad
      \begin{pmatrix}
         \sinh(\theta) & 1-\cosh(\theta)\\
         1-\cosh(\theta) & \sinh(\theta)
      \end{pmatrix}
   \end{equation}
   depending on whether the common vertex of $L_1$ and $L_2$ has type $-1$, $0$
   or $1$, respectively. The rest of the proof follows by direct computation.
\end{proof}

\begin{lemma}
   \label{lemma:tilds}
   Consider a decorated hyperbolic triangle with face-vector $F$. Define
   $t_n \coloneq \ip{F}{L_n}$. The $t_n$ can be computed by
   \begin{equation}
      \begin{pmatrix}
         t_1\\ t_2\\ t_3
      \end{pmatrix}
      \eq
      \begin{pmatrix}
         1 & -\rho_{\epsilon_3}'(\theta_3) & -\rho_{\epsilon_2}'(\theta_2)\\
         -\rho_{\epsilon_3}'(\theta_3) & 1 & -\rho_{\epsilon_1}'(\theta_1)\\
         -\rho_{\epsilon_2}'(\theta_2) & -\rho_{\epsilon_1}'(\theta_1) & 1
      \end{pmatrix}
      \begin{pmatrix}
         \tau_{\epsilon_1}(r_1)/\tau_{\epsilon_1}'(d_1)\\
         \tau_{\epsilon_2}(r_2)/\tau_{\epsilon_2}'(d_2)\\
         \tau_{\epsilon_3}(r_3)/\tau_{\epsilon_3}'(d_3)
      \end{pmatrix}.
   \end{equation}
\end{lemma}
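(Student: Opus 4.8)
The plan is to express the face-vector $F$ in the basis $\{L_1,L_2,L_3\}$ of $\Sym(2)$ furnished by Lemma~\ref{lemma:line_basis}, and then read off the coordinates $t_n=\ip{F}{L_n}$ by pairing with the dual basis. By Lemma~\ref{lemma:line_basis} the basis dual to $\{L_k\}$ is $\{-\tau_{\epsilon_k}(r_k)/\tau_{\epsilon_k}'(d_k)\,C_k\}$, so we may write
\[
   F \;=\; -\sum_{k=1}^{3}\frac{\tau_{\epsilon_k}(r_k)}{\tau_{\epsilon_k}'(d_k)}\,\ip{F}{C_k}\,L_k
   \;=\; \sum_{k=1}^{3}\frac{\tau_{\epsilon_k}(r_k)}{\tau_{\epsilon_k}'(d_k)}\,L_k,
\]
using the defining property $\ip{F}{C_k}=-1$ of the face-vector. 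Applying $\ip{\cdot}{L_n}$ to this identity gives
\[
   t_n \;=\; \ip{F}{L_n} \;=\; \sum_{k=1}^{3}\frac{\tau_{\epsilon_k}(r_k)}{\tau_{\epsilon_k}'(d_k)}\,\ip{L_k}{L_n}.
\]
Thus the claimed matrix identity is exactly this relation provided the Gram matrix $\bigl(\ip{L_k}{L_n}\bigr)_{k,n}$ equals the stated $3\times3$ matrix.

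So the remaining task is to compute the entries $\ip{L_k}{L_n}$. The diagonal entries are $\ip{L_n}{L_n}=|L_n|^2=1$ by the normalisation chosen for the $L_n$. For the off-diagonal entry $\ip{L_m}{L_n}$ with $m\neq n$, observe that $L_m$ and $L_n$ represent the two hyperbolic lines meeting at the vertex $C_k$ (where $\{k\}=\{1,2,3\}\setminus\{m,n\}$), and both are normalised to $|L|^2=1$. Lemma~\ref{lemma:angles_by_product} then gives $|\ip{L_m}{L_n}|=|\rho_{\epsilon_k}'(\theta_k)|$. It remains to fix the sign: I will argue that the correct sign is negative, i.e. $\ip{L_m}{L_n}=-\rho_{\epsilon_k}'(\theta_k)$, so that the Gram matrix has the form displayed in the statement. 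This sign is pinned down by the orientation conventions for the $L_n$ (namely $\ip{L_k}{C_k}<0$, which orients each $L_k$ so that the triangle lies on the side $\{\ip{X}{L_k}\leq 0\}$); concretely one checks it in the normalised model of Lemma~\ref{lemma:angles_by_product}, where $L_1=\left(\begin{smallmatrix}0&1\\1&0\end{smallmatrix}\right)$ and $L_2$ is one of the matrices in \eqref{eq:standard_transformations}, and verifies that the consistent orientation choice yields $\ip{L_1}{L_2}=-\rho_{\epsilon}'(\theta)$ (note $\rho_{-1}'=\cos$, $\rho_0'\equiv1$, $\rho_1'=\cosh$, all of which are the quantities appearing in the matrix).

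The main obstacle is the sign bookkeeping in the previous step: Lemma~\ref{lemma:angles_by_product} only determines $\ip{L_m}{L_n}$ up to sign, and one must carefully track how the orientation constraint $\ip{L_k}{C_k}<0$ interacts with the sign of $d_k$ (which may be negative when the centre of $C_k$ lies on the far side of $L_k$) and with the generalised-angle conventions at cusps and at line-type vertices. Once the sign is settled uniformly across the three vertex types $\epsilon_k\in\{-1,0,1\}$, assembling the matrix equation is immediate. A clean way to handle this is to note that $F=\sum_k \tau_{\epsilon_k}(r_k)/\tau_{\epsilon_k}'(d_k)\,L_k$ has $\ip{F}{C_k}=-1$ automatically by Lemma~\ref{lemma:line_basis}, so \emph{any} sign error would already be visible as a contradiction with the defining equation of $F$; this consistency check essentially forces the negative signs and completes the proof.
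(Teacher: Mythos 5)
Your proposal is correct and essentially identical to the paper's proof: both expand $F=\sum_k\tau_{\epsilon_k}(r_k)/\tau_{\epsilon_k}'(d_k)\,L_k$ via the dual basis of Lemma~\ref{lemma:line_basis} together with the normalisation $\ip{F}{C_k}=-1$, and then read off the Gram matrix $\ip{L_k}{L_n}$ from Lemma~\ref{lemma:angles_by_product}, with the sign fixed by the orientation convention $\ip{L_k}{C_k}<0$. The only weak point is your closing \enquote{consistency check}: since $\ip{L_k}{C_m}=0$ for $m\neq k$, the identity $\ip{F}{C_k}=-1$ involves only $\ip{L_k}{C_k}$ and is blind to the off-diagonal entries $\ip{L_m}{L_n}$, so it cannot force their signs --- the direct verification in the normalised model of \eqref{eq:standard_transformations}, which you also propose, is what is actually needed (and is all the paper does).
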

\begin{proof}
   Using the defining property of the face-vector and Lemma \ref{lemma:line_basis}
   we see
   \begin{equation}
      \label{eq:face_vector_by_edges}
      F \eq \sum_{n=1}^3
         \ip{-\frac{\tau_{\epsilon_n}(r_n)}{\tau_{\epsilon_n}'(d_n)}C_n}{F}\,L_n
      \eq\sum_{n=1}^3\frac{\tau_{\epsilon_n}(r_n)}{\tau_{\epsilon_n}'(d_n)}\,L_n.
   \end{equation}
   The result follows from Lemma \ref{lemma:angles_by_product}. Note that
   the sign of the generalised angles follows from our choice of $L_n$.
\end{proof}

\begin{definition}[tilts of a decorated hyperbolic triangle]
   \label{def:tilds}
   The $t_n$ in the Lemma \ref{lemma:tilds} above is called the
   \define{tilt} of the decorated hyperbolic triangle along the edge $L_n$.
\end{definition}

\begin{remark}
   \label{remark:geometric_interpretation_tild}
   The tilts have a special geometric meaning if $|F|^2 > -1$.
   In this case $F$ represents the unique hyperbolic cycle orthogonal to all
   vertex-cycles of the decorated triangle. Thus, by Lemma
   \ref{lemma:distance_from_product}, the tilt $t_n$ is given by the (oriented)
   distance of the centre of $F$ to $L_n$ divided by the radius of $F$. By Remark
   \ref{remark:geometric_interpretation_face_vector}, this distance is
   measured along the radical line belonging to the edge $L_n$.
\end{remark}

\begin{proposition}[local Delaunay condition]
   \label{prop:local_delaunay_condition}
   Given a decorated quadrilateral. Let $t_{\mathrm{left}}$ and
   $t_{\mathrm{right}}$ be the tilts along its diagonal relative to the two
   triangles constituting the quadrilateral. Then the interior edge is locally
   Delaunay iff its tilts satisfy
   \begin{equation}
      t_{\mathrm{left}} \plus t_{\mathrm{right}} \,\leq\, 0.
   \end{equation}
\end{proposition}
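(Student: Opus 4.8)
The plan is to translate the tilt condition $t_{\mathrm{left}}+t_{\mathrm{right}}\leq 0$ into the vector-geometric local Delaunay condition $\ip{C_1}{F_{234}}\leq-1$ and $\ip{C_4}{F_{123}}\leq-1$ of Definition~\ref{def:local_delaunay_condition}. Label the quadrilateral so that $C_2,C_3$ span the diagonal, the left triangle has vertex cycles $C_1,C_2,C_3$ with face-vector $F_{123}$, and the right triangle has $C_2,C_3,C_4$ with face-vector $F_{234}$. Let $L$ be the unit vector in $\Sym(2)$ whose complement is $\lin\{C_2,C_3\}$, chosen (say) so that $\ip{L}{C_1}<0$; then $\ip{L}{C_4}>0$, since $C_4$ lies on the opposite side of the diagonal. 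By definition $t_{\mathrm{left}}=\ip{F_{123}}{L}$, while the tilt of the right triangle along the diagonal is computed with the oppositely oriented normal, so $t_{\mathrm{right}}=-\ip{F_{234}}{L}$. Hence the claim reduces to showing
\[
   \ip{F_{123}}{L}\leq\ip{F_{234}}{L}
   \quad\Longleftrightarrow\quad
   \ip{C_1}{F_{234}}\leq-1 \text{ and } \ip{C_4}{F_{123}}\leq-1,
\]
and in fact I expect the single inequality $\ip{F_{123}-F_{234}}{L}\leq0$ to be equivalent to \emph{each} of the two displayed inequalities in~\eqref{eq:local_delaunay_vectors}, so that the two conditions in Definition~\ref{def:local_delaunay_condition} are automatically equivalent to one another.

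**The key computation.** The crucial observation is that $F_{123}$ and $F_{234}$ are both characterised by $\ip{F}{C_2}=\ip{F}{C_3}=-1$; they differ only in the third normalisation ($\ip{F_{123}}{C_1}=-1$ versus $\ip{F_{234}}{C_4}=-1$). Therefore $F_{123}-F_{234}$ is orthogonal to both $C_2$ and $C_3$, i.e.\ it is a scalar multiple of $L$: write $F_{123}-F_{234}=\lambda L$ with $\lambda=\ip{F_{123}-F_{234}}{L}$ (using $|L|^2=1$). Pairing with $C_1$ gives $-1-\ip{F_{234}}{C_1}=\lambda\ip{L}{C_1}$, and pairing with $C_4$ gives $\ip{F_{123}}{C_4}-(-1)=\lambda\ip{L}{C_4}$. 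Since $\ip{L}{C_1}<0$ and $\ip{L}{C_4}>0$, we conclude
\[
   \ip{C_1}{F_{234}}\leq-1 \iff \lambda\leq0,
   \qquad
   \ip{C_4}{F_{123}}\leq-1 \iff \lambda\leq0.
\]
Combined with $t_{\mathrm{left}}+t_{\mathrm{right}}=\ip{F_{123}}{L}-\ip{F_{234}}{L}=\lambda$, this gives exactly the proposition.

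**Possible obstacle and how to handle it.** The main point requiring care is the sign bookkeeping: I must check that the edge-normal $L$ used to \emph{define} $t_{\mathrm{right}}$ in Lemma~\ref{lemma:tilds}/Definition~\ref{def:tilds} (where, for the right triangle on its own, one picks the $\Sym(2)$-normal pointing \emph{into} that triangle, toward $C_4$) is indeed the negative of the $L$ used for the left triangle. This is where the convention $\ip{L_k}{C_k}<0$ from the paragraph preceding Lemma~\ref{lemma:line_basis} enters: for the left triangle the diagonal-normal satisfies $\ip{\cdot}{C_1}<0$, for the right triangle it satisfies $\ip{\cdot}{C_4}<0$, and since $C_1,C_4$ are on opposite sides of the diagonal line these two unit normals are genuinely opposite. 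Once that sign is pinned down, the rest is the short linear-algebra argument above; no case distinction on vertex types $\epsilon_n$ is needed, since everything is phrased in terms of the intrinsic vectors $C_n, F, L$ and the bilinear form. A remark at the end can note that we have simultaneously shown the two inequalities in~\eqref{eq:local_delaunay_vectors} are equivalent, which also re-proves (via Remark~\ref{remark:geometric_interpretation_face_vector}) the symmetry of the Delaunay property under swapping the two triangles.
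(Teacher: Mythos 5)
Your proof is correct and follows essentially the same route as the paper's: both work with the unit normal $L\in\Sym(2)$ to $\lin\{C_2,C_3\}$, use the sign conventions $\ip{L}{C_1}<0$, $\ip{L}{C_4}>0$, and reduce the inequalities $\ip{C_1}{F_{234}}\leq-1$ and $\ip{C_4}{F_{123}}\leq-1$ to the sign of the component of the face-vectors along $L$. Your formulation $F_{123}-F_{234}=\lambda L$ with $\lambda=t_{\mathrm{left}}+t_{\mathrm{right}}$ is a mild streamlining of the paper's expansion of $C_1$, $C_4$ and the face-vectors in the basis $\{L,C_2,C_3\}$, with the small added benefit of making explicit that the two inequalities in Definition~\ref{def:local_delaunay_condition} are each equivalent to $\lambda\leq0$, whereas the paper verifies only one and leaves the symmetric case implicit.
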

\begin{proof}
   This proof follows the ideas in \cite{Ushijima02}. Let $C_1,\dotsc,C_4\in\Sym(2)$
   denote the vertex cycles of the decorated
   quadrilateral such that $C_1$, $C_2$ and $C_3$ belong to the left and
   $C_2$, $C_3$ and $C_4$ to the right triangle. Since the subspace spanned
   by $C_2$ and $C_3$ corresponds to the common edge of the triangles there
   is an $L\in\Sym(2)$ with $|L|^2 = 1$ and $\ip{L}{C_2} = 0 = \ip{L}{C_3}$.
   We can normalise $L$ such that $\ip{L}{C_1} < 0$ and $\ip{L}{C_4} > 0$.
   It follows that $\{L, C_2, C_3\}$ is a basis of $\Sym(2)$. Using this
   basis we can represent the remaining vertex cycles as linear combinations:
   \begin{equation}
      C_1 \eq \alpha L \plus \alpha_2 C_2 \plus \alpha_3 C_3
      \qquad\text{and}\qquad
      C_4 \eq \tilde{\alpha} L\plus \tilde{\alpha}_2 C_2\plus \tilde{\alpha}_3 C_3.
   \end{equation}
   Note that $\tilde{\alpha}>0$ by our choice of $L$. Furthermore, we get the
   representation
   \begin{equation}
      F_{\mathrm{left}} \eq \beta L \plus \beta_2 C_2 \plus \beta_3 C_3
      \qquad\text{and}\qquad
      F_{\mathrm{right}}
         \eq \tilde{\beta} L \plus \tilde{\beta}_2 C_2 \plus \tilde{\beta}_3 C_3
   \end{equation}
   for the face-vectors $F_{\mathrm{left}}$ and $F_{\mathrm{right}}$ of
   the left and right triangle, respectively. By the defining property of the
   face-vectors we see that
   \(
      -1
      = \ip{C_i}{F_{\mathrm{right}}}
      = \tilde{\beta_2}\ip{C_i}{C_2} + \tilde{\beta_3}\ip{C_i}{C_3}
   \)
   for $i=2,3$. A similar equation holds for $\ip{C_i}{F_{\mathrm{left}}}$.
   Hence, we compute
   \begin{align}
      -1
      &\eq \ip{C_4}{F_{\mathrm{right}}}\\
      &\eq \tilde{\alpha}\tilde{\beta}
         \plus \tilde{\alpha}_2(\tilde{\beta}_2\ip{C_2}{C_2}
            + \tilde{\beta}_3\ip{C_2}{C_3})
         \plus \tilde{\alpha}_3(\tilde{\beta}_2\ip{C_3}{C_2}
            + \tilde{\beta}_3\ip{C_3}{C_3})\\
      &\eq \tilde{\alpha}\tilde{\beta} \minus\tilde{\alpha}_2 \minus\tilde{\alpha}_3.
   \end{align}
   Finally, we see that the local Delaunay condition
   \eqref{eq:local_delaunay_vectors} is given by
   \begin{equation}
      -1
      \,\geq\, \ip{C_4}{F_{\mathrm{left}}}
      \eq \tilde{\alpha}\beta \minus\tilde{\alpha}_2 \minus\tilde{\alpha}_3
      \eq \tilde{\alpha}(\beta-\tilde{\beta}) \minus1.
   \end{equation}
   Observing that $\beta=t_{\mathrm{left}}$ and $\tilde{\beta}=-t_{\mathrm{right}}$
   yields the claim.
\end{proof}

\begin{example}
   \label{ex:isosceles_triangle}
   Consider a decorated quadrilateral by gluing two copies of an
   isosceles decorated triangle along one of their legs, respectively.
   Note that by construction the vertex cycles at the base are of the
   same type and radius. Suppose that the radii satisfy the condition given in
   Corollary \ref{cor:radical_line_intersection}. By symmetry, the radical line
   of the cycles at the base and the altitude erected over the base of the isosceles
   triangle coincide. Hence, the tilt with respect to one leg, and thus both, is
   negative (see Remark \ref{remark:geometric_interpretation_tild} and Figure
   \ref{fig:isosceles_triangle}, left). It follows that the diagonal satisfies the
   local Delaunay condition.
\end{example}

\begin{figure}[h]
   \begin{picture}(385, 160)
      \put(7, 0){
         \includegraphics[width=0.4\textwidth]{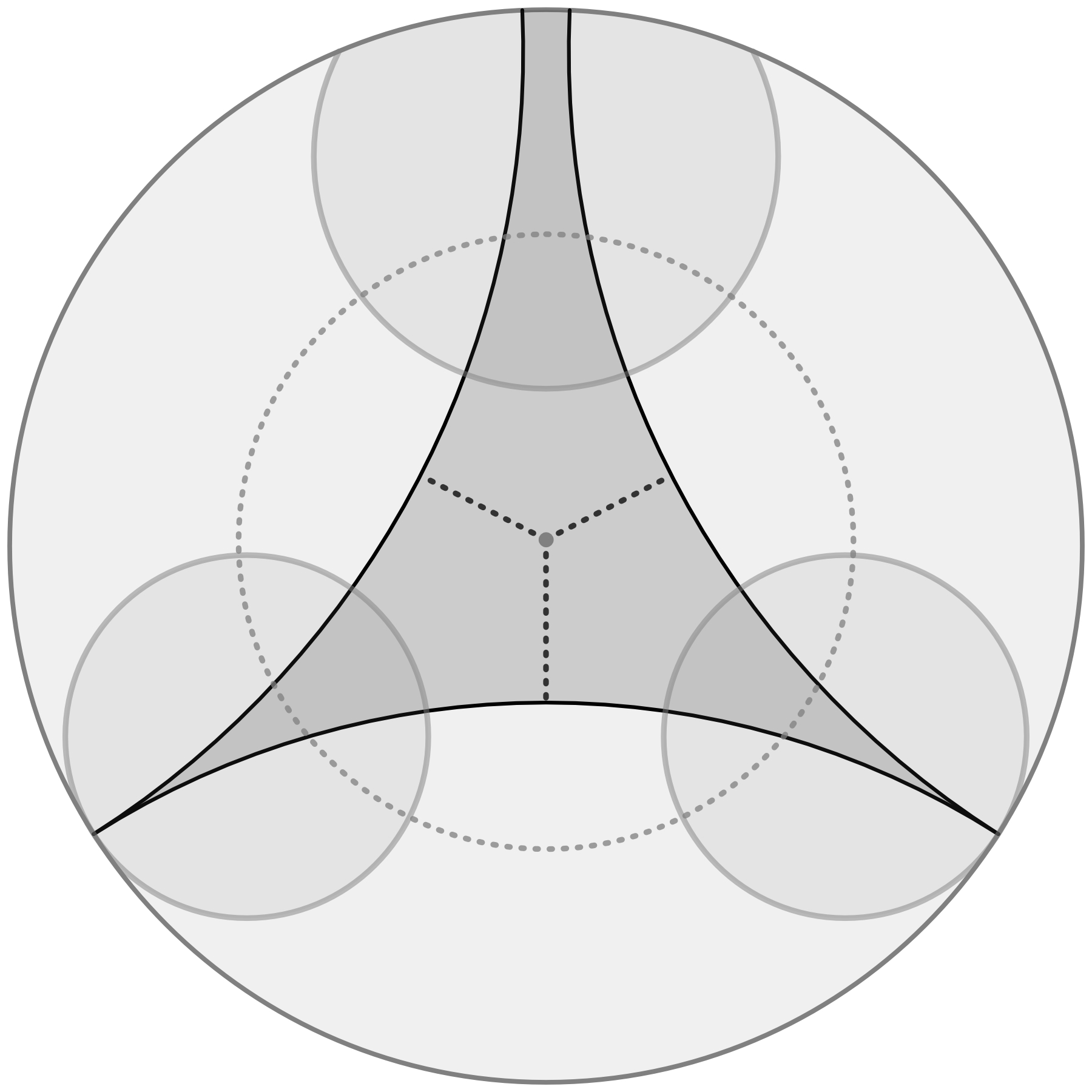}
      }
      \put(210, 0){
         \includegraphics[width=0.4\textwidth]{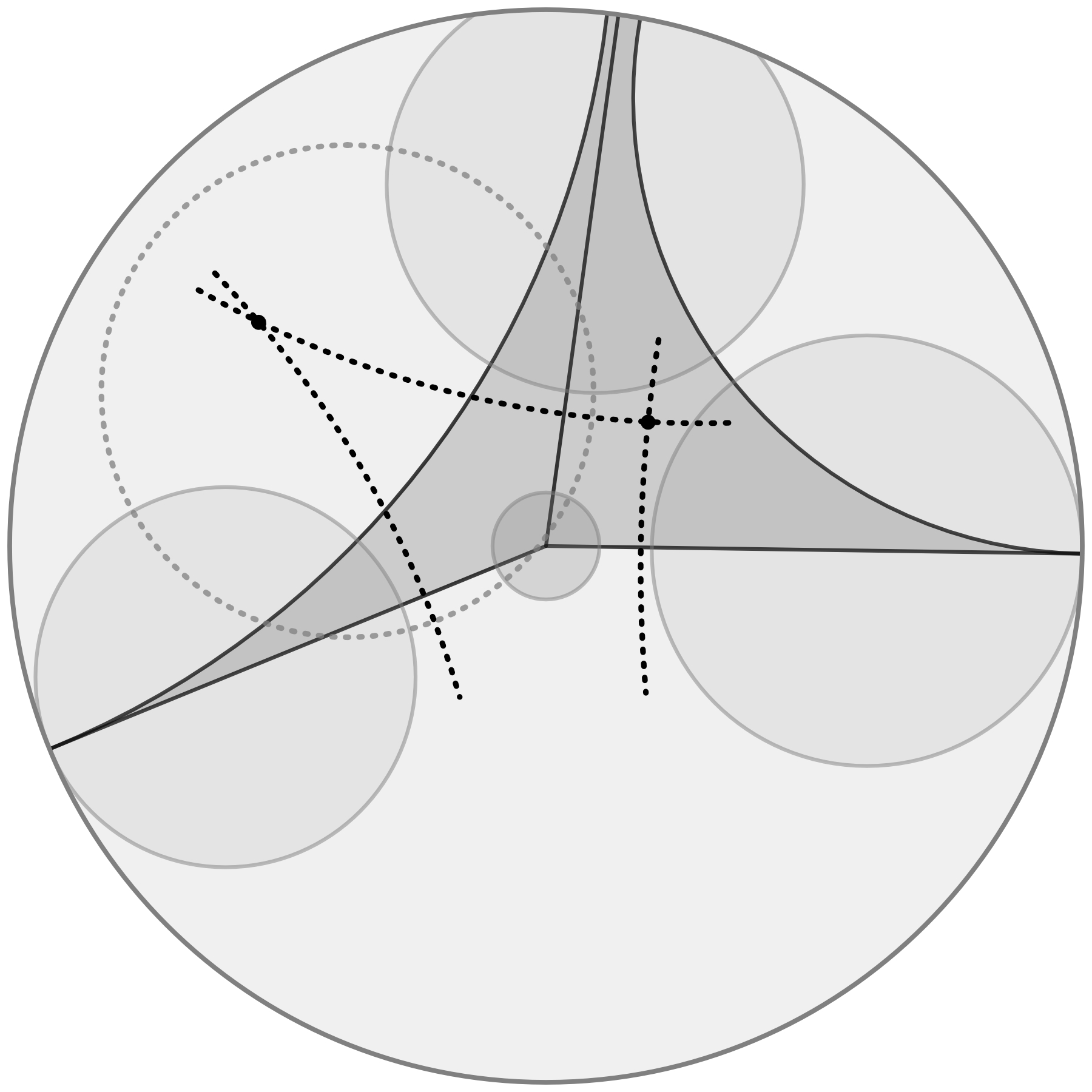}
      }
      \put(210, 42){\small $C_1$}
      \put(288, 63){\small $C_2$}
      \put(299, 156){\small $C_3$}
      \put(369, 74){\small $C_4$}
      \put(249, 112){\small $F_{\lambda_1}$}
      \put(306, 86){\small $F_{\lambda_4}$}
   \end{picture}
   \caption{\textsc{Left:} an isosceles hyperbolic triangle. The centre of the
      hyperbolic cycle which is orthogonal to all vertex cycles lies always to the
      left of the legs of the triangle. \textsc{Right:} a concave decorated
      quadrilateral with radical lines (dashed). Corollary
      \ref{cor:radical_line_intersection} provides conditions for the radical lines to
      intersect the interior of the edges. This guarantees that the diagonal
      satisfies the local Delaunay condition.
   }
   \label{fig:isosceles_triangle}
\end{figure}

\begin{proposition}
   \label{prop:flippable}
   Suppose a quadrilateral is decorated such that the radii satisfy the condition
   given in Corollary \ref{cor:radical_line_intersection}. Then it can always be
   triangulated such that its diagonal satisfies the local Delaunay condition.
   Equivalently, if the diagonal is not local Delaunay then it is flippable and the
   new diagonal is local Delaunay.
\end{proposition}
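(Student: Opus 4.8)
The plan is to compare the two triangulations of the quadrilateral: the given one, say $A$, with diagonal $\{C_2,C_3\}$ and triangles $C_1C_2C_3,\ C_2C_3C_4$, and the other, $B$, with diagonal $\{C_1,C_4\}$ and triangles $C_1C_2C_4,\ C_1C_3C_4$. I shall prove the equivalent second formulation, from which the first is immediate: so suppose the diagonal of $A$ is not local Delaunay, and let us show that the quadrilateral is strictly convex---hence the diagonal flippable---and that after the flip the diagonal of $B$ is local Delaunay.

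The bookkeeping device is the linear dependency $\mu_1C_1+\mu_2C_2+\mu_3C_3+\mu_4C_4=0$ among the four vertex cycles in the three-dimensional space $\Sym(2)$, together with $S=\mu_1+\mu_2+\mu_3+\mu_4$. Pairing this relation against the face-vector $F_{123}$ (using $\ip{F_{123}}{C_i}=-1$ for $i\in\{1,2,3\}$) gives $\ip{C_4}{F_{123}}=S/\mu_4-1$, and similarly $\ip{C_1}{F_{234}}=S/\mu_1-1$, $\ip{C_3}{F_{124}}=S/\mu_3-1$, $\ip{C_2}{F_{134}}=S/\mu_2-1$; so by Definition \ref{def:local_delaunay_condition} the diagonal of $A$ is local Delaunay iff $S/\mu_1\le0$ and $S/\mu_4\le0$, and that of $B$ iff $S/\mu_2\le0$ and $S/\mu_3\le0$. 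If the quadrilateral is strictly convex, the argument of Proposition \ref{prop:local_delaunay_condition} (writing $C_1,C_4$ in the basis $\{L,C_2,C_3\}$, with $L$ the diagonal's line normalised by $\ip{L}{C_1}<0<\ip{L}{C_4}$) shows $\mu_1,\mu_4$ have equal sign, and the symmetric argument for the other diagonal does the same for $\mu_2,\mu_3$; were all four equal, the cone spanned by $C_1,\dots,C_4$ would be all of $\Sym(2)$ and $\poly(C_1,\dots,C_4)$ all of $\HH$, which is absurd, so we may take $\mu_1,\mu_4>0>\mu_2,\mu_3$. Then the diagonal of $A$ is local Delaunay iff $S\le0$ and that of $B$ iff $S\ge0$; at least one holds, and if $A$ is not, then $S>0$, so $B$ is---and, being the edge-flip of a strictly convex quadrilateral, it is realised in $\HH$.

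It remains to exclude the non-convex case, i.e.\ to show that if the quadrilateral is not strictly convex then, under the hypothesis on the radii, the diagonal of $A$ is local Delaunay. A decorated quadrilateral is automatically strictly convex unless some vertex lies in $\HH$, i.e.\ is a circle; and since its two triangles have disjoint interiors, the unique reflex vertex is then one of the diagonal endpoints, say $C_2$, and it is a circle lying in the cone spanned by $C_1,C_3,C_4$, so $C_2=\sum_{u\in\{1,3,4\}}\alpha_uC_u$ with all $\alpha_u>0$. Reading off the dependency coefficients $\mu_2=-1$, $\mu_u=\alpha_u$, we get $S=\sum_u\alpha_u-1$, so by the criterion above, since $\mu_1,\mu_4>0$, the diagonal of $A$ is local Delaunay iff $S\le0$, i.e.\ iff $\sum_u\alpha_u\le1$. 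Now pair $C_2=\sum_u\alpha_uC_u$ with $C_2$: evaluating $|C_2|^2$ by Lemma \ref{lemma:radius_from_norm} and each $\ip{C_u}{C_2}$ by Lemma \ref{lemma:distance_from_product} (using $\tau'_{-\epsilon}=\tau_\epsilon$) collapses the identity to $\sum_u\alpha_u\,\tandist_{c_2}(C_u)=1/\cosh r_2$. On the other hand the radius hypothesis, read through Corollary \ref{cor:radical_line_intersection}, says precisely that the radical line of each edge at $C_2$ meets its interior, i.e.\ $\tandist_{c_2}(C_u)>1/\cosh r_2$ for every $u\in\{1,3,4\}$. Hence $1/\cosh r_2=\sum_u\alpha_u\,\tandist_{c_2}(C_u)>(\sum_u\alpha_u)/\cosh r_2$, so $\sum_u\alpha_u<1$ and the diagonal of $A$ is local Delaunay. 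This is the algebraic content of Example \ref{ex:isosceles_triangle} and Figure \ref{fig:isosceles_triangle}.

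Combining: if the diagonal of $A$ is not local Delaunay, the previous paragraph excludes the non-convex case, so the quadrilateral is strictly convex, the diagonal is flippable, and the convex-case analysis shows that the new diagonal of $B$ is local Delaunay; the first formulation follows immediately. The one step demanding genuine care is the sign bookkeeping---orienting $\dist_{\HH}$ and the tilts as in Definition \ref{def:modified_tangent_distance}, Lemma \ref{lemma:tilds} and Corollary \ref{cor:radical_line_intersection} so that all the inequalities above point the right way, and pinning down the sign pattern of $(\mu_1,\dots,\mu_4)$ in the strictly convex case.
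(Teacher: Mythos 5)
Your proof is correct and, at the level of strategy, parallel to the paper's: both split into the strictly convex and the non-convex case, both treat the convex case via the tetrahedron the four cycles span in $\Sym(2)$, and both reduce the non-convex case to Corollary \ref{cor:radical_line_intersection}. The execution differs in two useful ways. In the convex case the paper simply invokes the lower/upper convex hull of the affine tetrahedron; you make this quantitative through the Radon dependency $\sum_i\mu_iC_i=0$ and the identity $\ip{C_4}{F_{123}}=S/\mu_4-1$, which turns both halves of the Delaunay condition into the single sign of $S$ against the sign pattern $(+,-,-,+)$ of the $\mu_i$. In the non-convex case the paper argues geometrically with the pencil $F_\lambda=F_0+\lambda L_{23}$ of face-vectors, the radical lines meeting the interiors of the edges at the reflex vertex, and the angle sum $\geq\pi$ there; you instead pair the dependency with $C_2$ itself and collapse it, via Lemmas \ref{lemma:radius_from_norm} and \ref{lemma:distance_from_product}, to $\sum_u\alpha_u\,\tandist_{c_2}(C_u)=1/\cosh r_2$, so that the hypothesis $\tandist_{c_2}(C_u)>1/\cosh r_2$ immediately forces $\sum_u\alpha_u<1$, i.e.\ $S<0$. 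That is a genuinely different, and arguably more transparent, derivation of the same conclusion; it also makes visible why properness is exactly the right hypothesis. Two small points to tighten: (i) excluding the all-positive sign pattern is cleaner if you pair the dependency with the line $L_{13}$ supporting a boundary edge (its product with $C_2$ and $C_4$ is strictly negative for a strictly convex quadrilateral, giving $\mu_2\mu_4<0$ directly), rather than appealing to $\poly(C_1,\dotsc,C_4)=\HH$ being absurd, which tacitly uses that $\poly$ of a strictly convex quadrilateral is cut out by its four boundary half-planes; (ii) when the reflex angle equals exactly $\pi$ the coefficient of the opposite vertex in $C_2=\sum_u\alpha_uC_u$ degenerates to $0$, but your inequality still closes with $\alpha_u\geq0$, so this boundary case should be mentioned rather than absorbed into ``all $\alpha_u>0$''.
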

\begin{proof}
   The vertex cycles give rise to an affine tetrahedron in $\Sym(2)$.
   A triangulation of the decorated quadrilateral whose diagonal is local
   Delaunay corresponds to the lower convex hull of this tetrahedron.
   If the decorated quadrilateral is strictly convex then the lower and upper
   convex hulls project to the two possible triangulations of the quadrilateral.
   Hence one of them is locally Delaunay.

   Should the quadrilateral be not strictly convex then it possesses only one
   (geometric) triangulation. Let $C_1,\dotsc,C_4\in\Sym(2)$ denote the vertex cycles
   of the decorated quadrilateral as indicated in Figure
   \ref{fig:isosceles_triangle}, right. The dual pencil to the pencil spanned by
   $C_2$ and $C_3$ is given by
   \(
      F_{\lambda} \coloneq F_0 \,+\, \lambda\, L_{23},
   \)
   where $L_{23}$ is the line supporting the diagonal such that $\ip{L_{23}}{C_1}<0$
   and $F_0\in\Sym(2)$ satisfies $\ip{C_2}{F_0}=\ip{C_3}{F_0}=\ip{L_{23}}{F_0}=0$.
   There are $\lambda_n$, $n\in\{1,4\}$, such that $F_{\lambda_n}$ is the face-vector
   of the left or right triangle, respectively. Equivalently,
   $\ip{C_n}{F_{\lambda_n}}=-1$. It follows that $\ip{C_1}{F_{\lambda}}<-1$ for all
   $\lambda>\lambda_1$. Corollary \ref{cor:radical_line_intersection} grants that the
   radical lines intersect their corresponding edges. Furthermore, the diagonal of
   the triangulation is incident to a vertex of type $-1$ which has an interior
   angle sum $\geq\pi$. Thus, using Lemma \ref{cor:radical_line_intersection},
   we deduce $\lambda_2>\lambda_1$.
\end{proof}

\section{Decorated surfaces and their tessellations}
\label{sec:global}
\subsection{Decorated hyperbolic surfaces of finite type}
Let $\csurf$ be a closed orientable surface, that is, a closed orientable
$2$-manifold, and $\verts\subset\csurf$ a finite set of points partitioned into
$\verts_{-1}\cup\verts_{0}\cup\verts_{1}=\verts$. This partition determines a
\define{type} $\epsilon_v\in\{-1, 0, 1\}$ for each point $v\in\verts$. Note that
it is allowed for some $V_{\epsilon}$ to be empty. A complete path metric
$\dist_{\surf}$ on $\surf\coloneq\csurf\setminus(\verts_{0}\cup\verts_{1})$ is
\define{hyperbolic} if there is a cell-complex homeomorphic to $\csurf$ with
$0$-cells given by $V$ such that each open $2$-cell endowed with the restriction of
$\dist_{\surf}$ is isometric to a hyperbolic polygon whose vertices have the same
type as the corresponding points in $V$. For more information on path metrics we
refer the reader to \cites{BridsonHaefliger99,CooperEtAl00}. We call $\surf$
together with $\dist_{\surf}$ a \define{hyperbolic surface of finite type}, or short
\define{hyperbolic surface}. The restriction $\trunc(\surf)\subseteq\surf$ such
that each restricted $2$-cell is isometric to the corresponding truncated
hyperbolic polygon is the \define{truncation of $\surf$}.

The $1$-cells of the cell-complex above straighten to geodesics in $\surf$.
Therefore, we call it a \define{geodesic tessellation} of $\surf$. In general
there are infinitely many geodesic tessellations for a given hyperbolic surface.
If each $2$-cell is isometric to a hyperbolic triangle we call the
tessellation a \define{triangulation}. We also refer to the $0$-cells as
\define{vertices}, the $1$-cells as \define{edges} and the $2$-cells as
\define{faces}. Finally, a \define{decoration} of a hyperbolic surface is a
choice of decoration for each face such that it is consistent along the common
edges of each pair of neighbouring faces (more details are given in section
\ref{sec:config_space}). Note that a decoration is independent of the tessellation
since it can be completely described by the path metric $\dist_{\surf}$.

\begin{example}
   \label{example:fuchsian_group}
   Let $\Fgroup < \PSL(2;\R)$ be a finitely generated non-elementary
   Fuchsian group, i.e., it has a finite-sided fundamental domain (see
   \cite{Beardon83}*{\S 10.1}). The quotient $\surf\coloneq\HH/\Fgroup$ is a
   hyperbolic surface of finite type  (see Figure \ref{fig:surface_from_group}).
   A triangulation of $\surf$ can be obtained
   by triangulating the fundamental domain with hyperbolic triangles. Using the
   Beltrami-Klein model this is just triangulating a finite-sided convex
   polygon in the Euclidean sense. Identifying the sides of this Euclidean polygon
   according to the action of $\Fgroup$ we obtain a closed surface homeomorphic to
   $\csurf$. If we decorate these triangles consistently with
   the action of $\Fgroup$, i.e., identified vertices get cycles with the
   same radius, we obtain a decoration of $\surf$.
\end{example}

A decoration introduces about each vertex $v\in\verts$ of a hyperbolic surface
$\surf$ a closed curve: the \define{vertex cycle} $C_v$. These are special constant
curvature curves in $\surf$. A decorated hyperbolic surface is thus a pair
$(\surf, \{C_v\}_{v\in\verts})$. Furthermore, the notions introduced in the local
setting (section \ref{sec:hyperbolic_polygons_decorations}), e.g., centres, vertex
discs, radius, edge-length, etc., carry over to decorated hyperbolic surfaces. In
particular, we denote the disc associated with $C_v$ by $D_v$ and its radius by $r_v$.
The \define{weight-vector} $\omega\coloneq(\tau_{\epsilon_v}(r_v))_{v\in\verts}$
determines the decoration. We also write $\surf^{\omega}$ for the hyperbolic surface
$\surf$ decorated with $\omega$. Note that the centres of hypercycles, i.e.,
$v\in\verts_1$, are simple closed geodesics. In the following, if not mentioned
otherwise, we assume the auxiliary centre about a vertex $v\in\verts_0$ to be chosen
such that it does not intersect any other vertex cycle $C_{\tilde{v}}$. Slightly
abusing notation, we write $\dist_{\surf}(x, v)$ for the distance between a point
$x\in\surf$ and the (auxiliary) centre of the vertex cycle $C_v$. Similarly,
$\dist_{\surf}(v, \tilde{v})$ will be the smallest non-zero distance between
the centres of the vertex cycles $C_v$ and $C_{\tilde{v}}$.

\begin{lemma}
   \label{lemma:bounded_geodesics}
   Let $\surf^{\omega}$ be a decorated hyperbolic surface with non-intersecting
   vertex discs. For each pair of vertex cycles, say $C_{v_0}$ and $C_{v_1}$,
   and $L>0$ there is only a finite number of geodesic arcs which are
   orthogonal to both cycles and their length is $\leq L$.
\end{lemma}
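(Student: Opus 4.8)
The plan is to lift the problem to the universal-type cover and use a standard discreteness argument for geodesic arcs between two fixed horocycle/cycle lifts. First I would fix the decorated surface $\surf^{\omega}$ and lift everything to the hyperbolic plane $\HH$. The two vertex cycles $C_{v_0}$ and $C_{v_1}$ pull back to $\Fgroup$-orbits of hyperbolic cycles $\{\tilde C_{v_0}^{g}\}$ and $\{\tilde C_{v_1}^{h}\}$, where $\Fgroup$ is the group of deck transformations (when $\surf = \HH/\Fgroup$; in the general finite-type case one passes to a metric completion / developing-map picture, but the combinatorics of arcs is the same since arcs avoid the cone points by orthogonality and length considerations). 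A geodesic arc on $\surf$ from $C_{v_0}$ to $C_{v_1}$ meeting both orthogonally lifts to a geodesic segment in $\HH$ orthogonal to some lift $\tilde C_{v_0}$ and some lift $\tilde C_{v_1}$; fixing one lift $\tilde C_{v_0}$, distinct arcs on the surface correspond to distinct $\Fgroup$-translates $\gamma\cdot\tilde C_{v_1}$ that admit a common orthogonal segment to $\tilde C_{v_0}$ of length $\leq L$.

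Next I would bound how many such translates there can be. Given a lift $\tilde C_{v_1}'$ of $C_{v_1}$ whose associated disc is disjoint from that of $\tilde C_{v_0}$, Lemma~\ref{lemma:formula_radical_line} (together with Lemma~\ref{lemma:cos_sin_laws}) identifies the length of the common orthogonal segment with $\arcosh$ of a modified tangent-distance quantity built from $\ip{\tilde C_{v_0}}{\tilde C_{v_1}'}$ via Lemma~\ref{lemma:distance_from_product}: concretely, $\cosh$ of the orthogonal distance is an increasing function of $-\ip{\tilde C_{v_0}}{\tilde C_{v_1}'}$ (suitably normalised by the radii). Hence a bound of $L$ on the arc length is equivalent to an upper bound on $-\ip{\tilde C_{v_0}}{\tilde C_{v_1}'}$, which in turn is equivalent to requiring the centre of $\tilde C_{v_1}'$ to lie within a fixed compact neighbourhood $K\subset\HH$ of the centre of $\tilde C_{v_0}$ (the neighbourhood being a metric ball whose radius depends on $L$, $r_{v_0}$, $r_{v_1}$). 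The key point is then that $\Fgroup$ acts properly discontinuously on $\HH$ (and the centres of the lifts of $C_{v_1}$ form a single $\Fgroup$-orbit, or finitely many orbits), so only finitely many translates of that centre land in the compact set $K$. Therefore only finitely many arcs of length $\leq L$ exist.

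The main obstacle, and the step I would spend the most care on, is handling the non-manifold/completion subtleties: in the general finite-type case the surface need not be a global quotient $\HH/\Fgroup$, and the cone points, cusps and flares mean the "universal cover" is not literally $\HH$. I would circumvent this by working locally — covering a compact core of $\trunc(\surf)$ by finitely many isometrically embedded hyperbolic polygons and developing along arcs — or, more cleanly, by invoking that any two cycles and any bound $L$ only involve a compact region of $\surf$ (since orthogonal arcs of bounded length stay in a bounded set, and the vertex discs are disjoint so the arcs stay away from the singular points), and a compact region of a hyperbolic surface of finite type is covered by a bona fide hyperbolic disc with $\Fgroup$ acting properly discontinuously. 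The remaining parts — the monotone relationship between arc length and inner product, and proper discontinuity forcing finiteness — are then routine given the lemmas already established.
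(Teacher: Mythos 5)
There is a genuine gap, and it sits exactly where you placed your ``main obstacle''. Your counting mechanism --- fix one lift $\tilde C_{v_0}$ in $\HH$, observe that arcs of length $\leq L$ force the translates $\gamma\cdot\tilde C_{v_1}$ into a compact set, and invoke proper discontinuity of $\Fgroup$ --- is sound when $\surf=\HH/\Fgroup$ for a Fuchsian group. But the lemma is stated for arbitrary decorated hyperbolic surfaces of finite type, whose cone points may have arbitrary cone angles, and for these the paper explicitly notes that metric covers by $\HH$ do \emph{not} exist in general. Your two proposed patches do not close this. First, the claim that the arcs ``stay away from the singular points'' is unjustified: an arc orthogonal to $C_{v_0}$ and $C_{v_1}$ is only constrained at its endpoints; nothing prevents it from entering the vertex disc of a third cone point or passing arbitrarily close to (or, for cone angle $\geq 2\pi$, through) a cone point. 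Second, even granting that the arcs live in a fixed compact region, a compact region of $\surf$ containing cone points of general angle is \emph{not} covered by a disc in $\HH$ with a properly discontinuous group action --- that is precisely the obstruction; a developing map along arcs gives no global group whose orbit of $\tilde C_{v_1}$ you can count, and distinct arcs no longer biject with orbit points in a compact set. So the reduction from ``finitely many orbit points in $K$'' to ``finitely many arcs on $\surf$'' is not established.

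For contrast, the paper's proof is intrinsic and avoids any cover: it topologises the set $\mathcal{A}_L$ of unit-speed orthogonal arcs of length $\leq L$ with the uniform metric, shows equicontinuity and bounded diameter (truncating the cusps far enough away so that all such arcs lie in a compact subsurface), applies Arzel\`a--Ascoli to get compactness, and then observes that each such arc is isolated in $\mathcal{A}_L$ because it is locally length-minimising and hence has a tubular neighbourhood containing no other element; a compact set of isolated points is finite. If you want to salvage your approach you would either have to restrict to the Fuchsian-quotient case (Example \ref{example:fuchsian_group}), or replace the deck-group count by an intrinsic compactness-plus-isolation argument of this kind.
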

\begin{proof}
   Denote by $\mathcal{A}$ the set of all constant speed parametrised arcs
   $\gamma\colon[0,L]\to\surf$ with $\gamma(0)\in C_{v_0}$ and $\gamma(1)\in C_{v_1}$.
   We endow $\mathcal{A}$ with the topology of uniform convergence induced by
   the path metric $\dist_{\surf}$. Furthermore, let $\mathcal{A}_L\subset\mathcal{A}$
   be the subset of all geodesic arcs orthogonal to $C_{v_0}$ and $C_{v_1}$ with
   length $\leq L$.

   The family $\mathcal{A}_L$ is equicontinuous and has bounded diameter.
   Indeed, $1$ is a uniform Lipschitz constant for $\mathcal{A}_L$. To see
   the boundedness of the diameter restrict $\surf$ as follows: for each vertex
   $v\in\verts_0\setminus\{v_0,v_1\}$ choose a horocycle $C_v$ with
   $\dist_{\surf}(C_v,C_{v_0}\cup C_{v_1}) > L$. Then denote
   by $\surf'\subseteq\surf$ the surface obtained by removing the horodiscs
   associated to the $C_v$ from the truncation $\trunc(\surf)$. The surface
   $\surf'$ is compact and contains the support of all arcs contained in
   $\mathcal{A}_L$. Thus the diameter of $\mathcal{A}_L$ is at most the
   diameter of $\surf'$.

   Using the Arzel\`a-Ascoli theorem we conclude that $\mathcal{A}_L$ is compact
   in $\mathcal{A}$. Finally, we observe that the elements of $\mathcal{A}_L$
   are isolated with respect to the uniform topology. Equivalently, because
   they are locally length minimising, each such geodesic arc possess a tubular
   neighbourhood which can not completely contain another element form
   $\mathcal{A}_L$. Hence, $\mathcal{A}_L$ has to be a finite set.
\end{proof}

\subsection{Weighted Delaunay tessellations}
\label{sec:weighted_delaunay_tesselations}
Assume for the rest of this section \ref{sec:weighted_delaunay_tesselations}
that $\surf^{\omega}$ is decorated with non-intersecting vertex discs. Define
$\surf' \coloneq \surf\setminus\bigcup_{v\in\verts}D_v$. By
assumption, $\surf'$ is a compact connected surface with $|V|$ boundary components.
A \define{properly immersed (circular) disc} $(\varphi, D)$ is a continuous map
$\varphi\colon\bar{D}\to\surf$, where $D\subset\HH$ is a circular
disc and $\bar{D}$ its closure, such that $\varphi\vert_{D}$ is an isometric
immersion, i.e., each point in $D$ possesses a neighbourhood which is mapped
isometrically, and the circle $\varphi(\partial D)$ intersects no $C_v$ more then
orthogonally. As in the local setting (section
\ref{sec:hyperbolic_polygons_decorations}) the intersection angle is understood to
be the interior intersection angle of the associated discs.

Let $N\geq2$ be a positive integer. Suppose there is a properly immersed disc
$(\varphi, D)$ such that $\varphi^{-1}(\bigcup_{v\in\verts'}\bar{D}_v)$ consists of
exactly $N$ connected components, where $v\in\verts'\subseteq\verts$ iff $C_v$
intersects $\varphi(\partial D)$ orthogonally. To each of these connected
components corresponds a hyperbolic cycle in $\HH$ because $\varphi$ is isometric.
We refer to them as the vertex cycles pulled back by $\varphi$. Denote them by
$C_1,\dotsc,C_N$. Then we call $\surf'\cap\varphi(\poly(C_1,\dotsc,C_N)\cap D)$
a \define{truncated $N$-vertex cell} and $\varphi\vert_{\poly(C_1,\dotsc,C_N)\cap D}$
is its \define{attachment map}. Note that $\poly(C_1,\dots,C_N)$ is well defined
by Proposition \ref{prop:associated_polygon} and our assumption about the decoration.

\begin{definition}[weighted Delaunay tessellation, non-intersecting vertex discs]
   \label{def:weighted_Delaunay_tessellation}
   Let $\surf^{\omega}$ be a decorated hyperbolic surface with non-intersecting
   vertex discs. Geodesically extending the truncated vertex cells into the
   vertex discs defines a geodesic tessellation of $\surf$. It is
   called the \define{weighted Delaunay tessellation of $\surf^{\omega}$}. We refer to
   the extended truncated $2$-vertex cells as \define{Delaunay $1$-cells} and
   to the extended truncated $N$-vertex cells with $N\geq3$ as
   \define{Delaunay $2$-cells}.
\end{definition}

\begin{remark}
   The assumption about non-intersecting vertex discs is important to ensure the
   existence of properly immersed discs. For a surface without cone-points, i.e.,
   $\verts_{-1}=\emptyset$, this poses no real loss of generality. In these cases,
   we can consider the rescaled weights $s\omega$, where $s$ is a small positive
   scalar. If $s$ is small enough, it follows that $s\omega$ induces non-intersecting
   vertex discs and Corollary \ref{lemma:scalability} will guaranty that $\omega$
   and $s\omega$ induce the same weighted Delaunay tessellation. We note that this
   observation was already utilised in the classical Epstein-Penner convex hull
   construction.
\end{remark}

\begin{theorem}[well-definedness of weighted Delaunay tessellations]
   \label{theorem:weighted_Delaunay_tessellations}
   The weighted Delaunay tessellation defined in Definition
   \ref{def:weighted_Delaunay_tessellation} is a geodesic tessellation of $\surf$.
\end{theorem}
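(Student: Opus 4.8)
The plan is to show that the truncated vertex cells, together with their geodesic extensions into the vertex discs, assemble into a genuine geodesic tessellation: that the cells cover $\surf$, that they meet only along shared lower-dimensional cells, and that the resulting complex is homeomorphic to $\csurf$ with $0$-skeleton $\verts$. The central tool is the family of properly immersed discs, and the key geometric input is that the vertex discs are disjoint and compact complement $\surf'$ is a compact surface with boundary.

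First I would establish a \emph{local picture} at each point $x \in \surf'$. For $x$ in the interior of $\surf'$, consider the largest properly immersed disc $(\varphi, D)$ with $\varphi(0) = x$ — concretely, inflate a circle about $x$ until its image first touches some vertex cycle $C_v$ non-transversally (i.e.\ orthogonally) or until the immersion fails to be injective in a way forcing it to have already become tangent to several cycles. Using Lemma \ref{lemma:bounded_geodesics} (finiteness of short orthogonal geodesic arcs between two given cycles) one argues that this inflation terminates with the boundary circle orthogonal to at least two, hence (generically) at least three vertex cycles, producing an $N$-vertex cell containing $x$. The content of Proposition \ref{prop:associated_polygon} is exactly what guarantees that the pulled-back cycles $C_1,\dots,C_N$ have $\poly(C_1,\dots,C_N)$ with all centres as genuine vertices, so the attachment map $\varphi|_{\poly \cap D}$ has the right combinatorial type. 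This shows every point of $\surf'$ lies in some truncated $N$-vertex cell; extending geodesically into the vertex discs then covers all of $\surf$.

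Next I would prove \emph{compatibility and non-overlap}. Two truncated vertex cells sharing an interior point must, by pulling back via their attachment maps and invoking the uniqueness in Lemma \ref{lemma:intersection_radical_lines} / the face-vector formalism, either coincide or meet along a common face; the key is that a properly immersed disc through a point realising the "empty-disc" (no vertex disc crosses it more than orthogonally) condition is essentially unique up to the ambiguity of which maximal polygon one takes, and the local Delaunay condition (Proposition \ref{prop:local_delaunay_condition}) forces adjacent cells to fit together convexly along their shared edge. One then checks that the edges are the $2$-vertex cells, that around each vertex $v \in \verts_{-1}$ the incident cells close up to the full cone angle (using that $C_v$'s disc is a genuine disc and the cells are geodesic wedges at $v$), and that around cusps and flares the combinatorics is finite by Lemma \ref{lemma:bounded_geodesics} again. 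Assembling, the cells form a CW structure on $\csurf$ whose $2$-cells are isometric to (truncated) hyperbolic polygons with vertices of the correct type, i.e.\ a geodesic tessellation in the sense defined.

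The main obstacle I expect is \textbf{finiteness and properness of the cell decomposition} — ruling out that infinitely many vertex cells accumulate somewhere, or that a "cell" degenerates (e.g.\ an immersed disc wrapping around so that $\poly$ of the pulled-back cycles is not embedded under $\varphi$). This is where Lemma \ref{lemma:bounded_geodesics} does the heavy lifting: any edge of the tessellation is a geodesic arc orthogonal to two vertex cycles, and a uniform length bound (coming from compactness of $\surf'$ after deleting small horodiscs, exactly as in that lemma's proof) caps the number of edges, hence of cells. A secondary subtlety is that a decorated quadrilateral in the surface may be \emph{non-convex}, so that the naive "diagonal flip" is not realisable; but Definition \ref{def:local_delaunay_condition} is phrased via face-vectors precisely to handle this, and Proposition \ref{prop:flippable} shows the local Delaunay condition is still attainable, which is what one needs for the global cells to be consistently defined. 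Carefully stitching the local existence, local uniqueness, and global finiteness statements into a single "this is a tessellation" conclusion is the real work; the individual geometric facts are all in hand from the local section.
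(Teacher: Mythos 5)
There is a genuine gap in your covering step, which is the heart of the theorem. You propose to find a cell containing a given $x\in\surf'$ by inflating a disc \emph{centred at $x$} until its boundary becomes orthogonal to a vertex cycle, and then claim this terminates with a circle orthogonal to ``at least two, hence (generically) at least three'' cycles. That is not what happens: for a generic $x$ the maximal properly immersed disc centred at $x$ is orthogonal to exactly \emph{one} vertex cycle, and no amount of further inflation about the fixed centre is allowed once that happens. To reach a disc orthogonal to $N\geq 3$ cycles you must let the centre move (the classical ``grow, then slide the centre along the locus of equal tangent distance'' argument). The paper's Lemma \ref{lemma:delaunay_cover} does exactly this by minimising the modified tangent distance $\tandist_x$ over the \emph{entire} compact configuration space $\bar{\mathcal{D}}$ of properly immersed discs, with centre and radius both varying; compactness of $\bar{\mathcal{D}}$ plus a dimension count then forces $N\geq 3$ at the minimiser. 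Even granting a disc $(\varphi,D)$ orthogonal to $N\geq3$ cycles with $x\in\varphi(D)$, you still need $x\in\varphi(\poly(C_1,\dotsc,C_N)\cap D)$ — the cell is the polygon part, not the whole disc — and your sketch never addresses this. The paper handles it with a perturbation in the pencil $C^{\lambda}=C_{\mathrm{min}}+\lambda L$, where $L$ is a line separating $\tilde{x}$ from the polygon, which would produce a properly immersed disc of strictly smaller tangent distance to $x$, contradicting minimality. Without both of these ingredients the covering claim does not go through.

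The non-overlap part of your argument is closer in spirit to the paper but is asserted rather than proved. The actual mechanism (Lemmas \ref{lemma:delaunay_n_cells} and \ref{lemma:delaunay_1_cells}) is a mirror-symmetry/separating-line argument: if an attachment map identifies two points, one produces a second properly immersed disc $\tilde{D}=M(D)$, and the hyperbolic line through $\partial D\cap\partial\tilde{D}$ separates the two polygons because the cycles meet one boundary circle orthogonally and the other less than orthogonally. Citing ``uniqueness in Lemma \ref{lemma:intersection_radical_lines}'' and the local Delaunay condition does not substitute for this; the radical-line lemma is a statement about three cycles in $\HH$, not about injectivity of immersions into $\surf$. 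Your instinct that Lemma \ref{lemma:bounded_geodesics} controls finiteness is sound, and the overall three-part plan (cover, non-overlap, assemble) matches the paper's architecture, but as written the two load-bearing steps are each missing their key idea.
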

\begin{proof}
   First, we observe that the truncated $2$-vertex cells are geodesic segments
   since they are isometric images of hyperbolic segments in $\HH$.
   Each such truncated $2$-vertex cell intersects its two vertex cycles orthogonally.
   Hence, we can geodesically extend the segments into the corresponding vertex
   discs. Lemma \ref{lemma:delaunay_1_cells} shows that truncated $2$-vertex
   cells do not intersect, so lifted to $\csurf$ they form an embedded
   $1$-dimensional cell-complex with vertex set $\verts$.

   Now, Lemma \ref{lemma:delaunay_n_cells} grants that the interiors of truncated
   $N$-vertex cells, $N\geq3$, are homeomorphic to open discs. Their boundary is
   mapped into the union of the vertex cycles with the truncated $2$-vertex cells.
   Furthermore, the truncated $2$-vertex cells do not intersect the interiors of
   the truncated $N$-vertex cells. Thus, the truncated $N$-vertex cells can be
   extended into the vertex discs along with the truncated $2$-vertex cells. All
   ideas needed to prove these assertions are presented in the rest of this section.
   We omit further details.

   Finally, by Lemma \ref{lemma:delaunay_cover}, we see that every point of
   $\csurf$ is either contained in $\verts$ or in the geodesic extension of
   a truncated vertex cell.
\end{proof}

\begin{lemma}
   \label{lemma:delaunay_n_cells}
   The interiors of truncated $N$-vertex cells with $N\geq3$ are
   homeomorphic to open discs.
\end{lemma}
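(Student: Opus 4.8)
The plan is to realise the interior of a truncated $N$-vertex cell as the homeomorphic image, under the attachment map, of a convex planar region from which finitely many pairwise disjoint convex corners have been excised, and then to recognise that region as an open disc.

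Write $P\coloneq\poly(C_1,\dotsc,C_N)\cap D$ for the domain of the attachment map. First I would record that $P$ is a compact, geodesically convex subset of $\HH$ with non-empty interior, hence homeomorphic to a closed disc. Indeed $D$ is convex, and by the defining property of a properly immersed disc the representative $F\in\Sym(2)$ of $\partial D$ is orthogonal to each pulled-back vertex cycle $C_n$, so $\ip{F}{C_n}=-1$ by Proposition \ref{prop:hyperbolic_cycles}; thus $F$ is a face-vector for $C_1,\dotsc,C_N$, Proposition \ref{prop:associated_polygon} applies, and $\poly(C_1,\dotsc,C_N)$ is a genuine convex $N$-gon whose vertices are the centres of the $C_n$. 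Consequently $P$ is the intersection of the compact convex disc $\bar D$ with $N$ geodesic half-planes.

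The crucial point is that the attachment map $\varphi$ is injective on $P$; since $P$ is compact, $\varphi|_P$ is then a homeomorphism onto $\varphi(P)$. To prove the injectivity one uses the empty-disc hypothesis. The restriction $\varphi|_{\interior D}$ is a local isometry of a simply connected hyperbolic domain whose image contains no cone point of $\surf$ (a flat hyperbolic disc admits no isometric map onto a neighbourhood of a cone point of angle $\neq 2\pi$), hence lands in the smooth hyperbolic locus. If $\varphi(p)=\varphi(q)$ with $p\neq q$ in $P$, then, $P$ being convex, following the geodesic segment $[p,q]$ while keeping track of $\varphi(\partial D)$ produces a configuration incompatible with the requirement that $\varphi(\partial D)$ meet no $C_v$ more than orthogonally. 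The hypothesis of non-intersecting vertex discs, together with Lemma \ref{lemma:delaunay_1_cells} (embeddedness and mutual disjointness of the truncated $2$-vertex cells), is what makes this tracking possible. I expect this injectivity statement to be the main obstacle.

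Granting injectivity, the truncated $N$-vertex cell is homeomorphic to $Q\coloneq P\setminus\bigcup_{n}\interior\bar D_{C_n}$, where $\bar D_{C_n}$ is the disc bounded by $C_n$; the empty-disc condition ensures that these are the only vertex discs meeting $\interior P$. Each $\bar D_{C_n}$ is convex and meets $P$ in a convex set abutting precisely the two edges of $\poly(C_1,\dotsc,C_N)$ incident to the vertex $v_n$, which they cross orthogonally, being radii of $C_n$; and by hypothesis the $\bar D_{C_n}$ are pairwise disjoint. Hence passing to $Q$ excises $N$ pairwise disjoint convex corners from the topological disc $P$, so that $Q$ is again a topological closed disc and $\interior Q$ is an open disc. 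When $v_n$ is a cusp or a flare the argument is unchanged once one observes that $\bar D_{C_n}$ still cuts off exactly the end of $P$ accumulating at $v_n$, leaving a finite horocyclic, respectively hypercyclic, boundary arc.
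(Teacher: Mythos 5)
Your framing is sound in outline: you correctly observe that $\partial D$ itself serves as a face-vector for the pulled-back cycles (so Proposition \ref{prop:associated_polygon} applies and $\poly(C_1,\dotsc,C_N)$ is a genuine $N$-gon), you reduce the lemma to injectivity of the attachment map, and the concluding excision of the convex vertex-disc corners is unproblematic. But the step you yourself flag as ``the main obstacle'' is precisely the content of the lemma, and what you offer for it --- ``following the geodesic segment $[p,q]$ while keeping track of $\varphi(\partial D)$ produces a configuration incompatible with\dots'' --- is a restatement of the goal, not an argument. It also cannot be repaired by citing Lemma \ref{lemma:delaunay_1_cells}: in the paper that lemma is itself proved \emph{by} the very construction needed here (its proof begins ``using the same argument as in Lemma \ref{lemma:delaunay_n_cells}\dots''), so invoking it would be circular unless you give it an independent proof first.

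The missing mechanism is a separation argument in $\Sym(2)$. From a double point $\varphi(q)=\varphi(\tilde q)$ with $q\in\interior(P)$ and $\tilde q\in D$, the local isometry property produces a hyperbolic motion $M$ with $M(q)=\tilde q$ such that $(\varphi\circ M^{-1}, M(D))$ is again a properly immersed disc, carrying the transformed cycles $\tilde C_n=M(C_n)$ and polygon $\tilde P=M(P)$. The key point is that $\{C_n\}$ and $\{\tilde C_n\}$ are pull-backs of the \emph{same} vertex cycles of $\surf$, so properness of $(\varphi,D)$ forces each $\tilde C_n$ to meet $\partial D$ at most orthogonally, whereas by construction each $\tilde C_n$ meets $\partial\tilde D$ exactly orthogonally (and symmetrically for the $C_n$). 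Taking $L$ to be the mirror line of $\partial D$ and $\partial\tilde D$, normalised so that $\ip{L}{\partial D}<0<\ip{L}{\partial\tilde D}$, these two facts yield $\ip{L}{C_n}<0<\ip{L}{\tilde C_n}$ for all $n$, hence $P$ and $\tilde P$ lie in opposite closed half-planes and $\interior(P)\cap\interior(\tilde P)=\emptyset$; since $\tilde q=M(q)\in\interior(\tilde P)$, it cannot lie in $\interior(P)$, which is the injectivity you need. Without this (or some other genuine argument exploiting the ``at most orthogonal'' condition), your proof does not go through.
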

\begin{proof}
   Let $(\varphi, D)$ be the properly immersed disc used to define the
   truncated $N$-vertex cell and $C_1,\dotsc,C_N$ the pulled back vertex cycles.
   Define $P\coloneq\poly(C_1,\dotsc,C_N)$. We show that
   $\varphi\vert_{\interior(P)\cap D}$ is injective, thus a homeomorphism
   onto its image.

   To this end suppose there is $q\in\interior P$ and $\tilde{q}\in D$ with
   $q\neq\tilde{q}$ and $\varphi(q)=\varphi(\tilde{q})$. Since $\varphi\vert_D$
   is isometric, there is a neighbourhood $U\subset D$ of $q$ and a hyperbolic
   motion $M$ such that $M(q) = \tilde{q}$ and $M(U)\subset D$.
   Define $\tilde{D}\coloneq M(D)$ and $\tilde{\varphi}\coloneq\varphi\circ M^{-1}$.
   Then $(\tilde{\varphi}, \tilde{D})$ is a properly immersed disc and
   $\varphi\vert_{D\cap\tilde{D}}=\tilde{\varphi}\vert_{D\cap\tilde{D}}$
   (see Figure \ref{fig:delaunay_2_cells}).

   Now, let $\tilde{C}_n\coloneq M(C_n)$, $n=1,\dotsc,N$. Clearly,
   $\poly(\tilde{C}_1,\dotsc,\tilde{C}_N) =  M(P) \eqcolon \tilde{P}$.
   Since $\partial D$ and $\partial\tilde{D}$ are mirror symmetric
   about the unique hyperbolic line through their two intersection points, there is
   a representative $L\in\Sym(2)$ of this line such that $\ip{L}{\partial D} < 0$ and
   $\ip{L}{\partial\tilde{D}}>0$. The $\tilde{C}_n$ intersect $\partial D$ less then
   orthogonally, as $(\varphi, D)$ is proper, whilst they intersect
   $\partial\tilde{D}$ orthogonally, by construction. It follows that
   $\ip{L}{\tilde{C}_n}>0$. Similarly
   we see that $\ip{L}{C_n}<0$. Hence $P\cap\tilde{P} = \emptyset$. The assertion
   follows from observing that $\tilde{q}\in\interior\tilde{P}$.
\end{proof}

\begin{figure}[t]
   \begin{picture}(385, 160)
      \put(115, 0){
         \includegraphics[width=0.4\textwidth]{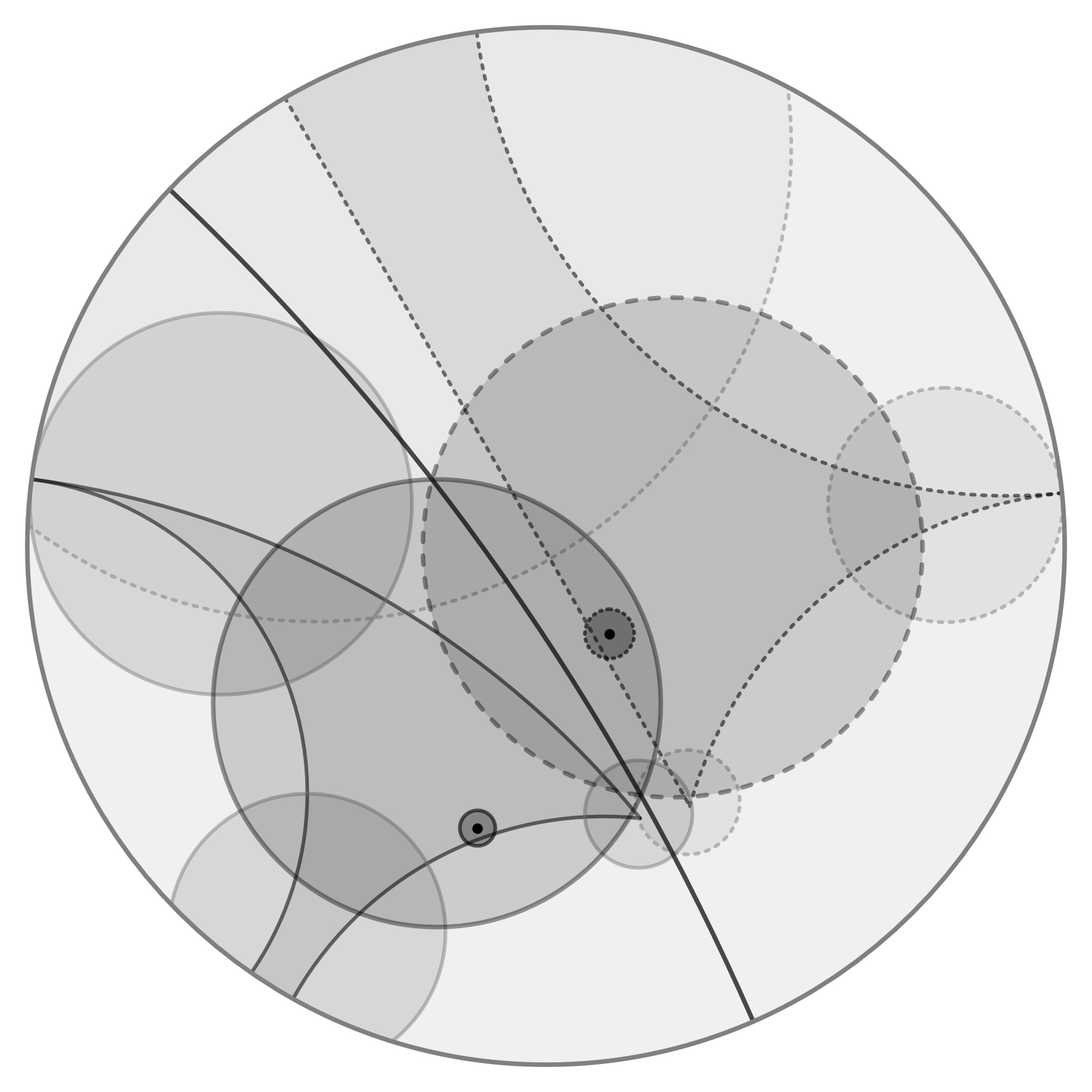}
      }
      \put(185, 31){\small $q$}
      \put(149, 7){\small $C_1$}
      \put(203, 26){\small $C_2$}
      \put(111, 83){\small $C_3$}
      \put(137, 45){\small $\partial D$}
      \put(172, 47){\small $P$}

      \put(206, 61){\small $\tilde{q}$}
      \put(165, 149){\small $\tilde{C}_1$}
      \put(220, 33){\small $\tilde{C}_2$}
      \put(271, 83){\small $\tilde{C}_3$}
      \put(232, 104){\small $\partial\tilde{D}$}
      \put(215, 75){\small $\tilde{P}$}

      \put(150, 121){\small $L$}
   \end{picture}
   \caption{A visualisation of the construction described in Lemma
      \ref{lemma:delaunay_n_cells}. Observe that the transformed cycles $\tilde{C}_n$
      intersect the circle $\partial\tilde{D}$ (dashed) orthogonally whilst
      intersecting $\partial D$ (solid) less the orthogonally, if at all.
   }
   \label{fig:delaunay_2_cells}
\end{figure}

\begin{lemma}
   \label{lemma:delaunay_1_cells}
   Two distinct truncated $2$-vertex cells do not cross each other
   or themselves.
\end{lemma}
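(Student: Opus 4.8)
The plan is to argue by contradiction, closely following the method of Lemma~\ref{lemma:delaunay_n_cells}. Suppose that a truncated $2$-vertex cell crosses itself, or that two distinct ones cross. In either case we obtain properly immersed discs $(\varphi,D)$ and $(\psi,E)$ — equal in the self-crossing case — whose associated polygons are geodesic segments $P=\poly(C_1,C_2)\cap D$ and $Q=\poly(C_1',C_2')\cap E$, together with points $p$ and $q$ in the relative interiors of $P$ and $Q$ satisfying $\varphi(p)=\psi(q)$, at which the two geodesics meet transversally.

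First I would normalise the configuration. Since $\varphi|_D$ and $\psi|_E$ are isometric immersions, there is a hyperbolic motion $M$ with $M(q)=p$ along which $\varphi$ and $\psi\circ M^{-1}$ agree near $p$. Replacing $(\psi,E)$ by the properly immersed disc $(\psi\circ M^{-1},M(E))$, with pulled-back cycles $\tilde C_i'=M(C_i')$ and cell $\tilde Q=M(Q)$, I may assume that $\varphi=\psi$ on the convex, hence connected, set $D\cap E$, which contains $p$; so $p\in P\cap\tilde Q$ and $P$, $\tilde Q$ are non-collinear geodesic segments through $p$. Next I would exclude $D=E$: then $\varphi=\psi$ on all of $D$, the two discs pull back the same cycles and $P=\tilde Q$, contradicting transversality; in the self-crossing case this means $M$ restricts to an orientation-preserving isometry of $D$ with $\varphi\circ M=\varphi$, necessarily a rotation, and local injectivity of $\varphi$ near its fixed point forces $M=\mathrm{id}$ and hence $p=q$, a contradiction. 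Consequently $D$ and $E$ are distinct discs with $\interior(D\cap E)\neq\emptyset$, so their boundary circles meet in exactly two points $y_1,y_2\in\HH$ — the internally tangent and nested configurations can occur only for unequal radii and are disposed of by a separate short argument.

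The core of the proof then runs exactly as in Lemma~\ref{lemma:delaunay_n_cells}. Let $L\in\Sym(2)$ represent the hyperbolic line through $y_1$ and $y_2$, normalised so that $\ip{L}{\partial D}<0<\ip{L}{\partial E}$. By construction $\ip{C_i}{\partial D}=-1=\ip{\tilde C_j'}{\partial E}$, i.e.\ $C_i\perp\partial D$ and $\tilde C_j'\perp\partial E$. The only point that goes beyond Lemma~\ref{lemma:delaunay_n_cells} is that $\partial D$ meets every $\tilde C_j'$, and $\partial E$ meets every $C_i$, \emph{at most} orthogonally: the intersection angle of two M\"obius circles is the same at both of their intersection points, so it is enough to bound this angle at one intersection point lying in the overlap $D\cap E$, where $\varphi=\psi$ and where the bound follows from properness of $\varphi$ (resp.\ $\psi$) applied to the vertex cycle carrying the image of $\tilde C_j'$ (resp.\ $C_i$); the case in which $\partial D$ and $\tilde C_j'$ are disjoint is harmless. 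With Lemma~\ref{lemma:intersection_angle} and the normalisation of $L$ this yields $\ip{L}{C_i}<0$ and $\ip{L}{\tilde C_j'}>0$ for $i,j\in\{1,2\}$, just as in Lemma~\ref{lemma:delaunay_n_cells}. Hence every point $\sum_i\alpha_i C_i\in P$ with $\alpha_i\geq0$ not all zero lies in $\{\ip{L}{X}<0\}$, while every point of $\tilde Q$ lies in $\{\ip{L}{X}>0\}$; so $P\cap\tilde Q=\emptyset$, contradicting $p\in P\cap\tilde Q$.

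The main obstacle I anticipate is precisely this transfer of the ``at most orthogonal'' condition across the overlap $D\cap E$: in Lemma~\ref{lemma:delaunay_n_cells} it was automatic because there the second disc was a hyperbolic translate of the first, whereas here one must track which of the pulled-back cycles actually reach the overlap, and separately dispatch the nested configuration of $D$ and $E$, which is the one feature genuinely distinguishing two different cells of unequal-radius discs from the self-crossing case.
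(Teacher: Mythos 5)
Your overall strategy is sound and your normalisation step (including the explicit treatment of the self\-/crossing case via a rotation fixed by $\varphi$, which the paper's proof leaves implicit) matches the paper's. But for the final contradiction you take a genuinely different route from the paper, and that route has a gap you acknowledge without closing. The paper does \emph{not} rerun the radical\-/line separation of Lemma~\ref{lemma:delaunay_n_cells}; instead it observes that the four pulled\-/back cycles form a decorated quadrilateral whose two diagonals are the preimages of the two cells, that $\partial D$ (orthogonal to $C_1,C_2$ and meeting $\tilde C_1',\tilde C_2'$ less than orthogonally) witnesses the local Delaunay condition for one diagonal while $\partial E$ witnesses it for the other, and that both diagonals being local Delaunay forces $D=E$, contradicting $D\neq E$. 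This argument is completely insensitive to \emph{how} the two circles $\partial D$ and $\partial E$ sit relative to one another.

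Your route, by contrast, hinges on $\partial D$ and $\partial E$ meeting in two points of $\HH$ so that the radical line $L$ separates them, and this is exactly where the analogy with Lemma~\ref{lemma:delaunay_n_cells} breaks: there $\tilde D=M(D)$ is congruent to $D$, so two distinct overlapping discs automatically meet in two points, whereas here the radii may differ and the nested and internally tangent configurations are live possibilities. In the nested case the pencil spanned by $\partial D$ and $\partial E$ may still contain a line, but it no longer lies ``between'' the two circles, and the sign deductions $\ip{L}{C_i}<0$, $\ip{L}{\tilde C_j'}>0$ — which in Lemma~\ref{lemma:delaunay_n_cells} rest on the mirror symmetry of the two congruent circles about $L$ — have no analogue. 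Deferring this to ``a separate short argument'' is therefore not a cosmetic omission: it is the one configuration that genuinely distinguishes this lemma from Lemma~\ref{lemma:delaunay_n_cells}, and I do not see a short disposal of it along your lines. (A secondary, smaller issue: your transfer of the at\-/most\-/orthogonal condition should be justified by identifying $\tilde C_j'$ with a $\varphi$\-/pullback because its disc meets $D\cap E$ in an open set, not by comparing angles at the two intersection points; and the case where $D_{\tilde C_j'}\cap E$ misses $D$ entirely is not the same as $\partial D$ and $\tilde C_j'$ being disjoint.) The cleanest fix is to replace the separation step by the paper's double local Delaunay argument on the quadrilateral, which you already have all the ingredients for.
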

\begin{proof}
   Let $e$ and $\tilde{e}$ be two distinct truncated $2$-vertex cells with
   attachment maps $\phi$ and $\tilde{\phi}$, respectively. Furthermore, let
   $(\varphi,D)$ and $(\tilde{\varphi}, \tilde{D})$ be the properly immersed
   discs used to define them. Towards a contradiction suppose that there are
   $q\in\phi^{-1}(e)$ and $\tilde{q}\in\tilde{\phi}^{-1}(\tilde{e})$ such that
   $\phi(q) = \tilde{\phi}(\tilde{q})$.
   Using the same argument as in Lemma \ref{lemma:delaunay_n_cells},
   we can assume that $q=\tilde{q}$ and
   $\varphi\vert_{D\cap\tilde{D}}=\tilde{\varphi}\vert_{D\cap\tilde{D}}$.
   On one hand, the vertex cycles pulled back by $\varphi$ and $\tilde{\varphi}$
   intersect $\partial\tilde{D}$ or $\partial D$ less then orthogonally, respectively.
   Hence $D\neq\tilde{D}$. On the other hand, the pulled back vertex cycles define a
   decorated hyperbolic quadrilateral. Its diagonals are given by $\phi^{-1}(e)$
   and $\tilde{\phi}^{-1}(\tilde{e})$. Since $\partial D$ intersects the vertex
   cycles pulled back by $\tilde{\varphi}$ less then orthogonal, the diagonal
   $\phi^{-1}(e)$ is local Delaunay. The same argument applies to
   $\tilde{\phi}^{-1}(\tilde{e})$, implying $D=\tilde{D}$.
\end{proof}

\begin{lemma}
   \label{lemma:delaunay_cover}
   The surface $\surf'$ is covered by the truncated cells.
\end{lemma}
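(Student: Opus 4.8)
The plan is to set $A\subseteq\surf'$ equal to the union of the truncated vertex cells and to prove $A=\surf'$ by showing that $A$ is open, closed and non-empty, and then invoking the connectedness of $\surf'$. The one genuinely new ingredient will be a \emph{bridging construction}: given a truncated $2$-vertex cell $e$ and a choice of one of its two sides, there is a truncated $N$-vertex cell with $N\ge 3$ that has $e$ as an edge and lies on the chosen side. Once this is available, everything else reduces to topological bookkeeping based on Lemma~\ref{lemma:delaunay_1_cells} (the truncated $2$-cells are pairwise disjoint arcs with endpoints on $\partial\surf'$, meeting the vertex cycles there orthogonally), Lemma~\ref{lemma:delaunay_n_cells} (the truncated $N$-cells with $N\ge3$ are embedded discs meeting their edges along their boundary) and Lemma~\ref{lemma:bounded_geodesics} (finiteness, hence local finiteness of the family of truncated cells).

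For the bridging construction I take the properly immersed disc $(\varphi,D)$ defining $e$, with $C_i,C_j$ the two pulled-back cycles that meet $\partial D$ orthogonally, and deform $\partial D$ inside the one-parameter pencil of circles orthogonal to both $C_i$ and $C_j$ (the pencil dual to $\lin\{C_i,C_j\}$). Moving in the direction corresponding to the prescribed side of the segment $\poly(C_i,C_j)$ enlarges the immersed disc on that side while keeping it properly immersed; I continue the deformation, developing the immersion along the way, until the boundary circle is forced to meet a third pulled-back cycle $C_k$, with centre on that side, orthogonally. By Proposition~\ref{prop:associated_polygon} the cycles $C_i,C_j,C_k$ then span a genuine hyperbolic triangle $\poly(C_i,C_j,C_k)$ having $\poly(C_i,C_j)$ among its edges, so $\surf'\cap\varphi'\big(\poly(C_i,C_j,C_k)\cap D'\big)$ is a truncated $3$-vertex cell with $e$ as an edge on the requested side. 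The same deformation, started from a small embedded empty disc about an arbitrary interior point of $\surf'$ and enlarged until its boundary becomes orthogonal to two cycles, produces at least one truncated cell, so $A\neq\emptyset$.

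Granting all this, openness of $A$ is checked pointwise. If $x\in A$ is interior to a truncated $N$-cell with $N\ge 3$, a neighbourhood of $x$ lies in that cell. If $x$ lies in the relative interior of a truncated $2$-cell $e$, then by Lemma~\ref{lemma:delaunay_1_cells} $e$ is the only truncated $2$-cell meeting a small disc $U$ about $x$; it splits $U$ into two half-discs, each of which is filled by a truncated $N$-cell abutting $e$ by the bridging construction together with Lemma~\ref{lemma:delaunay_n_cells}, so $U\subseteq A$. If $x\in\partial\surf'$, lying on some vertex cycle $C_v$, the same reasoning applies to the finitely many truncated cells abutting $C_v$ near $x$ and to the at most one truncated $2$-cell ending at $x$ (two would be tangent to $C_v$ at $x$ and hence overlap, contrary to Lemma~\ref{lemma:delaunay_1_cells}), which together fill a half-disc about $x$ inside $\surf'$. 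Closedness of $A$ follows from local finiteness of the family of truncated cells, which I would extract from Lemma~\ref{lemma:bounded_geodesics} by an Arzel\`a--Ascoli argument bounding the length of any truncated $2$-cell that meets a fixed compact subset of $\surf'$. Since $\surf'$ is connected and $A\neq\emptyset$, openness and closedness give $A=\surf'$.

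I expect the bridging construction to be the main obstacle. First one must ensure the developing map survives the entire deformation: this is exactly where properness is needed, since a properly immersed disc can never surround a point of $\verts_{-1}$ — there is no isometric immersion of a hyperbolic disc onto a neighbourhood of a cone point — so no holonomy obstruction to extending the development ever arises. Second one must show the deformation terminates, i.e.\ the immersed disc cannot be enlarged indefinitely while remaining properly immersed; this uses compactness of $\surf'$ together with Lemma~\ref{lemma:bounded_geodesics}, as an arbitrarily large such disc would produce arbitrarily long orthogonal geodesic arcs between a fixed pair of vertex cycles, and infinitely many of bounded length. A minor bookkeeping point is checking that the triangle produced by the bridging construction shares the whole of $e$ as an edge and abuts it from the correct side; this comes from the half-plane description $\poly(C_i,C_j,C_k)=\HH\cap\bigcap_n\{\ip{X}{L_n}\le0\}$ together with the sign conditions on the line supporting $e$, just as in the proofs of Lemma~\ref{lemma:delaunay_n_cells} and Proposition~\ref{prop:local_delaunay_condition}.
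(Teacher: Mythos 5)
Your route is genuinely different from the paper's. The paper proves the covering claim pointwise by a variational argument: for a fixed $x\in\surf'$ it minimises $\tandist_x$ over the configuration space $\bar{\mathcal{D}}$ of properly immersed discs, which is compact because $\cosh(r)\leq\min_{v}\tandist_c(C_v)$ and $\surf'$ is compact; counting degrees of freedom shows the minimising disc is orthogonal to at least three pulled-back cycles, and a separating-line perturbation of $\partial D_{\mathrm{min}}$ inside the pencil it spans with the separating line shows that $x$ must lie in the associated polygon. This bypasses the open--closed bookkeeping, the local-finiteness discussion and the bridging construction altogether. Your pivoting argument along the elliptic pencil dual to $\lin\{C_i,C_j\}$ is the classical \enquote{gift-wrapping} proof and can be made to work; in particular your two stated worries resolve favourably. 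The first cycle $C_k$ to become orthogonal as the parameter increases automatically satisfies $\ip{L}{C_k}>0$ for the line $L$ supporting $e$, because the orthogonality defects $\ip{\partial D_{\lambda}}{C_v}+1$ are affine in $\lambda$; this is exactly the sign condition making $\poly(C_i,C_j)$ an edge of the new polygon on the chosen side. And a cone point can never enter the growing disc: orthogonality already forces $\cosh\dist(c_{\lambda},v)=\cosh(r_{\lambda})\cosh(r_v)>\cosh(r_{\lambda})$ by Lemma \ref{lemma:distance_from_product}, so the proper-immersion constraint is violated strictly before the development could be obstructed.

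Two steps do not work as written, though both are repairable. First, termination of the bridging deformation: Lemma \ref{lemma:bounded_geodesics} counts orthogeodesic arcs of bounded length and says nothing about the radius of a growing immersed disc, so it cannot exclude that the deformation runs off towards the horocyclic member of the elliptic pencil without ever meeting a third cycle orthogonally. What you actually need is the uniform radius bound $\cosh(r)\leq\max_{c\in\surf'}\min_{v}\tandist_c(C_v)<\infty$, i.e.\ precisely the compactness of $\bar{\mathcal{D}}$ that the paper sets up; since the radii of the circles in the elliptic pencil tend to infinity in either direction, the admissible parameter interval is then compact and its endpoint must be an orthogonality event rather than a degeneration. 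Second, non-emptiness: enlarging a round disc about a fixed interior point makes its boundary tangent to, and then orthogonal to, \emph{one} vertex cycle, not two; to reach a two-cycle configuration you must additionally move the centre away from that first cycle while preserving the orthogonality already achieved, a second one-parameter deformation requiring the same termination argument. Once these are fixed the open--closed scheme goes through; you can also sidestep the delicate analysis at points of $\partial\surf'$ by running the connectedness argument on $\interior(\surf')$ and then using that the finitely many truncated cells are closed in $\surf'$ and $\overline{\interior(\surf')}=\surf'$.
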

\begin{proof}
   Consider a properly immersed disc $(\varphi, D)$. Let $(c, r)\in\surf'\times\R_{>0}$
   such that $\varphi^{-1}(c)$ is the centre and $r$ the radius of $D$. Then
   $(c, r)$ determines $(\varphi, D)$ up to hyperbolic motions. Utilising Lemma
   \ref{lemma:cos_sin_laws}, we see that the closure of the configuration space of
   properly immersed discs, up to hyperbolic motions, is given by
   \[
      \bar{\mathcal{D}}
      \coloneq
      \Big\{(c, r)\in\surf'\times\R_{\geq0}\,:\,
         \cosh(r) \,\leq\,
         \min_{v\in\verts}\tandist_c(C_v)
      \Big\}.
   \]
   Here the modified tangent distance $\tandist$ on $\surf$ is defined by
   replacing $\dist_{\HH}$ by $\dist_{\surf}$ in Definition
   \ref{def:modified_tangent_distance}. The configuration space $\bar{\mathcal{D}}$
   is a compact $3$-dimensional manifold with boundary. If $(\varphi, D)$ is a
   properly immersed disc corresponding to $(c,r)\in\bar{\mathcal{D}}$ then
   $\tandist_x(c,r)$ is greater, equal or smaller then $1$
   iff $x\in\surf'\setminus\varphi(\bar{D})$, $x\in\varphi(\partial D)$ or
   $x\in\varphi(D)$, respectively. Moreover, for each fixed $x\in\surf'$ the tangent
   distance $\tandist_x$ is a continuous function over $\bar{\mathcal{D}}$. Hence,
   as $\bar{\mathcal{D}}$ is compact, $\tandist_x$ attains a minimum at some
   $(c_{\mathrm{min}}, r_{\mathrm{min}})\in\bar{\mathcal{D}}$. Clearly some
   properly immersed disc contains $x$. Thus $r_{\mathrm{min}} > 0$,
   implying that there is a properly immersed disc
   $(\varphi_{\mathrm{min}}, D_{\mathrm{min}})$ with centre
   $c_{\mathrm{min}}$ and radius $r_{\mathrm{min}}$ such that
   $x\in\varphi_{\mathrm{min}}(D_{\mathrm{min}})$. Let
   $C_1,\dotsc,C_N$ be the vertex cycles pulled back by $\varphi_{\mathrm{min}}$.
   Considering the degrees of freedom of a properly
   immersed disc it follows that $N\geq3$.

   \begin{figure}[t]
      \begin{picture}(385, 155)
         \put(115, 0){
            \includegraphics[width=0.4\textwidth]{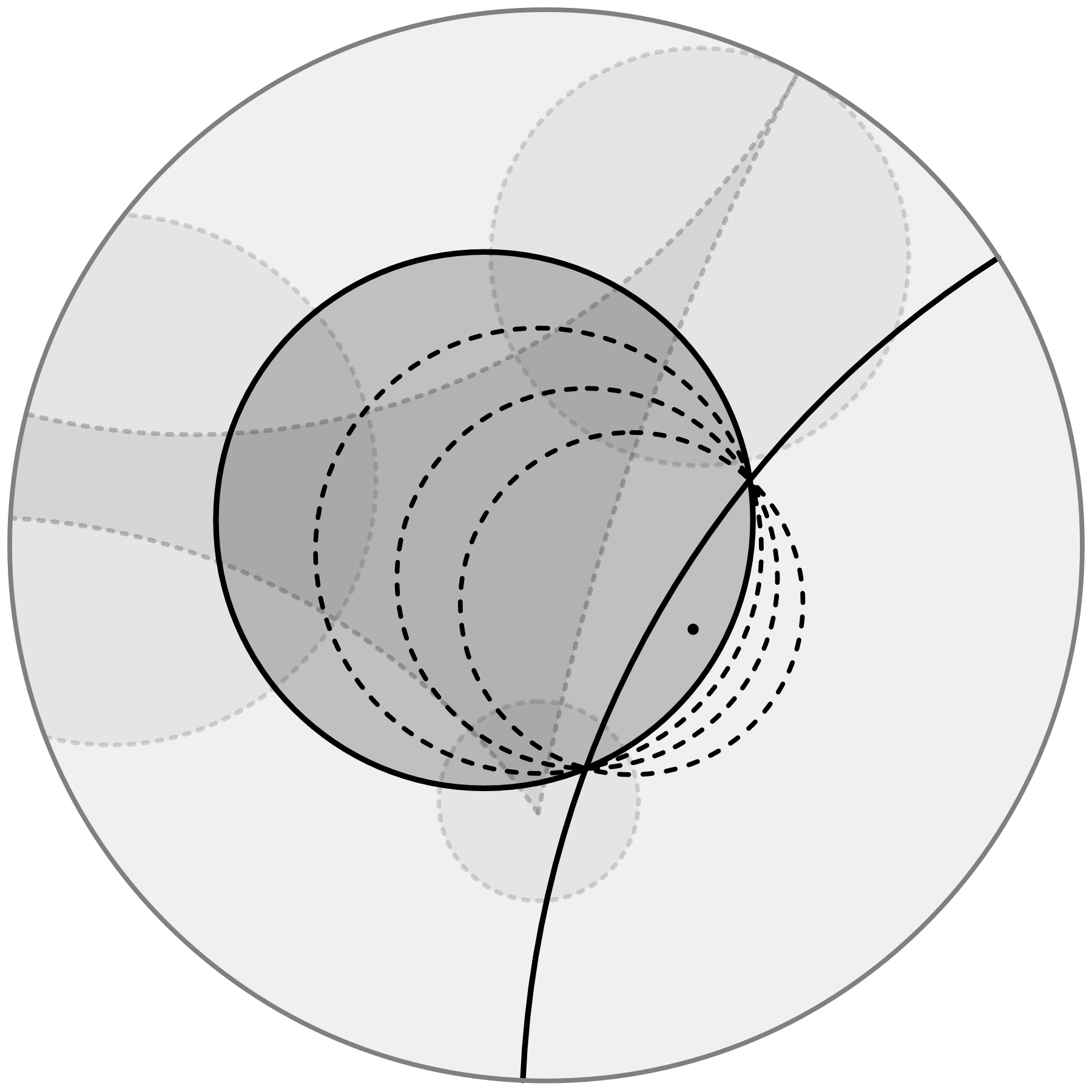}
         }

         \put(179, 27){\small $C_1$}
         \put(231, 148){\small $C_2$}
         \put(109, 88){\small $C_3$}

         \put(163, 123){\small $C_{\mathrm{min}}$}
         \put(217, 69){\small $\tilde{x}$}
         \put(195, 8){\small $L$}
      \end{picture}
      \caption{Sketch of the construction used in Lemma \ref{lemma:delaunay_cover}:
         small perturbations of $C_{\mathrm{min}}$ (solid black) in the pencil of
         cycles spanned by $C_{\mathrm{min}}$ and $L$ are still hyperbolic circles
         (dashed black). Since $C_{\mathrm{min}}$ and $\tilde{x}$ lie on
         opposite sides of $L$, some of these perturbed hyperbolic circles have
         a smaller modified tangent distance to $\tilde{x}$ then $C_{\mathrm{min}}$
         whilst still being at most orthogonal to all $C_n$.
      }
      \label{fig:delaunay_cover}
   \end{figure}

   We prove by contradiction that
   $x\in\varphi_{\mathrm{min}}(\poly(C_1,\dotsc,C_N)\cap D_{\mathrm{min}})$: denote by
   $\tilde{c}_{\mathrm{min}}\coloneq\varphi_{\mathrm{min}}^{-1}(c_{\mathrm{min}})\in\HH$
   the centre of $D_{\mathrm{min}}$. There is a $\tilde{x}\in\varphi^{-1}(x)$ such that
   $\dist_{\HH}(\tilde{c}_{\mathrm{min}},\tilde{x})=\dist_{\surf}(c_{\mathrm{min}},x)$.
   Suppose $\tilde{x}\notin\poly(C_1,\dotsc,C_N)$. Then we find a hyperbolic line
   separating $\tilde{x}$ from $\poly(C_1,\dotsc,C_N)$. Choose a representative
   $L\in\Sym(2)$ of this line with $\ip{L}{\tilde{x}} > 0$ and $\ip{L}{C_n} < 0$,
   $n=1,\dotsc,N$. The circle $C_{\mathrm{min}}\coloneq \partial D_{\mathrm{min}}$
   and line $L$ span a pencil of hyperbolic cycles given by
   \[
      C^{\lambda}
      \;\coloneq\; C_{\mathrm{min}} \plus \lambda L.
   \]
   Note that $C_{\mathrm{min}} = C^0$. By continuity, $C^{\lambda}$
   represents a hyperbolic circle for small $0\leq \lambda$
   (see Figure \ref{fig:delaunay_cover}). Furthermore, we have
   $\ip{C^{\lambda}}{C_n} < -1$, $n=1,\dotsc,N$, and
   $\ip{C^{\lambda}}{\tilde{x}} > \ip{C_{\mathrm{min}}}{\tilde{x}}$. It follows
   that if $\lambda$ is small enough there is a properly immersed disc
   $(\varphi^{\lambda}, D^{\lambda})$ with $C^{\lambda} = \partial D_{\lambda}$.
   Denote by $c^{\lambda}\in\surf'$ its centre and by $r^{\lambda}>0$ its radius.
   Using Lemma \ref{lemma:distance_from_product}, we observe that
   $\ip{C^{\lambda}}{\tilde{x}} = -\tandist_x(c^{\lambda}, r^{\lambda})$. But
   this leads to
   \[
      \tandist_x(c^{\lambda}, r^{\lambda})
      \,<\, \tandist_{\tilde{x}}(C_{\mathrm{min}})
      = \tandist_x(c_{\mathrm{min}}, r_{\mathrm{min}})
   \]
   contradicting the assumption that $(c_{\mathrm{min}}, r_{\mathrm{min}})$
   is a minimum point of $\tandist_x$.
\end{proof}

\subsection{Weighted Voronoi decompositions}
\label{sec:laguerre_voronoi_decompositions}
A decoration of a hyperbolic surface $\surf$ is called \define{proper} if for all
$(v,\tilde{v})\in\verts\times\verts_{-1}$ holds
\(
   \tau_{\epsilon_v}(r_v)/\cosh(r_{\tilde{v}})
   <\tau_{\epsilon_v}(\dist_{\surf}(v,\tilde{v})).
\)
For a point $x\in\surf$ of a properly decorated hyperbolic surface $\surf$ and a
vertex cycle $C_v$ there might be multiple geodesic arcs realising the modified
tangent distance $\tandist_x(C_v)$. By $\mathfrak{m}_x(v, r_v)$ we will denote the
number of such arcs. Note that always $\mathfrak{m}_x(v, r_v)\geq1$. We define
\[
   \mathfrak{M}_x
   \;\coloneq\;
   \sum_{v\in{\argmin\limits_{\tilde{v}\in\verts}\tandist_x(C_{\tilde{v}})}}
   \mathfrak{m}_x(v, r_v).
\]

\begin{definition}[weighted Voronoi decomposition]
   \label{def:laguerre_voronoi_decomposition}
   Let $\surf^{\omega}$ be a properly decorated hyperbolic surface. The
   \define{weighted Voronoi decomposition of $\surf^{\omega}$} is defined
   in the following way: define $\mathcal{V}_{-1}\coloneq\emptyset$ and
   $\mathcal{V}_n\coloneq\{x\in\surf\,:\,\mathfrak{M}_x\geq3-n\}$, $n=0,1,2$.
   The \define{(open) Voronoi $n$-cells}, $n=0,1,2$, are the
   connected components of $\mathcal{V}_n\setminus\mathcal{V}_{n-1}$.
   The \define{attachment maps} are given by inclusion.
\end{definition}

\begin{theorem}[properties of the weighted Voronoi decomposition]
   \label{theorem:properties_laguerre_voronoi}
   To the weighted Voronoi decomposition of $\surf^{\omega}$ corresponds a
   cell-complex of $\csurf$ such that each $2$-cell contains exactly one of
   the points of $\verts$ in its interior. In particular, the Voronoi
   $0$- and $1$-cells form a $1$-dimensional cell-complex which is geodesically
   embedded into $\surf$.
\end{theorem}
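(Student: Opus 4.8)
The plan is to identify the weighted Voronoi decomposition with the cell complex dual to the weighted Delaunay tessellation furnished by Theorem~\ref{theorem:weighted_Delaunay_tessellations}. Since $\tandist_x(C_v)$ scales uniformly when $\omega$ is replaced by a positive multiple, the weighted Voronoi decomposition is unchanged under such a rescaling; choosing the multiple small enough we may assume the vertex discs are pairwise disjoint, so that Definition~\ref{def:weighted_Delaunay_tessellation} applies and, by Corollary~\ref{lemma:scalability}, the weighted Delaunay tessellation is unchanged too. The bridge between the two structures is a reinterpretation of $\mathfrak{M}_x$ via properly immersed discs. For $x\in\surf$ lying outside every vertex disc, set $r(x):=\arcosh\big(\min_{w\in\verts}\tandist_x(C_w)\big)$; as in the proof of Lemma~\ref{lemma:delaunay_cover}, a lift of $x$ to $\HH$ together with the circle of radius $r(x)$ about it is the data of a properly immersed disc $(\varphi_x,D_x)$ of maximal radius centred over $x$, and the minimising vertices together with their realising arcs correspond one-to-one to the cycles pulled back by $\varphi_x$. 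Hence $\mathfrak{M}_x$ equals the number $N$ of vertex cycles that $(\varphi_x,D_x)$ pulls back. By Proposition~\ref{prop:associated_polygon} these cycles admit a face-vector $F$; since they, and only they, meet $\varphi_x(\partial D_x)$ at the maximal angle, $F$ is the representative of $\varphi_x(\partial D_x)$ itself, so $|F|^2\in(-1,0)$, $F$ represents a hyperbolic circle, and the hyperbolic centre of $D_x$ maps to the common point of the radical lines of the pulled-back cycles (Lemma~\ref{lemma:intersection_radical_lines}, Remark~\ref{remark:geometric_interpretation_face_vector}). Points inside a vertex disc $D_w$ satisfy $\tandist_x(C_{w'})\ge1>\tandist_x(C_w)$ for all $w'\neq w$, so there $\mathfrak{M}_x=1$ and they lie in a Voronoi $2$-cell.

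With this dictionary, a point $x$ outside the vertex discs lies in a Voronoi $2$-, $1$- or $0$-cell exactly when $(\varphi_x,D_x)$ pulls back $N=1$, $N=2$ or $N\ge3$ cycles; equivalently, when $x$ lies over a Delaunay vertex, over a Delaunay $1$-cell, or at the common radical point of a Delaunay $2$-cell. From this the local structure of the Voronoi $0$- and $1$-cells follows. Along a Voronoi $1$-cell the two realising arcs meet two cycles $C_v,C_{\tilde v}$ — either distinct, or two lifts of the same cycle — with $\tandist_x(C_v)=\tandist_x(C_{\tilde v})$; by Lemma~\ref{lemma:formula_radical_line} the cell therefore lies on a radical line and is an open geodesic arc, and as $x$ moves along that line the disc $D_x$ keeps touching exactly $D_v$ and $D_{\tilde v}$ until a third disc is touched, which happens only at a Voronoi $0$-cell. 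The set of realising arcs of length $\le r(x)$ is finite by the argument of Lemma~\ref{lemma:bounded_geodesics}, with $r(x)$ bounded on compacta, so $\mathcal V_0=\{\mathfrak{M}_x\ge3\}$ is discrete and the Voronoi $1$-cells form a locally finite family. No two Voronoi $1$-cells can cross, and none can cross itself, since a crossing point would carry at least three realising arcs and hence belong to $\mathcal V_0$. Thus the Voronoi $0$- and $1$-cells constitute a locally finite, geodesically embedded $1$-dimensional cell complex in $\surf$.

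It remains to assemble the $2$-dimensional complex and to pin down its $2$-cells. To a Delaunay $2$-cell $f$ one assigns the Voronoi $0$-cell at the centre of its circumscribed circle; to a Delaunay $1$-cell $e$, joining the centres of cycles $C_v,C_{\tilde v}$ and bounding the Delaunay $2$-cells $f_1,f_2$, one assigns the segment of the radical line of $C_v,C_{\tilde v}$ running between the Voronoi $0$-cells of $f_1$ and $f_2$ — Corollary~\ref{cor:radical_line_intersection} together with properness of the decoration forces this radical line through the interior of $e$, making the segment a genuine $1$-cell with both endpoints in $\mathcal V_0$; and to a Delaunay vertex $v$ one assigns the Voronoi $2$-cell around it, namely the union over the Delaunay faces incident to $v$ of the locus whose disc $D_x$ touches only $D_v$. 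Because the Delaunay faces around $v$ form a disc and, by Theorem~\ref{theorem:weighted_Delaunay_tessellations}, make up a cell complex of $\csurf$, this Voronoi $2$-cell is an open disc whose boundary $1$-cells inherit the cyclic order of the Delaunay edges at $v$; distinct Voronoi $2$-cells are disjoint, and the Voronoi cells together cover $\csurf$. Hence the weighted Voronoi decomposition is a cell complex of $\csurf$ whose $2$-cells are in bijection with $\verts$, each containing exactly one of its points, and whose $0$- and $1$-cells form the embedded geodesic complex of the preceding paragraph.

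I expect the main obstacle to be the bookkeeping that keeps this dual correspondence a genuine cell complex in the non-generic situations: Delaunay $2$-cells that are polygons rather than triangles, for which one must check that all pairwise radical lines still concur at the centre of the circumscribed circle; Delaunay $1$-cells incident to the same face on both sides, so that the dual $1$-cell begins and ends at a single $0$-cell; and, above all, the cone points. In the last case the properness hypothesis is indispensable: it is precisely the hypothesis of Corollary~\ref{cor:radical_line_intersection} applied with a cone point, and it is what keeps each radical line from missing the interior of its Delaunay edge and the dual picture from degenerating. The remaining ingredients — local finiteness from Lemma~\ref{lemma:bounded_geodesics}, the radical-line identification of Lemma~\ref{lemma:formula_radical_line}, and the combinatorial duality count — are routine.
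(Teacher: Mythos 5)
Your plan inverts the paper's logic. The paper proves this theorem directly from the definition of the modified tangent distance — Lemma \ref{lemma:laguerre_2_cell} handles the $2$-cells by a geodesic-flow argument (the arcs $\gamma_p^v$ orthogonal to a small embedded cycle $S_v$ about $v$ are distance-minimising within $P_v$, hence non-crossing, and of finite length by compactness of a truncated subsurface), and Lemma \ref{lemma:laguerre_0_cell} isolates the $0$-cells by pulling the minimising cycles back to a small embedded disc and intersecting radical lines — and only afterwards, in Theorem \ref{theorem:weighted_Delaunay_tessellations_revisited}, identifies the dual of the Voronoi decomposition with a Delaunay tessellation; for general proper decorations the weighted Delaunay tessellation is in fact \emph{defined} as that dual. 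Your dictionary between $\mathfrak{M}_x$ and the number of cycles pulled back by the maximal properly immersed disc centred over $x$ is correct and is essentially the content of Lemma \ref{lemma:delaunay_cover}, so in the regime where Definition \ref{def:weighted_Delaunay_tessellation} applies, dualising Theorem \ref{theorem:weighted_Delaunay_tessellations} is a viable alternative route.

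The gap is the reduction to that regime. Rescaling $\omega\mapsto s\omega$ does leave the argmin sets, hence $\mathfrak{M}_x$ and the Voronoi decomposition, unchanged; but for a cone point $v\in\verts_{-1}$ the weight is $\omega_v=\cosh(r_v)\geq 1$, so $s\omega_v<1$ is not the weight of any circle. The rescaled object is an abstract weight, not a decoration: there are no vertex discs, no properly immersed discs, and no Theorem \ref{theorem:weighted_Delaunay_tessellations} to dualise. This is exactly why the remark after Definition \ref{def:weighted_Delaunay_tessellation} restricts the rescaling trick to $\verts_{-1}=\emptyset$. Proper decorations whose cone-point discs overlap (two cone points at distance $d$, each decorated with radius $0.6d$, satisfy the properness inequality but have intersecting discs) cannot be rescaled into the non-intersecting regime, and these are precisely the new cases the theorem is meant to cover; for them your argument never gets off the ground. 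A related soft spot: your claim that $\mathfrak{M}_x=1$ inside a vertex disc rests on $\tandist_x(C_{w'})\geq 1$ for $w'\neq w$, which again presupposes disjoint discs. To handle cone points you need the paper's direct arguments, or at least the estimate in the proof of Theorem \ref{theorem:weighted_Delaunay_tessellations_revisited} where properness and Corollary \ref{cor:radical_line_intersection} rule out a cone point obstructing the immersed polygon.
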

\begin{proof}
   It is clear from the definition that the (open) Voronoi cells
   partition $\surf$. Lemma \ref{lemma:laguerre_0_cell} shows that
   Voronoi $0$-cells are points. By a similar construction, we see
   that the interiors of Voronoi $1$-cells are isomorphic to open
   hyperbolic segments. Finally, Lemma \ref{lemma:laguerre_2_cell} grants that
   to each open Voronoi $2$-cell $P_v$ corresponds exactly one
   $v\in\verts$ such that $P_v\cup\{v\}$ is an open disc in $\csurf$.
\end{proof}

\begin{lemma}
   \label{lemma:laguerre_2_cell}
   For each open Voronoi $2$-cell there is a $v\in\verts$ such that it is
   given by
   \begin{equation}
      \label{eq:laguerre_2_cell}
      P_v
      \;\coloneq\;
      \Big\{
         x\in\surf
         \;:\;
         \tandist_x(C_v)
         < \min_{\tilde{v}\in\verts\setminus\{v\}}\tandist_x(C_{\tilde{v}})
         \;\text{ and }\;
         \mathfrak{m}_x(v, r_v) = 1
      \Big\}.
   \end{equation}
   Conversely, for each $v\in\verts$ there is a neighbourhood $U_v\subset\csurf$
   of $v$ such that $U_v\setminus\{v\}\subset P_v$. In particular,
   $P_v\setminus\verts$ is homeomorphic to a punctured disc.
\end{lemma}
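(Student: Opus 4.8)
The plan is to prove the second (local) assertion first and then use it to obtain the first.

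\emph{Local behaviour and $P_v\cap\verts=\emptyset$.} I would read off the limits of the modified tangent distances at the vertices straight from Definition~\ref{def:modified_tangent_distance}: as $x\to v$ one has $\tandist_x(C_v)\to 1/\cosh(r_v)<1$ if $v\in\verts_{-1}$, $\tandist_x(C_v)\to 0$ if $v\in\verts_0$, $\tandist_x(C_v)\to-\infty$ if $v\in\verts_1$, while for every other vertex $w$ the surface distance $\dist_\surf(x,w)$, hence $\tandist_x(C_w)$, stays away from that limit: it diverges to $+\infty$ when $v\in\verts_0\cup\verts_1$, and when $v\in\verts_{-1}$ the properness inequality for the pair $(w,v)$ reads exactly $\tandist_v(C_w)>1/\cosh(r_v)=\tandist_v(C_v)$. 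Since in addition the radial geodesic to the centre of $C_v$ is the unique shortest one from nearby points, so that $\mathfrak{m}_x(v,r_v)=1$ there, a punctured neighbourhood $U_v\setminus\{v\}$ of $v$ in $\csurf$ lies in $P_v$; in particular $P_v\neq\emptyset$. Applying the same at an arbitrary cone point $u$, now in the second slot of the properness inequality, gives $\tandist_u(C_w)>\tandist_u(C_u)$ for all $w\neq u$, so $u\notin P_w$; and $u\notin P_u$ because $\mathfrak{m}_u(u,r_u)\neq1$. As $\verts\cap\surf=\verts_{-1}$, this shows $P_v\cap\verts=\emptyset$, i.e.\ $P_v\setminus\verts=P_v$.

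\emph{Reduction of the first assertion.} Each $\tandist_\cdot(C_w)$ is continuous, and a short compactness argument (closedness of the cut locus of the centre of $C_v$) makes $\{x:\mathfrak{m}_x(v,r_v)=1\}$ open, so $P_v$ is open. Since $\mathfrak{M}_x=1$ holds precisely when $\min_w\tandist_x(C_w)$ is attained at a unique vertex $v$ and $\mathfrak{m}_x(v,r_v)=1$, we get $\{x:\mathfrak{M}_x=1\}=\bigsqcup_{v\in\verts}P_v$, a disjoint union of open sets. Hence, once each $P_v$ is known to be connected, the open Voronoi $2$-cells --- the components of $\mathcal{V}_2\setminus\mathcal{V}_1=\{x:\mathfrak{M}_x=1\}$ --- are exactly the sets $P_v$, which is the first assertion. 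It then remains to show that each $P_v$ is connected and that $P_v\cup\{v\}$ is an open disc.

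\emph{Star-shapedness about $v$ via developing into $\HH$.} For $x\in P_v$ let $\gamma$ be the unique (by $\mathfrak{m}_x(v,r_v)=1$) shortest geodesic from the centre of $C_v$ to $x$; I claim $\gamma\setminus\{v\}\subseteq P_v$. First, the part of $\gamma$ lying in $\overline{P_v}$ contains no cone point in its interior: at such a $u$, properness gives $\tandist_u(C_u)<\tandist_u(C_v)$, whereas a limit along $\gamma\cap P_v$ gives $\tandist_u(C_v)=\min_w\tandist_u(C_w)\le\tandist_u(C_u)$, a contradiction. So that portion of $\gamma$ (with a slit at $v$ if $v$ is a cone point) develops isometrically into $\HH$: $v$ goes to a point, ideal point or geodesic $\hat v$; $\gamma$ to a geodesic segment $\hat\gamma$ from $\hat v$; the relevant arc of $C_v$ extends to a cycle $\widehat C_v$ of radius $r_v$ centred at $\hat v$ with $\tandist_\cdot(\widehat C_v)$ agreeing with $\tandist_\cdot(C_v)$ on the developed set (Definition~\ref{def:modified_tangent_distance}); and each arc realising $\dist_\surf(\cdot,w)$, $w\neq v$, meeting the developed set extends to a cycle $\widehat C_w^{(j)}$ with $\tandist_\cdot(C_w)\le\tandist_\cdot(\widehat C_w^{(j)})$, with equality where that arc is shortest. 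As the vertex discs of $\surf$ are disjoint, no $\widehat C_w^{(j)}$-disc contains the $\widehat C_v$-disc or vice versa, so Lemma~\ref{lemma:formula_radical_line} supplies a radical \emph{geodesic}, and $\{z:\tandist_z(\widehat C_v)<\tandist_z(\widehat C_w^{(j)})\}$ is the open half-plane $H_w^{(j)}$ on its $\widehat C_v$-side. Thus the developed image of $P_v$ lies in the convex set $\bigcap_{w,j}H_w^{(j)}$; by the limiting computation of the first paragraph $\hat v$ lies in it too, and so does the developed point $\hat x$ since $x\in P_v$. Convexity of the half-planes then traps the segment $\hat\gamma=[\hat v,\hat x]$ inside $\bigcap_{w,j}H_w^{(j)}$, which back on $\surf$ reads $\gamma\setminus\{v\}\subseteq P_v$. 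Doing this for all $x$, and using that $\gamma$ varies continuously over the open set $P_v$, the map sliding $x$ towards $v$ along $\gamma$ (parametrised by the value of $\tandist_\cdot(C_v)$) is a deformation retraction of $P_v\cup\{v\}$ onto $\{v\}$. Since $P_v\cup\{v\}$ is open in $\csurf$ (it contains $U_v$ around $v$ and $P_v$ elsewhere) and contractible, it is a simply connected topological surface, hence homeomorphic to $\R^2$; thus $P_v=P_v\setminus\verts$ is a once-punctured disc.

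I expect the hard part to be making the development of the third paragraph watertight on a general hyperbolic cone surface, where no global Fuchsian covering exists: one must develop along geodesics possibly running through cone points of angle $\ge2\pi$, check that two distinct developed copies of disjoint vertex discs never contain one another (so all radical geodesics exist), control that the ``missing'' arcs to a vertex $w\neq v$ cannot spoil the inclusion of the developed $P_v$ into $\bigcap_{w,j}H_w^{(j)}$, and --- when the group of symmetries of the development fixing $\hat v$ is nontrivial (orbifold-type cone points, cusps, flares) --- identify $P_v\cup\{v\}$ with the quotient of $\bigcap_{w,j}H_w^{(j)}$ by that group, which is still a disc but is no longer embedded. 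For surfaces arising from Fuchsian groups (Example~\ref{example:fuchsian_group}) the third paragraph is just the classical statement that a weighted Voronoi cell is the image of a convex hyperbolic polygon under the covering projection, and the general case is the same picture carried out locally.
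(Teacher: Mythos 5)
Your first two paragraphs are sound and in places more explicit than the paper's own treatment: the paper likewise obtains the neighbourhood $U_v$ from the properness constraints (via Corollary \ref{cor:radical_line_intersection}) and treats the identification of the open Voronoi $2$-cells with the sets $P_v$ as a direct reformulation of Definition \ref{def:laguerre_voronoi_decomposition}, so that everything reduces, exactly as you say, to showing each $P_v$ is a punctured disc. The divergence, and the problem, is your third paragraph.

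The development argument has a genuine gap which your closing paragraph names but does not close, and it is not a technicality --- it is the very reason the paper insists on intrinsic methods throughout. Two things go wrong. First, you only exclude interior cone points of $\gamma$ on $\gamma\cap\overline{P_v}$; but whether $\gamma$ stays in $\overline{P_v}$ is precisely what is being proved, so a priori $\gamma$ may leave $\overline{P_v}$ and pass through a cone point of angle $\geq 2\pi$ (where shortest paths genuinely can run through the singularity), and there the developing map branches and neither $\hat\gamma$ nor the cycles $\widehat{C}_w^{(j)}$ are defined. Second, even where the development exists, the step from $\hat\gamma\subset\bigcap_{w,j}H_w^{(j)}$ back to $\gamma\setminus\{v\}\subset P_v$ needs the \emph{reverse} inclusion: at every point $z$ of $\gamma$, every arc realising $\tandist_z(C_w)$ for every $w$ --- and every competing arc to $v$ itself, which governs $\mathfrak{m}_z(v,r_v)=1$ --- must already occur among the finitely many cycles you developed from $\gamma\cap\overline{P_v}$. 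You only establish the forward inclusion (developed $P_v$ into the half-plane intersection), which by itself says nothing about points of $\hat\gamma$ outside the developed image. The paper's proof sidesteps both issues by never leaving the surface: it foliates $P_v$ by the geodesics $\gamma_p^v$ emanating orthogonally from the embedded cycle $S_v=f_v^{-1}(\{R_v\})$, observes that as long as $\gamma_p^v(t)\in P_v$ the condition $\mathfrak{m}_{\gamma_p^v(t)}(v,r_v)=1$ forces the radial arc to be the unique minimiser, so these geodesics are distance minimising and cannot cross each other or themselves, and then uses compactness of $\surf\setminus\bigcup_v U_v$ to get $L_p^v<\infty$; the map $(p,t)\mapsto\gamma_p^v(t)$ then exhibits $P_v\setminus\{v\}$ as a punctured disc. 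If you wish to keep your convexity picture, you would have to localise it to the cone-point-free region swept out by these radial arcs, at which point you have essentially reconstructed the paper's argument.
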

\begin{proof}
   Equation \eqref{eq:laguerre_2_cell} is a direct reformulation of the definition
   of Voronoi $2$-cells. Now, let $v\in\verts$ and define
   $f_v\colon D_v\to \R$ by $f_v(x)\coloneq\dist_{\surf}(x, C_v)$. Since $\surf$
   is properly decorated, we conclude that there is some $R_v>0$
   such that $f_v^{-1}(\{t\geq R_v\})\subset P_v$ (see Corollary
   \ref{cor:radical_line_intersection}). This shows that $P_v$ is not
   empty and $U_v\coloneq f_v^{-1}(\{t>R_v\})\cup\{v\}$ is the required
   neighbourhood of $v$.

   Its left to show that $P_v\setminus\verts$ is homeomorphic to a punctured disc
   (see Figure \ref{fig:laguerre_voronoi_2_cell}). The previous considerations and
   $P_v\cap P_{\tilde{v}}=\emptyset$ for $v\neq\tilde{v}$ show that
   $P_v\setminus\verts = P_v\setminus\{v\}$. For large enough $R_v$ the set
   $S_v\coloneq f_v^{-1}(\{R_v\})$ is an embedding of the topological circle
   $\SS^1$ into $\surf$. Indeed, the $S_v$ are vertex cycles which can be chosen
   such that they do not intersect each other or themselves. Denote by
   $\gamma_p^v\colon[0,L_p)\to\surf\setminus U_v$ the arc-length parametrised
   geodesic arc orthogonal to $S_v$ with $\gamma_p^v(0) = p\in S_v$ emitting into
   $\surf\setminus U_v$. Here $L_p^v\in\R_{>0}\cup\{\infty\}$ is either the smallest
   number such that $\gamma_p^v(L_p^v)\notin P_v$ or $L_p^v=\infty$. Since, by
   construction, $S_v$ has constant distance to the centre of $C_v$ and
   $\gamma_p^v([0, L_p^v))\subset P_v$, all $\gamma_p$ are distance minimising. It
   follows that $\gamma_p^v$ can not cross each other or themselves. Finally,
   $L_p^v<\infty$ for all $p\in S_v$. This follows form
   $\surf'\coloneq\surf\setminus\bigcup_{v\in\verts}U_v$ being compact and
   $\gamma_p^v([0,L_p^v))\subset\surf'$. Thus,
   $L_p^v = \sup_{t\in[0,L_p^v)}\dist_{\surf}(\gamma_p^v(t), S_v) < \infty$.
\end{proof}

\begin{figure}[h]
   \includegraphics[width=0.4\textwidth]{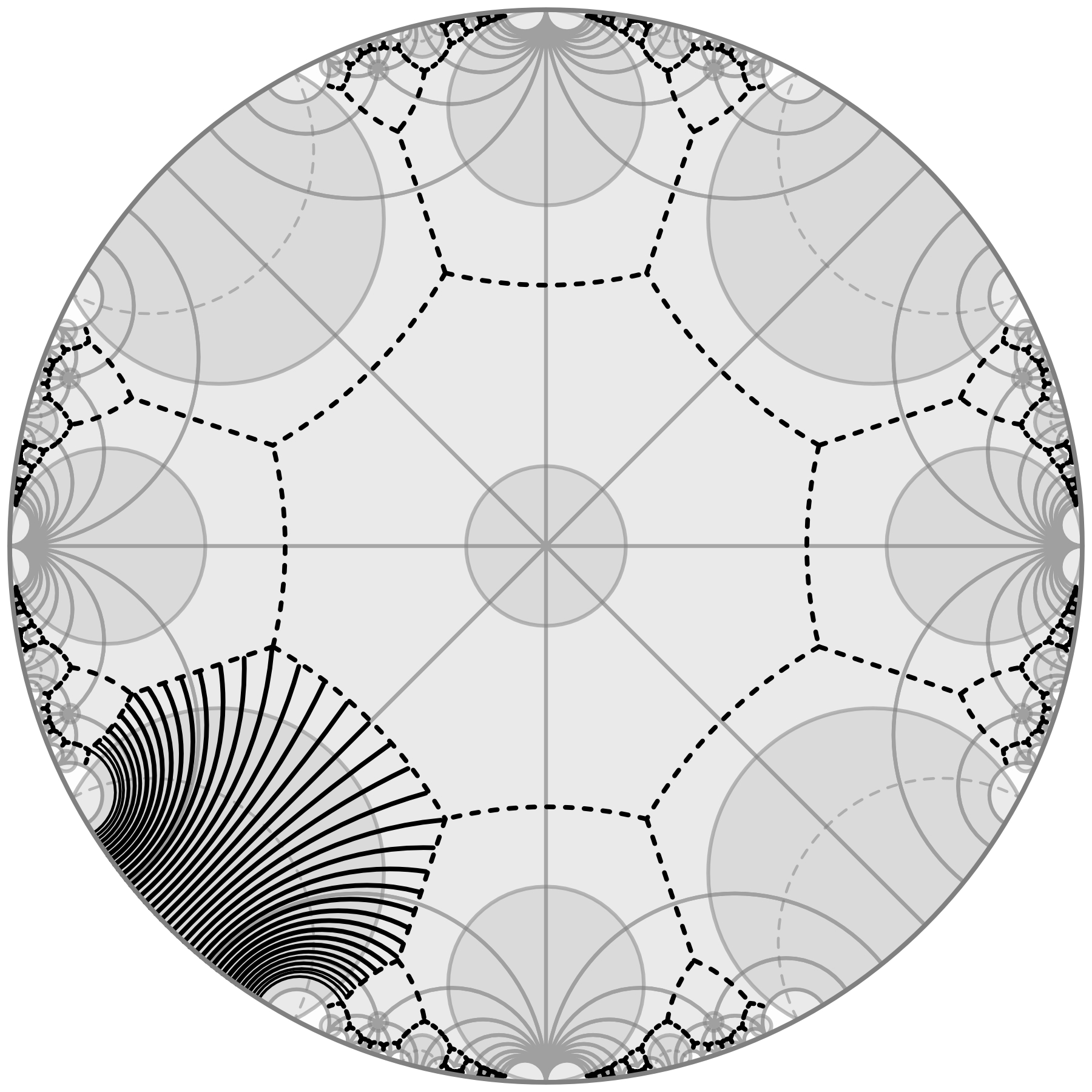}
   \caption{Depicted is the covering of the decorated hyperbolic surface discussed in
      Figure \ref{fig:surface_from_group} and the $1$-skeleton of its weighted Voronoi
      decomposition (dashed lines). Each Voronoi $2$-cell $P_v$
      without $\verts$ is homeomorphic to a once punctured disc. This can be seen
      by considering the \enquote{polar coordinates} $(p,t)\mapsto\gamma_p^v(t)$
      introduced in Lemma \ref{lemma:laguerre_2_cell}. The solid black segments are
      a sampling of their \enquote{radial coordinate lines}.
   }
   \label{fig:laguerre_voronoi_2_cell}
\end{figure}

\begin{lemma}
   \label{lemma:laguerre_0_cell}
   There is only a finite number of Voronoi $0$-cells each of which is
   a point.
\end{lemma}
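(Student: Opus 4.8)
The plan is to show that $\mathcal{V}_0=\{x\in\surf:\mathfrak{M}_x\geq3\}$ is closed and discrete and lies in a compact subset of $\surf$; finiteness is then automatic, and discreteness makes each connected component --- i.e. each Voronoi $0$-cell --- a single point. First I would confine $\mathcal{V}_0$: using the neighbourhoods $U_v$ from Lemma \ref{lemma:laguerre_2_cell}, on $U_v\setminus\{v\}\subset P_v$ one has $\mathfrak{m}_x(v,r_v)=1$ and $\tandist_x(C_v)<\tandist_x(C_{\tilde v})$ for all $\tilde v\neq v$, hence $\mathfrak{M}_x=1$; and at a cone point $v\in\verts_{-1}$ properness of the decoration gives $\tandist_v(C_v)<\tandist_v(C_{\tilde v})$ for every $\tilde v\neq v$, so again $\mathfrak{M}_v=1$. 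Therefore $\mathcal{V}_0\subseteq K\coloneq\surf\setminus\bigcup_{v\in\verts}U_v$, which is compact because $\surf$ is of finite type.

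Next I would analyse $\mathcal{V}_0$ locally. Fix $x\in\mathcal{V}_0$, write $\verts_x\coloneq\argmin_{v\in\verts}\tandist_x(C_v)$ (a finite set), and let $\gamma_{v,1},\dotsc,\gamma_{v,\mathfrak{m}_x(v,r_v)}$ be the minimising arcs for each $v\in\verts_x$, so that there are $\mathfrak{M}_x\geq3$ of them in total. Developing each arc into $\HH$ from a fixed lift $\tilde x$ of $x$ produces a distinct lift $L_{v,i}\in\Sym(2)$ of the relevant vertex cycle together with a smooth branch $y\mapsto f_{v,i}(y)=\tandist_{\tilde y}(L_{v,i})$ on a small neighbourhood $W$ of $x$, with $f_{v,i}(x)=\min_w\tandist_x(C_w)$. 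Using Lemma \ref{lemma:bounded_geodesics} to bound the competing arcs, one checks that on a (possibly smaller) $W$ one has $\tandist_y(C_v)=\min_i f_{v,i}(y)$ for $v\in\verts_x$, that no vertex outside $\verts_x$ ever enters the minimum, and hence that $y\in\mathcal{V}_0$ if and only if at least three of the finitely many functions $\{f_{v,i}\}$ attain the common value $\min_{w\in\verts_x,j}f_{w,j}(y)$. The same bookkeeping at a point with $\mathfrak{M}\leq2$ shows that $\{\mathfrak{M}\leq2\}$ is open, so $\mathcal{V}_0$ is closed in $\surf$.

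The core of the proof is then to shrink $W$ so that $\mathcal{V}_0\cap W=\{x\}$. The lifts $L_{v,i}$ are pairwise distinct --- distinct minimising arcs develop to distinct perpendicular segments --- and have pairwise disjoint associated discs, being lifts of the embedded, pairwise disjoint discs $D_w$. For any two of them the coincidence set $\{f_\alpha=f_\beta\}$ equals, near $\tilde x$, the radical line $\{z\in\HH:\tandist_z(L_\alpha)=\tandist_z(L_\beta)\}$, which by Lemma \ref{lemma:formula_radical_line} is a genuine hyperbolic geodesic through $\tilde x$. I would then show that, for any three of the $L_{v,i}$, these three geodesics are pairwise distinct. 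They must be linearly independent: otherwise $L_\alpha,L_\beta,L_\gamma$ span a pencil which, having two members with disjoint discs, contains a radical line (Lemma \ref{lemma:formula_radical_line}), whereas a pencil containing a radical line cannot contain three members with pairwise disjoint discs (on each side of the radical line its members are nested) --- a contradiction. Given independence, $\lin\{L_\alpha,L_\beta\}\cap\lin\{L_\alpha,L_\gamma\}=\lin\{L_\alpha\}$, so an equality of the two associated radical lines would place the geodesic $L_\alpha$ in this intersection, forcing $L_\alpha$ to be a hyperbolic line --- impossible, since it is the lift of a vertex cycle. As distinct geodesics through the common point $\tilde x$ meet only at $\tilde x$, every triple-coincidence set is $\{x\}$ inside $W$, and the finite union over triples gives $\mathcal{V}_0\cap W=\{x\}$. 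Hence $\mathcal{V}_0$ is discrete and closed in the compact set $K$, so finite, and every Voronoi $0$-cell is a point.

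I expect the main obstacle to be this last step --- concretely, excluding the possibility that three of the minimal branches coincide along an entire geodesic through $x$, which would yield a positive-dimensional connected component of $\mathcal{V}_0$. This is exactly the place where the standing hypothesis of pairwise non-intersecting vertex discs is essential, via the pencil geometry behind Lemmas \ref{lemma:uniqueness_radical_line} and \ref{lemma:formula_radical_line}. The remaining ingredients --- finiteness and continuous dependence of the competing geodesic arcs, compactness of $\surf$ with the cusp, flare and cone-point ends deleted, and the semicontinuity of $\mathfrak{M}$ --- are routine.
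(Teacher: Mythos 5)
Your overall strategy coincides with the paper's: develop the minimising arcs at a candidate $0$-cell to obtain cycles in $\HH$, observe that near the developed point $\tilde x$ the locus $\{\mathfrak{M}\geq 3\}$ is contained in a union of pairwise coincidence loci of the tangent-distance branches, show that for each relevant triple these loci are distinct geodesics through $\tilde x$, and conclude isolatedness; finiteness then follows from confinement to a compact subset of $\surf$. The gap sits exactly in the step you flag as the core. Your exclusion of a triple of branches coinciding along a whole geodesic rests on the claim that the lifted cycles $L_{v,i}$ have \emph{pairwise disjoint associated discs}, ``being lifts of the embedded, pairwise disjoint discs $D_w$''. That claim is not available here. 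The standing hypothesis in this section is only that the decoration is \emph{proper}; properness does not force the vertex discs to be pairwise disjoint (Theorem \ref{theorem:weighted_Delaunay_tessellations_revisited}\,\ref{item:laguerre_to_delaunay} explicitly singles out the non-intersecting case as a special one), nor even to be embedded. Moreover, even when the $D_w$ are embedded and pairwise disjoint in $\surf$, two lifts of the \emph{same} vertex cycle obtained by developing along two distinct minimising arcs are two circles of equal radius in $\HH$ whose centres are separated by an amount controlled only by the angle the two arcs make at $x$; nothing bounds that angle from below, so the two lifted discs may overlap or be nested. Once nesting is allowed, your pencil argument (``a pencil containing a radical line cannot contain three members with pairwise disjoint discs'') has nothing to bite on, and the possibility that three minimal branches are affinely collinear in $\Sym(2)$ --- which is precisely what would produce a positive-dimensional component of $\mathcal{V}_0$ --- is not excluded.

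This is the point where properness must enter, and in your write-up it enters nowhere in the core step. The paper closes it by invoking Corollary \ref{cor:radical_line_intersection}: properness forces the radical line of two cycles realising the common minimum to cross the ray between their (auxiliary) centres, which is what prevents consecutive radical lines through $\tilde p$ from coinciding. If you want to keep your linear-algebra formulation, you need to replace the disjoint-disc hypothesis by an argument, based on the inequalities \eqref{eq:proper_constraints}, that no three of the developed cycles attaining the minimum at $\tilde x$ can lie on a common affine line in $\Sym(2)$. The remaining ingredients of your proof --- the confinement of $\mathcal{V}_0$ to the compact set $K$ via Lemma \ref{lemma:laguerre_2_cell}, the local finiteness of competing arcs via Lemma \ref{lemma:bounded_geodesics}, the semicontinuity of $\mathfrak{M}$, and the deduction of finiteness from closedness and discreteness --- are correct and agree with the paper.
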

\begin{proof}
   Let $p\in\surf$ be a point contained in a Voronoi $0$-cell. We can
   find a circular disc $D\subset\HH$ and an isometry $\varphi\colon D\to\surf$
   such that $\varphi^{-1}(p)\eqcolon\tilde{p}$ is the centre of $D$. By definition,
   there are $N\geq3$ geodesic arcs $\gamma_n\colon(0,1)\to\surf$ corresponding to
   the minimisers of $\tandist_p$ in $\{C_v\}_{v\in\verts}$. Suppose they are
   enumerated in counter-clockwise direction. These geodesic arcs are pulled back by
   $\varphi$ to the intersections of $D$ with hyperbolic rays starting at $\tilde{p}$
   (see Figure \ref{fig:delaunay_voronoi_0_cell}). Let $v_n\in\verts$ be the endpoint
   of $\gamma_n$. Then there is a hyperbolic cycle $C_n$ of type $\epsilon_{v_n}$ and
   radius $r_{v_n}$ on the ray corresponding to
   $\gamma_n$ such that $\tandist_{p}(C_{v_n}) = \tandist_{\tilde{p}}(C_n)$. Maybe
   after choosing a smaller disc $D$, it follows that
   $\tandist_{\varphi(x)}(C_{v_n}) = \tandist_{x}(C_n)$ for all $x\in D$ and
   $n=1,\dotsc,N$, because $\dist_{\surf}$ is continuous. Hence, $\tilde{p}$ is
   the common intersection point of the radical lines of consecutive cycles
   $C_n$ (see Lemma \ref{lemma:formula_radical_line}). Due to the requirement of
   properness of the decoration consecutive radical lines can not coincide. This
   shows that the set of Voronoi $0$-cells consists of isolated points.
   Observing that all $0$-cells have to be contained in the compact set $\surf'$
   constructed in Lemma \ref{lemma:laguerre_2_cell}, we see that there are only a
   finite number of Voronoi $0$-cells.
\end{proof}

\begin{figure}[h]
   \begin{picture}(385, 155)
      \put(115, 0){
         \includegraphics[width=0.4\textwidth]{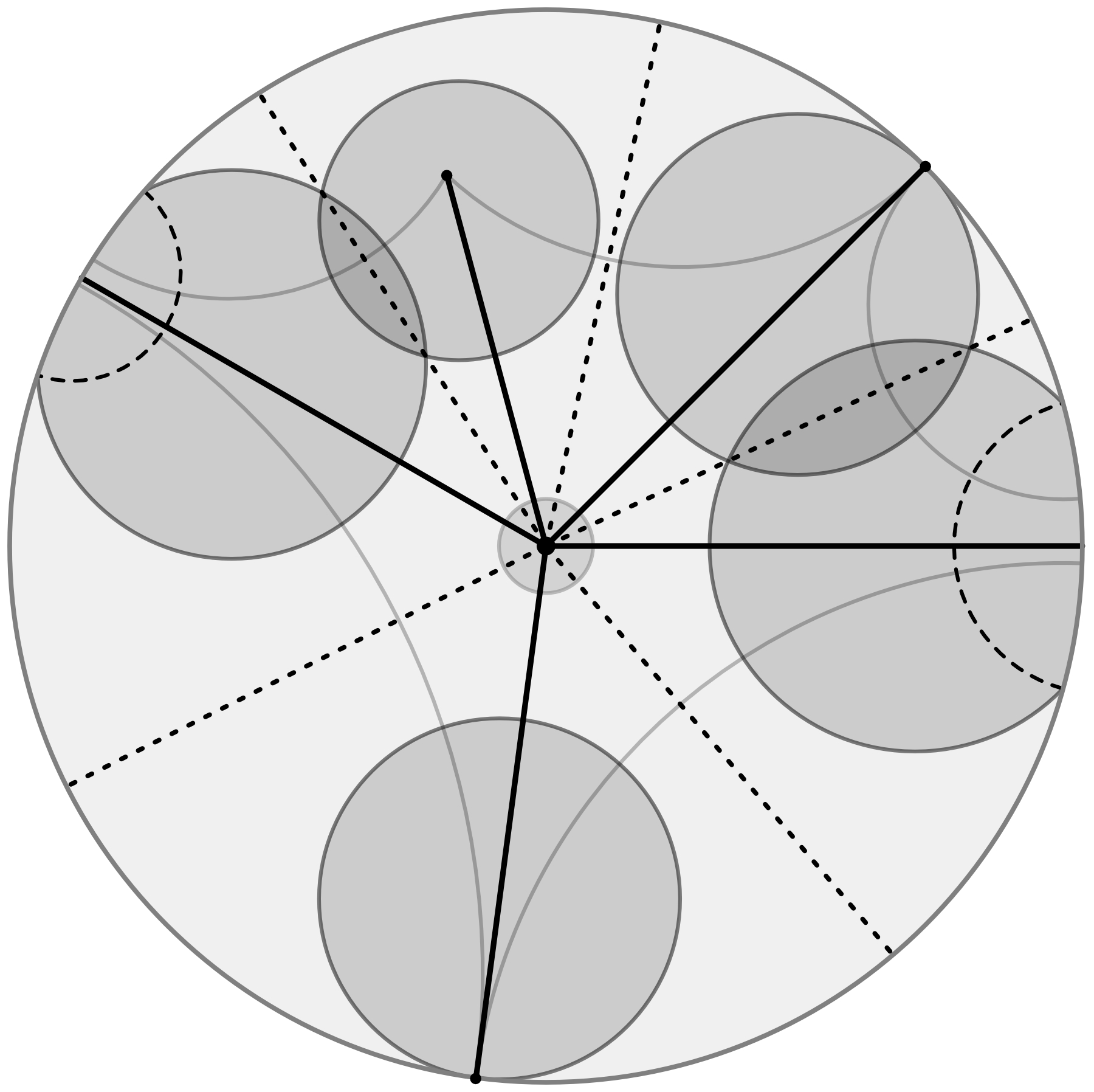}
      }
      \put(186, 71){\small $\tilde{p}$}
      \put(274, 75){\small $C_1$}
      \put(251, 133){\small $C_2$}
      \put(180, 133){\small $C_3$}
      \put(118, 116){\small $C_4$}
      \put(176, -5){\small $C_5$}
   \end{picture}
   \caption{Using a small isometrically embedded disc $(\varphi, D)$ about a
      point $p\in\surf$ contained in a Voronoi $0$-cell, we can pull
      back the $\tandist$-minimising cycles along hyperbolic rays (solid black)
      corresponding to the geodesic arcs in $\surf$. It follows that $p$ is
      an isolated point since $\tilde{p}\coloneq\varphi^{-1}(p)$ is the common
      intersection point of the radical lines (dashed black) of consecutive cycles.
   }
   \label{fig:delaunay_voronoi_0_cell}
\end{figure}

\begin{theorem}[dual tessellation of weighted Voronoi decomposition]
   \label{theorem:weighted_Delaunay_tessellations_revisited}
   Let $\surf^{\omega}$ be a properly decorated hyperbolic surface. The combinatorial
   dual of its weighted Voronoi decomposition can be realised as a geodesic
   tessellation of $\surf$. This realisation exhibits the following properties:
   \begin{enumerate}[label=(\roman*)]
      \item\label{item:laguerre_to_delaunay}
         If the vertex discs of the decoration do not intersect, the realisation is
         given by the weighted Delaunay tessellation of $\surf^{\omega}$ (Definition
         \ref{def:weighted_Delaunay_tessellation}). In particular, the Voronoi
         $0$-cells are the centres of the properly immersed discs defining the
         Delaunay $2$-cells.
      \item\label{item:laguerre_local_delaunay}
         All edges of a geodesic triangulation refining the realisation satisfy the
         local Delaunay condition (Definition \ref{def:local_delaunay_condition}). In
         particular, an edge satisfies the strict local Delaunay condition iff it
         already was an edge of the weighted Delaunay tessellation.
   \end{enumerate}
\end{theorem}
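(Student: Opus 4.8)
The plan is to build the required tessellation as an explicit geometric dual of the weighted Voronoi decomposition and then to read off \ref{item:laguerre_to_delaunay} and \ref{item:laguerre_local_delaunay}. By Theorem \ref{theorem:properties_laguerre_voronoi} the weighted Voronoi decomposition is a cell complex on $\csurf$ with exactly one $2$-cell $P_v$ for each $v\in\verts$, so its combinatorial dual has vertex set $\verts$, one edge per Voronoi $1$-cell and one face per Voronoi $0$-cell. I would realise a dual vertex at the (auxiliary) centre of $C_v$, a dual edge across a Voronoi $1$-cell as the geodesic segment $[u,v]$ joining the two centres whose cells border it, and a dual face at a Voronoi $0$-cell $p$ as the geodesic polygon on the centres of the $\tandist_p$-minimising cycles.

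At a Voronoi $0$-cell $p$ I would use the local picture from the proof of Lemma \ref{lemma:laguerre_0_cell}: a small isometry $\varphi\colon D\to\surf$ with $\varphi^{-1}(p)=\tilde p$ the centre of $D$, together with cycles $C_1,\dots,C_N$ ($N\geq3$, cyclically ordered) on hyperbolic rays from $\tilde p$, with centres mapping to vertices $v_1,\dots,v_N$, $\tandist_{\varphi(x)}(C_{v_n})=\tandist_x(C_n)$ near $\tilde p$, and $\tilde p$ the common point of the consecutive radical lines. Setting $\mu\coloneq\min_{v\in\verts}\tandist_p(C_v)$, Lemma \ref{lemma:distance_from_product} applied to $\tilde p$ (cf.\ Definition \ref{def:modified_tangent_distance}) gives $\ip{\tilde p}{C_n}=-\mu$ for all $n$, hence $F_p\coloneq\mu^{-1}\tilde p\in\Sym(2)$ satisfies $\ip{F_p}{C_n}=-1$ for all $n$. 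Since the discs of the $C_n$ cannot contain one another, Proposition \ref{prop:associated_polygon} shows $v_1,\dots,v_N$ are precisely the vertices of the hyperbolic $N$-gon $\poly(C_1,\dots,C_N)$; mapping its edges through $\varphi$ and extending geodesically as in Definition \ref{def:weighted_Delaunay_tessellation} produces a geodesic $N$-gon $Q_p$ with those vertices, which I take as the dual face. Because a geodesic through the centre of a cycle is $\ip{\cdot}{\cdot}$-orthogonal to it (Lemma \ref{lemma:conformal_circles_orthogonality}), each dual edge $[u,v]$ crosses the radical line of $C_u,C_v$ --- the supporting line of the Voronoi $1$-cell, by Lemma \ref{lemma:formula_radical_line} --- orthogonally and transversally, and monotonicity of $\tandist_\cdot(C_u)$ and $\tandist_\cdot(C_v)$ along $[u,v]$ confines the segment to $\overline{P_u}\cup\overline{P_v}$ and to that single Voronoi $1$-cell. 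The faces, edges and vertices then fit together around each Voronoi $0$-cell in the cyclic order prescribed by the minimising arcs. The laborious point --- checking that the resulting complex is embedded and a genuine geodesic tessellation, i.e.\ that the geodesic extensions neither overlap nor re-meet the Voronoi structure --- is the analogue for the dual complex of Lemmas \ref{lemma:laguerre_2_cell}--\ref{lemma:laguerre_0_cell}; in the presence of cone-points this is exactly where properness of the decoration (Corollary \ref{cor:radical_line_intersection}, as in Proposition \ref{prop:flippable}) is used, and I expect it to be the main obstacle.

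For \ref{item:laguerre_to_delaunay}, assume the vertex discs are disjoint. Then $\mu>1$ at every Voronoi $0$-cell $p$: if $\mu\leq1$ then $p$ lies on or inside a vertex disc, and embeddedness and disjointness of the vertex discs force $\mathfrak{M}_p\leq2$, contradicting that $p$ is a $0$-cell. Hence $(p,\arcosh\mu)$ lies in the configuration space $\bar{\mathcal{D}}$ of properly immersed discs from the proof of Lemma \ref{lemma:delaunay_cover}, and the corresponding properly immersed disc pulls back precisely $C_1,\dots,C_N$; with $N\geq3$ it defines a Delaunay $2$-cell whose extended truncation is $Q_p$ and whose centre is $p$. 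Conversely the centre of any Delaunay $2$-cell realises $\geq3$ equal minimal tangent distances, so is a Voronoi $0$-cell. As the realisation and the weighted Delaunay tessellation are both geodesic tessellations with vertex set $\verts$ and the same $2$-cells, they coincide, and the Voronoi $0$-cells are the centres of the defining properly immersed discs.

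For \ref{item:laguerre_local_delaunay}, refine the realisation to a geodesic triangulation by triangulating each $Q_p$. An interior edge $e$ is either a dual edge $[u,v]$ shared by two dual faces $Q_p,Q_{p'}$, or a diagonal inside a single $Q_p$; in either case the face-vector of any sub-triangle of $Q_p$ equals $F_p$, since $F_p$ already satisfies the three relations $\ip{F}{\cdot}=-1$ that determine it. If $e$ is a diagonal, its two flanking triangles share the face-vector $F_p$, so $t_{\mathrm{left}}=-t_{\mathrm{right}}$ and $e$ is (non-strictly) local Delaunay by Proposition \ref{prop:local_delaunay_condition}, and $e$ is not an edge of the realisation. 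If $e=[u,v]$ is a dual edge with opposite vertices $w,w'$, then $F_{\mathrm{left}}=F_p$ and $F_{\mathrm{right}}=F_{p'}$; since $u,v$ minimise $\tandist_p$ we get $\tandist_p(C_{w'})\geq\mu=\tandist_p(C_u)$, i.e.\ $\ip{F_p}{C_{w'}}\leq-1$, one of the two inequalities \eqref{eq:local_delaunay_vectors} (cf.\ Remark \ref{remark:geometric_interpretation_face_vector} and Lemma \ref{lemma:intersection_angle}), and the symmetric argument at $p'$ gives the other, so $e$ is local Delaunay. The inequality is strict unless $w'$ is also a minimiser at $p$ and $w$ at $p'$, in which case $u,v,w,w'$ all bound $Q_p=Q_{p'}$ and $[u,v]$ is in fact a diagonal of that face. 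Hence every refinement edge satisfies $t_{\mathrm{left}}+t_{\mathrm{right}}\leq0$, with equality precisely on the diagonals, i.e.\ an edge is strictly local Delaunay iff it is an edge of the realisation.
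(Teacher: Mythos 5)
Your overall strategy coincides with the paper's: at each Voronoi $0$-cell $p$ you pass to a local chart, pull back the minimising cycles $C_1,\dotsc,C_N$, observe that $F_p=\tilde p/\mu$ satisfies $\ip{F_p}{C_n}=-1$, and invoke Proposition \ref{prop:associated_polygon} to identify the dual face with the polygon on the centres; your treatments of \ref{item:laguerre_to_delaunay} and \ref{item:laguerre_local_delaunay} are also in the spirit of the paper, which dispatches them by tracing back the definitions once the construction is in place. However, the step you describe as \enquote{the laborious point} and \enquote{the main obstacle} --- showing that the polygon $P=\poly(C_1,\dotsc,C_N)$, which a priori is only defined in $\HH$ via a chart around the single point $\tilde p$, extends to an isometric immersion of all of $\interior(P)$ into $\surf$ that is injective and compatible with the Voronoi structure --- is precisely the content of the paper's proof, and you do not carry it out. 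Identifying where the difficulty lies and which hypothesis it will consume is not the same as resolving it, so this is a genuine gap.

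Concretely, three arguments are missing. First, an extension mechanism: the paper homotopes the concatenation of the two rays from $\tilde p$ to consecutive cycles $C_n$, $C_{n+1}$ onto the edge of $P$ between them by sliding $\tilde p$ along their radical line, thereby extending $\varphi$ to an isometric immersion $\Phi\colon U\to\surf$ on a maximal $U\subseteq P$. Second, the proof that $U=P$: otherwise a cone-point $\Phi(q)$ with $q\in\interior(P)$ obstructs the extension, and properness together with Corollary \ref{cor:radical_line_intersection} yields $\tandist_{\tilde p}(C_n)>\cosh(\dist_{\HH}(q,\tilde p))/\cosh(r_{\Phi(q)})>\tandist_{p}(C_{\Phi(q)})$, contradicting minimality of the $C_n$ at $p$; this is where properness actually enters, not merely in embeddedness bookkeeping. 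Third, injectivity of $\Phi$ on $\interior(P)$, obtained by producing a nontrivial hyperbolic motion $M$ with $\Phi|_{P\cap M(P)}=(\Phi\circ M^{-1})|_{P\cap M(P)}$ and contradicting that $\Phi|_D$ is an isometry. A related soft spot: your claim that the dual edge $[u,v]$ is confined to $\overline{P_u}\cup\overline{P_v}$ by monotonicity of $\tandist_\cdot(C_u)$ and $\tandist_\cdot(C_v)$ along the segment does not follow, since monotonicity of these two functions says nothing about a third cycle $C_w$ achieving a strictly smaller tangent distance somewhere on the segment; in the paper this is subsumed by the immersion of the whole face. Granting the construction, your arguments for \ref{item:laguerre_to_delaunay} and \ref{item:laguerre_local_delaunay} are sound.
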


\begin{figure}[t]
   \begin{picture}(385, 155)
      \put(12, 0){
         \includegraphics[width=0.4\textwidth]
            {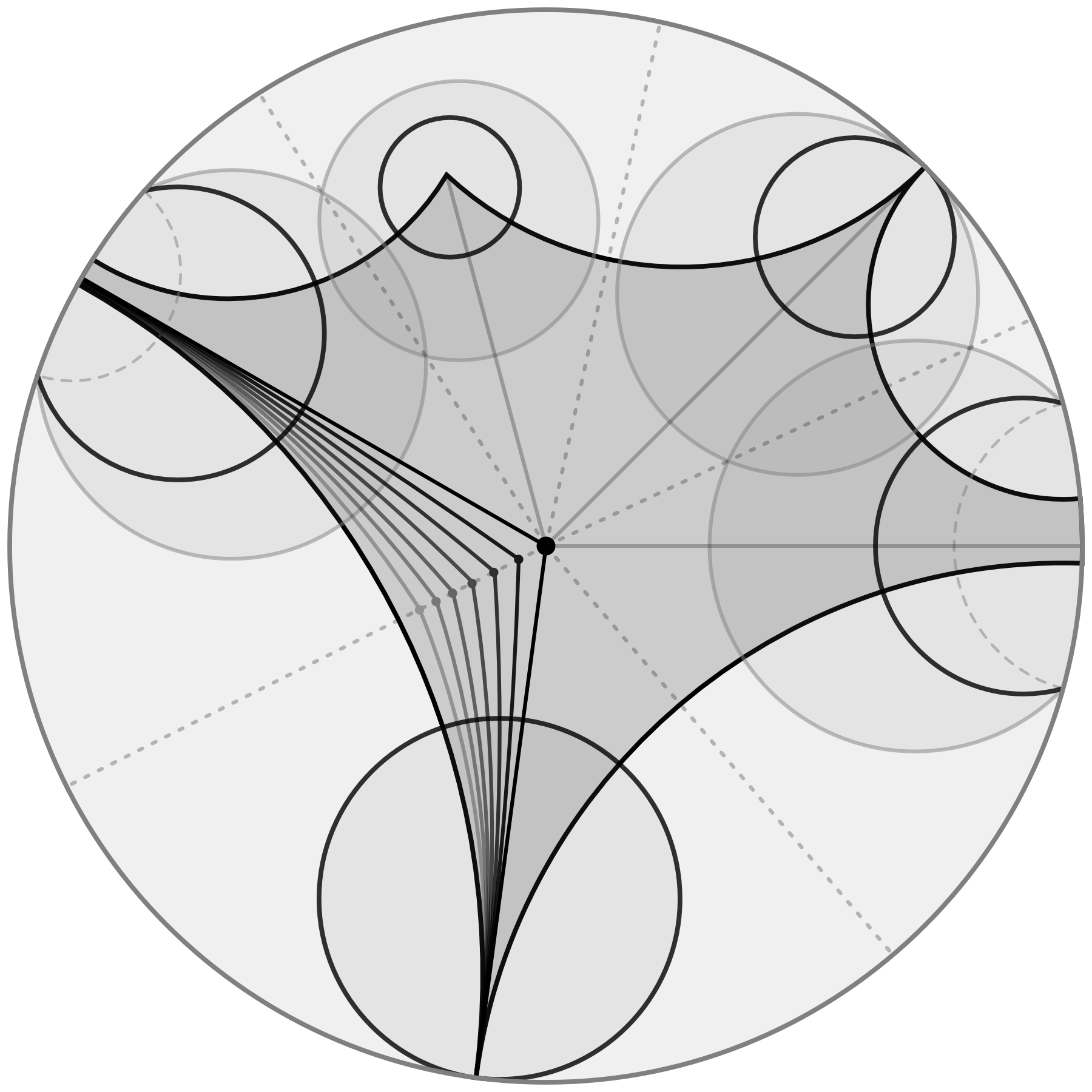}
      }
      \put(210, 0){
         \includegraphics[width=0.4\textwidth]
            {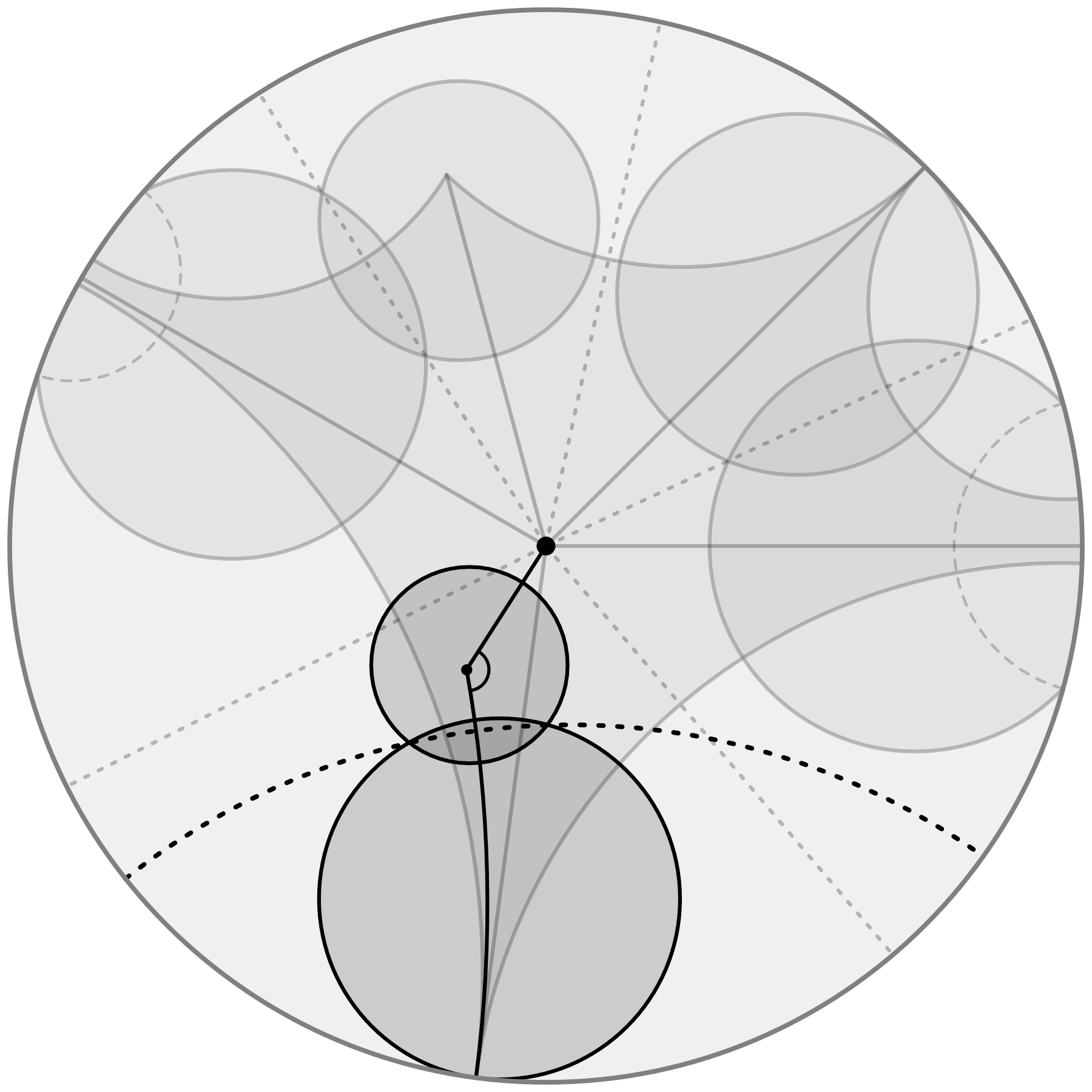}
      }

      \put(96, 76){\small $\tilde{p}$}
      \put(110, 88){\small $P$}
      \put(150, 48){\small $S_1$}
      \put(151, 115){\small $S_2$}
      \put(77, 142){\small $S_3$}
      \put(19, 86){\small $S_4$}
      \put(63, 0){\small $S_5$}

      \put(294, 76){\small $\tilde{p}$}
      \put(272, 59){\small $q$}
      \put(260, 0){\small $C_5$}
   \end{picture}
   \caption{Visualisation of the constructions used in Theorem
      \ref{theorem:weighted_Delaunay_tessellations_revisited}.
      \textsc{Left}: building upon the construction presented in Lemma
      \ref{lemma:laguerre_0_cell}, we can associate a hyperbolic polygon $P$
      (shaded) to a Voronoi $0$-cell. The embedding $\varphi$
      can be extended to an isometric immersion using the indicated homotopy.
      \textsc{Right}: should some cone-point $\Phi(q)$ obstruct
      extending $\varphi$ to all of $\interior(P)$ then
      $\tandist_p(C_{v_n})>\tandist_p(C_{\Phi(q)})$ as $q$ and $\tilde{p}$ lie on
      the same side of the radical line (dashed black) between $C_n$ and $q$.
   }
   \label{fig:delaunay_from_laguerre}
\end{figure}

\begin{proof}
   Consider a Voronoi $0$-cell $p\in\surf$. Let $\varphi\colon D\to\surf$ be
   the isometry and $C_1, \dotsc, C_N$ be the hyperbolic cycles defined in Lemma
   \ref{lemma:laguerre_0_cell}. In addition to the $C_n$ we can find hyperbolic
   cycles $S_n$ corresponding to the $S_v$ defined in Lemma
   \ref{lemma:laguerre_2_cell}. By construction their associated discs do not
   intersect. Let $P\coloneq\poly(S_1,\dotsc,S_N)$. We show that the interior of
   $P$ can be isometrically mapped into $\surf$, i.e., it
   defines an (open) Delaunay $2$-cell. The rest of the assertions,
   including properties \ref{item:laguerre_to_delaunay} and
   \ref{item:laguerre_local_delaunay}, follow directly by tracing back the
   definitions made up to this point.

   All cycles $C_n$ have equal tangent distance to $\tilde{p}\coloneq\varphi^{-1}(p)$.
   Hence, we find an $F\in\Sym(2)$ such that $\ip{F}{C_n} = -1$ for all $n=1,\dotsc,N$.
   Indeed, $F = \tilde{p}/\tandist_{\tilde{p}}(C_n)$. Note that
   $F$ defines a properly immersed disc if $|F|^2 > -1$. Furthermore, the $C_n$
   give another decoration of $P$ as $C_n$ and $S_n$ share the same centre.
   So Proposition \ref{prop:associated_polygon} grants that the centres of the
   $C_n$ are exactly the vertices of $P$. Consider the cycles $C_1$ and $C_2$. We
   can homotope the path given by the concatenation of the two rays connecting
   $\tilde{p}$ with $C_1$ and $C_2$, respectively, to the edge between these
   cycles by moving $\tilde{p}$, and with it the rays, along the radical line of
   $C_1$ and $C_2$ (see Figure \ref{fig:delaunay_from_laguerre}, left). Proceeding like
   this for all consecutive cycles $C_n$ and $C_{n+1}$ we can extend $\varphi$ to an
   isometric immersion $\Phi\colon U\to\surf$. Here, $U\subseteq P$ shall be the largest
   set which can be obtained by the described homotopy such that $\Phi$ is still
   isometric. Suppose that $U\neq P$. Then there is a $q\in\partial U$ with
   $q\in\interior(P)$ and $\Phi(q)\in\verts_{-1}$. Let $C_n$ be a cycle whose centre
   has minimal distance to $q$. Then the angle at $q$ between the rays connecting it to
   $\tilde{p}$ and the centre of $C_n$ is $>\pi/2$ (see Figure
   \ref{fig:delaunay_from_laguerre}, right). Thus, by the properness of the decoration
   and Corollary \ref{cor:radical_line_intersection}, we see that
   \[
      \tandist_{\tilde{p}}(C_n)
      \;>\; \frac{\cosh(\dist_{\HH}(q,\tilde{p}))}{\cosh(r_{\Phi(q)})}
      \;>\; \tandist_{p}(C_{\Phi(q)}).
   \]
   This contradicts the minimality of
   $\tandist_{p}(C_{v_n}) = \tandist_{\tilde{p}}(C_n)$. So $U= P$.
   Finally, suppose $\Phi|_{\interior(P)}$ is not injective. Similarly to Lemma
   \ref{lemma:delaunay_n_cells}, we can find a non-trivial hyperbolic motion $M$
   such that $P\cap M(P)\neq\emptyset$ and
   $\Phi|_{P\cap M(P)}=(\Phi\circ M^{-1})|_{P\cap M(P)}$. Since $\Phi|_{\interior(P)}$
   is an isometry on each region belonging to a Voronoi $2$-cell (Lemma
   \ref{lemma:laguerre_2_cell})
   we see that there is no neighbourhood of $\tilde{p}$ over which $\Phi$ is injective.
   But this contradicts the initial assertion that $\Phi|_D$ is an isometry. It
   follows that $\Phi$ is injective over $\interior(P)$.
\end{proof}

This theorem justifies the following generalisation of the notion of weighted
Delaunay tessellation to properly decorated surfaces.

\begin{definition}[weighted Delaunay tessellation, proper decorations]
   For a properly decorated surface $\surf^{\omega}$ the geodesic tessellation
   dual to the weighted Voronoi decomposition of $\surf^{\omega}$ is called
   \define{weighted Delaunay tessellation of $\surf^{\omega}$}.
   A geodesic triangulation refining this tessellation is called a
   \define{weighted Delaunay triangulation of $\surf^{\omega}$}.
\end{definition}

\subsection{The flip algorithm}
\label{sec:flip_algorithm}
Consider a geodesic triangulation $\tri$ of a properly decorated hyperbolic
surface $\surf^{\omega}$. A triangle $\Delta$ of $\tri$ can be lifted to a decorated
triangle in $\HH$. This lift is not unique but any two lifts can be connected
by a hyperbolic motion. Denote the vertex cycles of some lift by $C_1$, $C_2$
and $C_3$. Furthermore, let $F_{\Delta}\in\Sym(2)$ denote their face-vector. The
\define{support function
$H_{\Delta}^{\omega}\colon\HH\setminus\{\ip{X}{F_{\Delta}}=0\}\to\R_{>0}$ of the
decorated triangle $\Delta$} is defined by
\[
   -1
   \;=\; \big\langle\sqrt{\smash[b]{H_{\Delta}^{\omega}(X)}}\,X,\,F_{\Delta}\big\rangle
\]
for all $X\in\HH\setminus\{\ip{X}{F_{\Delta}}=0\}$. We observe that
$\poly(C_1,C_2,C_3)\subset\HH\setminus\{\ip{X}{F_{\Delta}}=0\}$ because
$\ip{C_n}{F_{\Delta}}=-1$. Hence, $H_{\Delta}^{\omega}$ is well defined on
$\Delta\subset\surf$. For two adjacent triangles the
support functions agree on their common edge (see Figure \ref{fig:support_function}).
It follows, that each geodesic triangulation $\tri$ of a decorated hyperbolic surface
$\surf^{\omega}$ induces a continuous \define{support function}
$H_{\tri}^{\omega}\colon\surf\to\R_{>0}$ restricting to $H_{\Delta}^{\omega}$ on each
triangle $\Delta$, respectively.

\begin{figure}[h]
   \begin{picture}(385, 120)
      \put(77, 0){
         \includegraphics[width=0.6\textwidth]{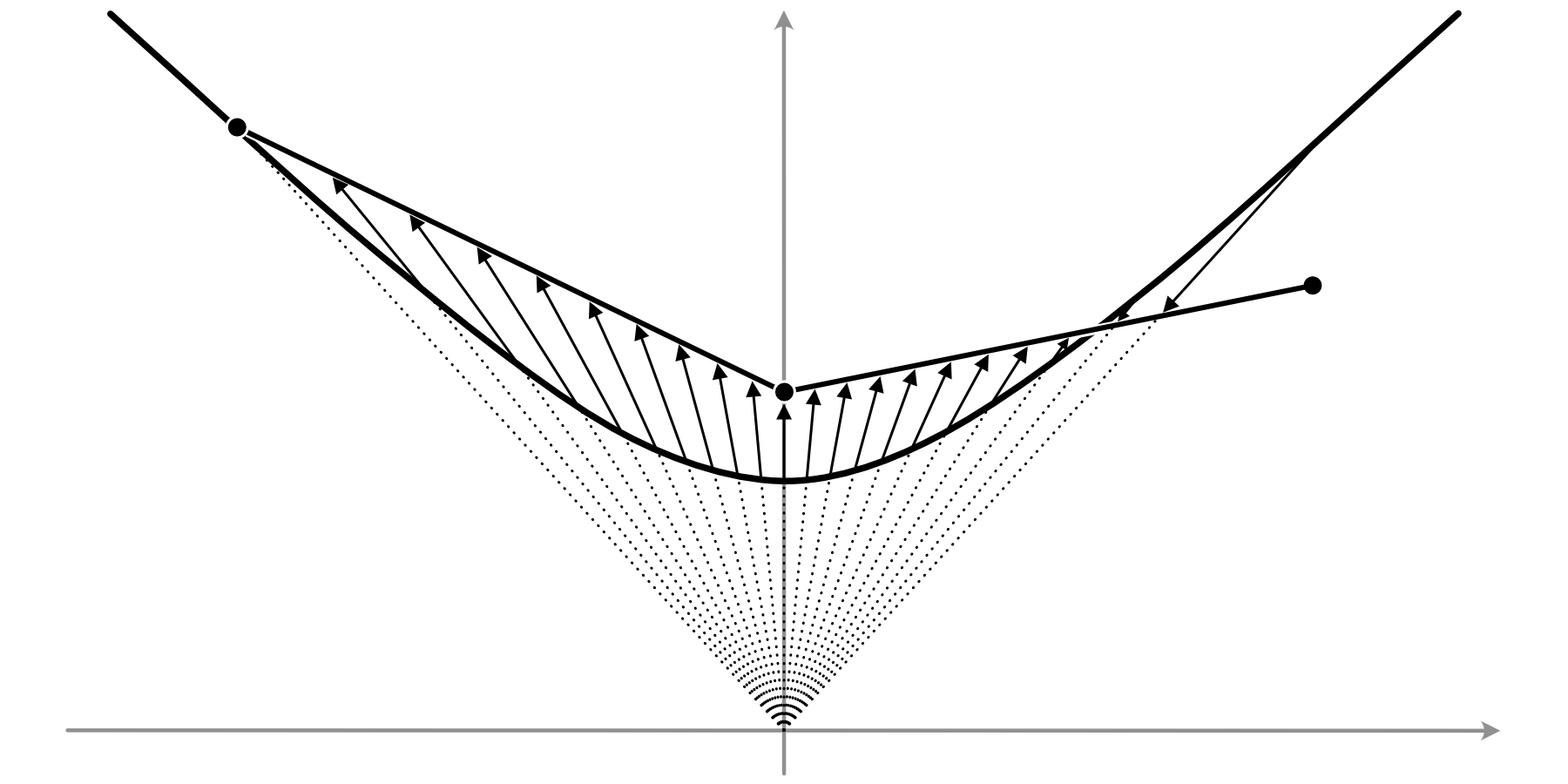}
      }

      \put(223, 67){\small $\Delta$}
      \put(151, 83){\small $\tilde{\Delta}$}
      \put(300, 113){\small $\HH$}
   \end{picture}
   \caption{Decorating a hyperbolic triangle $\Delta$ defines a lift to the
      affine plane $\{\ip{F_{\Delta}}{X}=-1\}\subset\Sym(2)$. The support
      function $H^{\omega}_{\Delta}$ is given by the scaling factors such that
      $\sqrt{\smash[b]{H_{\Delta}^{\omega}(X)}}X\in\{\ip{F_{\Delta}}{X}=-1\}$
      for all $X\in\HH\setminus\{\ip{F_{\Delta}}{X}=0\}$. If two decorated triangles
      $\Delta$ and $\tilde{\Delta}$ share an edge, then their support functions
      agree on the corresponding points.
   }
   \label{fig:support_function}
\end{figure}

\begin{theorem}[flip algorithm]
   \label{theorem:flip_algorithm}
   Let $\surf^{\omega}$ be a properly decorated hyperbolic surface. Start with any
   geodesic triangulation of $\surf$. Then consecutively flipping edges violating
   the strict local Delaunay condition terminates after a finite number of steps.
   The computed triangulation of $\surf$ is a weighted Delaunay triangulation with
   respect to the decoration $\omega$ in the sense of Theorem
   \ref{theorem:weighted_Delaunay_tessellations_revisited}.
\end{theorem}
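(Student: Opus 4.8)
The plan is to run the standard three-part analysis of a flip algorithm, with the support function $H_{\tri}^{\omega}$ from Section~\ref{sec:flip_algorithm} playing the role of the monotone potential. I will establish, in turn, that every flip performed by the algorithm is admissible, that every flip weakly decreases $H_{\tri}^{\omega}$ and strictly decreases it somewhere (so no triangulation recurs), and that only finitely many triangulations can occur during the run; correctness of the output then follows from Theorem~\ref{theorem:weighted_Delaunay_tessellations_revisited}. For admissibility: if $\tri$ is a geodesic triangulation with an edge $e$ that is not locally Delaunay (equivalently, by Proposition~\ref{prop:local_delaunay_condition}, with $t_{\mathrm{left}}+t_{\mathrm{right}}>0$), develop the two triangles adjacent to $e$ into a decorated quadrilateral in $\HH$ with vertex cycles $C_1,C_2,C_3,C_4\in\Sym(2)$, where $C_2,C_3$ span the diagonal $e$. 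Since $\surf^{\omega}$ is properly decorated, the radii satisfy the hypothesis of Corollary~\ref{cor:radical_line_intersection}, so Proposition~\ref{prop:flippable} applies: $e$ is flippable and, after the flip, the new diagonal is locally Delaunay. (If the two triangles do not form an embedded quadrilateral in $\surf$ one argues in the developed picture and pushes the flip down; edges with $t_{\mathrm{left}}+t_{\mathrm{right}}=0$ are already locally Delaunay and need not be touched.)

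\emph{Monotonicity.} Keeping the notation of the developed quadrilateral, recall from the proof of Proposition~\ref{prop:flippable} that $C_1,\dotsc,C_4$ span an affine tetrahedron in $\Sym(2)$ whose lower convex hull projects to the locally Delaunay triangulation of the quadrilateral and whose upper hull projects to the other one; flipping the non-Delaunay edge $e$ replaces the upper-hull piece by the lower-hull piece. Since $H_{\Delta}^{\omega}(X)=\ip{X}{F_{\Delta}}^{-2}$ is exactly the squared radial scale taking $X$ onto the lifted triangle of $\Delta$, and since the support function is continuous and unchanged outside the developed quadrilateral, a flip yields $H_{\tri'}^{\omega}\leq H_{\tri}^{\omega}$ on all of $\surf$, with strict inequality on the interior of the developed quadrilateral. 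Hence the functions $H_{\tri_k}^{\omega}$ produced by the algorithm are weakly decreasing; as a geodesic triangulation determines its support function, no triangulation can recur.

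\emph{Finiteness.} The function $H_{\tri_0}^{\omega}$ is continuous on $\surf$, bounded on the compact truncation $\trunc(\surf)$, and decays to $0$ towards the cusp and flare ends, so $M_0\coloneq\sup_{\surf}H_{\tri_0}^{\omega}<\infty$ and $H_{\tri_k}^{\omega}\leq M_0$ for all $k$. A bound on the support function of a decorated triangle bounds the length of each of its edges: along an edge $e=e_{uv}$ of a triangle $\Delta$ the function $X\mapsto\ip{X}{F_{\Delta}}$ is of hyperbolic sine/cosine type, pinned to the fixed values $-\tau_{\epsilon_u}(r_u)$ and $-\tau_{\epsilon_v}(r_v)$ at the two endpoints, so its minimal modulus along $e$ is of order $e^{-\len_{uv}/2}$, whence $H_{\Delta}^{\omega}$ attains a value of order $e^{\len_{uv}}$ on $e$; together with $H_{\Delta}^{\omega}\leq M_0$ this gives a uniform bound $\len_{uv}\leq L$. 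Every edge occurring in the algorithm is thus a geodesic arc between two vertex centres, orthogonal to the two cycles it joins, of length at most $L$; by Lemma~\ref{lemma:bounded_geodesics}, applied to each (ordered) pair of vertices, there are only finitely many such arcs, hence only finitely many triangulations with all edges of length $\leq L$. Combined with the no-recurrence above, the algorithm terminates after finitely many flips.

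\emph{The output.} When the algorithm halts, every edge of the resulting triangulation satisfies the local Delaunay condition (Definition~\ref{def:local_delaunay_condition}), so by Theorem~\ref{theorem:weighted_Delaunay_tessellations_revisited}\ref{item:laguerre_local_delaunay} — together with the local-to-global principle that a geodesic triangulation all of whose edges are locally Delaunay refines the weighted Delaunay tessellation — the output is a weighted Delaunay triangulation of $\surf^{\omega}$. The step I expect to be the main obstacle is the finiteness argument: converting the bound on $H_{\tri}^{\omega}$ into a genuine metric bound on edge lengths, and verifying the behaviour of $H_{\tri}^{\omega}$ at the flare and cusp ends so that $M_0$ is finite; the remaining ingredients are either quoted from earlier in the section or short convexity arguments.
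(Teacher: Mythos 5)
Your proposal is correct and follows essentially the same route as the paper's proof: flippability via Proposition \ref{prop:flippable} (using properness through Corollary \ref{cor:radical_line_intersection}), monotone decrease of the support function under flips via the lower-convex-hull interpretation, an edge-length bound extracted from the bound on $H_{\tri}^{\omega}$ (the paper's Lemmas \ref{lemma:pencil_argmin} and \ref{lemma:local_edge_length_bound}), finiteness via Lemma \ref{lemma:bounded_geodesics}, and correctness via the local-to-global principle (the paper's Proposition \ref{prop:local_to_global_Delaunay}). The step you flag as the main obstacle is indeed where the paper invests its technical lemmas, and your sketch of it (the maximum of $H_{\Delta}^{\omega}$ along an edge growing like $\ee^{\len_{uv}}$) matches the paper's quantitative bound.
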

\begin{proof}
   Let us suppose for a moment that consecutively flipping edges of a geodesic
   triangulation violating the strict local Delaunay condition terminates after a
   finite number of steps. Hence, all edges of the final geodesic triangulation
   satisfying the local Delaunay condition. Therefore, Proposition
   \ref{prop:local_to_global_Delaunay} grants that this triangulation refines
   the weighted Delaunay tessellation of the decorated hyperbolic surface
   $\surf^{\omega}$, i.e., it is a weighted Delaunay triangulation of $\surf^{\omega}$.

   It remains to show that the flip algorithm terminates. Proposition
   \ref{prop:flippable} together with Example \ref{ex:isosceles_triangle}
   guarantees that an edge violating the strict local Delaunay
   condition can always be flipped. Let $\tri$ and $\tilde{\tri}$ be geodesic
   triangulations. Suppose $\tilde{\tri}$ can be obtained from $\tri$ by flipping
   an edge $e\subset\surf$ of $\tri$ which violates the strict local Delaunay
   condition. Locally this is equivalent to changing to the lower convex hull
   of four points in $\Sym(2)$ (compare to the proof of Proposition
   \ref{prop:flippable}). Thus we deduce that
   $H_{\tilde{\tri}}^{\omega}\leq H_{\tri}^{\omega}$. In particular,
   $H_{\tilde{\tri}}^{\omega}(x)<H_{\tri}^{\omega}(x)$ for all $x\in e$. Finally, Lemma
   \ref{lemma:local_edge_length_bound} yields that there is an upper bound for the
   lengths of the edges of $\tri$ only determined by $H_{\tri}^{\omega}$ and the
   vertex radii. Using Lemma \ref{lemma:bounded_geodesics}, it follows that there
   is only a finite number of geodesic triangulations of $\surf$ whose edges satisfy
   this lengths constraint. This implies that the number of geodesic
   triangulations $\tilde{\tri}$ with $H_{\tilde{\tri}}^{\omega}\leq H_{\tri}^{\omega}$
   is finite.
\end{proof}

\begin{corollary}
   \label{corollary:min_support_function}
   Let $\tri$ be a weighted Delaunay triangulation of a properly decorated hyperbolic
   surface $\surf^{\omega}$. Then $H_{\tri}^{\omega}\leq H_{\tilde{\tri}}^{\omega}$
   for any other geodesic triangulation $\tilde{\tri}$ of $\surf$. In particular
   $H_{\tri}^{\omega}=H_{\tilde{\tri}}^{\omega}$ holds iff $\tilde{\tri}$ is
   another weighted Delaunay triangulation of $\surf^{\omega}$.
\end{corollary}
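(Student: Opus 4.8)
The plan is to derive the statement from the analysis of the flip algorithm in Theorem \ref{theorem:flip_algorithm}, after first recording one structural fact: the support function of a weighted Delaunay triangulation is already determined by the underlying weighted Delaunay tessellation and does not depend on the chosen refinement.

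\emph{Step 1 (the support function is an invariant of the tessellation).} Let $\tri$ be a weighted Delaunay triangulation of $\surf^{\omega}$ and let $c$ be a Delaunay $2$-cell of the weighted Delaunay tessellation it refines. As established in the proof of Theorem \ref{theorem:weighted_Delaunay_tessellations_revisited}, up to its attachment map $c$ is the hyperbolic polygon $P=\poly(C_1,\dotsc,C_N)$ of the pulled-back vertex cycles $C_1,\dotsc,C_N$, there is an $F_c\in\Sym(2)$ with $\ip{F_c}{C_n}=-1$ for all $n$, and (by Proposition \ref{prop:associated_polygon}) the centres of the $C_n$ are precisely the vertices of $P$. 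Since $\tri$ refines the tessellation, the triangles of $\tri$ contained in $c$ pull back to a triangulation of $P$, and each of them is a decorated hyperbolic triangle whose three vertex cycles are among $C_1,\dotsc,C_N$. The vertex cycles of a decorated hyperbolic triangle form a basis of $\Sym(2)$, so the face-vector of such a triangle is the \emph{unique} element of $\Sym(2)$ pairing to $-1$ with those three cycles; as $F_c$ has this property, the face-vector equals $F_c$. By the defining relation of the support function this yields $H_{\tri}^{\omega}(X)=1/\ip{X}{F_c}^{2}$ on $c$, an expression that involves neither $\tri$ nor the chosen triangulation of $P$. Since the open Delaunay $2$-cells are dense in $\surf$ and $H_{\tri}^{\omega}$ is continuous, $H_{\tri}^{\omega}$ is one and the same function for every weighted Delaunay triangulation of $\surf^{\omega}$; denote it by $H^{\omega}$.

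\emph{Step 2 (the inequality).} Let $\tilde{\tri}$ be any geodesic triangulation of $\surf$. By Theorem \ref{theorem:flip_algorithm}, consecutively flipping edges that violate the strict local Delaunay condition turns $\tilde{\tri}$, after finitely many flips, into a weighted Delaunay triangulation $\tri'$. As shown in the proof of that theorem, each such flip corresponds to passing to the lower convex hull of four points in $\Sym(2)$ and therefore does not increase the support function, so $H_{\tilde{\tri}}^{\omega}\geq H_{\tri'}^{\omega}$; by Step 1 the right-hand side equals $H^{\omega}=H_{\tri}^{\omega}$, and hence $H_{\tri}^{\omega}\leq H_{\tilde{\tri}}^{\omega}$. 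For the equivalence, if $\tilde{\tri}$ is itself a weighted Delaunay triangulation then $H_{\tilde{\tri}}^{\omega}=H^{\omega}=H_{\tri}^{\omega}$ by Step 1. Conversely, assume $\tilde{\tri}$ is not a weighted Delaunay triangulation. Then, by Proposition \ref{prop:local_to_global_Delaunay}, some edge $e$ of $\tilde{\tri}$ fails the local Delaunay condition; by Proposition \ref{prop:flippable} together with Example \ref{ex:isosceles_triangle} it is flippable, and flipping it produces a geodesic triangulation $\tilde{\tri}'$ with $H_{\tilde{\tri}'}^{\omega}(x)<H_{\tilde{\tri}}^{\omega}(x)$ for all $x\in e$ (again from the proof of Theorem \ref{theorem:flip_algorithm}). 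Applying the inequality just proved to $\tilde{\tri}'$ gives $H_{\tri}^{\omega}(x)\leq H_{\tilde{\tri}'}^{\omega}(x)<H_{\tilde{\tri}}^{\omega}(x)$ on $e$, so $H_{\tri}^{\omega}\neq H_{\tilde{\tri}}^{\omega}$, proving the stated equivalence.

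\emph{Main obstacle.} The only genuinely new ingredient is Step 1, whose crux is the uniqueness of the face-vector of the polygonal Delaunay $2$-cell: this forces every triangle refining the cell to carry the same face-vector, hence the same local support function, so that $H^{\omega}$ is well defined independently of the refinement. Everything else is bookkeeping around the monotonicity of the support function under flips, which was already proved for Theorem \ref{theorem:flip_algorithm}.
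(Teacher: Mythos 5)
Your proof is correct and follows what is evidently the intended derivation: the paper states this as an unproved corollary of Theorem \ref{theorem:flip_algorithm}, and your Step 2 is exactly the monotonicity-under-flips argument from that theorem's proof, while your Step 1 (well-definedness of the support function across all refinements of the weighted Delaunay tessellation, via uniqueness of the face-vector of each Delaunay $2$-cell) supplies the one fact the corollary implicitly uses; the same fact can alternatively be read off from the proof of Proposition \ref{prop:local_to_global_Delaunay}, where $h=\tilde{h}$ is established along edges for any two locally Delaunay triangulations. No gaps.
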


\begin{corollary}[generalised Epstein-Penner convex hull construction]
   \label{cor:convex_hull_construction}
   Let $\surf^{\omega}$ be a properly decorated hyperbolic surface coming from a
   finitely generated, non-elementary Fuchsian group $\Fgroup < \PSL(2;\R)$
   (see Example \ref{example:fuchsian_group}). Since the covering space of
   $\surf$ is $\HH$, we can find for each vertex $v\in\verts$ an orbit
   $\mathcal{C}_v\coloneq\{C_v^g\}_{g\in\Fgroup}\subset\Sym(2)$, each element
   representing the vertex cycle about $v$ (see Proposition
   \ref{prop:hyperbolic_cycles}). Then the boundary of
   $\conv\left(\bigcup_{v\in\verts}\mathcal{C}_v\right)$ exhibits the following
   properties:
   \begin{itemize}
      \item
         it consists of a countable number of codimension-one \enquote{faces}
         each of which is the convex hull of a finite number of points in
         $\bigcup_{v\in\verts}\mathcal{C}_v$,
      \item
         each face lies in an elliptic plane, i.e, its face-vector $F$ satisfies
         $|F|^2<0$,
      \item
         the set of faces is locally finite about each point in
         $\bigcup_{v\in\verts}\mathcal{C}_v$,
      \item
         the set of faces can be partitioned into a finite number of
         $\Fgroup$-invariant subsets,
      \item
         the faces project to the Delaunay $2$-cells of the decorated
         hyperbolic surface.
   \end{itemize}
\end{corollary}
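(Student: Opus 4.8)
The plan is to read off the convex hull from the support function of the weighted Delaunay tessellation. By Theorem \ref{theorem:weighted_Delaunay_tessellations_revisited} the properly decorated surface $\surf^{\omega}$ carries a weighted Delaunay tessellation $\mathcal{T}$, and any refining geodesic triangulation $\tri$ is a weighted Delaunay triangulation; by Corollary \ref{corollary:min_support_function} its support function $H^{\omega}\coloneq H^{\omega}_{\tri}$ does not depend on the refinement, hence descends to a function on $\surf$ with the following property: if $f$ is a face of $\mathcal{T}$ with vertices $v_1,\dots,v_N$ and face-vector $F_f$, then $\sqrt{\smash[b]{H^{\omega}(X)}}\,X\in\{\ip{Y}{F_f}=-1\}$ for every $X$ in the lifted face $\poly(C_{v_1},\dots,C_{v_N})$. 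Since $\surf=\HH/\Fgroup$ (Example \ref{example:fuchsian_group}), $H^{\omega}$ lifts to a $\Fgroup$-invariant function $\tilde H$ on $\HH$; put $\Psi(X)\coloneq\sqrt{\smash[b]{\tilde H(X)}}\,X$ and $\Sigma\coloneq\Psi(\HH)$. Pulling $\mathcal{T}$ back to $\HH$ exhibits $\Sigma$ as the union of the affine polygons $P_f^g\coloneq\conv\{C_{v_1}^g,\dots,C_{v_N}^g\}$, one for each face $f$ of $\mathcal{T}$ and each left coset of the $\Fgroup$-stabiliser of a chosen lift of $f$, where $C_{v_i}^g\in\mathcal{C}_{v_i}$ are the corresponding lifted vertex cycles; neighbouring polygons meet exactly along common edges, because the support functions of adjacent triangles agree there. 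This already gives four of the five assertions: there are countably many polygons $P_f^g$, each the convex hull of finitely many points of $\bigcup_{v\in\verts}\mathcal{C}_v$; their $\Fgroup$-orbits are in bijection with the finitely many faces of $\mathcal{T}$; around each point $C_v^g$ only those $P_f^g$ with $f$ incident to $v$ occur, so the family is locally finite there; and radial projection from the origin onto $\HH$, followed by the covering $\HH\to\surf$, maps each $P_f^g$ onto the corresponding Delaunay $2$-cell. For the elliptic-plane assertion, the proof of Theorem \ref{theorem:weighted_Delaunay_tessellations_revisited} exhibits the face-vector of a Delaunay $2$-cell as $F_f=\tilde p/\tandist_{\tilde p}(C_{v_1})$ with $\tilde p\in\HH$ the associated Voronoi $0$-cell, so $|F_f|^{2}=-1/\tandist_{\tilde p}(C_{v_1})^{2}<0$.

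Next I would check that every plane $\{\ip{Y}{F_f^g}=-1\}$ supports $K\coloneq\conv\big(\bigcup_{v}\mathcal{C}_v\big)$, with $K$ on the side $\ip{Y}{F_f^g}\le -1$. The global Delaunay property of $\mathcal{T}$ — that the cycle $F_f$ of each Delaunay $2$-cell meets every vertex cycle of $\surf^{\omega}$ at most orthogonally, or not at all (Definition \ref{def:weighted_Delaunay_tessellation}, Theorem \ref{theorem:weighted_Delaunay_tessellations_revisited}) — translates, via Remark \ref{remark:geometric_interpretation_face_vector}, Lemma \ref{lemma:intersection_angle} and the cosine laws (Lemma \ref{lemma:cos_sin_laws}), into $\ip{C_w}{F_f}\le -1$ for every $w\in\verts$; lifting to $\HH$ gives $\ip{C_w^h}{F_f^g}\le -1$ for all $w\in\verts$ and all $g,h\in\Fgroup$, with equality precisely for the vertices of $P_f^g$, i.e.\ the co-circular cycles defining $f$. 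Hence $K\subseteq\bigcap_{f,g}\{\ip{Y}{F_f^g}\le -1\}$, and each $P_f^g$ — lying in a supporting hyperplane and being the convex hull of points of $\bigcup_{v}\mathcal{C}_v\subseteq K$ — is a face of $\partial K$. Thus $\Sigma=\bigcup_{f,g}P_f^g\subseteq\partial K$.

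The remaining, and by far hardest, point is the reverse inclusion: that $\Sigma$, together with the ideal and hyperideal corners at the vertices $C_v^g$ with $v\in\verts_0\cup\verts_1$ (which are null or spacelike, hence lie outside the image of $\Psi$), exhausts $\partial K$. Here I would adapt the argument of Epstein and Penner \cite{EpsteinPenner88}: since $\Fgroup$ is discrete and non-elementary, the orbit $\bigcup_{v}\mathcal{C}_v$ accumulates only in the directions of the limit set of $\Fgroup$ on the cone at infinity, which forces $K$ to be closed and its recession cone to be contained in the closed future cone; the minimality of $\tilde H$ (Corollary \ref{corollary:min_support_function}) then shows that the half-spaces $\{\ip{Y}{F_f^g}\le -1\}$ already cut out $K$, so that no supporting hyperplane in a timelike direction is missing and $\Psi$ maps $\HH$ onto the portion of $\partial K$ visible from the origin — without gaps, since the Delaunay $2$-cells tessellate $\surf$ (Theorem \ref{theorem:weighted_Delaunay_tessellations_revisited}). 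Combining the two inclusions identifies $\partial K$ with $\bigcup_{f,g}P_f^g$ and yields all five properties. I expect this closedness-and-completeness step to be the main obstacle; by contrast, once the support function is in hand, the rest is bookkeeping with the dictionary between faces of $\mathcal{T}$ and $\Fgroup$-orbits of the polygons $P_f^g$.
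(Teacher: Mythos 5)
Your overall strategy is the one the paper intends: the corollary is stated without a standalone proof precisely because it is meant to be read off from the support function $H_{\tri}^{\omega}$ of a weighted Delaunay triangulation, lifted $\Fgroup$-equivariantly to $\HH$, with the affine polygons $\conv\{C_{v_1}^g,\dotsc,C_{v_N}^g\}$ over the lifted Delaunay $2$-cells forming the faces. Your treatment of countability, local finiteness, $\Fgroup$-invariance and the projection property is the expected bookkeeping. Where you genuinely diverge is the ellipticity bullet: the paper explicitly designates Proposition \ref{prop:elliptic_face_vectors} as its proof (a contradiction argument showing that a parabolic or hyperbolic face-vector would force $H_{\tri}^{\omega}$ to be unbounded along a geodesic ray), whereas you extract it directly from the proof of Theorem \ref{theorem:weighted_Delaunay_tessellations_revisited}, where $F=\tilde p/\tandist_{\tilde p}(C_n)$ with $\tilde p\in\HH$ a Voronoi $0$-cell, so $|F|^2=-\tandist_{\tilde p}(C_n)^{-2}<0$ by Proposition \ref{prop:hyperbolic_cycles}. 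Your argument is shorter and perfectly valid for the Delaunay tessellation itself; the paper's version has the advantage of applying to an arbitrary locally Delaunay triangulation, which is needed elsewhere for the flip algorithm. One small point in your supporting-hyperplane step: Lemma \ref{lemma:intersection_angle} only gives the implication from $\ip{C_1}{C_2}\le-1$ to the intersection behaviour, so to conclude $\ip{C_w^h}{F_f^g}\le-1$ from the properness of the immersed disc you should invoke the quantitative formula of Lemma \ref{lemma:distance_from_product} (and note that no vertex disc can be contained in the immersed disc, since the vertex discs are disjoint and several of them meet $\partial D$ orthogonally); otherwise the inequality could fail in the containment case.

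The genuine gap is the one you flag yourself: the reverse inclusion, i.e.\ that the $P_f^g$ account for \emph{all} codimension-one faces of $\partial K$. Deferring this to Epstein--Penner is not a routine adaptation here, because for a group of the second kind (surfaces with flares, $\verts_1\neq\emptyset$) the orbit points $C_v^g$ are spacelike, the limit set is a proper subset of $\partial\HH$, and $\partial K$ has portions lying over the domain of discontinuity that are not in the image of your map $\Psi$; the closed-orbit and recession-cone arguments of the cusped finite-covolume case do not transfer verbatim. The cleanest repair is to prove the statement as written, namely that every codimension-one face of $\partial K$ is one of the $P_f^g$ (rather than that these faces cover $\partial K$): any such face has a face-vector $F$ with $\ip{C_w^h}{F}\le-1$ for all $w,h$ and equality on at least three orbit points, and Corollary \ref{corollary:min_support_function} together with Proposition \ref{prop:local_to_global_Delaunay} identifies the induced support function with $H_{\tri}^{\omega}$, forcing the face to project to a Delaunay $2$-cell. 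As it stands, this step of your proposal is a sketch, not a proof.
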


The rest of the section is devoted to proving the technical details needed for
the proof of Theorem \ref{theorem:flip_algorithm}. We begin by analysing the
relation between the function $H_{\tri}^{\omega}$ and the edge-lengths.

\begin{lemma}
   \label{lemma:pencil_argmin}
   Suppose $C_0, C_1\in\Sym(2)$ are two hyperbolic cycles in $\HH$ whose
   corresponding discs do not intersecting. Then $|C_1-C_0|^2>0$ and
   \[
      0 \,<\, \argmin_{\lambda\in\R}|C_0+\lambda(C_1-C_0)|^2 \,<\, 1.
   \]
\end{lemma}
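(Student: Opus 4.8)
The plan is to reduce everything to two inequalities among the vectors $C_0,C_1\in\Sym(2)$. Write $g(\lambda)$ for $\bigl|C_0+\lambda(C_1-C_0)\bigr|^2=|C_0|^2+2\lambda\ip{C_0}{C_1-C_0}+\lambda^2|C_1-C_0|^2$, so that $g'(0)=2(\ip{C_0}{C_1}-|C_0|^2)$, $g'(1)=2(|C_1|^2-\ip{C_0}{C_1})$ and $g'(1)-g'(0)=2|C_1-C_0|^2$. Thus, once we know
\[
   \ip{C_0}{C_1}<|C_0|^2 \qquad\text{and}\qquad \ip{C_0}{C_1}<|C_1|^2,
\]
we get $g'(0)<0<g'(1)$, hence $|C_1-C_0|^2>0$ (the first assertion), so $g$ is a strictly convex parabola whose unique minimiser lies strictly between $0$ and $1$ (the second assertion). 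By the symmetry $C_0\leftrightarrow C_1$ it therefore suffices to prove $\ip{C_0}{C_1}<|C_0|^2$, i.e.\ $\ip{C_0}{C_1-C_0}<0$.

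The computational input is the identity $\tandist_x(C)=-\ip{x}{C}$ for $x\in\HH$ and a cycle $C\in\Sym(2)$, which follows from a direct computation (Lemma~\ref{lemma:distance_from_product} with the point $x$ in place of one of the cycles) together with the orientation convention in Definition~\ref{def:modified_tangent_distance}. Since the discs are disjoint, every $x\in D_0$ satisfies $\tandist_x(C_0)<1\leq\tandist_x(C_1)$, so $\ip{x}{C_1-C_0}=\tandist_x(C_0)-\tandist_x(C_1)<0$, and symmetrically $\ip{x}{C_1-C_0}>0$ on $D_1$. As both discs are non-empty, the linear form $\ip{\,\cdot\,}{C_1-C_0}$ takes both signs on $\HH$, which is possible only if $|C_1-C_0|^2>0$; hence, putting $H=\{x\in\HH:\ip{x}{C_1-C_0}<0\}$, the set $H$ is a genuine open half-plane containing $D_0$, whose bounding geodesic meets neither $D_0$ nor $D_1$ (a common point with $D_0$ would lie in $D_0\cap D_1$). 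This bounding geodesic is the radical line of $C_0,C_1$ (Lemma~\ref{lemma:formula_radical_line}).

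It remains to show that $C_0$ itself lies on the $H$-side, i.e.\ $\ip{C_0}{C_1-C_0}<0$, and I would argue by the type of $C_0$. If $C_0$ is a hyperbolic circle with centre $c_0$, then $\ip{x}{C_0}=\ip{x}{c_0}/\cosh r_0$ for all $x\in\HH$ by Lemma~\ref{lemma:distance_from_product}, so $C_0$ is a positive multiple of $c_0\in D_0\subseteq H$ and we are done. If $C_0$ is a horocycle at the cusp $\xi_0$, then $C_0$ spans the null ray of $\xi_0$, which is the limit of the rays through points of $D_0$ running into $\xi_0$; hence $C_0$ lies in the closure of the cone over $D_0$ and $\ip{C_0}{C_1-C_0}\leq0$, with equality impossible because it would make $\xi_0$ an endpoint at infinity of the radical line and that line would then enter the horodisc $D_0$. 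Finally, if $C_0$ is a hypercycle with axis $\gamma_0=N_0^{\perp}\cap\HH$, $|N_0|^2=1$, then $C_0$ is a negative multiple of $N_0$ and the hyperdisc $D_0$ contains the entire closed half-plane $\{x\in\HH:\ip{x}{N_0}\leq0\}$ bounded by $\gamma_0$; its inclusion in $H$ forces $\ip{N_0}{C_1-C_0}>0$ by an elementary argument in $\Sym(2)$ (if $\ip{N_0}{C_1-C_0}\leq0$, the relation $|C_1-C_0|^2>0$ lets one produce $x\in\HH$ with $\ip{x}{N_0}\leq0$ and $\ip{x}{C_1-C_0}\geq0$, contradicting the inclusion), whence $\ip{C_0}{C_1-C_0}<0$.

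The bookkeeping in the first two paragraphs is routine; the substance is the case analysis, and the hypercycle case is the main obstacle. It is the one type for which $C_0$ does not lie in the closed cone generated by its own disc, so the sign of $\ip{\,\cdot\,}{C_1-C_0}$ cannot be carried from $D_0$ to $C_0$ by continuity alone; instead one has to exploit that a hyperdisc is wide enough to engulf a half-plane and conclude via a one-dimensional sign computation with two vectors of positive norm.
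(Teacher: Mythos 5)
Your proof is correct and follows essentially the same route as the paper: identify $C_1-C_0$ with the radical line, establish the sign conditions $\ip{C_0}{C_1-C_0}<0<\ip{C_1}{C_1-C_0}$, and read off the minimiser of the quadratic in $\lambda$. The only real difference is that you prove the sign conditions in detail (via the identity $\tandist_x(C)=-\ip{x}{C}$ and a case analysis on the type of $C_0$, including the genuinely non-trivial hypercycle case), whereas the paper simply asserts that disjoint discs force the two cycles to lie on opposite sides of their radical line.
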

\begin{proof}
   Let $\hat{C}_0$ and $\hat{C}_1$ be the lifts of $C_0$ and $C_1$ to $\Herm(2)$
   as defined in Proposition \ref{prop:hyperbolic_cycles}, respectively.
   From Lemma \ref{lemma:formula_radical_line} follows that $C_0$ and $C_1$
   posses a radical line. By Lemma \ref{lemma:uniqueness_radical_line}, this
   radical line is given by $\hat{C}_1-\hat{C}_0 = C_1-C_0$ since it is contained
   in the pencil spanned by $\hat{C}_0$ and $\hat{C}_1$. Hence $|C_1-C_0|^2>0$ as
   $C_1-C_0$ is a M\"obius-circle. Furthermore, the cycles lie
   on different sides of the radical line, i.e., $\ip{C_0}{C_1-C_0}<0$ and
   $\ip{C_1}{C_1-C_0}>0$, because their discs do not intersect. It follows that
   \[
      |C_1-C_0|^2
      \eq \ip{C_1}{C_1-C_0} \minus \ip{C_0}{C_1-C_0}
      \,>\, -\ip{C_0}{C_1-C_0}.
   \]
   Finally, the expression $|C_0+\lambda(C_1-C_0)|^2$ is quadratic in $\lambda$.
   Thus its minimum point is given by the root of the first derivative, that is,
   \[
      \lambda_{\mathrm{min}}
      \eq -\frac{\ip{C_0}{C_1-C_0}}{|C_1-C_0|^2}.\qedhere
   \]
\end{proof}

\begin{lemma}
   \label{lemma:local_edge_length_bound}
   Let $\Delta\subset\surf$ be a triangle in a geodesic triangulation $\tri$ of a
   properly decorated hyperbolic surface $\surf^{\omega}$. Suppose it is incident to
   the vertices $v_0,v_1,v_2\in\verts$ and define
   \(
      H_{\Delta,\omega}^{\max}
      \coloneq\max\{1,\, \max_{x\in\Delta}H_{\tri}^{\omega}(x)\}.
   \)
   Then the edge-lengths of the edges of $\Delta$ are bounded from above by
   \[
      \max\big\{
         r_{v_m}+r_{v_n}+2\arcosh(H^{\max}_{\Delta,\omega})
         \;:\; m,n\in\{0,1,2\}^2,\; m<n
      \big\}.
   \]
\end{lemma}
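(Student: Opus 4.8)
The plan is to reduce the statement to a single edge, and then bound the length of the relevant subsegment by studying the restriction of the support function to the edge geodesic, where it is (the inverse square of) a hyperbolic–cosine type function.

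Fix $m<n$ and abbreviate $\len\coloneq\len_{v_mv_n}$, $r_m\coloneq r_{v_m}$, $r_n\coloneq r_{v_n}$. If $\len\leq r_m+r_n$ there is nothing to prove, since $\arcosh(H^{\max}_{\Delta,\omega})\geq\arcosh(1)=0$. Otherwise lift $\Delta$ to $\HH$ with vertex cycles $C_m,C_n,C_k\in\Sym(2)$ and face-vector $F_{\Delta}$; let $L_k\in\Sym(2)$, $|L_k|^2=1$, $\ip{L_k}{C_m}=0=\ip{L_k}{C_n}$ represent the edge line, parametrised at unit speed by $L\colon\R\to\HH$, so that in $\Sym(2)$ one has $L(t)=\cosh(t)P+\sinh(t)Q$ with $|P|^2=-1$, $|Q|^2=1$, $\ip{P}{Q}=0$. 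The points $p_m\coloneq C_m\cap L$ and $p_n\coloneq C_n\cap L$ (so $\ip{p_m}{L_k}=0$, $\ip{p_m}{C_m}=-1$ by incidence, Lemma \ref{lemma:distance_from_product}, and likewise for $p_n$) lie on the edge with $v_m,p_m,p_n,v_n$ in order, and $\len=r_m+\dist_{\surf}(p_m,p_n)+r_n$. Hence it suffices to prove $d\coloneq\dist_{\surf}(p_m,p_n)<2\arcosh(H^{\max}_{\Delta,\omega})$. (For cusps and flares one replaces the centre by the auxiliary centre and ``$C_v\cap L$'' by the corresponding intersection point; all computations below go through verbatim with the general modifiers $\tau_{\epsilon}$ in place of $\cosh$.)

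By definition of the support function, $\sqrt{H_{\Delta}^{\omega}(X)}=-1/\ip{X}{F_{\Delta}}$, and $\ip{X}{F_{\Delta}}<0$ on $\bar{\Delta}$ (for $X=\sum_j\alpha_jC_j$ in the associated polygon one has $\ip{X}{F_{\Delta}}=-\sum_j\alpha_j<0$). So, writing $g(t)\coloneq-\ip{L(t)}{F_{\Delta}}$, one has $g>0$ on the parameter interval $[t_m,t_n]$ of $p_m,p_n$ and $H_{\Delta}^{\omega}(L(t))=g(t)^{-2}$ there; moreover $g(t)=-\ip{P}{F_{\Delta}}\cosh t-\ip{Q}{F_{\Delta}}\sinh t$ is of the form $a\cosh t+b\sinh t$. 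Using $\ip{F_{\Delta}}{C_m}=-1$, $\ip{L_k}{C_m}=0$, $\ip{p_m}{C_m}=-1$, $\ip{p_m}{L_k}=0$ and expressing $F_{\Delta}$ in the basis $\{p_m,C_m,L_k\}$, a direct computation (using Lemma \ref{lemma:distance_from_product} and the cosine law, Lemma \ref{lemma:cos_sin_laws}) gives
\[
   g(t_m)\eq 1-\frac{2\sinh\bigl(\tfrac{d}{2}+r_n\bigr)\sinh\bigl(\tfrac{d}{2}\bigr)\sinh(r_m)}{\sinh(\len)},
\]
and the symmetric formula for $g(t_n)$; in particular $0<g(t_m),g(t_n)<1$ (that is, $H_{\Delta}^{\omega}>1$ at $p_m$ and $p_n$), with value $1$ only in the degenerate cases $d=0$ or $r_m=0$, which are trivial or handled separately.

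Now distinguish cases according to the shape of $g$ on $[t_m,t_n]$. If $g(t)=A\cosh(t-t_0)$ with $A>0$ and $t_0\in[t_m,t_n]$, then $L(t_0)\in[p_m,p_n]\subset\bar{\Delta}$ gives $H^{\max}_{\Delta,\omega}\geq g(t_0)^{-2}=A^{-2}$, while the identity $\cosh u\cosh w=\cosh^2\tfrac{u+w}{2}+\cosh^2\tfrac{u-w}{2}-1$ with $u=t_m-t_0$, $w=t_n-t_0$ yields $g(t_m)g(t_n)=A^2\cosh(u)\cosh(w)\geq A^2\cosh^2\bigl(\tfrac{d}{2}\bigr)$; together with $g(t_m)g(t_n)<1$ this forces $\cosh^2\bigl(\tfrac{d}{2}\bigr)<A^{-2}\leq H^{\max}_{\Delta,\omega}$, hence $\cosh\bigl(\tfrac{d}{2}\bigr)<\sqrt{H^{\max}_{\Delta,\omega}}\leq H^{\max}_{\Delta,\omega}$. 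In every remaining case $g$ is monotone on $[t_m,t_n]$ (it is $A\cosh(t-t_0)$ with $t_0\notin[t_m,t_n]$, a multiple of $\sinh(t-t_1)$ with $t_1\notin(t_m,t_n)$, or a multiple of $\ee^{\pm t}$); then the value of $g$ at the endpoint farther from the turning point exceeds $\cosh(d)$ times the value at the nearer endpoint, and, both values being $<1$, one gets $\cosh(d)<g(\text{near})^{-1}\leq g(\text{near})^{-2}=H_{\Delta}^{\omega}(\text{near point})\leq H^{\max}_{\Delta,\omega}$, a fortiori $\cosh\bigl(\tfrac{d}{2}\bigr)\leq\cosh(d)<H^{\max}_{\Delta,\omega}$. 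In all cases $d<2\arcosh(H^{\max}_{\Delta,\omega})$, so $\len<r_m+r_n+2\arcosh(H^{\max}_{\Delta,\omega})$; taking the maximum over $m<n$ concludes.

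The main obstacle is the bookkeeping in the middle step: carrying the computation of $g(t_m)$ and the strict inequality $0<g(t_m)<1$ uniformly through the three vertex types (cone-point, cusp, flare) with their auxiliary-centre conventions, and verifying that the case distinction on the shape of $g$ together with the degenerate configurations ($d=0$, $r_m=0$, $H^{\max}_{\Delta,\omega}=1$) is exhaustive. Once those are in place, the concluding hyperbolic-trigonometric estimates are short.
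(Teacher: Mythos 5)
Your proof is correct, and at bottom it is the same geometric idea as the paper's: everything happens in the two\nobreakdash-plane of $\Sym(2)$ spanned by the two vertex cycles (equivalently, on the edge geodesic), and the gap $d=\len-r_m-r_n$ between the two discs is controlled by the spread of the locus $\{H_{\Delta}^{\omega}=1\}$ on that geodesic. The execution differs, though. The paper routes the argument through Lemma \ref{lemma:pencil_argmin}: the minimiser of $|C_0+\lambda(C_1-C_0)|^2$ gives a point of the edge where $H_{\Delta}^{\omega}$ equals $h_{01}$, the pencil contains two limit points at mutual distance $2\arcosh(h_{01})$ (in fact $2\arcosh(\sqrt{h_{01}})$, which only strengthens the bound), and the key geometric fact \enquote{each disc contains one of the limit points} is asserted synthetically. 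You instead compute the restriction $g(t)=-\ip{L(t)}{F_{\Delta}}=a\cosh t+b\sinh t$ explicitly and \emph{prove} the dual statement $0<g(t_m),g(t_n)<1$ by trigonometry, then finish with a case analysis on the shape of $g$; your case $g=A\cosh(t-t_0)$ with $t_0\in[t_m,t_n]$ is exactly the paper's situation, and your monotone cases cover what the paper's phrasing leaves implicit. What your version buys is that the paper's unproved containment claim is replaced by a verifiable computation; what it costs is the bookkeeping you yourself flag: the displayed formula for $g(t_m)$ is the circle--circle case only, and the generalisation is \emph{not} literally \enquote{verbatim with $\tau_{\epsilon}$ in place of $\cosh$} --- the constraint imposed by $\ip{C_m}{F_{\Delta}}=-1$ reads $g(0)=\cosh r_m$, \enquote{coefficient of $\ee^{-t}$ equals $\ee^{r_m}/2$}, or $g'(0)=-\sinh r_m$ according to the type, and the resulting closed forms for $g(t_m)$ differ. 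I checked all six type combinations: in each one $g(t_m)<1$ reduces to $d>0$ (plus positivity of genuine circle/hypercycle radii, and independently of the sign of auxiliary horocycle radii), so there is no gap, but a complete write-up should tabulate these cases rather than appeal to a uniform substitution.
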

\begin{proof}
   Consider two vertices, say $v_0$ and $v_1$. Lift the edge between $v_0$ and $v_1$
   to $\HH$. Denote by $C_0$ and $C_1$ the corresponding lifts of the vertex cycles.
   If the cycles $C_0$ and $C_1$ intersect, then the length of the edge between them
   is less or equal to $r_{v_0}+r_{v_1}$. Now assume otherwise. The previous Lemma
   \ref{lemma:pencil_argmin} shows that
   \(
      h_{01}
      \coloneq-\min_{\lambda\in\R}|C_0+\lambda(C_1-C_0)|^2
      \leq H^{\max}_{\Delta,\omega}.
   \)
   In addition, the pencil spanned by the cycles contains two points. The distance
   of these points bounds the distance of the cycles since each of the discs bounded
   by the cycles contains one of them. One computes that the distance of the points
   is given by $2\arcosh(h_{01})$. The monotonicity of the $\arcosh$-function yields
   the result.
\end{proof}

Finally, we are going to consider the relationship between the support function
$H_{\tri}^{\omega}$, the local Delaunay condition and global Delaunay triangulations.
The following proposition proves the ellipticity part of Corollary
\ref{cor:convex_hull_construction}. This provides us with the means to derive some
technical details about the geometry of support functions (Lemma
\ref{lemma:h_function_monotonicity}) building the core of the proof of Proposition
\ref{prop:local_to_global_Delaunay}.

\begin{proposition}
   \label{prop:elliptic_face_vectors}
   Consider a geodesic triangulation $\tri$ of a properly decorated hyperbolic
   surface $\surf^{\omega}$. Suppose that all edges satisfy the local Delaunay
   condition. Then the face-vector $F_{\Delta}\in\Sym(2)$ of any lift of a
   (decorated) triangle $\Delta$ to $\HH$ satisfies $|F_{\Delta}|^2 < 0$.
\end{proposition}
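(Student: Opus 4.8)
The plan is to argue by contradiction via a maximal--triangle propagation, in the spirit of the tilt analysis in \cite{Ushijima02}. Recall that $\Sym(2)$ with $\ip{\cdot}{\cdot}$ is a Minkowski $3$-space of signature $(2,1)$, and that $|F_\Delta|^2<0$ is precisely the statement that the plane $\{\ip{X}{F_\Delta}=-1\}$ spanned by the vertex cycles $C_1,C_2,C_3$ of $\Delta$ is elliptic; should $|F_\Delta|^2\geq0$ for some $\Delta$, then by Proposition \ref{prop:hyperbolic_cycles} and Remark \ref{remark:geometric_interpretation_face_vector} $F_\Delta$ would represent a horocycle or hypercycle orthogonal to $C_1,C_2,C_3$. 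Since $\surf$ is of finite type, $\tri$ has only finitely many triangles, so I would pick a triangle $\Delta^{\ast}$ with $m\coloneq|F_{\Delta^{\ast}}|^2$ maximal and assume $m\geq0$, aiming at a contradiction.

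The first step is an edge identity. For an interior edge $e$ shared by $\Delta$ and $\Delta'$, with endpoint cycles $C_2,C_3$ and unit supporting line $L_e$ (so $\ip{L_e}{C_2}=\ip{L_e}{C_3}=0$ and $|L_e|^2=1$, oriented away from $\Delta$'s opposite vertex), both face vectors lie on the affine line $G_e+\R L_e$, where $G_e$ is determined by $\ip{C_2}{G_e}=\ip{C_3}{G_e}=-1$ and $\ip{L_e}{G_e}=0$. Hence $|F_\Delta|^2=|G_e|^2+(t_e^\Delta)^2$ and $|F_{\Delta'}|^2=|G_e|^2+(t_e^{\Delta'})^2$ for the tilts along $e$ (Definition \ref{def:tilds}), while the local Delaunay condition (Definition \ref{def:local_delaunay_condition}, Proposition \ref{prop:local_delaunay_condition}) reads $F_{\Delta'}=F_\Delta+s_eL_e$ with $s_e\geq0$. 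Here $G_e$ is the cycle orthogonal to $C_2$ and $C_3$ whose centre lies on the geodesic joining their centres; by Corollary \ref{cor:radical_line_intersection} together with properness of the decoration (compare Lemma \ref{lemma:pencil_argmin} and Lemma \ref{lemma:local_edge_length_bound}) it is a genuine circle or a point, so $|G_e|^2<0$ and therefore $(t_e^\Delta)^2>|F_\Delta|^2$ for every triangle--edge incidence.

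The second step is propagation. I would combine \eqref{eq:face_vector_by_edges}, Lemma \ref{lemma:line_basis} and Definition \ref{def:tilds} to get $|F_\Delta|^2=\sum_n\frac{\tau_{\epsilon_n}(r_n)}{\tau_{\epsilon_n}'(d_n)}t_n$ with strictly positive coefficients. For $\Delta^{\ast}$ all three tilts are nonzero (by the previous step, as $m\geq0$) and their positive-weighted sum is $m\geq0$, so some tilt $t_e^{\Delta^{\ast}}$ is positive. Then $|F_{\Delta'}|^2=m+s_e(2t_e^{\Delta^{\ast}}+s_e)\leq m$ with $s_e\geq0$ and $2t_e^{\Delta^{\ast}}+s_e>0$ forces $s_e=0$, that is $F_{\Delta'}=F_{\Delta^{\ast}}$, so $\Delta'$ is again maximal. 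Iterating, the set $S$ of triangles reachable from $\Delta^{\ast}$ along such edges consists of maximisers which, in compatible lifts, all carry one common face vector $F$ with $|F|^2=m\geq0$; since $\tri$ is finite and this walk never stops, $S$ contains a nontrivial cycle of the dual graph of $\tri$.

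The hard part will be ruling this out. Developing $\bigcup S$ into $\HH$, every developed vertex cycle then lies on the single plane $\{\ip{X}{F}=-1\}$, and running around any loop in $\bigcup S$ shows that its holonomy fixes $F$. As $|F|^2\geq0$, the stabiliser of $F$ in the group $\PSL(2;\R)$ of hyperbolic motions (acting on $\Sym(2)$ as $\SO^{+}(2,1)$) is abelian --- a one-parameter group of translations along the axis of the hypercycle $F$ when $|F|^2>0$, a parabolic subgroup when $|F|^2=0$. Hence the holonomy restricted to $\pi_1(\bigcup S)$ is elementary. When $\bigcup S=\surf$ this contradicts the non-elementarity of the holonomy of a finite-type hyperbolic surface with non-trivial topology (cf.\ Example \ref{example:fuchsian_group}); the few surfaces with elementary holonomy --- a single cusped or flared annulus --- admit so few geodesic triangulations that $|F_\Delta|^2<0$ is seen by inspection. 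Verifying that $\bigcup S$ must in fact exhaust $\surf$ is the delicate point, and I would approach it by analysing the boundary edges of $S$, along which the tilt from the $S$-side is strictly negative while $s_e>0$. This contradiction gives $m<0$, and the proposition follows.
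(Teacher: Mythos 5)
Your first two steps are sound, and they set up a genuinely different, more combinatorial argument than the paper's. The orthogonal decomposition $|F_{\Delta}|^2=|G_e|^2+(t_e^{\Delta})^2$ along the line $\{\ip{C_2}{X}=\ip{C_3}{X}=-1\}$ is correct, as is the reading of the local Delaunay condition as $F_{\Delta'}=F_{\Delta}+s_eL_e$ with $s_e\geq0$ (this is exactly the $\beta\leq\tilde{\beta}$ computation in the proof of Proposition \ref{prop:local_delaunay_condition}). The claim $|G_e|^2<0$ is true but deserves more than a \enquote{compare}: writing $G_e=aC_2+bC_3$ one finds $|G_e|^2=(2\ip{C_2}{C_3}-|C_2|^2-|C_3|^2)/(\ip{C_2}{C_3}^2-|C_2|^2|C_3|^2)$ with negative denominator, so the inequality reduces to $2\ip{C_2}{C_3}<|C_2|^2+|C_3|^2$, which one checks case by case from Lemmas \ref{lemma:radius_from_norm} and \ref{lemma:distance_from_product}; the only cases where it is not automatic are those involving a cone point, and there it is precisely the properness constraint \eqref{eq:proper_constraints}. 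With that, the propagation step (a maximiser with $m\geq 0$ has a strictly positive tilt, and crossing that edge forces $s_e=0$) is correct.

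The genuine gap is the last step, and you have named it yourself: nothing in the propagation forces $\bigcup S$ to carry any topology. The walk must eventually revisit a surface triangle, but the resulting loop in the dual graph can perfectly well be nullhomotopic (a fan of triangles closing up around a vertex), in which case the returning developed copy coincides with the original, the \enquote{holonomy element} is the identity, and the stabiliser argument says nothing. Worse, the holonomy route is not available at the level of generality of this paper: for surfaces with cone points there is in general no metric cover by $\HH$ and no Fuchsian holonomy representation (the introduction stresses exactly this point), so \enquote{non-elementarity} is not a fact you can invoke, and even for honest Fuchsian quotients a single element fixing a spacelike or lightlike $F$ always exists, so you would need the full subgroup generated by loops in $\bigcup S$ to be non-elementary --- which is the unproved claim $\bigcup S=\surf$ in disguise. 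The paper avoids all of this by an intrinsic unboundedness argument: it develops triangles along a geodesic ray $\tilde{\gamma}$ limiting to the ideal centre of the non-elliptic $F_{\Delta_0}$, uses the local Delaunay condition (via Lemma \ref{lemma:h_function_monotonicity}) to show the support function $H_{\tri}^{\omega}$ is non-decreasing across each crossed edge, and computes $H_{\Delta_0}^{\omega}\circ\tilde{\gamma}(s)=h_0+2\lambda_s\ee^{\delta}\to\infty$, contradicting the boundedness of $H_{\tri}^{\omega}$ on $\surf$. If you want to salvage your maximal-face-vector setup, the cleanest repair is to replace the holonomy contradiction by exactly this kind of escape-to-infinity argument along the direction singled out by $F$; as written, the proof is incomplete.
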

\begin{proof}
   Suppose otherwise. We are going to construct a geodesic ray
   $\gamma\colon [0, \infty)\to\surf\setminus\verts$ such that
   $H_{\tri}^{\omega}\circ\gamma$ is unbounded. This is a contradiction to the
   existence of $\max_{x\in\surf} H_{\tri}^{\omega}(x)$. There are two cases: either
   $|F_{\Delta}|^2 = 0$ or $|F_{\Delta}|^2 > 0$. We are only elaborating the first
   case. The proof for the other one works similarly.

   Let $\Delta_0$ be a triangle whose lift $\tilde{\Delta}_0$ has a face-vector
   satisfying $|F_{\Delta_0}|^2=0$. Choose a geodesic ray
   $\tilde{\gamma}\colon[0,\infty)\to\HH$
   starting in the interior of $\tilde{\Delta}_0$ which
   limits to the centre of the horocycle corresponding to $F_{\Delta_0}$ and
   intersects the interior of an edge of $\tilde{\Delta}_0$. Maybe after
   perturbing the ray a little bit, it projects to a geodesic ray
   $\gamma\colon[0, \infty)\to\surf\setminus\verts$. Successively
   lift triangles of $\tri$ along $\gamma$ to $\HH$ such that they cover
   $\tilde{\gamma}$ (see Figure \ref{fig:ellipticity_lemma}). Denote the triangles
   by $\Delta_1, \Delta_2, \dotsc$ and the times when $\gamma$ intersects edges
   of $\tri$ by $0\eqcolon s_0 < s_1 < s_2 < \cdots$. Hence,
   $(H_{\tri}^{\omega}\circ\gamma)(s) = (H_{\Delta_n}^{\omega}\circ\tilde{\gamma})(s)$
   for $s_n\leq s\leq s_{n+1}$. By the local Delaunay condition
   (compare to Lemma \ref{lemma:h_function_monotonicity}),
   \(
      (H_{\Delta_m}^{\omega}\circ\tilde{\gamma})(s)
      \geq(H_{\Delta_n}^{\omega}\circ\tilde{\gamma})(s)
   \)
   for $s>s_{n+1}$ and $m>n$. Finally, by construction, there is a $\lambda_s>0$
   for every $s\geq0$ such that
   \[
      (H_{\Delta_0}^{\omega}\circ\tilde{\gamma})(s)
      \;=\; -\big|\sqrt{h_0}\,\tilde{\gamma}(0) + \lambda_s F_{\Delta_0}\big|^2
      \;=\; h_0 + 2\lambda_s\ee^{\delta}.
   \]
   Here $h_0\coloneq (H_{\Delta_0}^{\omega}\circ\tilde{\gamma})(0)$ and $\delta$ is
   the (oriented) distance of $\tilde{\gamma}(0)$ to the horocycle given by
   $F_{\Delta_0}$. Note that $\lambda_s\to\infty$ as $s\to\infty$. Thus,
   $(H_{\Delta_0}^{\omega}\circ\tilde{\gamma})(s) \to \infty$ as $s\to\infty$, too.
\end{proof}

\begin{figure}[h]
   \begin{picture}(385, 155)
      \put(115, 0){
         \includegraphics[width=0.4\textwidth]{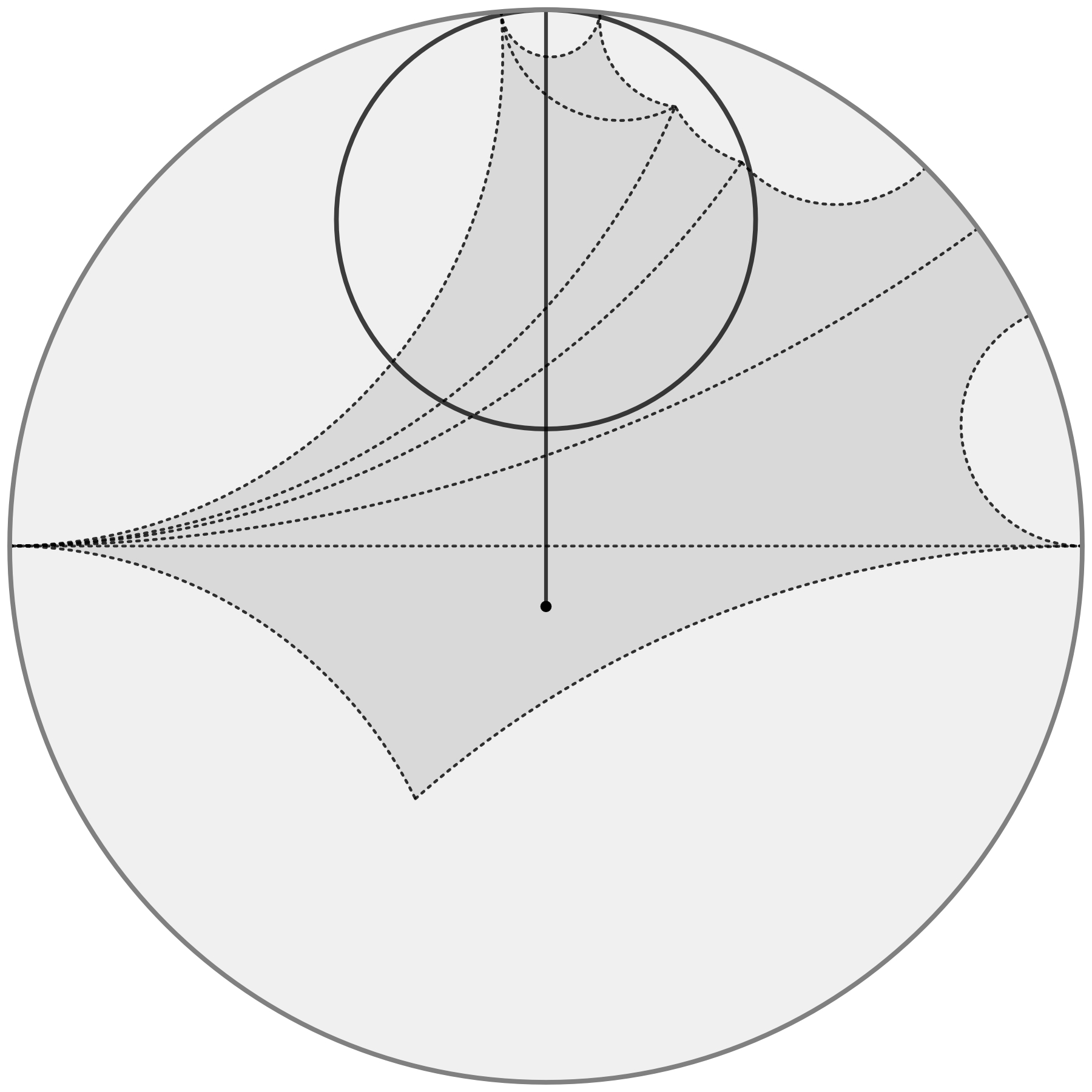}
      }
      \put(155, 115){\small $F_{\Delta}$}
      \put(197, 81){\small $\tilde{\gamma}$}

      \put(170, 60){\small $\tilde{\Delta}_0$}
      \put(240, 90){\small $\tilde{\Delta}_1$}
      \put(229, 115){\small $\tilde{\Delta}_2$}
      \put(211, 127){\small $\tilde{\Delta}_3$}
      \put(184, 115){\small $\tilde{\Delta}_4$}
      \put(199, 142){\small $\tilde{\Delta}_5$}
   \end{picture}
   \caption{
      Sketch of the construction used in Proposition
      \ref{prop:elliptic_face_vectors}.
   }
   \label{fig:ellipticity_lemma}
\end{figure}

\begin{lemma}
   \label{lemma:h_function_monotonicity}
   Let $C$ be a hyperbolic cycle in $\HH$ and $L$ a hyperbolic line orthogonal
   to it. Let $\gamma\colon\R\to L$ be the parametrisation of $L$ by the
   (oriented) distance to the (auxiliary) centre of $C$. If $C$ is a horo- or
   hypercycle choose $\gamma$ such that $\R_{<0}$ is mapped into the disc associated
   to $C$. Consider $F_1,F_2\in\Sym(2)\setminus\lin\{C\}$ with $\ip{C}{F_n} = -1$,
   $n=1,2$, such that $\nu F_1$ and $\nu F_2$ are hyperbolic circles for some $\nu>0$.
   Denote by $H_n\colon\HH\to\R$ the support function induced by $F_n$.
   Furthermore, let $\delta_n$ the distance of the (auxiliary) centre of $C$ to the
   orthogonal projection of the centre of $F_n$ to the line $L$ (see Figure
   \ref{fig:geometry_of_pencils}).

   Then the sign of $H_1\circ\gamma-H_2\circ\gamma$ is
   constant over $\R_{>0}$ if $C$ is a circle or $\R$ otherwise. Furthermore,
   if $(H_1\circ\gamma)(s)>(H_2\circ\gamma)(s)$ for some $s>0$, then
   $\delta_1 > \delta_2$.
\end{lemma}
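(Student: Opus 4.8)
The plan is to pass to the hyperboloid model of $\HH$ inside $\Sym(2)$ furnished by Proposition \ref{prop:hyperbolic_cycles} --- so the points of $\HH$ are the $X$ with $|X|^2=-1$ and $x_0>0$, and $\ip{p}{q}=-\cosh\dist_{\HH}(p,q)$ for $p,q\in\HH$ --- and to compute $H_1\circ\gamma-H_2\circ\gamma$ there. Unravelling the definition of the support function, $H_n(X)=1/\ip{X}{F_n}^2$ for a point $X\in\HH$. Since $\nu F_n$ is a hyperbolic circle, $F_n$ is timelike ($|F_n|^2<0$), so $F_n=\mu_n p_n$ with $\mu_n>0$ and $p_n\in\HH$ the centre of the circle $\nu F_n$ --- the \enquote{centre of $F_n$} of the statement. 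After a hyperbolic motion I may assume that $L$ is a standard geodesic through $\gamma(0)$, with $\gamma(s)=\cosh(s)\,e_0+\sinh(s)\,v$ for an orthonormal timelike/spacelike pair $e_0,v$; by Lemma \ref{lemma:radius_from_norm} the cycle $C$ then takes one of three normal forms according to its type $\epsilon\in\{-1,0,1\}$, with $\gamma(0)$ its centre when $\epsilon=-1$.

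Write $h_n=\dist_{\HH}(p_n,L)$ and let $\gamma(\delta_n)$ be the foot of the perpendicular from $p_n$ to $L$, so that $\delta_n$ is exactly the oriented distance of the statement. By the hyperbolic Pythagorean theorem $\ip{\gamma(s)}{F_n}=-\mu_n\cosh(h_n)\cosh(s-\delta_n)$, whence $(H_n\circ\gamma)(s)=\bigl(\mu_n\cosh h_n\bigr)^{-2}\cosh^{-2}(s-\delta_n)$. The prefactor is pinned down by $\ip{C}{F_n}=-1$: evaluating $\ip{C}{p_n}$ in each normal form of $C$ (again by Pythagoras, using $C=\gamma(0)/\tau_{-1}(r)$ for a circle, $C$ null for a horocycle, and $C$ spacelike along its centre-geodesic for a hypercycle) yields, up to a positive constant independent of $n$,
\[
   (H_n\circ\gamma)(s)\;=\;\frac{\tau_{\epsilon}(\delta_n)^2}{\tau_{\epsilon}(r)^2\,\cosh^2(s-\delta_n)} ,
\]
and in the hypercyclic case the same relation, together with $r>0$ and the convention that $\gamma$ maps $\R_{<0}$ into the disc of $C$, forces $\delta_1,\delta_2>0$.

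It remains to compare the two functions. Now $(H_1\circ\gamma-H_2\circ\gamma)(s)$ has the same sign as $\tau_{\epsilon}(\delta_1)^2\cosh^2(s-\delta_2)-\tau_{\epsilon}(\delta_2)^2\cosh^2(s-\delta_1)$, and this difference of squares factors as the product of $\tau_{\epsilon}(\delta_1)\cosh(s-\delta_2)-\tau_{\epsilon}(\delta_2)\cosh(s-\delta_1)$ and $\tau_{\epsilon}(\delta_1)\cosh(s-\delta_2)+\tau_{\epsilon}(\delta_2)\cosh(s-\delta_1)$; expanding both factors by the subtraction formula for $\cosh$, one finds that $(H_1\circ\gamma-H_2\circ\gamma)(s)$ is a positive multiple of
\[
   \begin{cases}
      \sinh s\,\sinh(\delta_1-\delta_2)\,\bigl[\,2\cosh\delta_1\cosh\delta_2\cosh s-\sinh(\delta_1+\delta_2)\sinh s\,\bigr] & \text{if }\epsilon=-1,\\[3pt]
      \sinh(\delta_1-\delta_2)\,\bigl[\,\ee^{2s}\cosh(\delta_1-\delta_2)+\ee^{\delta_1+\delta_2}\,\bigr] & \text{if }\epsilon=0,\\[3pt]
      \cosh s\,\sinh(\delta_1-\delta_2)\,\bigl[\,\cosh s\,\sinh(\delta_1+\delta_2)-2\sinh s\,\sinh\delta_1\sinh\delta_2\,\bigr] & \text{if }\epsilon=1 .
   \end{cases}
\]
In each case the bracketed factor is strictly positive for every $s\in\R$: for $\epsilon=0$ this is clear; for $\epsilon=-1$ it follows from $\lvert\sinh(\delta_1+\delta_2)\rvert<2\cosh\delta_1\cosh\delta_2$ and $\lvert\tanh s\rvert<1$; for $\epsilon=1$ it follows by regarding the bracket as affine in $\tanh s\in(-1,1)$ and checking positivity at both endpoints, where $\delta_1,\delta_2>0$ enters. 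Hence the sign of $(H_1\circ\gamma-H_2\circ\gamma)(s)$ equals $\operatorname{sign}(\delta_1-\delta_2)$ for all $s\in\R$ when $C$ is a horo- or hypercycle, and equals $\operatorname{sign}(s)\cdot\operatorname{sign}(\delta_1-\delta_2)$ --- in particular it is constant on $\R_{>0}$ --- when $C$ is a circle, the extra factor $\sinh s$ reflecting that $H_1$ and $H_2$ must agree at the centre $\gamma(0)$ of $C$. In particular, if $(H_1\circ\gamma)(s)>(H_2\circ\gamma)(s)$ for some $s>0$ then this sign is positive, i.e.\ $\delta_1>\delta_2$, which is the second assertion.

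The most delicate point is the final computation: performing the three factorisations and checking that each bracketed factor keeps a constant sign --- for $\epsilon=1$ this hinges on first deducing $\delta_1,\delta_2>0$, and it is here that the hypothesis on the orientation of $\gamma$ (that $\R_{<0}$ maps into the disc of $C$) is used. Reducing to the hyperboloid model, recognising $F_n$ as a rescaled point of $\HH$, and invoking the hyperbolic Pythagorean theorem are routine.
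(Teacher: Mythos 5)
Your proof is correct. Its computational core---reducing everything to a closed form for $H_n\circ\gamma$---is the same as the paper's second step, which likewise writes $(H_n\circ\gamma)(s)=\cosh^{-2}(r_n^s)$ with $\cosh(r_n^s)$ proportional to $\cosh(s-\delta_n)$ and pins the constant down via $\ip{C}{F_n}=-1$. As a side remark, your prefactor $\tau_{\epsilon}(\delta_n)/\tau_{\epsilon}(r)$ rather than the paper's $\tau'_{\epsilon}(\delta_n)/\tau'_{\epsilon}(R)$ is the correct one: for a circle, $\ip{C}{F_n}=-1$ forces $H_1(\gamma(0))=H_2(\gamma(0))=\cosh^{-2}(r)$, which only your version satisfies, so the paper's $\tau'$ appears to be a typo that happens not to affect its limiting argument. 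Where you genuinely diverge is in how the two conclusions are extracted. The paper gets sign-constancy from a qualitative geometric observation---the curves $s\mapsto\sqrt{H_n(\gamma(s))}\,\gamma(s)$ are two affine lines in the $2$-plane $\lin(\gamma(\R))\subset\Sym(2)$ meeting at the common point $C$, and whether that crossing is visible along $\gamma$ depends on whether $C$ is timelike---and then obtains $\delta_1>\delta_2$ separately by letting $s\to\infty$ in an explicit inequality. You instead factor $\tau_{\epsilon}(\delta_1)^2\cosh^2(s-\delta_2)-\tau_{\epsilon}(\delta_2)^2\cosh^2(s-\delta_1)$ and read both conclusions off the sign of $\sinh(\delta_1-\delta_2)$ (times $\sinh s$ in the circle case) after checking positivity of the accompanying bracket. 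Your route is more elementary, delivers both claims in one stroke, and makes explicit a point the paper glosses over in the hypercyclic case, namely that $\delta_1,\delta_2>0$ is forced by $\ip{C}{F_n}=-1$ together with the orientation convention on $\gamma$---exactly what your endpoint check at $\tanh s\to1$ requires. The trade-off is that the paper's affine-lines observation yields sign-constancy with no case analysis at all.
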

\begin{proof}
   The first claim about the sign follows from observing that the functions
   $H_n\circ\gamma$ correspond to two intersecting affine lines in $\Sym(2)$.
   Indeed, for each $s\in\R$ there is a $\lambda_s > 0$ such that
   \[
      \sqrt{(H_n\circ\gamma)(s)}\,\gamma(s)
      \eq
      C \plus \lambda_sX_n
   \]
   where $X_n\coloneq\sqrt{(H_n\circ\gamma)(1)}\,\gamma(1)-C$, $n=1,2$,
   (compare to Figure \ref{fig:support_function}).

   Next, note that we can assume $F_1$, $F_2$ and $C$ to represent hyperbolic
   cycles, maybe after rescaling, i.e., considering $\nu F_1$, $\nu F_2$
   and $(1/\nu) C$ for some $\nu>0$. Denote by $\epsilon\in\{-1, 0, 1\}$ the type
   of $C$ and by $R$ its radius. Furthermore, let $r_n>0$ denote the radius of the
   circle corresponding to $F_n$ and by $d_n$ the distance between its centre and
   $L$. Finally, consider an $s>0$ such that $\gamma(s)$ is not contained in the
   discs associated to $F_1$ and $F_2$. Then there are circles centred at $\gamma(s)$
   orthogonally intersecting $F_1$ or $F_2$, respectively. Denote their radii by
   $r^s_1$ and $r^s_2$. Using Lemma \ref{lemma:cos_sin_laws} we see that
   $\cosh(r_n)\tau'_{\epsilon}(R) = \tau'_{\epsilon}(\delta_n)\cosh(d_n)$
   and $\cosh(r_n)\cosh(r_n^s) = \cosh(d_n)\cosh(s-\delta_n)$. Hence
   \[
      \cosh(r_n^s)
      \eq
      \frac{\tau'_{\epsilon}(R)\cosh(s-\delta_n)}{\tau'_{\epsilon}(\delta_n)}.
   \]
   By Lemma \ref{lemma:radius_from_norm}, $(H_n\circ\gamma)(s) = \cosh^{-2}(r_n^s)$.
   Therefore, assuming that $(H_1\circ\gamma)(s) > (H_2\circ\gamma)(s)$ is equivalent to
   \begin{equation}
      \label{eq:height_like_function_explicite}
      \frac{\ee^{\delta_2}+\epsilon\ee^{-\delta_2}}
         {\ee^{\delta_1}+\epsilon\ee^{-\delta_1}}
      \;<\;
      \ee^{\delta_1-\delta_2}
      \frac{1+\ee^{2(\delta_2-s)}}{1+\ee^{2(\delta_1-s)}}.
   \end{equation}
   Here we used that $\tau'_{\epsilon}(x) = (\ee^x+\epsilon\ee^{-x})/2$.
   After taking the limit $s\to\infty$ and rearranging we arrive at
   $\delta_1\geq\delta_2$. But equation \eqref{eq:height_like_function_explicite}
   prohibits equality.
\end{proof}

\begin{figure}[h]
   \begin{picture}(385, 155)
      \put(115, 0){
         \includegraphics[width=0.4\textwidth]{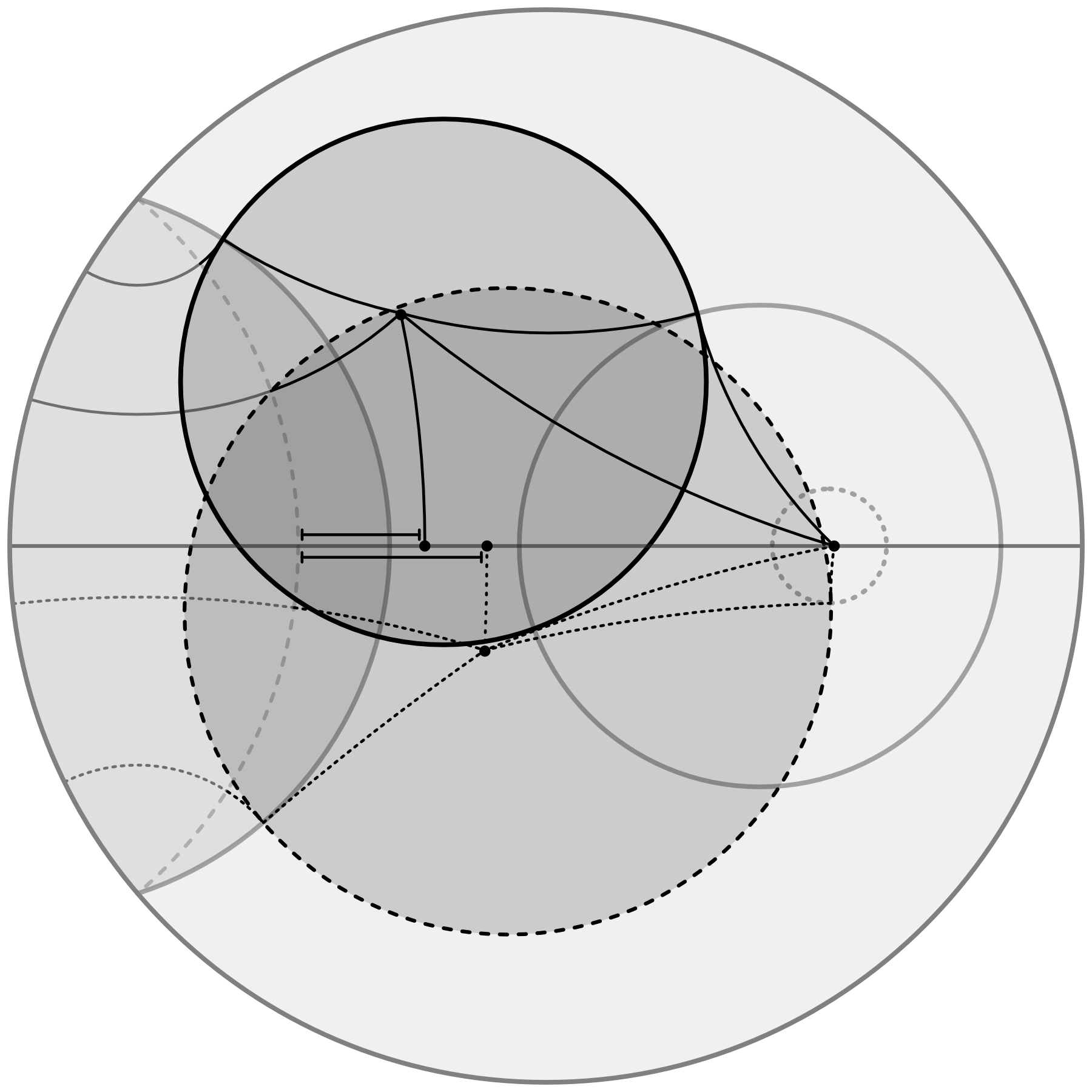}
      }
      \put(142, 24){\small $C$}
      \put(264, 75){\small $L$}
      \put(236, 82){\small $\gamma(s)$}
      \put(200, 135){\small $F_2$}
      \put(200, 17){\small $F_1$}
      \put(166, 83){\small $\delta_2$}
      \put(172, 69){\small $\delta_1$}
   \end{picture}
   \caption{Sketch of the geometric objects considered in Lemma
      \ref{lemma:h_function_monotonicity}. It shows elements of two distinct
      pencils of cycles which both contain a hyperbolic cycle $C$ and their dual
      pencils both contain a common hyperbolic line $L$. The radical line of the first
      pencil intersects $L$ further to the right then the radical line of the second
      pencil, i.e., $\delta_1>\delta_2$, if at some point $\gamma(s)\in L$ the
      radii of the corresponding members of the pencils satisfy $r_2^s > r_1^s$, that
      is, $(H_1\circ\gamma)(s) > (H_2\circ\gamma)(s)$.
   }
   \label{fig:geometry_of_pencils}
\end{figure}

\begin{proposition}
   \label{prop:local_to_global_Delaunay}
   Let $\tri$ be a geodesic triangulation of a properly decorated hyperbolic
   surface $\surf^{\omega}$ whose edges all satisfy the local Delaunay condition.
   Then it refines the weighted Delaunay tessellation of $\surf^{\omega}$.
\end{proposition}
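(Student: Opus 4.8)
The weighted Delaunay tessellation and $\tri$ are both geodesic tessellations of $\surf$ with vertex set $\verts$, and $\tri$ is a triangulation, so the assertion ``$\tri$ refines the weighted Delaunay tessellation'' is equivalent to saying that every edge of $\tri$ is either a Delaunay $1$-cell or a diagonal inside some Delaunay $2$-cell; equivalently, after grouping the faces of $\tri$ into suitable blocks, each block should coincide with a Delaunay $2$-cell. The plan is to build these blocks from the face-vectors, to show that each block is a geodesic polygon embedded in $\surf$, and then to identify its ``centre'' with a Voronoi $0$-cell, so that the construction in Theorem \ref{theorem:weighted_Delaunay_tessellations_revisited} recovers exactly the block as the dual Delaunay $2$-cell.

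First I would lift, for an edge $e$ of $\tri$ shared by faces $\Delta,\Delta'$, the pair to a decorated quadrilateral with vertex cycles $C_1,C_2,C_3,C_4\in\Sym(2)$, with $C_2,C_3$ spanning $e$, and with face-vectors $F,F'$. If $e$ fails the \emph{strict} local Delaunay condition but satisfies the local Delaunay condition (Definition \ref{def:local_delaunay_condition}), then at least one of the inequalities $\ip{C_1}{F'}\le-1$, $\ip{C_4}{F}\le-1$ is an equality; since the vertex cycles of a decorated triangle form a basis of $\Sym(2)$ and $\ip{C_i}{F}=\ip{C_i}{F'}=-1$ for $i=2,3$, either equality forces $F=F'$. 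Hence ``$\Delta\sim\Delta'$ whenever they meet along an edge that is not strictly local Delaunay'' generates an equivalence relation whose classes $B$ are connected unions of faces of $\tri$ carrying one common face-vector $F_B$; by Proposition \ref{prop:elliptic_face_vectors}, $|F_B|^2<0$. Unfolding the faces of $B$ into $\HH$ along the flat interior edges yields an isometric immersion $\Phi_B$ of an abstract polygon onto $B\subset\surf$; injectivity of $\Phi_B$ on the interior is obtained by the separating-hyperbolic-line argument already used for Lemma \ref{lemma:delaunay_n_cells} and in the proof of Theorem \ref{theorem:weighted_Delaunay_tessellations_revisited}, using that the boundary edges of $B$ are strictly local Delaunay together with Lemma \ref{lemma:h_function_monotonicity} applied at those edges. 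Thus every block $B$ is a geodesic polygon embedded in $\surf$ with interior homeomorphic to an open disc, with vertices in $\verts$, and the blocks tile $\surf$.

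Next I would identify $B$ with a Delaunay $2$-cell. The vector $F_B$ with $|F_B|^2<0$ determines a centre $c_B\in\HH$, and by Lemma \ref{lemma:distance_from_product} the relation $\ip{F_B}{C_v}=-1$ for every vertex cycle $C_v$ of $B$ forces $\tandist_{c_B}(C_v)$ to equal one and the same number $\mu_B$ for all vertices $v$ of $B$. The crucial step is to show $\mu_B=\min_{u\in\verts}\tandist_{c_B}(C_u)$: if some vertex cycle $C_u$ with $u$ not a vertex of $B$ satisfied $\tandist_{c_B}(C_u)<\mu_B$, then the lift $\tilde{c}_B$ would lie on the far side of the radical line of $C_u$ and a neighbouring vertex cycle of $B$; propagating this across the chain of faces of $\tri$ joining $c_B$ to $u$, invoking the local Delaunay condition at each crossed edge together with Remark \ref{remark:geometric_interpretation_face_vector} and Lemma \ref{lemma:h_function_monotonicity}, and ruling out an obstructing cone-point exactly as in the proof of Theorem \ref{theorem:weighted_Delaunay_tessellations_revisited} (where the properness of the decoration and Corollary \ref{cor:radical_line_intersection} enter), contradicts the minimality. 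Hence $c_B$ is a Voronoi $0$-cell whose nearest vertex cycles are precisely the vertex cycles of $B$, and the construction in Theorem \ref{theorem:weighted_Delaunay_tessellations_revisited} recovers $B$ as its dual Delaunay $2$-cell. Since both the blocks and the Delaunay $2$-cells tile $\surf$, the two tilings coincide; therefore the strictly-local-Delaunay edges of $\tri$ are exactly the Delaunay $1$-cells and every other edge of $\tri$ is a diagonal of a Delaunay $2$-cell, which is the refinement property.

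I expect the principal obstacle to be precisely this global minimality step: converting the local inequalities $t_{\mathrm{left}}+t_{\mathrm{right}}\le0$ into a statement that survives propagation along arbitrarily long chains of faces on the (non-simply-connected) surface. This is exactly what Proposition \ref{prop:elliptic_face_vectors} (all face-vectors elliptic, so nothing ``escapes to infinity'') and Lemma \ref{lemma:h_function_monotonicity} (monotone comparison of support functions across a locally Delaunay edge) are designed to supply, and the cone-point case is the reason the properness hypothesis is needed. Establishing embeddedness of the unfolded blocks is a secondary, more routine point, handled by the separating-line technique of Lemma \ref{lemma:delaunay_n_cells}.
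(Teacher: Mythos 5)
Your route is genuinely different from the paper's. You try to reconstruct the weighted Delaunay tessellation directly from $\tri$: faces are grouped into blocks along the non-strict edges (your observation that equality in \eqref{eq:local_delaunay_vectors} forces $F_{123}=F_{234}$, because $C_1,C_2,C_3$ is a basis of $\Sym(2)$, is correct and clean), each block carries one elliptic face-vector $F_B$ by Proposition \ref{prop:elliptic_face_vectors}, and you then want to certify the centre $c_B$ of $F_B$ as a Voronoi $0$-cell, i.e.\ as a \emph{global} minimiser configuration for the modified tangent distance. The paper instead takes a reference triangulation $\tilde{\tri}$ refining the already-constructed weighted Delaunay tessellation (Theorem \ref{theorem:weighted_Delaunay_tessellations_revisited}, item \ref{item:laguerre_local_delaunay}) and compares the two support functions only along each single edge $e$ of $\tri$: if $H_{\tilde{\tri}}^{\omega}>H_{\tri}^{\omega}$ somewhere on $e$, applying Lemma \ref{lemma:h_function_monotonicity} once from each endpoint of $e$ produces the two incompatible inequalities $\delta<\tilde{\delta}_N$ and $\tilde{\delta}_N<\delta$. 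The advantage of that version is that the comparison never has to leave one edge, so no developing along long chains of triangles is required.

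That is exactly where your proposal has a genuine gap: the sentence ``propagating this across the chain of faces of $\tri$ joining $c_B$ to $u$'' is the entire content of the proposition and is not carried out. To execute it you would need to (i) fix the path of propagation, presumably the geodesic realising $\dist_{\surf}(c_B,u)$, developed into $\HH$ together with the chain of crossed triangles $\Delta_0,\dotsc,\Delta_K$; (ii) turn the local Delaunay condition at each crossed edge into $F_{\Delta_k}-F_{\Delta_{k+1}}=\mu_k L_k$ with a controlled sign of $\mu_k$ and telescope to
$\ip{C_u}{F_B}=-1+\sum_k\mu_k\ip{C_u}{L_k}$, which requires knowing on which side of every developed edge-line $L_k$ the whole cycle $C_u$ lies --- not automatic when $u$ is a vertex of a crossed triangle or when $D_u$ straddles an edge; and (iii) treat geodesics that run into vertices or cone-points, where properness and Corollary \ref{cor:radical_line_intersection} must enter. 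Lemma \ref{lemma:h_function_monotonicity} and Proposition \ref{prop:elliptic_face_vectors} compare support functions at points of $\HH$ along a geodesic; converting that into the inequality $\ip{C_u}{F_B}\leq-1$ at the cycle point $C_u\in\Sym(2)$ is precisely the delicate step, and until it is done the identification of $c_B$ with a Voronoi $0$-cell, and hence of your blocks with Delaunay $2$-cells, is unsupported. (The embeddedness of the unfolded blocks, which you also defer, would follow once this minimality is in place, so it is indeed the secondary issue you take it to be.)
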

\begin{proof}
   Let $\tilde{\tri}$ be another geodesic triangulation whose edges all
   satisfy the local Delaunay condition. We claim that an edge of $\tri$ is
   either also an edge of $\tilde{\tri}$ or it only intersects edges of
   $\tilde{\tri}$ which satisfy the local Delaunay condition with equality. The
   proposition now follows from the properties of the weighted Delaunay tessellation
   (Theorem \ref{theorem:weighted_Delaunay_tessellations_revisited} item
   \ref{item:laguerre_local_delaunay}).

   We proceed by proving the claim. Let $e$ be an edge of $\tri$ incident to the
   vertices $u,v\in\verts$. Denote by $\gamma\colon I\to e$ the parametrisation
   of $e$ by the (oriented) distance to the centre of $C_u$. If $C_u$ is a horo-
   or hypercycle choose $\gamma$ such that $\R_{<0}$ is mapped into the disc
   associated to $C_u$. Suppose that $e$ intersects $N>0$ edges of $\tilde{\tri}$
   at $(s_n)_{1\leq n\leq N}\subset I$ with
   \[
      \inf I \eqcolon s_0
      \;<\; s_1
      \;<\; \cdots
      \;<\; s_N
      \;<\; s_{N+1}\coloneq\sup I.
   \]
   By definition, $h\coloneq H_{\tri}^{\omega}\circ\gamma$ is a function satisfying
   the assumptions of Lemma \ref{lemma:h_function_monotonicity} whilst
   $\tilde{h}\coloneq H_{\tilde{\tri}}^{\omega}\circ\gamma$ is a continuous
   function agreeing on each $(s_n,s_{n+1})$ with such a function
   $\tilde{h}_n\colon I\to\R$. Towards a contradiction, suppose that there is an
   $x\in I$ such that $\tilde{h}(x)=\tilde{h}_n(x) > h(x)$.
   After possibly changing the role of $u$ and $v$, from Lemma
   \ref{lemma:h_function_monotonicity} follows that $\tilde{h}_n(s) > h(s)$
   for all $s<x$. Using the local Delaunay condition, we see by induction that
   $\tilde{h}_m(s)\geq\tilde{h}_n(s)$ for all $s<s_{m+1}$ and $m<n$.
   Hence, $\tilde{h}_0(s) > h(s)$ for all $s<s_1$. Utilising Lemma
   \ref{lemma:h_function_monotonicity} and the local Delaunay condition
   one more time we get
   \begin{equation}
      \label{eq:deltas}
      \delta \;<\; \tilde{\delta}_0 \;<\; \cdots \;<\; \tilde{\delta}_N
   \end{equation}
   and $\tilde{h}_N(s) > h(s)$ for all $s>s_N$. Considering the
   parametrisation of $e$ with respect of $C_v$ instead of $C_u$, the second
   inequality and Lemma \ref{lemma:h_function_monotonicity} imply
   $\len_e-\delta < \len_e-\tilde{\delta}_N$. Here $\len_e$ is the length of
   the edge $e$. But this is equivalent to $\tilde{\delta}_N < \delta$ contradicting
   inequality \eqref{eq:deltas}. Thus, $h(s) \geq \tilde{h}(s)$ for all $s\in I$.
   Applying the same argument to each edge of $\tilde{\tri}$ which intersects
   $e$ we also get $h(s_n)\leq\tilde{h}(s_n)$, $n=1,\dotsc,N$. It follows
   $h(s) = \tilde{h}(s)$ for all $s\in I$. This is equivalent to the claim.
\end{proof}

\section{The configuration space of decorations}
\label{sec:config_space}
Recall that the weight-vector of a decoration is defined as
$\omega\coloneq(\tau_{\epsilon_v}(r_v))_{v\in\verts}\in\R_{>0}^{\verts}$. We
call $\R_{>0}^{\verts}$ the \define{space of abstract (positive) weights}.
Its subspace $\config_{\surf}\subseteq\R_{>0}^{\verts}$ consisting of all
weight-vectors $\omega\in\R_{>0}^{\verts}$ satisfying the
homogeneous linear constraints
\begin{equation}
   \label{eq:proper_constraints}
   0 \;>\;
   \omega_v \,-\, \tau_{\epsilon_v}(\dist_{\surf}(v,\tilde{v}))\,\omega_{\tilde{v}}
\end{equation}
for all $(v,\tilde{v})\in\verts\times\verts_{-1}$ is the
\define{configuration space of proper decorations of $\surf$}. If
$\verts_{-1}=\emptyset$ all abstract weights can be realised as the (modified) radii
of decoration of $\surf$. Furthermore, it is clear that
$\config_{\surf}=\R_{>0}^{\verts}$. For $\verts_{-1}=\emptyset$ weights in
$\config_{\surf}$ can, in general, only be realised as a decoration of $\surf$
if $\omega_v<\tau_{\epsilon_v}(\dist_{\surf}(v,\tilde{v}))$ and
$1<\omega_{\tilde{v}}$ for all $(v,\tilde{v})\in\verts\times\verts_{-1}$. Still, the
derivations of the previous sections \ref{sec:laguerre_voronoi_decompositions} and
\ref{sec:flip_algorithm} are true for all weights in $\config_{\surf}$, since the key
observation, i.e., Corollary \ref{cor:radical_line_intersection}, only depends on the
constraints \eqref{eq:proper_constraints}. In particular, for any
$\omega\in\config_{\surf}$ Theorem
\ref{theorem:weighted_Delaunay_tessellations_revisited} grants the existence
of a unique weighted Delaunay tessellation with respect to the decoration induced
by $\omega$. Denote it by $\tri_{\surf}^{\omega}$. Let $\tri$ be some geodesic
tessellation of $\surf$. We define
\[
   \config_{\surf}(\tri)
   \;\coloneq\;
   \big\{\omega\in\config_{\surf}
   \,:\, \tri \text{ refines } \tri_{\surf}^{\omega}\big\}.
\]
Note that $\config_{\surf}(\tri)$ is allowed to be empty.

\begin{lemma}
   \label{lemma:config_delaunay_cell}
   Consider the weighted Delaunay tessellation $\tri_{\surf}^{\omega}$ corresponding
   to $\omega\in\config_{\surf}$. Then $\config_{\surf}(\tri_{\surf}^{\omega})$ is
   the intersection of $\config_{\surf}$ with a closed polyhedral cone
   $\mathcal{C}_{\surf}(\tri_{\surf}^{\omega})$. Furthermore, the
   faces of $\mathcal{C}_{\surf}(\tri_{\surf}^{\omega})$ are exactly given by
   those cones $\mathcal{C}_{\surf}(\tri_{\surf}^{\tilde{\omega}})$ defined by
   weighted Delaunay tessellations $\tri_{\surf}^{\tilde{\omega}}$ which
   $\tri_{\surf}^{\omega}$ refines.
\end{lemma}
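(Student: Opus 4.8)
The plan is to push everything through the tilt formula and turn the statement into elementary polyhedral combinatorics on the weight space $\R^{\verts}$. Fix once and for all a geodesic triangulation $\tri$ of $\surf$ refining $\tri_{\surf}^{\omega}$ (such a $\tri$ exists by the construction of the weighted Delaunay tessellation), and write $E_0$ for its edges that are edges of $\tri_{\surf}^{\omega}$ and $E_1$ for the remaining diagonals. The combinatorics of $\tri$ and the metric $\dist_{\surf}$ are fixed, so in Lemma \ref{lemma:tilds} the angles $\theta_n$ and the distances $d_n$ are constants while $\tau_{\epsilon_v}(r_v)=\omega_v$; hence the tilt of each triangle of $\tri$ along each of its edges is the value at $\omega$ of a fixed linear functional on $\R^{\verts}$. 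By Proposition \ref{prop:local_delaunay_condition}, an edge $e$ with adjacent triangles $\Delta,\Delta'$ is (strictly) local Delaunay for $\omega$ iff $t_e(\omega)\le 0$ (resp.\ $<0$), where $t_e:=t^{e}_{\Delta}+t^{e}_{\Delta'}$ is a linear functional. Here I use the scale‑invariance of weighted Delaunay tessellations so that all the objects in sight are honest cones.

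Next I would translate ``$\tri_{\surf}^{\omega}$ refines $\tri_{\surf}^{\eta}$''. By Proposition \ref{prop:local_to_global_Delaunay} and Theorem \ref{theorem:weighted_Delaunay_tessellations_revisited}\ref{item:laguerre_local_delaunay}, for $\eta\in\config_{\surf}$ the triangulation $\tri$ refines $\tri_{\surf}^{\eta}$ exactly when $t_e(\eta)\le 0$ for all edges $e$ of $\tri$, and in that case the edges of $\tri_{\surf}^{\eta}$ are precisely the $e$ with $t_e(\eta)<0$. Thus $\tri_{\surf}^{\omega}$ refines $\tri_{\surf}^{\eta}$ iff $t_e(\eta)\le 0$ for all $e\in E_0$ and $t_e(\eta)=0$ for all $e\in E_1$. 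Consequently $\config_{\surf}(\tri_{\surf}^{\omega})=\config_{\surf}\cap\mathcal{C}_{\surf}(\tri_{\surf}^{\omega})$, where
\[
   \mathcal{C}_{\surf}(\tri_{\surf}^{\omega})\;:=\;\bigl\{\eta\in\R^{\verts}:\ t_e(\eta)\le 0\ \text{for }e\in E_0,\quad t_e(\eta)=0\ \text{for }e\in E_1\bigr\}
\]
is a closed polyhedral cone. One checks $\omega$ lies in its relative interior, so the affine hull of $\mathcal{C}_{\surf}(\tri_{\surf}^{\omega})$ is cut out by the $E_1$‑equalities; and the cone is independent of the auxiliary $\tri$: if $\mathcal{C}'$ is built from another refining triangulation, then for $\eta\in\mathcal{C}_{\surf}(\tri_{\surf}^{\omega})$ the segment $(1-s)\omega+s\eta$ lies in $\config_{\surf}(\tri_{\surf}^{\omega})=\config_{\surf}\cap\mathcal{C}'$ for small $s>0$ and in $\mathcal{C}'$ for all $s$ by convexity and closedness, so $\eta\in\mathcal{C}'$, and symmetrically. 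This gives the first assertion.

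For the face description I would argue in both directions. If $\tri_{\surf}^{\omega}$ refines $\tri_{\surf}^{\tilde\omega}$, then $\tri$ also refines $\tri_{\surf}^{\tilde\omega}$, its non‑$\tri_{\surf}^{\tilde\omega}$ edges (namely $E_1$ together with $E_0\setminus\mathrm{edges}(\tri_{\surf}^{\tilde\omega})$) all satisfy $t_e=0$, and rerunning the previous paragraph with $\tri$ as common refinement yields $\mathcal{C}_{\surf}(\tri_{\surf}^{\tilde\omega})=\{\eta\in\mathcal{C}_{\surf}(\tri_{\surf}^{\omega}):t_e(\eta)=0\ \text{for }e\in E_0\setminus\mathrm{edges}(\tri_{\surf}^{\tilde\omega})\}$, which is a face of $\mathcal{C}_{\surf}(\tri_{\surf}^{\omega})$ obtained by tightening valid inequalities (it is nonempty, containing $\tilde\omega$). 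Conversely, any nonempty face $\mathcal{F}$ equals $\{\eta\in\mathcal{C}_{\surf}(\tri_{\surf}^{\omega}):t_e(\eta)=0,\ e\in S\}$ with $S\subseteq E_0$ its full set of edges tight along $\mathcal{F}$; picking $\tilde\omega\in\operatorname{relint}\mathcal{F}$, the geodesic tessellation $\tilde\tri$ obtained from $\tri_{\surf}^{\omega}$ by deleting the edges of $S$ is a coarsening that $\tri_{\surf}^{\omega}$ refines, $\tilde\omega$ realises $\tilde\tri=\tri_{\surf}^{\tilde\omega}$ as its weighted Delaunay tessellation, and then $\mathcal{F}=\mathcal{C}_{\surf}(\tri_{\surf}^{\tilde\omega})$.

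The step I expect to be the main obstacle is the very last one: I must guarantee that the relative interior of each face of $\mathcal{C}_{\surf}(\tri_{\surf}^{\omega})$ actually contains a \emph{proper} weight vector $\tilde\omega\in\config_{\surf}$ whose weighted Delaunay tessellation has exactly the predicted $1$‑skeleton $\mathrm{edges}(\tri)\setminus S$ — i.e.\ that tightening the constraints indexed by $S$ neither forces further local Delaunay equalities nor pushes $\operatorname{relint}\mathcal{F}$ off $\config_{\surf}$ onto the locus where the properness constraints \eqref{eq:proper_constraints} degenerate. This is precisely where properness enters, through Corollary \ref{cor:radical_line_intersection} and Proposition \ref{prop:flippable} (ensuring the merged cells are genuinely the Delaunay cells of $\tilde\omega$), together with Theorem \ref{theorem:weighted_Delaunay_tessellations_revisited}; handling this correctly is what upgrades the obvious surjection $\{\text{faces}\}\to\{\text{coarsenings refined by }\tri_{\surf}^{\omega}\}$ to the bijection claimed in the lemma.
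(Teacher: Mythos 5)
Your proposal is correct and follows essentially the same route as the paper: fix a geodesic triangulation refining $\tri_{\surf}^{\omega}$, observe via Lemma \ref{lemma:tilds} and Proposition \ref{prop:local_delaunay_condition} that the tilts---and hence the local Delaunay conditions---are linear functionals in $\omega_v=\tau_{\epsilon_v}(r_v)$, conclude that $\mathcal{C}_{\surf}(\tri_{\surf}^{\omega})$ is a closed polyhedral cone cut out by $t_e\leq 0$ on Delaunay edges and $t_e=0$ on the auxiliary diagonals, and read off the face structure from Theorem \ref{theorem:weighted_Delaunay_tessellations_revisited} item \ref{item:laguerre_local_delaunay}. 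The realisability point you flag at the end (that each face must meet $\config_{\surf}$ in a weight actually inducing the predicted coarsening) is a genuine subtlety, but the paper's own proof is even terser there, dismissing the face claim as a \enquote{reformulation} of that theorem, so you have if anything supplied more detail than the original.
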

\begin{proof}
   Let $\tri$ be a geodesic triangulation refining $\tri_{\surf}^{\omega}$. Then all
   edges of $\tri$ satisfy the local Delaunay condition (Theorem
   \ref{theorem:weighted_Delaunay_tessellations_revisited}). We observe
   that the tilts (Definition \ref{def:tilds}), and thus the local Delaunay
   condition (Proposition \ref{prop:local_delaunay_condition}), are linear in the
   $\tau_{\epsilon_v}(r_v)=\omega_v$. It follows that
   $\mathcal{C}_{\surf}(\tri_{\surf}^{\omega})$ is the solution space to a
   finite number of homogeneous linear inequalities and equalities, thus a
   closed polyhedral cone. The second claim about the faces of
   $\mathcal{C}_{\surf}(\tri_{\surf}^{\omega})$ is a reformulation of
   Theorem \ref{theorem:weighted_Delaunay_tessellations_revisited} item
   \ref{item:laguerre_local_delaunay}.
\end{proof}

\begin{corollary}
   \label{lemma:scalability}
   Let $\omega\in\config_{\surf}$. Then
   \(
      \{s\omega:s>0\}\subseteq\config_{\surf}(\tri_{\surf}^{\omega}).
   \)
   In particular, for any geodesic triangulation $\tri$ refining
   $\tri_{\surf}^{\omega}$ we have $H_{\tri}^{s\omega} = (1/s^2)H_{\tri}^{\omega}$.
   Here $H_{\tri}^{\omega}$ and $H_{\tri}^{s\omega}$ are the support functions
   induced by $\omega$ and $s\omega$, respectively.
\end{corollary}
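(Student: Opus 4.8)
The claim has two parts — the inclusion $\{s\omega:s>0\}\subseteq\config_{\surf}(\tri_{\surf}^{\omega})$ and the identity $H_{\tri}^{s\omega}=(1/s^2)H_{\tri}^{\omega}$ — and I would treat them separately. For the inclusion the plan is to read it off Lemma \ref{lemma:config_delaunay_cell}: that lemma identifies $\config_{\surf}(\tri_{\surf}^{\omega})$ with $\config_{\surf}\cap\mathcal{C}_{\surf}(\tri_{\surf}^{\omega})$, where $\mathcal{C}_{\surf}(\tri_{\surf}^{\omega})$ is a closed polyhedral cone. Since the constraints \eqref{eq:proper_constraints} defining $\config_{\surf}$, as well as $\R_{>0}^{\verts}$ itself, are homogeneous, $\config_{\surf}$ is closed under multiplication by positive scalars; hence so is the intersection $\config_{\surf}(\tri_{\surf}^{\omega})$. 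As $\omega\in\config_{\surf}$ and $\tri_{\surf}^{\omega}$ trivially refines itself, $\omega$ lies in $\config_{\surf}(\tri_{\surf}^{\omega})$, and the inclusion follows. (Equivalently, and without appealing to Lemma \ref{lemma:config_delaunay_cell}, one may note that the tilts of Definition \ref{def:tilds} are homogeneous linear in the weights $\omega_v=\tau_{\epsilon_v}(r_v)$ — the coefficient matrix in Lemma \ref{lemma:tilds} being determined independently of the decoration — so that the local Delaunay condition of Proposition \ref{prop:local_delaunay_condition} and its strict version are both invariant under $\omega\mapsto s\omega$; Proposition \ref{prop:local_to_global_Delaunay} and Theorem \ref{theorem:weighted_Delaunay_tessellations_revisited} item \ref{item:laguerre_local_delaunay} then even give $\tri_{\surf}^{s\omega}=\tri_{\surf}^{\omega}$.)

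For the scaling of support functions, the key point I would establish first is that rescaling the decoration by $s$ replaces the $\Sym(2)$-representative of every vertex cycle by $\tfrac1s$ times itself. Concretely, if one fixes a lift of a triangle $\Delta$ of $\tri$ to $\HH$ so that passing from the decoration $\omega$ to $s\omega$ changes only the radii of the vertex cycles (their centres being unaffected), then Lemma \ref{lemma:radius_from_norm} for $\epsilon_v=\pm1$, and the explicit horocycle representative from the proof of Lemma \ref{lemma:distance_from_product} for $\epsilon_v=0$, show that each representative $C_n$ becomes $\tfrac1s C_n$. Since the $C_n$ form a basis of $\Sym(2)$ and the face-vector is characterised by $\ip{F_{\Delta}}{C_n}=-1$, the face-vector for $s\omega$ is $sF_{\Delta}$. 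Feeding this into the defining relation of the support function, which amounts to $\sqrt{H_{\Delta}^{\omega}(X)}=-1/\ip{X}{F_{\Delta}}$, gives $H_{\Delta}^{s\omega}=(1/s^2)H_{\Delta}^{\omega}$ on $\Delta$ (with unchanged domain $\HH\setminus\{\ip{X}{F_{\Delta}}=0\}$). Gluing over the triangles of $\tri$ then yields $H_{\tri}^{s\omega}=(1/s^2)H_{\tri}^{\omega}$.

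I expect the main obstacle to be exactly that first step of the second part: pinning down, uniformly across the three vertex types $\epsilon_v\in\{-1,0,1\}$, that scaling the weight vector by $s$ scales each $\Sym(2)$-representative $C_v$ by precisely $\tfrac1s$. For $\epsilon_v=\pm1$ this is transparent from the normalised matrix \eqref{eq:cycle_herm_matrix_1} after projecting to the affine hyperplane $\{\ip{X}{\partial\HH}=1\}$, but for cusps one must keep track of the fixed auxiliary centre and of the factor $\tfrac12$ in $\tau_0$. Once this normalisation bookkeeping is in place, the behaviour of the face-vectors and of $H_{\Delta}^{\omega}$ is entirely formal.
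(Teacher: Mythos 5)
Your proof is correct and follows the route the paper intends: the paper states this as an immediate corollary of Lemma \ref{lemma:config_delaunay_cell} (homogeneity of the cone $\mathcal{C}_{\surf}(\tri_{\surf}^{\omega})$ and of the constraints \eqref{eq:proper_constraints}), and the support-function identity is exactly the computation $C_v\mapsto\tfrac1s C_v$, $F_{\Delta}\mapsto sF_{\Delta}$ that you carry out. Your "bookkeeping" remark is the only subtle point, and you resolve it correctly.
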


\begin{theorem}[configuration space of proper decorations]
   \label{theorem:configuration_space}
   The configuration space $\config_{\surf}$ of proper decorations of $\surf$
   is a convex connected subset of $\R_{>0}^{\verts}$. There is only a finite number
   of geodesic tessellations $\tri_1,\dotsc,\tri_N$ such that
   $\config_{\surf}(\tri_n)$ are non-empty. In particular,
   $\config_{\surf} = \bigcup_n\config_{\surf}(\tri_n)$.
   In addition, for all $1\leq m<n\leq N$ either
   $\config_{\surf}(\tri_m)\cap\config_{\surf}(\tri_n)=\emptyset$ or there
   is a $k\neq m,n$ such that
   $\config_{\surf}(\tri_m)\cap\config_{\surf}(\tri_n)=\config_{\surf}(\tri_k)$.
\end{theorem}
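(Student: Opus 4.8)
The plan is to handle the three assertions in turn, with the finiteness claim --- the generalisation of Akiyoshi's compactification --- being the substantial one. Convexity and connectedness are immediate: $\config_{\surf}$ is the intersection of the open convex cone $\R_{>0}^{\verts}$ with the finitely many open half-spaces cut out by the homogeneous linear inequalities \eqref{eq:proper_constraints}, one for each of the finitely many pairs $(v,\tilde v)\in\verts\times\verts_{-1}$; an intersection of convex sets is convex, and convex sets are connected. For the covering, by Theorem \ref{theorem:weighted_Delaunay_tessellations_revisited} every $\omega\in\config_{\surf}$ has a weighted Delaunay tessellation $\tri_{\surf}^{\omega}$, and $\omega\in\config_{\surf}(\tri_{\surf}^{\omega})=\config_{\surf}\cap\mathcal{C}_{\surf}(\tri_{\surf}^{\omega})$ by Lemma \ref{lemma:config_delaunay_cell}, so these sets cover $\config_{\surf}$; once finiteness is known they are the $\config_{\surf}(\tri_1),\dots,\config_{\surf}(\tri_N)$ and $\config_{\surf}=\bigcup_n\config_{\surf}(\tri_n)$. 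The last sentence is precisely the statement that the cones $\mathcal{C}_{\surf}(\tri_n)$ form a polyhedral fan: by the face description in Lemma \ref{lemma:config_delaunay_cell}, $\mathcal{C}_{\surf}(\tri_m)\cap\mathcal{C}_{\surf}(\tri_n)$ is a common face of both, hence again a cone $\mathcal{C}_{\surf}(\tri_k)$ attached to the weighted Delaunay tessellation $\tri_k$ refined by both $\tri_m$ and $\tri_n$ and appearing as a face of each; intersecting with $\config_{\surf}$ gives the dichotomy, with $\tri_k\neq\tri_m,\tri_n$ whenever the intersection is a proper face.

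For finiteness I would first prove local finiteness. Fix $\omega_0\in\config_{\surf}$; its weighted Delaunay triangulation $\tilde\tri$ has elliptic face-vectors ($|F_{\Delta}|^2<0$) by Proposition \ref{prop:elliptic_face_vectors}, and ellipticity is an open condition, so on a neighbourhood $U$ of $\omega_0$ the triangulation $\tilde\tri$ keeps elliptic face-vectors and a uniformly bounded support function $H_{\tilde\tri}^{\omega}$. For $\omega\in U$ and any weighted Delaunay triangulation $\tri$ of $\surf^{\omega}$, Corollary \ref{corollary:min_support_function} gives $H_{\tri}^{\omega}\le H_{\tilde\tri}^{\omega}$, hence $H_{\tri}^{\omega}$ is bounded on $U$; since the radii $r_v$ are bounded on $U$ as well, Lemma \ref{lemma:local_edge_length_bound} bounds all edge-lengths of $\tri$ uniformly on $U$. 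Rescaling to a fixed decoration with non-intersecting discs via Corollary \ref{lemma:scalability} --- which changes neither the combinatorial type nor, up to a controlled amount, the relevant lengths --- Lemma \ref{lemma:bounded_geodesics} leaves only finitely many arcs available as edges of $\tri$; hence only finitely many weighted Delaunay triangulations, and therefore only finitely many weighted Delaunay tessellations (each a subcomplex of one of them), occur on $U$.

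To upgrade this to global finiteness, note that by Corollary \ref{lemma:scalability} the combinatorial type of $\tri_{\surf}^{\omega}$ depends only on the ray $[\omega]$, so it suffices to rule out an infinite sequence of pairwise distinct types along rays through points $\omega_k$ of the cross-section $\config_{\surf}\cap\{\sum_v\omega_v=1\}$, whose closure is the compact simplex. Passing to a subsequence $\omega_k\to\omega^{\ast}$, local finiteness forces $\omega^{\ast}\in\partial\config_{\surf}$, i.e.\ either some coordinate $\omega^{\ast}_{v_0}$ vanishes or some constraint \eqref{eq:proper_constraints} degenerates. When a cusp or flare weight $\omega_{v_0}\to0$ the cycle $C_{v_0}$ retreats into its end and the tangent distances $\tandist_x(C_{v_0})$ blow up locally, so $\tri_{\surf}^{\omega_k}$ stabilises to the weighted Delaunay tessellation of the limiting boundary configuration; the degeneration of a properness constraint is excluded along the cross-section by the tangent-distance estimate underlying Corollary \ref{cor:radical_line_intersection}. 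Either way no new combinatorial type appears near $\omega^{\ast}$, contradicting the choice of the $\omega_k$. I expect this boundary analysis --- controlling the weighted Delaunay tessellation as $\omega$ degenerates towards $\partial\config_{\surf}$ --- to be the main obstacle, and it is exactly the place where the properness hypothesis \eqref{eq:proper_constraints} is genuinely used.
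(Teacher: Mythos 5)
Your overall architecture matches the paper's: the convexity, covering, and fan-structure claims are correctly reduced to Lemma \ref{lemma:config_delaunay_cell}, and the finiteness argument proceeds by compactifying a cross-section of the weight cone, extracting a convergent subsequence $\omega_k\to\omega^{\ast}$, and splitting according to whether $\omega^{\ast}$ lies in $\config_{\surf}$ or on its boundary. Your local-finiteness argument (ellipticity of face-vectors is an open condition, Corollary \ref{corollary:min_support_function} bounds the Delaunay support function by that of a fixed triangulation, Lemma \ref{lemma:local_edge_length_bound} then bounds edge-lengths, and Lemma \ref{lemma:bounded_geodesics} leaves only finitely many candidate edges) is sound and in fact more explicit than the paper, which only gestures at the ``local finiteness implied by Lemma \ref{lemma:config_delaunay_cell}''.

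However, there is a genuine gap exactly where you predict one: the degenerate boundary case. First, your claim that a properness constraint \eqref{eq:proper_constraints} cannot become an equality in the limit is false --- $\config_{\surf}$ is an open polyhedral region, and the closure of its cross-section contains points where $\omega_v=\tau_{\epsilon_v}(\dist_{\surf}(v,\tilde{v}))\,\omega_{\tilde{v}}$ with all coordinates positive. The paper handles this case by observing that the functions $\tandist_x(\omega_v^n)=\tau_{\epsilon_v}(\dist_{\surf}(v,x))/\omega_v^n$ still converge there, so the local-finiteness argument applies unchanged. Second, and more seriously, when a cusp or flare weight tends to $0$ you assert that $\tri_{\surf}^{\omega_k}$ ``stabilises to the weighted Delaunay tessellation of the limiting boundary configuration''; but no such limiting tessellation is defined in the paper's framework (weights are strictly positive --- the existence of a limit object is a \emph{consequence} of the theorem, cf.\ the remark on partial decorations), and the stabilisation of the combinatorics is precisely the content that must be proved. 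The paper does this by introducing the auxiliary sequence $\tilde{\omega}^n$ that freezes the non-vanishing coordinates, showing that the Voronoi $2$-cells of the degenerating vertices are eventually nested (using $\bar{P}_v^m\subset D_v(\tilde{\omega}^n)$ and $D_v(\tilde{\omega}^m)\subset\bar{P}_v^n$ for suitable indices, which follow from $\tandist_x(\tilde{\omega}_v^n)\to\infty$ on $\trunc(\surf)$), and deducing that the cyclic link sequences $U_v^n$ are eventually constant, so the stars of these vertices stabilise and the argument reduces to the non-degenerate case. Without this, or an equivalent argument, the finiteness claim --- the generalisation of Akiyoshi's compactification, which is the substance of the theorem --- is not established.
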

\begin{proof}
   Everything except the (global) finiteness of the decomposition was covered in
   Lemma \ref{lemma:config_delaunay_cell}. Aiming for a contradiction, suppose that
   there are infinitely many geodesic tessellations $(\tri_n)_{n=1}^{\infty}$ with
   $\config_{\surf}(\tri_n)\neq\emptyset$. In particular, we assume that
   $\tri_m\neq\tri_n$ if $m\neq n$. Denote by
   $\SS^{\verts}\coloneq\{\omega\in\R^{\verts}:\sum_{v\in\verts}\omega_v^2 = 1\}$
   the unit sphere in $\R^{\verts}$. Choose for each $n\geq1$ a
   $\omega^n\in\SS^{\verts}\cap\config_{\surf}(\tri_n)$. Then
   there is a convergent subsequence of $(\omega^n)_{n=1}^{\infty}$ since
   $\SS^{\verts}$ is compact. To simplify notation, we assume that
   $(\omega^n)_{n=1}^{\infty}$ already converges. Let
   $\omega\in\R_{\geq0}^{\verts}\cap\SS^{\verts}$ be its limit point.
   Denote by $\verts_0^0$ and $\verts_1^0$ those vertices $v$ in
   $\verts_0$ or $\verts_1$ with $\omega_v = 0$, respectively.
   Note that by construction $\verts_0^0\cup\verts_1^0\neq\verts$.

   First, assume that $\verts_0^0\cup\verts_1^0=\emptyset$. If
   $\omega\in\config_{\surf}$, it induces a weighted Delaunay tessellation
   by Corollary \ref{lemma:scalability}. This contradicts the local finiteness of the
   decomposition implied by Lemma \ref{lemma:config_delaunay_cell}. Should
   $\omega\in\partial\config_{\surf}$ instead, then
   \(
      \tandist_x(\omega_v^n)
      \coloneq\tau_{\epsilon_v}(\dist_{\surf}(v,x))/\omega_v^n
   \)
   still converges for all $x\in\surf$ and $v\in\verts$ as $n\to\infty$. So the
   global finiteness follows again from the local finiteness. Now assume
   $\verts_0^0\cup\verts_1^0\neq\emptyset$. The idea is to
   show that the (combinatorial) star of each vertex in $\verts_0^0\cup\verts_1^0$
   is constant for large enough $n$. Then we can use the same argument as in the
   first case. To this end, consider the auxiliary sequence
   $(\tilde{\omega}^n)_{n=1}^{\infty}$ with
   \[
      \tilde{\omega}_v^n
      \;\coloneq\;\begin{cases}
         \omega_v^n &,\text{if } v\in\verts_0^0\cup\verts_1^0,\\
         \omega_v &,\text{otherwise}.
      \end{cases}
   \]
   In the weighted Voronoi decomposition dual to the weighted Delaunay tessellation
   $\tri_{\surf}^{\tilde{\omega}^n}$ to each vertex $v$
   corresponds an open Voronoi $2$-cell $P_v^n$ containing it. Denote its
   closure by $\bar{P}_v^n$. The boundary of
   $P_v^n$ is comprised of Voronoi $1$- and $0$-cells. By definition,
   for each (open) $1$-cell $e\subset\partial P_v^n$ there is a unique $u\in\verts$ such
   that $\tandist_x(\tilde{\omega}_u^n)=\tandist_x(\tilde{\omega}_v^n)$ for all
   $x\in e$. Remember the embedded cycle $S_v$ and the map
   $\gamma^v\colon p\mapsto\gamma_p^v(L_p)$ introduced in Lemma
   \ref{lemma:laguerre_2_cell}. The latter maps $S_v$ continuously
   onto $\partial P_v^n$. Using this map, the Voronoi $1$-cells induce
   a decomposition of $S_v$ into segments. Hence, traversing $S_v$ in
   counter-clockwise direction, we can associate to each vertex $v$ and step $n$
   a finite sequence $U_v^n\coloneq(u_1^{v,n},\dotsc,u_{i_{v,n}}^{v,n})$ of vertices
   corresponding to Voronoi $1$-cells in the boundary of $P_v^n$.
   Note that $U_v^n$ is defined up to cyclic permutations and that it determines the
   star of $v$ in $\tri_{\surf}^{\tilde{\omega}^n}$ together with $\gamma^v$. We
   observe that $\tandist_x(\tilde{\omega}_v^n)\to\infty$ as $n\to\infty$ for all
   pairs $(x,v)\in\trunc(\surf)\times(\verts_0^0\cup\verts_1^0)$. So for large enough
   $n$ all faces of $\tri_{\surf}^{\tilde{\omega}^n}$ contain at most one
   vertex from $\verts_0^0\cup\verts_1^0$, counted with multiplicity.
   Moreover the discs $D_v(\tilde{\omega}^n)$ about $v\in\verts_0^0\cup\verts_1^0$
   corresponding to $\tilde{\omega}^n$ exist for large $n$. From these observations
   and the definition of the modified tangent distance
   (Definition \ref{def:modified_tangent_distance}) follows that for each $n$
   and $v\in\verts_0^0\cup\verts_1^0$ there is an $N_v^n>0$ such that
   $\bar{P}_v^m \subset D_v(\tilde{\omega}^n)$ for all $m>N_v^n$. Conversely, there
   is an $M_v^n>0$ such that $D_v(\tilde{\omega}^m)\subset\bar{P}_v^n$ for all
   $m>M_v^n$. It follows that we can find for each $n$ an $N^n>0$ such that
   $\bar{P}_v^m\subset\bar{P}_v^n$ for all $v\in\verts_0^0\cup\verts_1^0$
   and $m>N^n$. Thus $U_v^m$ is a subsequence of $U_v^n$. We conclude that
   there is some $N>0$ such that $U_v^n=U_v^m$ for all
   $v\in\verts_0^0\cup\verts_1^0$ and $n,m>N$.
\end{proof}

\begin{remark}
   This theorem shows that the notion of a partial decoration can be
   extended from hyperbolic cusp surfaces to hyperbolic surfaces of finite type.
   For more information on partial decorations of cusp surfaces see
   \cite{Springborn20}*{\S 5}.
\end{remark}

\begin{corollary}
   \label{cor:simplicial_representation}
   The configuration space of proper decorations $\config_\surf$ can be identified
   up to scaling with the interior of a convex $(|V|-1)$-polytope $\mathcal{P}_{\surf}$
   contained in the standard simplex
   \(
      \Delta^{|V|-1}
      = \conv\{(\delta_{v\tilde{v}})_{\tilde{v}\in\verts}\}_{v\in\verts}.
   \)
   Here $\delta_{v\tilde{v}}$ is the Kronecker-delta. The weight-vectors can be
   recovered using barycentric coordinates with respect to $\Delta^{|V|-1}$.
   In particular, $\mathcal{P}_{\surf} = \Delta^{|V|-1}$ if $\verts_{-1}=\emptyset$.
   Moreover, there is a (finite) simplicial decomposition of $\mathcal{P}_{\surf}$
   such that each facet contains all points which induce the same weighted Delaunay
   decomposition of $\surf$.
\end{corollary}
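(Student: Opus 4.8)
The plan is to assemble this corollary from Theorem \ref{theorem:configuration_space}, Lemma \ref{lemma:config_delaunay_cell} and elementary convexity. The crucial observation is that the defining constraints \eqref{eq:proper_constraints} of $\config_{\surf}$ are \emph{homogeneous} linear inequalities, and that $\R_{>0}^{\verts}$ is itself defined by homogeneous inequalities; hence $\config_{\surf}$ is an open convex cone in $\R^{\verts}$. It is non-empty: setting $\omega_{\tilde v}=1$ for $\tilde v\in\verts_{-1}$ and $\omega_v=\varepsilon$ for all other $v$ satisfies \eqref{eq:proper_constraints} once $\varepsilon>0$ is small, since $\dist_{\surf}(v,\tilde v)>0$ makes $\tau_{\epsilon_v}(\dist_{\surf}(v,\tilde v))>0$ and $\cosh(\dist_{\surf}(\tilde v,\tilde v))>1$. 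Being open and non-empty, $\config_{\surf}$ is full-dimensional in $\R^{\verts}$.

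First I would projectivise. Every ray through the origin inside $\config_{\surf}$ meets the affine hyperplane $\mathcal{H}\coloneq\{\omega:\sum_{v\in\verts}\omega_v=1\}$ in a single point, because all coordinates are positive; thus $\omega\mapsto\omega/\sum_{v}\omega_v$ identifies $\config_{\surf}$, up to scaling, with $\config_{\surf}\cap\mathcal{H}$. This set is open in $\mathcal{H}$, convex, and contained in the compact simplex $\Delta^{|V|-1}$; explicitly it is $\Delta^{|V|-1}$ intersected with the finitely many open half-spaces coming from \eqref{eq:proper_constraints}. Its closure $\mathcal{P}_{\surf}\coloneq\overline{\config_{\surf}\cap\mathcal{H}}$ is therefore the convex polytope obtained by intersecting $\Delta^{|V|-1}$ with the corresponding closed half-spaces, and $\config_{\surf}\cap\mathcal{H}$ is its relative interior (the strict region is open, convex and non-empty, hence dense in the closed one); full-dimensionality of $\config_{\surf}$ forces $\dim\mathcal{P}_{\surf}=|V|-1$. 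Since the vertices of $\Delta^{|V|-1}$ are the standard basis vectors $(\delta_{v\tilde v})_{\tilde v}$, the barycentric coordinates of a point of $\mathcal{P}_{\surf}$ with respect to them are literally its own coordinates, so reading them off recovers the normalised weight-vector, i.e.\ $\omega$ up to the positive scaling divided out. When $\verts_{-1}=\emptyset$ there are no constraints \eqref{eq:proper_constraints}, so $\config_{\surf}=\R_{>0}^{\verts}$, $\config_{\surf}\cap\mathcal{H}$ is the open simplex, and $\mathcal{P}_{\surf}=\Delta^{|V|-1}$.

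For the simplicial decomposition I would feed in Theorem \ref{theorem:configuration_space} and Lemma \ref{lemma:config_delaunay_cell}. There are only finitely many geodesic tessellations $\tri_1,\dotsc,\tri_N$ with $\config_{\surf}(\tri_n)\neq\emptyset$; by Lemma \ref{lemma:config_delaunay_cell} each $\config_{\surf}(\tri_n)=\config_{\surf}\cap\mathcal{C}_{\surf}(\tri_n)$ for a closed polyhedral cone $\mathcal{C}_{\surf}(\tri_n)$ whose faces are again cones of the list, and by Theorem \ref{theorem:configuration_space} their pairwise intersections inside $\config_{\surf}$ are again members of the list; together with $\config_{\surf}=\bigcup_n\config_{\surf}(\tri_n)$ this says that the cones form a finite polyhedral fan over $\config_{\surf}$. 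Intersecting with $\mathcal{H}$ and passing to closures, the convex polytopes $Q_n\coloneq\overline{\config_{\surf}(\tri_n)\cap\mathcal{H}}$ cover $\mathcal{P}_{\surf}$ (their union is closed and contains the dense set $\config_{\surf}\cap\mathcal{H}$) and meet along common faces, so they form a polytopal subdivision of $\mathcal{P}_{\surf}$; moreover, by Lemma \ref{lemma:config_delaunay_cell} and Theorem \ref{theorem:weighted_Delaunay_tessellations_revisited}\ref{item:laguerre_local_delaunay}, the combinatorial type of $\tri_{\surf}^{\omega}$ is constant as $\omega$ ranges over the relative interior of any $Q_n$ meeting $\config_{\surf}$. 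Finally I would refine this polytopal complex to a simplicial one --- every finite polytopal complex has a simplicial refinement, for instance a pulling triangulation, which needs no new vertices --- so that each $Q_n$ is a subcomplex. Then every relatively open simplex of the refinement lies in the relative interior of a unique $Q_n$, hence consists entirely of weights inducing one and the same weighted Delaunay decomposition of $\surf$; equivalently, for every arising tessellation the set of weights realising it is a subcomplex.

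The only point requiring genuine care is the bookkeeping in the last step: one must check that, after cutting the cones $\mathcal{C}_{\surf}(\tri_n)$ with $\mathcal{H}$ and taking closures, the polytopes $Q_n$ still form a bona fide polytopal complex covering all of $\mathcal{P}_{\surf}$, with nothing pathological happening along $\partial\mathcal{P}_{\surf}$, where the weights are no longer proper (the hypothesis of Corollary \ref{cor:radical_line_intersection} fails with equality) and $\tri_{\surf}^{\omega}$ need not even be defined. This is handled by observing that the constancy-of-combinatorial-type assertion is only claimed on the cells meeting the relative interior $\config_{\surf}$, while the boundary faces of $\mathcal{P}_{\surf}$ are simply carried along by the simplicial refinement; all the substantive finiteness has already been established in Theorem \ref{theorem:configuration_space}.
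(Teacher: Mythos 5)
The paper states this corollary without a separate proof, as an immediate consequence of Theorem \ref{theorem:configuration_space} and Lemma \ref{lemma:config_delaunay_cell}, and your argument is precisely the intended fleshing-out: the constraints \eqref{eq:proper_constraints} are homogeneous, so the open convex cone $\config_{\surf}$ projectivises onto an open convex subset of $\Delta^{|V|-1}$ whose closure is the polytope $\mathcal{P}_{\surf}$, and the finite fan of cones from Lemma \ref{lemma:config_delaunay_cell} descends to a polytopal subdivision that one then triangulates. The only point worth flagging is that after your simplicial refinement an individual facet need no longer contain \emph{all} weights inducing a given tessellation, only weights all inducing the same one (with each full level set appearing as a subcomplex); since the cells $\config_{\surf}(\tri_n)$ are general polyhedral cones rather than simplicial ones, this weaker reading is the one the fan structure actually supports, and your explicit treatment of the bookkeeping along $\partial\mathcal{P}_{\surf}$, where $\tri_{\surf}^{\omega}$ is undefined, addresses a detail the paper glosses over entirely.
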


\begin{figure}[h]
   \includegraphics[width=0.48\textwidth]{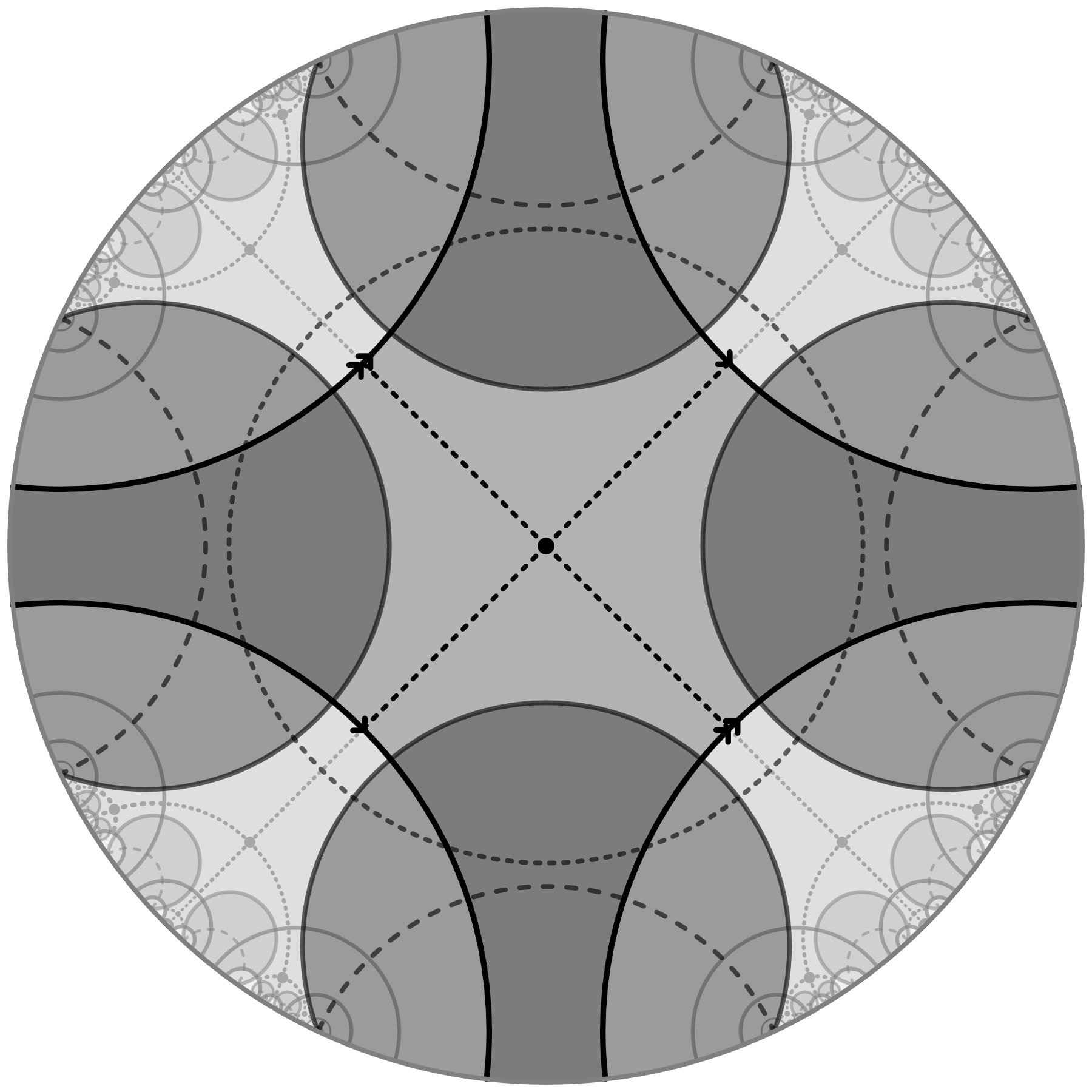}
   \caption{A maximally symmetric hyperbolic quadrilateral (shaded). Its opposite
      edges can be identified (indicated by arrows) to obtain a hyperbolic torus with
      a single flare. By symmetry all vertex cycles of the quadrilateral are orthogonal
      to a common circle (dashed circle). Consequently the corresponding weighted
      Delaunay tessellation (solid edges) of the hyperbolic surface possesses only a
      single $2$-cell and the weighted Voronoi decomposition (dashed edges) only a
      single $0$-cell, respectively.
   }
   \label{fig:torus_with_flare}
\end{figure}

\begin{example}
   It is worth noting that a geodesic tessellation $\tri$ defining a
   $|\verts|$-dimensional set $\config_{\surf}(\tri)\subset\R_{>0}^{\verts}$ is not
   always a triangulation. To see this consider a maximally symmetric hyperbolic
   quadrilateral (see Figure \ref{fig:torus_with_flare}). Necessarily its vertices all
   have the same type and all its edges have the same length. We can glue opposite
   edges of the quadrilateral to obtain a genus-$1$ hyperbolic surface $\surf$
   with a single vertex. By symmetry, for any non self-intersecting decoration of
   the quadrilateral, all vertex cycles posses a single common orthogonal circle. It
   follows that the corresponding weighted Delaunay tessellation $\tri$ possesses only
   a single Delaunay $2$-cell, the interior of the initial quadrilateral. Hence,
   $\config_{\surf}(\tri)=\config_{\surf}$.
\end{example}

\begin{example}
   Let $\Gamma<\PSL(2;\R)$ be a non-elementary free Fuchsian group with
   finite-sided fundamental domain. Denote by $\verts$ the vertex set of the
   hyperbolic surface $\surf\coloneq\HH/\Gamma$. Note that $\verts_{-1}=\emptyset$.
   Extend the vertex set by some $p\in\trunc(\surf)$. Then the weighted Voronoi
   decomposition for the \enquote{undecorated} surface, i.e., $\omega_v=0$ for
   $v\in\verts$ and $\omega_p=1$, exists by Theorem
   \ref{theorem:configuration_space}. Indeed, the open Voronoi $2$-cell
   containing $p$ is given by
   \[
      \big\{x\in\interior(\trunc(\surf)) \;:\; \mathfrak{m}_x(p, 0)=1\big\}.
   \]
   In other words, it is the intersection of the interior of $\trunc(\surf)$
   with the Dirichlet domain of $\surf$ defined by $p$
   (see \cite{Beardon83}*{\S 9.4}).
\end{example}

\begin{example}
   Let $n\geq4$. The $(n, n, n)$-Triangle group is the subgroup $\Fgroup<\PSL(2;\R)$
   of all M\"obius transformations contained in the group generated by reflections
   in the hyperbolic triangle with three angles of $\pi/n$. It is a co-compact
   Fuchsian group. In particular, $\surf\coloneq\HH/\Fgroup$ is homeomorphic to a
   sphere and has three cone-points, i.e., $\verts=\verts_{-1}$ and $|\verts_{-1}|=3$.
   About each cone-point there is a cone-angle of $(2\pi)/n$
   (\cite{Beardon83}*{\S 10.6}). The sphere with three marked points admits four
   combinatorial triangulations. Each of them is a Delaunay triangulation of $\surf$
   for some $\omega\in\config_{\surf}$ (see
   Figure \ref{fig:symmetric_triangle_groups}).
\end{example}

\begin{figure}[h]
   \begin{picture}(385, 180)
      \put(17, 0){
         \includegraphics[width=0.9\textwidth]{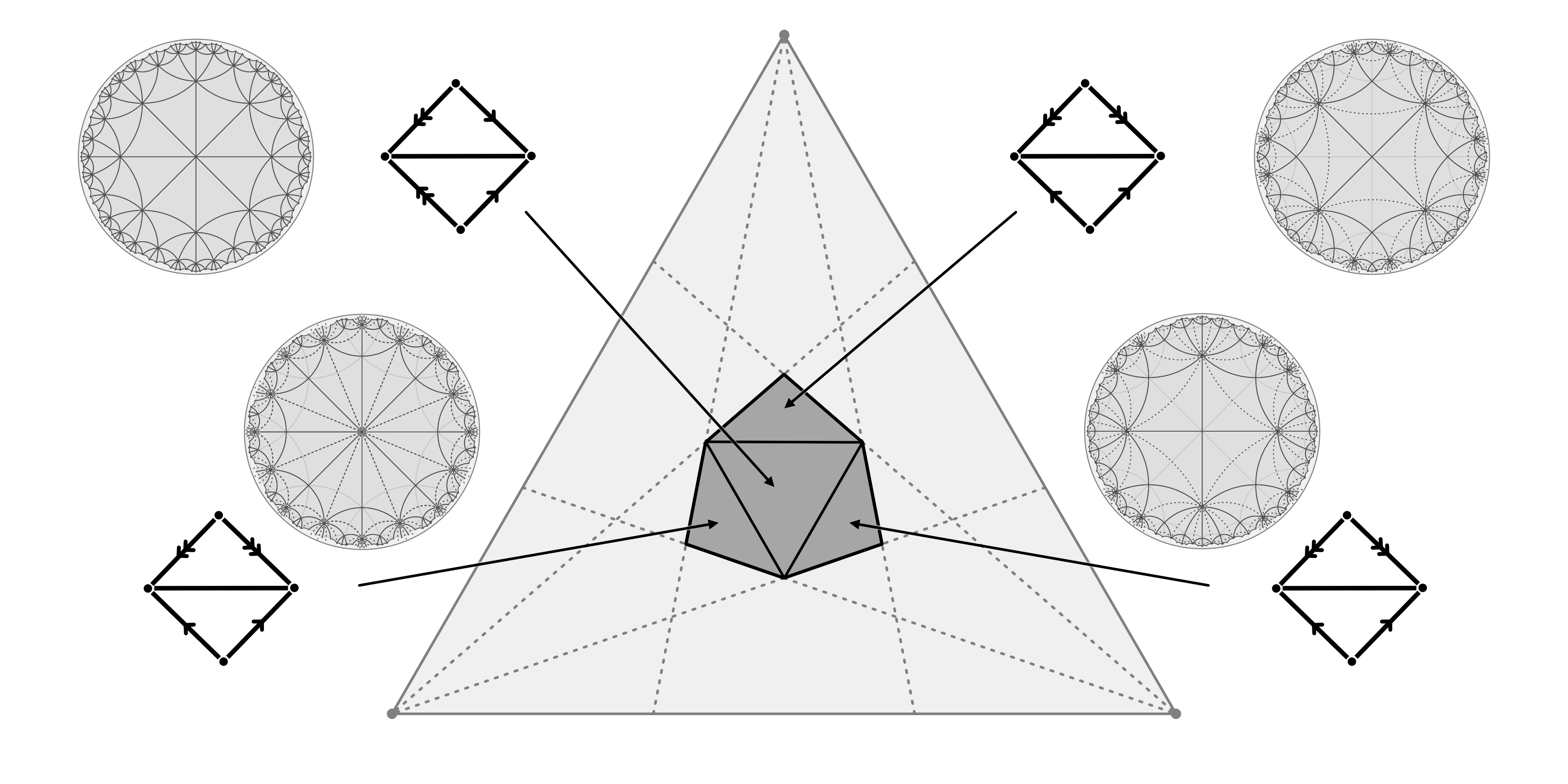}
      }
      \put(119, 161){\footnotesize $v_1$}
      \put(119, 118){\footnotesize $v_1$}
      \put(96, 139){\footnotesize $v_2$}
      \put(141, 139){\footnotesize $v_3$}

      \put(261, 161){\footnotesize $v_2$}
      \put(261, 118){\footnotesize $v_3$}
      \put(238, 139){\footnotesize $v_1$}
      \put(284, 139){\footnotesize $v_1$}

      \put(43, 42){\footnotesize $v_2$}
      \put(88, 42){\footnotesize $v_2$}
      \put(66, 63){\footnotesize $v_1$}
      \put(66, 21){\footnotesize $v_3$}

      \put(297, 42){\footnotesize $v_3$}
      \put(343, 42){\footnotesize $v_3$}
      \put(319, 64){\footnotesize $v_1$}
      \put(319, 21){\footnotesize $v_2$}

      \put(176, 172){\small $[1:0:0]$}
      \put(66, 7){\small $[0:1:0]$}
      \put(285, 7){\small $[0:0:1]$}
   \end{picture}
   \caption{Depicted is the space of abstract weights for the hyperbolic surface
      associated to the $(4, 4, 4)$-Triangle group using the simplicial representation
      (Corollary \ref{cor:simplicial_representation}). Weight-vectors can be recovered
      using barycentric coordinates $[\omega_{v_1}:\omega_{v_2}:\omega_{v_3}]$.
      The configuration space of proper decorations is highlighted. Its simplicial
      decomposition corresponding to weighted Delaunay tessellations of the surface
      is indicated.
   }
   \label{fig:symmetric_triangle_groups}
\end{figure}

\begin{bibdiv}
   \begin{biblist}
      \bibselect{references}
   \end{biblist}
\end{bibdiv}

\end{document}